\definecolor{shadecolor}{rgb}{1,0.9,0.7}
\newtheorem{theorem}{Theorem}[section]
\newtheorem{lemma}[theorem]{Lemma}
\newtheorem{proposition}[theorem]{Proposition}
\theoremstyle{definition}
\newtheorem{definition}[theorem]{Definition}
\newtheorem{construction}[theorem]{Construction}
\newtheorem{example}[theorem]{Example}
\newtheorem{examples}[theorem]{Examples}
\theoremstyle{remark}
\newtheorem{remark}[theorem]{Remark}
\numberwithin{equation}{section}
\numberwithin{figure}{section}
\newcommand{\doublebar}[1]{\shortstack{$\overline{\hphantom{#1}}
	$\\[-4pt] $\overline{#1}$}}
\newcommand {\lfor} {\llbracket}
\newcommand {\rfor} {\rrbracket}
\newcommand{\BB} {\mathbb{B}}
\newcommand{\NN} {\mathbb{N}}
\newcommand{\ZZ} {\mathbb{Z}}
\newcommand{\QQ} {\mathbb{Q}}
\newcommand{\RR} {\mathbb{R}}
\newcommand{\CC} {\mathbb{C}}
\newcommand{\PP} {\mathbb{P}}
\renewcommand{\AA} {\mathbb{A}}
\newcommand{\GG} {\mathbb{G}}
\newcommand{\TT} {\mathbb{T}}
\newcommand{\VV} {\mathbb{V}}
\newcommand {\shAff} {\mathcal{A}\text{\textit{ff}}}
\newcommand {\shD}  {\mathcal{D}}
\newcommand {\shExt} {\mathcal{E} \!\text{\textit{xt}}}
\newcommand {\shF}  {\mathcal{F}}
\newcommand {\shH}  {\mathcal{H}}
\newcommand {\shHom} {\mathcal{H}\!\text{\textit{om}}}
\newcommand {\shI}  {\mathcal{I}}
\newcommand {\shL}  {\mathcal{L}}
\newcommand {\shM}  {\mathcal{M}}
\newcommand {\shMPA} {\mathcal{MPA}}
\newcommand {\shPA} {\mathcal{PA}}
\newcommand {\shPL} {\mathcal{PL}}
\newcommand {\shQ}  {\mathcal{Q}}
\newcommand {\shR}  {\mathcal{R}}
\newcommand {\shT}  {\mathcal{T}}
\newcommand {\shP}  {\mathcal{P}}
\newcommand {\shX}  {\mathcal{X}}
\newcommand {\shY}  {\mathcal{Y}}
\newcommand {\foB}  {\mathfrak{B}}
\newcommand {\foD}  {\mathfrak{D}}
\newcommand {\foL}  {\mathfrak{L}}
\newcommand {\foP}  {\mathfrak{P}}
\newcommand {\foW}  {\mathfrak{W}}
\newcommand {\foX}  {\mathfrak{X}}
\newcommand {\foY}  {\mathfrak{Y}}
\newcommand {\foa}  {\mathfrak{a}}
\newcommand {\fob}  {\mathfrak{b}}
\newcommand {\fod}  {\mathfrak{d}}
\newcommand {\foj}  {\mathfrak{j}}
\newcommand {\fom}  {\mathfrak{m}}
\newcommand {\fop}  {\mathfrak{p}}
\newcommand {\fou}  {\mathfrak{u}}
\newcommand {\Aff}  {\operatorname{Aff}}
\newcommand {\Aut}  {\operatorname{Aut}}
\newcommand {\can} {{\mathrm{can}}}
\newcommand {\cl}  {\operatorname{cl}}
\newcommand {\codim} {\operatorname{codim}}
\newcommand {\cone} {{\mathbf{C}}}
\newcommand {\conv} {\operatorname{conv}}
\newcommand {\ev}  {\operatorname{ev}}
\newcommand {\Ext}  {\operatorname{Ext}}
\newcommand {\GL}  {\operatorname{GL}}
\newcommand {\Gm} {\GG_m}
\newcommand {\gp}  {{\operatorname{gp}}}
\newcommand {\Hom}  {\operatorname{Hom}}
\newcommand {\height}  {\operatorname{ht}}
\newcommand {\id}  {\operatorname{id}}
\newcommand {\im}  {\operatorname{im}}
\newcommand {\inc}  {\mathrm{in}}
\newcommand {\Int}  {\operatorname{Int}}
\newcommand {\inte}  {\mathrm{int}}
\renewcommand {\ker } {\operatorname{ker}}
\newcommand {\kk} {\Bbbk}
\newcommand {\liminv} {\varprojlim}
\newcommand {\lra}  {\longrightarrow}
\newcommand {\M} {\mathcal{M}}
\newcommand {\Map} {{\operatorname{Map}}}
\renewcommand {\max} {{\operatorname{max}}}
\newcommand {\MPA} {\operatorname{MPA}}
\newcommand {\NE}  {\operatorname{NE}}
\renewcommand{\O}  {\mathcal{O}}
\newcommand {\ob}  {\operatorname{ob}}
\newcommand {\ol} {\overline}
\renewcommand{\P}  {\mathscr{P}}
\newcommand {\PA} {\operatorname{PA}}
\newcommand {\PL} {\operatorname{PL}}
\newcommand {\Proj} {\operatorname{Proj}}
\newcommand {\rk} {\operatorname{rk}}
\newcommand {\scrS}  {\mathscr{S}}
\newcommand {\sat}  {{\operatorname{sat}}}
\newcommand {\Sing} {\operatorname{Sing}}
\newcommand {\Spec} {\operatorname{Spec}}
\newcommand {\Spf}  {\operatorname{Spf}}
\newcommand {\tor} {\mathrm{tor}}
\newcommand {\ul} {\underline}
\newcommand {\vect}  {\operatorname{\mathbf{vect}}}
\newcommand {\X} {\mathfrak X}
\newcommand {\W} {\mathscr{W}}
\newcommand {\bT} {\mathbf{T}}
\newcommand{\hooklongrightarrow}{\lhook\joinrel\longrightarrow}
\def\mapright#1{\smash{
  \mathop{\longrightarrow}\limits^{#1}}}
\def\mydate{\ifcase\month \or January\or February\or March\or
April\or May\or June\or July\or August\or September\or October\or 
November\or December\fi \space\number\day,\space\number\year}
\begin{document}

%===========================================================
\title[Theta functions]{Theta functions on varieties with effective
anti-canonical class}

\author{Mark Gross} \address{DPMMS, Centre for Mathematical Sciences,
Wilberforce Road, Cambridge, CB3 0WB, UK}
%\curraddr{}
\email{mgross@dpmms.cam.ac.uk}

% author two information
\author{Paul Hacking} \address{Department of Mathematics and
Statistics, University of Massachusetts, Amherst, MA01003-9305}
%\curraddr{}
\email{hacking@math.umass.edu}

% author three information
\author{Bernd Siebert} \address{FB Mathematik,
Universit\"at Hamburg, Bundesstra\ss e~55, 20146 Hamburg,
Germany}
%\curraddr{}
\email{siebert@math.uni-hamburg.de}

\thanks{This work was partially supported by NSF grants DMS-1262531
(M.G.), DMS-1201439 and DMS-1601065 (P.H.) and by a Royal Society 
Wolfson Merit Award (M.G.).}

\date{\today}
\maketitle

\begin{abstract}
We show that a large class of maximally degenerating families of $n$-dimensional
polarized varieties comes with a canonical basis of sections of powers of the
ample line bundle. The families considered are obtained by smoothing a reducible
union of toric varieties governed by a wall structure on a real
$n$-(pseudo-)manifold. Wall structures have previously been constructed
inductively for cases with locally rigid singularities \cite{affinecomplex} and
by Gromov-Witten theory for mirrors of log Calabi-Yau surfaces and $K3$ surfaces
\cite{GHK1},\cite{GHKS}. For trivial wall structures on the $n$-torus we
retrieve the classical theta functions.

We anticipate that wall structures can be constructed quite generally from
maximal degenerations. The construction given here then provides the homogeneous
coordinate ring of the mirror degeneration along with a canonical basis. The
appearance of a canonical basis of sections for certain degenerations points
towards a good compactification of moduli of certain polarized varieties via
stable pairs, generalizing the picture for K3 surfaces \cite{GHKS}.
Another possible application apart from mirror symmetry may be to geometric
quantization of varieties with effective anti-canonical class.
\end{abstract}

\tableofcontents
%\bigskip

%===========================================================
%===========================================================
\section*{Introduction}

It is anticipated that one can construct a mirror to any maximally unipotent
degeneration of Calabi-Yau varieties. Precisely, given a Calabi-Yau variety
$\shY_{\eta}$ over $\eta:=\Spec\kk((t))$ with maximally unipotent monodromy, one
should be able to construct a mirror variety $\foX$ defined over something like
the field of fractions of a completion of $\kk[\NE(\shY_\eta)]$, where
$\NE(\shY_{\eta})$ denotes the monoid of effective curve classes on
$\shY_{\eta}$.

While this general goal has not yet been achieved, various combinations of the
authors of this paper have obtained partial results in this direction. For these
results a crucial input is a suitably chosen extension $\shY\rightarrow\Spec
\kk\lfor t\rfor$ of $\shY_{\eta}$. 

Starting in \cite{logmirror1} and culminating in \cite{affinecomplex}, the first
and last authors of this paper showed how to construct the mirror if
$\shY\rightarrow\Spec\kk\lfor t \rfor$ was a sufficiently nice polarized
\emph{toric degeneration}. This is a degeneration whose central fibre is toric
and is described torically near the deepest points of the central fibre. The
mirror was then constructed as a toric degeneration
$\foX\rightarrow\Spec\kk\lfor t\rfor$ (and more generally a family of such). The
class of toric degenerations is a natural one from the point of view of mirror
symmetry, as the mirror of a toric degeneration is a toric degeneration. This
point of view incorporates, for example, all Batyrev-Borisov mirrors
\cite{GrBB}. However, it is not clear how generally toric degenerations can be
constructed given $\shY_{\eta}$.

On the other hand, in \cite{GHK1}, the first two authors jointly with
Sean Keel generalized certain aspects of the construction of
\cite{affinecomplex} to construct the mirror to an arbitrary log Calabi-Yau
surface $(Y,D)$ with $D$ an anti-canonical cycle of $n$ rational curves. These
mirrors were constructed as smoothings of a union of $n$ copies of $\AA^2$,
called the $n$-vertex, and the natural base space for the smoothing is
$\Spf( \kk\lfor \NE(Y)\rfor)$, with $\NE(Y)$ the cone of effective
curves in $Y$. Using similar techniques, \cite{GHKS} will provide mirrors to K3
surfaces $\shY_{\eta}$. This construction in particular will provide canonical
families over certain toroidal compactifications of $F_g$, the moduli space of
K3 surfaces of genus $g$. Both these papers used \emph{theta functions}, certain
canonically defined functions, as a key part of the construction. In particular,
in \cite{GHK1}, while it was easy to describe deformations of the $n$-vertex
with origin deleted, theta functions were necessary to provide an extension of
such deformations across the origin.

Nevertheless, the key point in common to these constructions is an explicit
description of the family $\foX$ whose starting point is combinatorial data
recorded in a cell complex of integral convex polyhedra that form a topological
manifold $B$, along with some additional data. The family is then constructed by
patching standard toric pieces extracted from the discrete data with corrections
carried by a \emph{wall structure}, a collection of real codimension one
rational polyhedra along with certain polynomial data. The main difficulty is
then determining a suitable wall structure. In the case of \cite{affinecomplex},
the wall structure was determined by a small amount of additional polynomial
starting data, and then an inductive process for the $k$-th step determined the
family $\foX\to \Spec \kk\lfor t\rfor$ modulo $t^{k+1}$. In \cite{GHK1},
however, the wall structure was written down all at once in terms of enumerative
data on $(Y,D)$. The wall structure in this case records the Gromov-Witten
theory of so-called $\AA^1$-curves in $Y\setminus D$, rational curves meeting
$D$ in exactly one point. In some sense it is a limiting case of
\cite{affinecomplex} in that the one-parameter families of \cite{affinecomplex}
arise after certain localizations of the base. In fact, the core argument for
why the wall structures provide a well-defined deformation relies on the
reduction to this situation via the enumerative interpretation of the inductive
process of wall insertion for toric surfaces in \cite{GPS}. A combination of the
two methods will be used in \cite{GHKS} in the general K3 case.

The purpose of this paper is two-fold. First, we want to provide a unified
framework for both kinds of construction. The focus is not on the specific
construction of wall structures that, depending on context, can come from an
inductive insertion process as in \cite{affinecomplex} or be related to certain
enumerative invariants as in \cite{GHK1}. Rather we are striving for maximal
generality in the treatment of singularities allowed on $B$ and in the treatment
of parameters leading to higher dimensional base spaces of families. The general
construction of wall-crossing structures will be done elsewhere using the
framework developed in this paper: see \cite{UtahSurvey} for
announcements of results in this direction.

Second, and more importantly, we treat in this framework the occurrence of a
canonical basis of global functions or, in the projective setting, of sections
of powers of an ample line bundle that the construction comes with. Thus we
provide here, in the projective setting, the homogeneous coordinate ring of the
family via an explicit basis as a module over the base space. In the case of
degenerations of abelian varieties the canonical sections agree with classical
theta functions. In fact, in Section~\ref{Sect: Abelian varieties} we show that
we obtain all classical theta functions naturally within our framework. We thus
also call our canonical sections \emph{theta functions}.

One of the main results of this paper is therefore the existence of theta
functions in the canonical degenerations constructed in \cite{affinecomplex}.

\begin{theorem}
\label{Thm: GS situation} Let $\pi: \foX\to S$ be one of the
canonical degenerations of varieties with effective anticanonical bundle over a
complete local ring $S$ constructed in \cite{affinecomplex}. Assume that there
is an ample line bundle $L$ on the central fibre $X_0\subseteq \foX$ that
restricts to the natural ample line bundles on the irreducible components
provided by the construction.

Then there is a distinguished extension of $L$ to an ample line bundle $\foL$ on
$\foX$, and $\foL^d$ for $d\ge 1$ has a canonical basis of sections indexed by
the $1/d$-integral points of $B$, the integral affine manifold underlying the
construction.
\end{theorem}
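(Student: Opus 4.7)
The plan is to construct the theta functions directly from the combinatorics of the wall structure on the integral affine base $B$, extending the broken-line strategy of \cite{GHK1} to arbitrary dimension and to the polarized setting. Recall that the degeneration $\foX\to S$ of \cite{affinecomplex} is obtained by gluing affine toric pieces associated to the cells of a polyhedral decomposition $\P$ of $B$, with the gluing corrected by a consistent wall structure $\shS$. The hypothesis that $L$ restricts to the natural ample bundle on each irreducible component of $X_0$ means $L$ is encoded by a strictly convex multivalued integral piecewise linear function $\varphi$ on $(B,\P)$. I would encode the extension $\foL$ by passing to the graded affine manifold $\tilde B$, namely the ``cone'' over $B$ twisted by $\varphi$, sitting in $B\times\RR_{\geq 0}$ so that integral points of $\tilde B$ at height $d$ correspond to $1/d$-integral points of $B$.

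I would then lift $\shS$ to a wall structure $\tilde\shS$ on $\tilde B$ by pulling back each wall $\fod$ and reinterpreting the attached function in the polarized coordinates; its consistency would follow from that of $\shS$ together with the additivity of $\varphi$ along walls. For each $1/d$-integral point $p\in B$ with lift $\tilde p\in\tilde B$ of height $d$, and for each generic basepoint $Q$ in a chamber of $\tilde\shS$, I would set
\[
\vartheta_p(Q) \;=\; \sum_\gamma c_\gamma z^{m_\gamma},
\]
where the sum runs over broken lines $\gamma\colon(-\infty,0]\to \tilde B$ of asymptotic direction $\tilde p$ ending at $Q$, each carrying a monomial obtained from $z^{\tilde p}$ by multiplying against the wall-function factor at every bending point.

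The core of the argument is showing that $\vartheta_p$ assembles into a global section of $\foL^d$. At fixed $Q$ the sum is finite modulo any power of the maximal ideal of $S$, by the finiteness built into $\shS$. Independence of $Q$, modulo the gluing data of $\foX$, follows from consistency of $\tilde\shS$ at each joint: a homotopy between two paths connecting chambers crosses walls and joints in a manner in which the wall-crossing automorphisms exactly counterbalance the change in the broken-line count. This yields a well-defined $\vartheta_p\in H^0(\foX,\foL^d)$ for every $1/d$-integral $p$.

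To show the $\vartheta_p$ form a basis I would restrict to $X_0$ and observe that on each maximal cell $\sigma\in\P$ the function $\vartheta_p|_\sigma$ has a unique dominant monomial $z^p$ in the toric chart of the component $X_\sigma$, the other broken-line contributions being either strictly deeper in the maximal ideal of $S$ or labelled by lattice points strictly below $\tilde p$ in the $\varphi$-filtration. Combined with the toric description of $H^0(X_0,L^d)$ on each component and the required compatibility along intersections, this shows that $\{\vartheta_p\}$ restricts to a basis on the central fibre, and a Nakayama argument over the complete local ring $S$ lifts this to $\foX$. The principal obstacle will be propagating the inductive modulo-$t^k$ consistency statements of \cite{affinecomplex} through the polarized lift to $\tilde B$ and proving that the broken-line sums converge adically to well-defined global sections; isolating the right notion of consistent polarized wall structure and matching it with the construction of $\foX$ is where the bulk of the technical work will sit.
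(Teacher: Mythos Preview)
Your proposal is correct and follows essentially the same route as the paper: pass to the cone $\cone B$ over $B$, lift the consistent wall structure to $\cone\scrS$, define theta functions as sums over broken lines on the cone, and show they form a basis by reduction modulo $I_0$ and a flatness/Nakayama argument. The paper confirms your instinct that the principal obstacle is matching the consistency notion of \cite{affinecomplex} with the broken-line consistency needed here (done in the appendix via \cite{CPS}, Proposition~3.2), and handles one subtlety you gloss over: lifting a wall $\fop$ to $\cone\fop$ may require taking an $a$-th root of $f_\fop$ when the cone has index $a>1$, which needs $\QQ\subset A$.
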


In the appendix we clarify the natural parameter space $S$ for
\cite{affinecomplex} in the Calabi-Yau situation, under the natural
local indecomposability assumption of the discrete data (``simple
singularities''). Theorem~\ref{Thm: GS situation} then follows from
the principal technical result Theorem~\ref{Thm: Main}.

In somewhat more detail, we discuss the broad picture presented in this paper.
The fundamental combinatorial object of the construction is an integral affine
manifold with singularities $B$ with a polyhedral decomposition $\P$. The
singular locus is taken to be as large as is possible for our approach: it is
(modulo some issues along the boundary of $B$) the union of codimension two
cells of the barycentric subdivision of $\P$ not intersecting the interiors of
maximal cells. Thus the singular locus is considerably bigger than is taken in
\cite{affinecomplex}. Further, unlike the previously cited work, we don't
actually insist that $B$ is a manifold: it can fail to be a manifold in
codimension $\ge 3$. While we do not give the details here, a typical situation
in which such a $B$ arises is as the dual intersection complex of a dlt minimal
model of a maximally unipotent degeneration of Calabi-Yau varieties. Koll\'ar
and Xu in \cite{KollarXu}, \S33 showed that $B$ will indeed be a manifold off of
a codimension three subset (see also \cite{NX} for somewhat weaker results).

The parameterizing family $S$ for our construction then arises by choosing a
ring $A$ and a toric monoid $Q$, so that we take $S=\Spec A[Q]/I$ for various
choices of ideal $I$ with radical a fixed ideal $I_0$. In \cite{affinecomplex},
$Q$ was taken to be $\NN$, while in \cite{GHK1}, typically $Q$ was closely
related to $\NE(Y)$. An additional combinatorial piece of data is a multi-valued
piecewise linear function $\varphi$ defined on $B_0:=B\setminus \Delta$ with
values in $Q^{\gp}\otimes_{\ZZ}\RR$. In \cite{affinecomplex}, this is viewed as
specified data, while in \cite{GHK1}, this function is canonically given by the
mirror construction presented there. This combinatorial data is all described in
\S\ref{Section: affine geometry}.

The goal then is to specify additional information which determines an
appropriate family $\foX\rightarrow S$. This family should have the property
that $\foX\times_S \Spec A[Q]/I_0$ is a union of polarized toric varieties
defined over $\Spec A[Q]/I_0$; these polarized toric varieties are determined by
their Newton polyhedra, which run over the maximal cells of $\P$, and are glued
together as dictated by the combinatorics of $\P$. The local structure of this
family over $S$ in neighbourhoods of codimension one strata should roughly be
determined by the function $\varphi$.

The necessary additional information is a \emph{wall structure} $\scrS$,
consisting of a collection of walls with attached functions. These walls
instruct us how to specify gluings between various standard charts. However,
unlike in \cite{affinecomplex}, we only have models for charts in codimensions 0
and $1$, and thus a wall structure is only able to produce a thickening
$\foX^{\circ} \rightarrow S$ of $X_0^{\circ}\rightarrow \Spec A[Q]/I_0$, the
reduced scheme obtained from $X_0$ by deleting codimension $\ge 2$ strata. This
construction is explained in \S\ref{Sect: Wall structures}. 

Roughly, in a mirror symmetry context, a wall structure can be viewed as a way
of encoding information about Maslov index zero disks with boundary in the fibre
of an SYZ fibration (where $B$ plays the role of the base of the fibration). We
expect, based on our experiences in \cite{GHK1} and \cite{GHKS}, that it will be
possible to define suitable wall structures in great generality using a version
of logarithmic Gromov-Witten invariants which shall be presented in forthcoming
work of Abramovich, Chen, Gross and Siebert \cite{ACGS}.

This leaves the question of (partially) compactifying the family
$\foX^{\circ}\rightarrow S$ to $\foX\rightarrow S$. This is where we make
contact with the innovation of \cite{GHK1}, where theta functions were used
precisely to achieve this compactification. If $X_0$ is affine, then a flat
infinitesimal deformation will also be affine, and hence we can hope to
construct $\foX$ by taking the spectrum of the $A[Q]/I$-algebra
$\Gamma(\foX^{\circ}, \O_{\foX^{\circ}})$. Thus we need the latter algebra to be
sufficiently large. This is achieved via the general construction of theta
functions, given in \S \ref{Sect: Global functions}, using broken lines. These
were introduced in \cite{PP2mirror} and first used to construct regular
functions in the context of \cite{affinecomplex} in \cite{CPS}. The definition
of broken line depends on the structure $\scrS$, and we say a structure is
\emph{consistent} if suitable counts of broken lines yield regular functions on
$\foX^{\circ}$. In the consistent affine case, theta functions can then be
viewed as canonically given lifts of monomial functions on $X_0$, and they are
labelled by asymptotic directions on $B$.

So far, this only allows the partial compactification in the affine
case. However, in \S\ref{Sect: Theta functions}, we turn to the
general case, most importantly including the projective case. The
key point is that in general we can reduce to the affine case as
follows. The cone over $B$ itself carries a natural affine
structure, and corresponds (after suitably truncating the cone) to
the total space of $\foL^{-1}$, where $\foL$ is an ample line bundle
on $\foX^{\circ}$ specified by the data of $B$. Then regular
functions on the total space of $\foL^{-1}$ homogeneous of weight
$d\ge 0$ with respect to the fibrewise $\Gm$-action correspond to
sections of  $\foL^{\otimes d}$. As a result, one is able to
construct a homogeneous coordinate ring for $\foX$.

In this projective context, theta functions are then viewed as
sections of $\foL^{\otimes d}$ for $d\ge 0$, and if $d>0$, these
functions are parameterized by the set $B({1\over d}\ZZ)$, the set
of points of $B$ with coordinates in ${1\over d}\ZZ$. Broken lines
can then be viewed via projection from the truncated cone over $B$
to $B$, to obtain objects we call jagged paths, see \S\ref{Subsect:
Jagged paths}. In fact, historically, jagged paths were discovered
before broken lines, in discussions between the first and third
authors of this paper and Mohammed Abouzaid.

The construction is then summarized as follows in the case that $B$
is compact. For a given base ring $R=A[Q]/I$, we define a
homogeneous graded $R$-algebra 
\[
A:=R\oplus \bigoplus_{d>0}\bigoplus_{p\in B({1\over d}\ZZ)} R \vartheta_p.
\]
We give a tropical rule for the multiplication law in terms of
counting trees with three leaves where the edges are jagged paths,
or the corresponding count in terms of broken lines on the truncated
cone over $B$. Note that here associativity follows from the fact
that the functions $\vartheta_p$ are actually functions on
$\foX^{\circ}$ constructed by gluing. We then define $\foX=\Proj
A$. 

Thus we emphasize there are three levels of tropical constructions: the wall
structure $\scrS$ on $B$ which governs the construction can be viewed as a
tropicalization of Maslov index zero disks. In the affine case the broken lines
which describe theta functions can be viewed as a tropicalization of Maslov
index two disks, while the trees which yield the multiplication law can be
viewed as a tropicalization of Maslov index four disks. In the projective case,
jagged paths contributing to the description of a theta function can be viewed
as tropicalizations of holomorphic disks contributing to Floer multiplication
for two Lagrangian sections and a fibre of the SYZ fibration, while the trees
which yield multiplication can be viewed as a tropicalization of holomorphic
disks contributing to Floer multiplication involving three Lagrangian
sections.

 This latter point of view has
been explained in \cite{DBr}, Chapter 8 and \cite{thetasurvey}.

Once a suitable theory for counting such disks in an algebro-geometric setting
is developed, it should be possible to write down the algebra $A$ directly from
enumerative geometry of a log Calabi-Yau variety, generalizing the construction
of \cite{GHK1}. This in turn should lead to a general mirror construction for a
maximally unipotent family of Calabi-Yau varieties. This chain of ideas will be
pursued elsewhere: see \cite{UtahSurvey} for a
more recent account. However, one should view the wall structure $\scrS$ as
giving the richest description of the construction.

The correspondence between points of $B({1\over d}\ZZ)$ and theta functions is
particularly illuminating in the case of abelian varieties. Classically, the
existence of a canonical basis of sections of powers of the ample line bundle
relies on explicit formulas. In the case of abelian varieties our formal family
is the completion of an analytic family $\shX\to \tilde S$ over an analytic open
subset $\tilde S$ of an affine toric variety, with $\foL$ the completion of a
holomorphic line bundle $\shL$. The affine manifold is a real $n$-torus
$B=\RR^n/\Gamma$ with $\Gamma\subseteq \RR^n$ a lattice of rank $n$, and
$B({1\over d}\ZZ)$ can be viewed as one-half of the kernel of the polarization
induced by $\shL^{\otimes d}$. In \S\ref{Sect: Abelian varieties}, we then show
that our theta functions coincide with classical theta functions. This was the
original motivation for using the term ``theta function'' for our canonical
functions.

In the appendix, we make the connection between the general framework we
consider here and that of \cite{affinecomplex}. We leave this discussion to the
appendix as the presentation in the rest of the paper is self-contained, but the
appendix relies on greater details from earlier work of the Gross-Siebert
program. The discussion of the appendix leads to the proof of Theorem \ref{Thm:
GS situation}.

In \cite{GHKK} theta functions are used to construct canonical bases
of cluster algebras. A cluster algebra can be understood as the ring
of global functions on the interior $U=Y \setminus D$ of a log
Calabi--Yau variety $(Y,D)$. The variety $U$ admits a flat
degeneration to an algebraic torus which is used to give a
perturbative construction of the theta functions. This is a special
case of the general construction described in this paper. In the
dimension $2$ case the theta functions can be described explicitly,
see \cite{CGMMRSW}. For cluster varieties describing the open double
Bruhat cell in a semi-simple algebraic group it is an open question
if our theta functions coincide with Lusztig's canonical basis, see
\cite{GHKK}, Corollary~0.20 and the discussion following it.

Other cases with an alternative characterization of theta functions include
mirrors to certain log Calabi-Yau surfaces \cite{GHK2} and possibly also
higher-dimensional Fano varieties with anticanonical polarization.

While in general we do not currently have a characterization
of theta functions other than via our construction, the case of
abelian varieties does lead to some speculation. Indeed, there is an
interpretation of classical theta functions in terms of geometric
quantization that also generalizes to moduli spaces of flat bundles
over a Riemann surface \cite{axelrodetal}, \cite{hitchin},
\cite{tyurin}, \cite{baieretal}. From this point of view, the
degeneration of abelian varieties is viewed as a degenerating family
of complex structures on a fixed Lagrangian fibration $A\to B$.
Similarly, $\shL$ can be viewed as a degenerating family of
compatible complex structures on a complex line bundle $L$ over $A$.
Viewing the complex structure as a distribution on the tangent
spaces, the limit $s\to 0$ is given by the tangent spaces to fibres
of the Lagrangian fibration. In this picture, a $1/d$-integral point
$x\in B$ labels a distributional section of $L$ with support the
fibre of the Lagrangian fibration over $x$. These distributional
sections provide the initial data for the heat equation fulfilled
by classical theta functions due to the functional equation.

A similar picture is expected to hold in much greater generality
\cite{andersen}. In the context of geometric quantization of
Calabi-Yau varieties with a Lagrangian fibration provided by the SYZ
conjecture, the existence of generalized theta functions  was indeed
conjectured by the late Andrei Tyurin \cite{tyurin}. We believe that
our theta functions should also fulfill some heat equation with
distributional limit over the limiting Lagrangian fibration, but the
nature of this equation is unknown to date.

The present theta functions were conjectured to exist by the first
and third authors of this article in the context of homological
mirror symmetry applied to the degenerations of
\cite{affinecomplex}. In the affine case the first proof of
existence in dimension two has appeared in \cite{GHK1}, while
\cite{CPS} established the existence of canonical functions in any
dimension in the framework of \cite{affinecomplex}. See
\cite{thetasurvey} for more details on the history.
\bigskip

\emph{Acknowledgements}: We would like to thank M.~Abouzaid, J.~Andersen,
D.~Pomerleano, S.~Keel and C.~Xu for discussions on various aspects of
this paper. We also thank the anonymous referee for his very attentive
reading.

%===========================================================
%===========================================================
\section{The affine geometry of the construction}
\label{Section: affine geometry}

%===========================================================
\subsection{Polyhedral affine pseudomanifolds}
\label{Subsect: Polyhedral affine manifolds}

We give a common setup for \cite{logmirror1}, \cite{affinecomplex}
and \cite{GHK1}. An \emph{affine manifold} $B_0$ is a differentiable
manifold with an equivalence class of charts with transition
functions in $\Aff(\RR^n)= \RR^n \rtimes \GL(\RR^n)$. It is
\emph{integral} if the transition functions lie in $\Aff(\ZZ^n)=
\ZZ^n\rtimes\GL(\ZZ^n)$. A map between (integral) affine manifolds
preserving this structure is called an \emph{(integral) affine map}.
In the integral case it makes sense to talk about
\emph{$1/d$-integral points} $B_0(\frac{1}{d}\ZZ)\subseteq B_0$,
locally defined as the preimage of $\frac{1}{d}\ZZ^n\subseteq \RR^n$
in a chart. An integral affine manifold $B_0$ comes with a sheaf of
integral (co-) tangent vectors $\Lambda=\Lambda_{B_0}$ (dually
$\check\Lambda=\check\Lambda_{B_0}$) and of integral affine
functions $\shAff(B_0,\ZZ)$. These sheaves are locally constant with
stalks isomorphic to $\ZZ^n$ and to $\Aff(\ZZ^n,\ZZ) \simeq
\ZZ^n\oplus\ZZ$, respectively. The corresponding real versions are
denoted $\Lambda_\RR$, $\check\Lambda_\RR$ and $\shAff(B_0,\RR)$.
Generally, if $A$ is an abelian group then $A_\RR:=
A\otimes_\ZZ\RR$. We have an exact sequence
\begin{equation}
\label{Eqn: Aff}
0\lra \ul\ZZ \lra \shAff(B_0,\ZZ)\lra \check\Lambda\lra 0,
\end{equation}
dividing out the constant functions. Taking $\Hom_{B_0}
(\check\Lambda,\,.\,)$ provides a connecting homomorphism
\[
\Hom_{B_0}(\check\Lambda,\check\Lambda)\lra
\Ext^1_{B_0}(\check\Lambda,\ZZ)=
H^1(B_0,\Lambda).
\]
The image of the identity defines the \emph{radiance obstruction} of
$B_0$, which is an obstruction class to the existence of a set of
charts with linear rather than affine transition functions
(see \cite{GH} or \cite{logmirror1}, pp.179ff).

A (convex) \emph{polyhedron} in $\RR^n$ is the solution set of
finitely many affine inequalities. A polyhedron is \emph{integral}
if each face contains an integral point and the affine inequalities can be
taken with rational coefficients. In particular, any vertex of an
integral polyhedron is integral. We use lower case Greek letters
for integral polyhedra, where we reserve $\sigma,\sigma',\ldots$ for
maximal cells and $\rho$ for codimension-one cells. For a
polyhedron $\tau$ we write $\partial\tau$ for the union of proper
faces of $\tau$ and $\Int\tau:=\tau\setminus \partial\tau$ for the
complement. Note that for $\tau\subseteq\RR^n$ and $\dim\tau<n$ this
does not agree with the topological boundary. Another notation is
$\Lambda_\tau$ for the sheaf of integral tangent vectors on $\tau$,
viewed as an integral affine manifold with boundary. We will not be
too picky and sometimes also use the notation $\Lambda_\tau$ for the
stalk of $\Lambda_\tau$ at any $y\in\Int\tau$ or the abelian group
of global sections $\Gamma(\tau, \Lambda_\tau)$. The precise meaning
should always be obvious from the context. Also, if
$\tau\subseteq\tau'$ we consider $\Lambda_\tau$ naturally as a
subgroup of $\Lambda_{\tau'}$.

The arena for all that follows is a topological space $B$ of
dimension $n$, possibly with boundary, with an integral affine
structure on $B_0:=B\setminus\Delta$ with $\Delta\subseteq B$ of
codimension two, and a compatible decomposition $\P$ into integral
polyhedra. Unlike in much previous work, we will not assume that $B$
is a manifold, but rather will have some weaker properties.
The details are contained in the following construction.

\begin{construction}
\label{Construction: B}
\emph{(Polyhedral affine manifolds.)}
Let $\P$ be a set of integral polyhedra along with a set of integral
affine maps $\omega\to\tau$ identifying $\omega$ with a face of
$\tau$, making $\P$ into a category. We require that any proper face
of any $\tau\in\P$ occurs as the domain of an element of $\hom(\P)$
with target $\tau$. We assume that the direct limit in the category
of topological spaces
\[
B:=\varinjlim_{\tau\in\P}\tau
\]
satisfies the following conditions:
\begin{enumerate}
\item 
For each $\tau\in\P$ the map $\tau\rightarrow B$ is injective, that is, no 
cells self-intersect
(unlike in \cite{logmirror1}).
\item
By abuse of notation we view the elements of $\P$ as subsets
of $B$, also referred to as \emph{cells} of $\P$. We assume that the
intersection of any two cells of $\P$ is a cell of $\P$.
\item $B$
is pure dimension $n$, in the sense that every cell  of $\P$ is
contained in at least one $n$-dimensional cell.
\item
Every $(n-1)$-dimensional cell of $\P$ is contained in one or two
$n$-dimensional cells, so that $B$ is a manifold with boundary away from
codimension $\ge 2$ cells. 
\item
\emph{The $S_2$ condition.} If $\tau\in\P$ satisfies $\dim\tau\le n-2$, then
any $x\in\Int \tau$ has a neighbourhood basis in $B$
consisting of open sets $V$ with $V\setminus\tau$ connected.
\end{enumerate}
If $\P$ consisted only of simplices, then the above conditions are
somewhat stronger than the usual notion of pseudomanifold (with
boundary). For lack of better terminology, and to remind the reader
that $B$ need not be a manifold, we call $B$ a pseudomanifold, but
the reader should also remember the precise conditions stated above.

Cells of dimensions $0$, $1$ and $n$ are also called
\emph{vertices}, \emph{edges} and \emph{maximal cells}. The notation
for the set of $k$-cells is $\P^{[k]}$ and we often write
$\P_\max:=\P^{[n]}$ for the set of maximal cells. A cell
$\rho\in\P^{[n-1]}$ only contained in one maximal cell is said to
lie on the \emph{boundary} of $B$, and we let $\partial B$ be the
union of all $(n-1)$-cells lying on the boundary of $B$. Any cell of
$\P$ contained in $\partial B$ is called a \emph{boundary cell}.
Cells not contained in $\partial B$ are called \emph{interior},
defining $\P_\inte\subseteq \P$. Thus $\P_\partial:=\P\setminus\P_\inte$
is the induced polyhedral decomposition of $\partial B$. 

Next we want to endow $B$ with an affine structure outside a subset
$\Delta\subseteq B$ of codimension two, sometimes referred to as the
\emph{discriminant locus}. For $\Delta$ we take the union of the $(n-2)$-cells
of a barycentric subdivision $\tilde\P$ of $\P$ that neither intersect the
interiors of maximal cells nor the interiors of maximal cells of the boundary
$\partial B$. Two remarks are in order here. First, while the barycenter of a
bounded polyhedron can be defined invariantly in affine geometry, the precise
location of $\Delta$ is not important as long as it respects the cell structure.
So the construction of $\Delta$ is purely topological. Second, for an unbounded
cell $\tau$ we take the barycenter at infinity, that is, replace the barycenter
by an unbounded direction $u_\tau\in(\Lambda_\tau)_\RR$. A piecewise linear
choice of $\Delta$ is explained in \cite{affinecomplex}, p.1310 and runs as
follows. For each bounded cell choose a point $a_\tau\in \Int \tau$, which is to
become its barycenter. For unbounded cells the direction vectors $u_\tau$ need
to be parallel for faces with the same asymptotic cone.\footnote{The notion of
asymptotic cone is discussed at the beginning of \S\ref{Sect: Wall structures}.}
Then a $k$-cell of $\tilde\P$ labelled by a sequence $\tau_0\subsetneq
\tau_1\subsetneq\ldots \subsetneq \tau_k$ in $\P$ with $\tau_0,\ldots,\tau_l$,
$l\ge 0$, bounded and $\tau_{l+1},\ldots,\tau_k$ unbounded is taken as $\conv\{
a_{\tau_0},\ldots, a_{\tau_l}\}+ \sum_{i=l+1}^k \RR_{\ge 0}
u_{\tau_i}$.\footnote{Note that in the unbounded case the $\tau_i$ need to have
strictly ascending asymptotic cones for the dimension of this cell of $\tilde
\P$ to be $k$.} For unbounded $\tau\in\P$ with bounded faces of
dimension $n-1$ there is then a deformation retraction of $\tau\setminus\Delta$
to the union of bounded faces of $\tau\setminus\Delta$. Note that if
$\sigma,\sigma' \in\P_\max$ intersect in $\rho\in\P^{[n-1]}$ then
$\rho\not\subseteq\partial B$ and $\rho\setminus \Delta$ has a number of
connected components, one for each $(n-1)$-cell of the barycentric subdivision
of $\rho$. Thus each connected component of $\rho\setminus \Delta$ is labelled
uniquely by a sequence $\tau_0\subseteq \tau_1\subseteq \ldots\subseteq
\tau_{n-1}=\rho$ with $\tau_k\in\P^{[k]}_\inte$. We denote such an $(n-1)$-cell
of the barycentric subdivision of $\rho$ by $\ul\rho\in \tilde\P^{[n-1]}_\inte$.
With this notation it is understood that $\ul\rho$ is contained in
$\rho\in\P^{[n-1]}_\inte$. In particular, we take only those $(n-1)$-cells of
$\tilde\P$ that do not intersect the interiors of maximal cells of $\P$.

To define an affine structure on $B_0:=B\setminus \Delta$ compatible with the
given affine structure on the cells it suffices to provide, for each $\ul\rho\in
\tilde\P^{[n-1]}_\inte$, an identification of tangent spaces of the adjacent
maximal cells $\sigma,\sigma'$ inducing the identity on $\Lambda_\rho$.
Equivalently, if $\xi\in\Lambda_\sigma$ is such that $\Lambda_\rho+\ZZ\xi=
\Lambda_\sigma$ then for each $\ul\rho\in\tilde\P^{[n-1]}_\inte$ with
$\ul\rho\subseteq\rho$ we have to provide $\xi'\in\Lambda_{\sigma'}$ with
$\Lambda_\rho+\ZZ\xi' =\Lambda_{\sigma'}$. Each such data defines an integral
affine structure on $B_0$ via the local identification of tangent
spaces taking $\xi$ to $\xi'$.

This ends the construction of the pseudomanifold $B$, a
codimension two subset $\Delta$, a decomposition $\P$ of $B$ into
integral affine polyhedra and a compatible integral affine structure
on $B_0$. For brevity we refer to all these data as a
\emph{polyhedral affine pseudomanifold} or just \emph{polyhedral
pseudomanifold}, denoted $(B,\P)$.
\end{construction}

\begin{remark}
\label{Rem: topology of B minus Delta}
The complement $B_0$ of $\Delta$ retracts onto a simplicial complex of dimension
one. In fact, by the very definition of $\Delta$, $B_0$ is covered by the
interiors as subsets of $B$ of the maximal cells $\sigma\in \P$ and by
$\ul\rho\setminus\Delta$, $\ul\rho\in\tilde\P^{[n-1]}_\inte$. By assumption on
$B$, each interior $(n-1)$-cell is contained in precisely two maximal cells.
Thus $B_0$ deformation retracts to a one-dimensional simplicial subspace having
one vertex $a_\sigma\in\Int \sigma$ for each $\sigma\in \P_\max$ and an edge
connecting $a_\sigma$, $a_{\sigma'}$ for each $\ul\rho\in
\tilde\P^{[n-1]}_\inte$ with $\ul\rho\subseteq\sigma\cap \sigma'$.
\end{remark}

There are two major series of examples.

\begin{examples}
\label{Expl: standard examples}
1)\ \ In \cite{logmirror1}, \cite{affinecomplex} the affine structure extends
over a neighbourhood of the vertices. In fact, in this case we can replace
$\Delta$ by the union $\breve\Delta$ of $(n-2)$-cells of $\Delta$ not containing
any vertex. This example also requires a compatibility condition between the
charts (\cite{affinecomplex}, Definition~1.2), which only arises if $B_0$
intersects cells of codimension at least two. Additional aspects of the main
body of this paper particular to this case are discussed in the appendix. Note
in this case $B$ is actually a topological manifold, possibly with
boundary.\\[1ex]
2)\ \ In \cite{GHK1}, \cite{GHKS}, the polyhedral affine pseudomanifolds
used (while still actually manifolds)
are quite different from those of~(1). These two papers give
two-dimensional examples where all singularities occur at vertices
of a polyhedral decomposition. In the case of \cite{GHK1}, one
starts with a so-called Looijenga pair $(Y,D)$, that is, $Y$ is a rational
surface and $D\in |-K_Y|$ is a cycle of rational curves. Write
$D=D_1+\cdots+D_n$ in cyclic order. One associates to the pair its
dual intersection complex $(B,\Sigma)$. Topologically $B=\RR^2$ and
$\Sigma$, the polyhedral decomposition, is a complete fan with a
two-dimensional cone $\sigma_{i,i+1}$ associated to each double
point $D_i\cap D_{i+1}$ and ray $\rho_i= \sigma_{i-1,i}\cap
\sigma_{i,i+1}$ associated to each irreducible component $D_i$.
Abstractly, $\sigma_{i,i+1}$ is integral affine isomorphic to the
first quadrant of $\RR^2$. The discriminant locus $\Delta$ coincides
with the zero-dimensional cell in $\Sigma$, which we denote by $0$. 
The affine structure on $B_0$ is given by charts 
\[
\psi_i:U_i=\Int(\sigma_{i-1,i}\cup\sigma_{i,i+1})
\rightarrow \RR^2
\]
where $\psi_i$ is defined on the closure of $U_i$ by
\[
\psi_i(v_{i-1})=(1,0), \quad \psi_i(v_i)=(0,1), \quad \psi_i(v_{i+1})=
(-1,-D_i^2)
\]
with $v_i$ denoting a primitive generator of $\rho_i$ and $\psi_i$
is defined linearly on the two two-dimensional cones. 

If one wishes a compact example with boundary, one can choose a
compact two-dimensional subset $\bar B\subseteq B$ with polyhedral
boundary and $0\in\Int \bar B$. In certain cases one may find such
a $\bar B$ with locally convex boundary. Indeed, one can show
that such a $\bar B$ exists if $D$ supports a nef and big divisor.

In \cite{GHKS}, we will need a version of this applied to
degenerations of K3 surfaces. Let $\shY\rightarrow T$ be a
one-parameter degeneration of K3 surfaces which is simple normal
crossings, relatively minimal, and maximally unipotent. Let $(B,\P)$
be the dual intersection complex of the degenerate fibre: $\P$ has a
vertex $v$ for every irreducible component $Y_v$ of the central
fibre $\shY_0$, and $\P$ contains a simplex with vertices
$v_0,\ldots,v_n$ if $Y_{v_0}\cap\cdots\cap Y_{v_n}\not=\emptyset$.
We take $\Delta$ to be the set of vertices. The affine structure is
defined as follows. Each two-dimensional simplex of $\P$ carries the
affine structure of the standard simplex. Given simplices of $\P$
with a common edge,
\[
\sigma_1=\langle v_0,v_1,v_2\rangle, \quad \sigma_2=\langle
v_0,v_1,v_3\rangle,
\]
we define a chart $\psi:\Int(\sigma_1\cup\sigma_2)\rightarrow \RR^2$
via
\begin{equation}
\label{Eq: GHKS chart}
\psi(v_0)=(0,0), \quad \psi(v_1)=(0,1), \quad \psi(v_2)=(1,0), \quad
\psi(v_3)=(-1,-(Y_{v_0}\cap Y_{v_1})^2),
\end{equation}
where the latter self-intersection is computed in $Y_{v_0}$. Again,
$\psi$ is affine linear on each two-cell.

These constructions generalize to higher dimensions, producing many
examples of polyhedral affine pseudomanifolds with singular locus the union
of codimension two cells. This can be applied, for example, to log Calabi-Yau
manifolds with suitably well-behaved compactifications, or to log smooth
relatively minimal maximally unipotent degenerations of Calabi-Yau manifolds.
More generally, one can consider relatively minimal dlt models of
such degenerations.
The general construction will be taken up elsewhere.
\end{examples}

Continuing with the general case, an important piece of data that
comes with a polyhedral pseudomanifold are certain tangent vectors along
any codimension one cell $\rho$ that encode the monodromy of the
affine structure in a neighbourhood of $\rho$. Let
$\ul\rho,\ul\rho'\subseteq \rho$ be two $(n-1)$-cells of the
barycentric subdivision, and let $\sigma,\sigma'\in\P^{[n]}$ be the
maximal cells adjacent to $\rho$. Consider the affine parallel
transport $T$ along a path starting from $x\in\Int\ul\rho$ via
$\Int\sigma$ to $\Int\ul\rho'$ and back to $x$ through
$\Int\sigma'$. By the definition of the affine structure on $B_0$
this transformation leaves $\Lambda_\rho\subseteq \Lambda_x$
invariant. Thus $T$ takes the form
\begin{equation}
\label{Eqn: monodromy vectors}
T(m)= m+ \check d_\rho(m)\cdot m_{\ul\rho\,\ul\rho'},
\quad m\in \Lambda_x,
\end{equation}
where $\check d_\rho\in\check\Lambda_x$ is a generator of
$\Lambda_\rho^\perp\subseteq \check\Lambda_x$ and
$m_{\ul\rho\,\ul\rho'}\in\Lambda_\rho$. To fix signs we require
$\check d_\rho$ to take non-negative values on $\sigma$. Since changing
the roles of $\sigma$ and $\sigma'$ reverses both the sign of
$\check d_\rho$ and the orientation of the path, the \emph{monodromy
vector} $m_{\ul\rho\,\ul\rho'}$ is well-defined. Note also that
$m_{\ul\rho'\ul\rho}= -m_{\ul\rho\,\ul\rho'}$.

In the first series of examples (Example~\ref{Expl: standard
examples},1) the connected components of $\rho\setminus\Delta$ are
in bijection with vertices $v\in \rho$, and the notation was
$m_\rho^{vv'}$. In the second series of examples (Example~\ref{Expl:
standard examples},2) the affine structure extends to a
neighbourhood of $\Int\rho$ and hence $m_{\ul \rho\,\ul\rho'}=0$.
\smallskip

The last topic in this subsection concerns the case $\partial
B\neq\emptyset$. First note that the boundary $\partial B$ of $B$
does not generally carry a natural structure of connected polyhedral
pseudomanifold. In fact, $\partial B\setminus\Delta$ is merely the
disjoint union of the interiors of the cells $\rho\in\P^{[n-1]}
\setminus\P^{[n-1]}_\inte$. An exception is if for any pair of
adjacent $(n-1)$-cells $\rho,\rho'\subseteq\partial B$ the tangent
spaces $\Lambda_\rho$, $\Lambda_{\rho'}$ are parallel, measured in a
chart at some point close to $\rho\cap\rho'\in \P^{[n-2]}$. Then
$\partial B$ with the induced polyhedral decomposition is naturally
a polyhedral sub-pseudomanifold of $(B,\P)$. While this case has some
special importance (see e.g.\ \cite{CPS}), it is irrelevant in this
paper. We therefore always assume $\Delta$ contains the
$(n-2)$-skeleton of $\partial B$.

Unlike in \cite{affinecomplex} we also make no assumption on local
convexity of $B$ along its boundary.
\medskip

%===========================================================
\subsection{Convex, piecewise affine functions}

The next ingredient is a multi-valued convex PL-function on $B_0$.
Here ``PL'' stands for ``piecewise linear''. Let $Q$ be a toric
monoid and $Q_\RR\subseteq Q_\RR^\gp$ the corresponding cone, that is,
$Q=Q^\gp\cap Q_\RR$. Recall that a monoid $Q$ is called \emph{toric}
if it is finitely generated, integral, saturated and if in addition
$Q^\gp$ is torsion-free. Thus toric monoids are precisely the
monoids that are isomorphic to a finitely generated saturated
submonoid of a
free abelian group.\footnote{We do not require toric monoids to be sharp,
that is, $Q$ may have non-trivial invertible elements.}

\begin{definition}
\label{Def: PL-functions}
A \emph{$Q^\gp$-valued piecewise affine (PA-) function} on an open set
$U\subseteq B_0=B\setminus \Delta$ is a continuous map
\[
U\lra Q_\RR^\gp
\]
which restricts to a $Q_\RR^\gp$-valued integral affine function on
each maximal cell of $\P$. The sheaf of $Q^\gp$-valued integral piecewise affine
functions on $B_0$ is denoted $\shPA(B,Q^\gp)$. The sheaf of
\emph{$Q^\gp$-valued piecewise linear (PL-) functions} is the quotient
$\shPL(B,Q^\gp):= \shPA(B, Q^\gp)/\ul Q^\gp$ by the locally constant
functions. The respective spaces of global sections are denoted
$\PA(B,Q^\gp)$ and $\PL(B,Q^\gp)$.
\end{definition}

\begin{remark}
\label{Rem: PL functions}
This definition is less restrictive than the one given in
\cite{logmirror1}, Definition~1.43. In particular, we do not require
that locally around the interior of $\rho\in\P^{[n-1]}$ a
PA-function $\varphi$ is the sum of an affine function and a
PA-function on the quotient fan along $\rho$. If
$\rho\not\subseteq\partial B$ then this quotient fan is just the fan
of $\PP^1$ in $\RR$. The condition says that the change of slope
(cf.\ Definition~\ref{Def: kink} below) of $\varphi$ along a connected
component of $\rho\setminus\Delta$ is independent of the choice of
connected component. See Example~\ref{Expl: Kink not constant} for
an illustration.
\end{remark}

The change of a PA- (or PL-) function $\varphi$ along
$\ul\rho\in\tilde\P^{[n-1]}_\inte$ is given by an element $\kappa\in
Q^\gp$ as follows. Let $\sigma,\sigma'$ be the two maximal cells
containing $\ul\rho$. Then $V:=\Int\sigma\cup\Int\sigma'\cup
\Int\ul\rho$ is a contractible  open neighbourhood of $\Int\ul\rho$
in $B_0=B\setminus\Delta$. An affine chart at $x\in\Int\ul\rho$ thus
provides an identification $\Lambda_\sigma=\Lambda_x=
\Lambda_{\sigma'}$. Let $\delta:\Lambda_x\to\ZZ$ be the quotient by
$\Lambda_\rho \subseteq\Lambda_x$. Fix signs by requiring that $\delta$
is non-negative on tangent vectors pointing from $\rho$ into
$\sigma'$. Let $n,n'\in \check\Lambda_x\otimes Q^\gp$ be the slopes of
$\varphi|_\sigma$, $\varphi|_{\sigma'}$, respectively. Then
$(n'-n)(\Lambda_\rho)=0$ and hence there exists $\kappa\in Q^\gp$
with
\begin{equation}
\label{Eqn: kink}
n' -n =\delta \cdot\kappa.
\end{equation}

\begin{definition}
\label{Def: kink}
The element $\kappa_{\ul\rho}(\varphi):=\kappa$ defined in
\eqref{Eqn: kink} is called the \emph{kink} of the $Q^\gp$-valued
PA-function $\varphi$ along $\ul\rho\in\tilde\P^{[n-1]}_\inte$.
\end{definition}

Clearly, a PA-function is integral affine on an open set $U\subseteq
B_0$ if and only if $\kappa_{\ul\rho}(\varphi)=0$ whenever
$U\cap\ul\rho\neq\emptyset$. Moreover, if $U$ is connected, then a
PA-function $\varphi$ on $U$ is determined uniquely by the
restriction to $\Int\sigma$ for one $\sigma\in\P_\max$ intersecting
$U$ and the kinks $\kappa_{\ul\rho}(\varphi)$. Conversely, if
$U\subseteq B_0$ is simply-connected then there exists a
PA-function $\varphi$ with any prescribed set of kinks
$\kappa_{\ul\rho}(\varphi)\in Q^\gp$.

\begin{example}
\label{Expl: Kink not constant}
To illustrate how the kink can depend on the choice of
$\ul\rho\subseteq\rho$ let us look at the simplest example of an
affine manifold with singularities, see Example~1.16 in
\cite{logmirror1}, or \S3.2 in \cite{invitation}. There are only two
maximal cells, the $2$-simplices $\sigma_1:=
\conv\{(-1,0),(0,0),(0,1)\}$,  $\sigma_2:= \conv\{(0,0),(1,0),
(0,1)\}$. Take $B=\sigma_1\cup \sigma_2\subseteq \RR^2$ as a
topological manifold and $\Delta= \{(0,1/2)\}$ the midpoint of the
interior edge $\rho$. Then $\rho= \ul\rho_1\cup\ul\rho_2$ with
$\ul\rho_\mu\in \tilde\P^{[n-1]}_\inte$ two intervals of integral
affine length $1/2$, say $\ul\rho_1$ the lower one containing
$(0,0)$ and $\ul\rho_2$ the upper one containing $(0,1)$. The given
embedding into $\RR^2$ defines the affine chart on $B\setminus
\ul\rho_2$ (Chart~I). The other chart (Chart~II) is given by
applying $\scriptsize\begin{pmatrix} 1& 0\\ 1&1\end{pmatrix}$ to
$\sigma_2$. Thus the image of this chart is
$\conv\{(-1,0),(0,0),(1,1),(0,1)\}$ minus the image of $\ul\rho_1$.

Writing $x,y$ for the standard coordinates on $\RR^2$, consider the
function $\varphi$ that in Chart~I is given by $y$. In this chart it
is an affine function and hence has kink $\kappa=0$. However, in
Chart~II the restriction of $\varphi$ to the image of $\sigma_1$
equals $y$ and the restriction to the image of $\sigma_2$ equals
$y-x$. Thus $\kappa_{\ul\rho_1}(\varphi)=0$ while
$\kappa_{\ul\rho_2}(\varphi)=-1$. In particular, $\varphi$ is not a
piecewise affine function in the sense of \cite{logmirror1},
Definition~1.43, but it is in the sense of this paper.

Note also that the described phenomenon can only occur under the
presence of non-trivial monodromy $m_{\ul\rho\,\ul\rho'}\neq 0$ along
$\rho$ \eqref{Eqn: monodromy vectors}.
\end{example}

\begin{definition}
The sheaf of \emph{$Q^\gp$-valued multivalued piecewise affine
(MPA-) functions} on $B_0=B\setminus\Delta$ is
\[
\shMPA(B, Q^\gp):= \shPA(B, Q^\gp)/ \shAff(B,Q^\gp).
\]
A section of $\shMPA(B, Q^\gp)$ over an open set $U\subseteq B_0$ is
called a \emph{($Q^\gp$-valued) MPA-function}, and we write
$\MPA(B,Q^\gp):= \Gamma(B_0, \shMPA(B,Q^\gp))$.
\end{definition}

Note that dividing out locally constant functions gives the
alternative definition
\[
\shMPA(B, Q^\gp)= \shPL(B, Q^\gp)/ \shHom(\Lambda,\ul Q^\gp).
\]

Since $\shHom(\Lambda,\ul Q^\gp) = \check\Lambda\otimes\ul Q^\gp$
there is an exact sequence of abelian sheaves on $B\setminus\Delta$,
\begin{equation}
\label{Eqn: MPA}
0 \lra \check\Lambda \otimes\ul Q^\gp \lra \shPL(B,Q^\gp)\lra
\shMPA(B, Q^\gp)\lra 0.
\end{equation}
The connecting homomorphism of the restriction to $U\subseteq
B\setminus \Delta$,
\begin{equation}
\label{Eqn: c_1(varphi)}
c_1: \MPA(U,Q^\gp)\lra H^1(U,\check\Lambda\otimes Q^\gp),
\end{equation}
measures the obstruction to lifting an MPA-function $\varphi$ on $U$
to a PL-function. The notation $c_1(\varphi)$ comes from the
interpretation on the Legendre-dual side as being the affine
representative of the first Chern class of a line bundle defined by
$\varphi$, see \cite{logmirror1}, \cite{logmirror2}. We are working
in what is called the \emph{cone picture} here (\cite{logmirror1},
\S2.1), while the bulk of the discussion in \cite{logmirror1},
\cite{logmirror2} takes place in the \emph{fan picture}
(\cite{logmirror1},\S2.2). The two pictures are related by a
\emph{discrete Legendre transform} (\cite{logmirror1}, \S1.4), which
swaps the roles of $c_1(\varphi)$ and the radiance obstruction $c_B$
of $B$ (\cite{logmirror1}, Proposition~1.50,3). Thus in the current
paper, $c_1(\varphi)$ takes the role of the radiance obstruction in
\cite{logmirror2}, which represents the residue of the Gauss-Manin
connection (\cite{logmirror2}, Theorem~5.1,(4)). Hence in the
present setup $c_1(\varphi)$ is related to the complex structure
moduli. Indeed, the MPA-function $\varphi$ has a prominent role in
the construction of our deformation, see \S\ref{Subsect: foX^o}.

An MPA-function is uniquely determined by its kinks:

\begin{proposition}
\label{Prop: MPA is defined by kinks}
There is a canonical decomposition
\[
\shMPA (B, Q^\gp)= \bigoplus_{\ul\rho\in \tilde\P^{[n-1]}_\inte}
Q^\gp_{\Int\ul\rho},
\]
where $Q^\gp_{\Int\ul\rho}$ is the push-forward to $B_0$ of the
locally constant sheaf on $\Int\ul\rho$ with stalks $Q^\gp$.
The induced canonical isomorphism
\[
\MPA(B, Q^\gp)= \textstyle{\Gamma\Big(B_0,
\bigoplus_{\ul\rho\in \tilde\P^{[n-1]}_\inte} Q^\gp_{\Int\ul\rho}\Big) }=
\Map (\tilde\P^{[n-1]}_\inte, Q^\gp)
\]
identifies $\varphi\in \MPA(B, Q^\gp)$ with the map associating to
$\ul\rho\in\tilde\P^{[n-1]}_\inte$ the kink $\kappa_{\ul\rho} (\varphi)
\in Q^\gp$ along $\ul\rho$ of a local PA-representative of
$\varphi$.
\end{proposition}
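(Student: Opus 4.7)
The plan is to construct a sheaf isomorphism realizing the claimed decomposition, then read off global sections. First I would define a morphism of sheaves on $B_0$,
$$\Phi\colon \shMPA(B, Q^\gp) \longrightarrow \bigoplus_{\ul\rho \in \tilde\P^{[n-1]}_\int} Q^\gp_{\Int\ul\rho},$$
sending a local section $\varphi$ to the tuple $(\kappa_{\ul\rho}(\tilde\varphi))_{\ul\rho}$ of kinks of any PA-representative $\tilde\varphi$. This is well-defined: the kink is locally constant along $\Int\ul\rho$ because the slopes of $\tilde\varphi$ on the two maximal cells adjacent to $\rho$ are constant on sufficiently small open sets, and independence from the choice of $\tilde\varphi$ follows from \eqref{Eqn: kink}, since genuinely affine functions have all kinks equal to zero.

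Next I would verify that $\Phi$ is a stalk isomorphism at each $x \in B_0$. If $x \in \Int\sigma$ for $\sigma \in \P_\max$, every germ of PA-function at $x$ is affine and no $\ul\rho$ contains $x$, so both stalks vanish. If $x \in \Int\ul\rho$, a small connected neighborhood $V$ meets only the two maximal cells $\sigma,\sigma'$ adjacent to $\rho$, and formula \eqref{Eqn: kink} identifies the quotient $\shPA(V)/\shAff(V)$ with $Q^\gp$ via the kink, matching the RHS stalk. For any other $x \in B_0$ (lying on a lower-dimensional cell of $\tilde\P$ outside $\Delta$, for example a vertex in the setting of Examples~\ref{Expl: standard examples},1), I would take $V$ to be a simply-connected neighborhood and invoke the principle stated just before the proposition: on a connected open a PA-function is determined, modulo an affine function, by its kinks along the $\ul\rho$'s meeting $V$, and on a simply-connected open any prescribed tuple of kinks is realized. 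Thus the stalk of $\shMPA$ at $x$ is in canonical bijection with $\bigoplus_{\ul\rho \ni x} Q^\gp$, matching the RHS stalk.

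For the global statement, each $\Int\ul\rho$ is connected and the family $\{\ul\rho\}_{\ul\rho \in \tilde\P^{[n-1]}_\int}$ is locally finite in $B_0$, so $\Gamma(B_0, Q^\gp_{\Int\ul\rho}) = Q^\gp$ and
$$\Gamma\Bigl(B_0, \bigoplus_{\ul\rho} Q^\gp_{\Int\ul\rho}\Bigr) \;=\; \prod_{\ul\rho} Q^\gp \;=\; \Map(\tilde\P^{[n-1]}_\int, Q^\gp),$$
with the bijection sending $\varphi$ to $(\kappa_{\ul\rho}(\varphi))_{\ul\rho}$ by construction of $\Phi$.

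The main obstacle lies in the third case of the stalk computation, namely the realizability of arbitrary kink tuples on a simply-connected $V \subset B_0$. By Remark~\ref{Rem: topology of B minus Delta}, $V$ deformation retracts onto a subtree of the $1$-complex whose vertices are the barycenters $a_\sigma$ of adjacent maximal cells and whose edges correspond to the $\ul\rho$'s meeting $V$. On a tree there is no monodromy obstruction, so one may start with an arbitrary affine germ on one maximal cell of $V$ and successively extend across each wall $\ul\rho$ by adjusting the slope by $\delta \cdot \kappa_{\ul\rho}$ as dictated by \eqref{Eqn: kink}, producing a PA-representative with the prescribed kinks.
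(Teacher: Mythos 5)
Your proof is correct and is essentially the paper's one-line argument (``immediate from the local description \eqref{Eqn: kink}'') written out as a sheaf morphism and stalk comparison; the paper leaves all of this to the reader. One small remark on your third case: with the paper's default discriminant $\Delta$ (which, by its barycentric construction, contains the entire codimension~$\ge 2$ skeleton of $\P$), every $x\in B_0$ lies either in $\Int\sigma$ for some $\sigma\in\P_\max$, in $\Int\ul\rho$ for a unique $\ul\rho\in\tilde\P^{[n-1]}_\int$, or in $\Int\rho$ for a boundary facet $\rho$ (where a neighbourhood meets only one maximal cell and both stalks are trivially zero). So the tree-retraction/simple-connectedness argument you invoke never actually gets used in a nontrivial way here; the vertex situation of Example~\ref{Expl: standard examples},1 only occurs for the smaller discriminant $\breve\Delta$, which is not the setup of this proposition. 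The extra generality is harmless but worth noting, since for $\breve\Delta$ the statement would actually change (the balancing condition \eqref{Eqn: Balancing condition} constrains the kinks and breaks the free direct sum), so your case~3 should not be read as extending the proposition to that setting.
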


\begin{proof}
This is immediate from the local description~\eqref{Eqn: kink} of
piecewise affine functions.
\end{proof}
\smallskip

To obtain local toric models for the deformation construction our
MPA-function needs to be convex in the following sense.

\begin{definition}
\label{Def: convex MPA-function}
A \emph{convex ($Q$-valued) MPA-function} on $B$ is a $Q^\gp$-valued
MPA-function $\varphi$ with $\kappa_{\ul\rho}(\varphi)\in Q$ for all
$\ul\rho\in\tilde\P^{[n-1]}_\inte$. The monoid of convex $Q$-valued
MPA-functions on $B$ is denoted $\MPA(B,Q)$.
\end{definition}

\begin{example}
\label{Expl: MPA-functions}
1)\ \ In \cite{logmirror1}, \cite{affinecomplex} we took $Q=\NN$ and
considered only those functions fulfilling certain additional linear
conditions. This defines a subspace of our $\MPA(B,Q^\gp)$ that can
be characterized as follows. The first requirement is
$\kappa_{\ul\rho}(\varphi)= \kappa_{\ul\rho'}(\varphi)$ for any
$\ul\rho,\ul\rho'\in\tilde\P^{[n-1]}_\inte$ contained in the same
$(n-1)$-cell $\rho$ of $\P$, see Remark~\ref{Rem: PL functions}. We can
then write $\kappa_\rho(\varphi)$. The second requirement comes from
the behaviour in codimension two. Let $\tau\in \P_\inte$ be a cell of
codimension two and $\rho_1,\ldots,\rho_k$ be the adjacent cells of
codimension one. Working in a chart at a vertex $v\in\tau$ let
$n_1,\ldots,n_k\in \check\Lambda_v$ be the primitive normal vectors
to $\Lambda_{\rho_i}$, with signs chosen following a simple loop
about the origin in $(\Lambda_v)_\RR/(\Lambda_\tau)_\RR \simeq
\RR^2$. Then the following \emph{balancing condition} must hold in
$Q^\gp\otimes\check\Lambda_v$:
\begin{equation}
\label{Eqn: Balancing condition}
\sum_{i=1}^k \kappa_{\rho_i}(\varphi)\otimes n_i=0.
\end{equation}
The balancing condition assures that locally $\varphi$ has a single-valued
piecewise linear representative, even in higher codimension. In this
way the MPA-functions of \cite{logmirror1}, \cite{affinecomplex} can be
interpreted as \emph{tropical divisors} on $B$.
\\[1ex]
2)\ \ In \cite{GHK1} the monoid $Q$ comes with a monoid
homomorphism $\operatorname{NE}(Y)\to Q$ from the cone of classes
of effective curves of the rational surface $Y$. The convex
MPA-function is obtained by defining $\kappa_\rho$ for an edge
$\rho\in \P$ to be the class of the component
$D_\rho\subseteq D$.

Analogous statements hold in \cite{GHKS} with $\operatorname{NE}(Y)$
replaced by $\operatorname{NE}(\shY)$, the cone of effective curve
classes in the total space of the degeneration $\shY
\to T$, and with $D_\rho\subseteq \shY_0$ the double curve
corresponding to $\rho\in\P^{[1]}$.
\end{example}

There is also a universal MPA-function. It takes values in a free
monoid and even happens to be convex. Denote by $\ul{\mathrm{MPA}}$
the category of convex MPA-functions on $B$ taking values in arbitrary
commutative monoids $Q$ and with morphisms from $\varphi_1\in
\MPA(B,Q_1)$ to $\varphi_2\in \MPA(B,Q_2)$ the homomorphisms $h:
Q_1\to Q_2$ with $\varphi_2=h\circ \varphi_1$.

\begin{proposition}
\label{Prop: Universal MPA function}
a)\ The monoid $Q_0:=\Hom\big( \MPA(B,\NN),\NN\big)$ is canonically
isomorphic to $\NN^{\tilde\P^{[n-1]}_\inte}$.\\[1ex]
b)\ The $Q_0$-valued MPA-function $\varphi_0$ taking value the
generator $e_{\ul \rho}\in \NN^{\tilde\P^{[n-1]}_\inte}=Q_0$ at $\ul\rho\in
\tilde\P^{[n-1]}_\inte$ is an initial object in the category
$\ul{\mathrm{MPA}}$. In other words, for
any monoid $Q$ and any $Q$-valued MPA-function $\varphi$ on $B$, there
exists a unique monoid homomorphism $h: Q_0 \to Q$
such that
\[
\varphi=h\circ\varphi_0.
\]
\end{proposition}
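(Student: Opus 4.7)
My plan is to derive both parts directly from Proposition \ref{Prop: MPA is defined by kinks}, which identifies an MPA-function with its collection of kinks. Let $I:=\tilde\P^{[n-1]}_\int$ for brevity.

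For part~(a), I would begin by noting that Proposition \ref{Prop: MPA is defined by kinks}, combined with the definition of convexity, gives a monoid isomorphism $\MPA(B,\NN)\xrightarrow{\sim} \Map(I,\NN)$ sending $\varphi$ to $(\kappa_{\ul\rho}(\varphi))_{\ul\rho}$. Hence $Q_0=\Hom(\Map(I,\NN),\NN)$. For each $\ul\rho\in I$ the evaluation $\mathrm{ev}_{\ul\rho}\colon \Map(I,\NN)\to\NN$ is a monoid homomorphism, and $\NN$-linear combinations of these give a monoid map $\NN^{(I)}\to Q_0$, where $\NN^{(I)}$ denotes the free commutative monoid on $I$. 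I would then check this is an isomorphism: injectivity is immediate since the $\mathrm{ev}_{\ul\rho}$ are linearly independent as functionals, while surjectivity follows from the observation that any $h\colon \Map(I,\NN)\to\NN$ must have \emph{finite} support on the generators $e_{\ul\rho}$ (the element $(1)_{i\in I}$ dominates every finite sum $\sum_{i\in J}e_i$, forcing $\sum_{\ul\rho} h(e_{\ul\rho})\le h((1)_{i\in I})<\infty$) and is therefore determined by, and equal to, the finite combination $\sum_{\ul\rho} h(e_{\ul\rho})\mathrm{ev}_{\ul\rho}$.

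For part~(b), the universal MPA-function $\varphi_0\in \MPA(B,Q_0)$ is well-defined by Proposition \ref{Prop: MPA is defined by kinks}: prescribing the kink at $\ul\rho$ to be $e_{\ul\rho}\in Q_0$ uniquely specifies an element of $\MPA(B,Q_0)$, and convexity is automatic since each $e_{\ul\rho}\in Q_0$. Given any convex $Q$-valued MPA-function $\varphi$, the universal property of the free commutative monoid $Q_0=\NN^{(I)}$ produces a unique monoid homomorphism $h\colon Q_0\to Q$ with $h(e_{\ul\rho}) = \kappa_{\ul\rho}(\varphi)$ for all $\ul\rho\in I$. Composing, $h\circ\varphi_0$ is a convex $Q$-valued MPA-function whose kink along $\ul\rho$ is $h(e_{\ul\rho}) = \kappa_{\ul\rho}(\varphi)$, so by Proposition \ref{Prop: MPA is defined by kinks} again we conclude $h\circ\varphi_0=\varphi$. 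Uniqueness of $h$ is forced by the requirement $h(e_{\ul\rho})=\kappa_{\ul\rho}(h\circ\varphi_0)=\kappa_{\ul\rho}(\varphi)$ together with the universal property of $\NN^{(I)}$.

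The only non-routine step is the finite-support argument in part~(a), needed to handle the case where $\tilde\P^{[n-1]}_\int$ is infinite (as happens when $B$ is non-compact); once this is in place, both parts reduce to bookkeeping with the kinks proposition and the universal property of free commutative monoids.
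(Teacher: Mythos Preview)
Your proof is correct and follows essentially the same route as the paper: both reduce everything to Proposition~\ref{Prop: MPA is defined by kinks}. The paper establishes (b) first (observing that $\varphi = h\circ\varphi_0$ holds iff $h(e_{\ul\rho}) = \kappa_{\ul\rho}(\varphi)$) and then reads off (a) as the special case $Q=\NN$; you reverse the order, but the substance is identical.

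One caveat about your handling of infinite $I=\tilde\P^{[n-1]}_\int$. The finite-support observation is fine, but the clause ``and is therefore determined by, and equal to, the finite combination'' is a non sequitur: when $I$ is infinite the elements $e_{\ul\rho}$ do not generate $\Map(I,\NN)$ as a monoid, so knowing $h$ on them does not a priori determine $h$ on elements of infinite support. The desired conclusion $\Hom(\NN^I,\NN)\cong\NN^{(I)}$ is nonetheless true (for countable $I$ it follows by extending $h$ to a group homomorphism $\ZZ^I\to\ZZ$ and invoking the Specker theorem $\Hom(\ZZ^{\NN},\ZZ)=\ZZ^{(\NN)}$), but this is a genuinely nontrivial fact that your argument does not supply. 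In the paper's applications $\tilde\P^{[n-1]}_\int$ is finite---indeed $Q_0$ is used elsewhere as a toric, hence finitely generated, monoid---so in that setting your argument is complete as written and the distinction between $\NN^{(I)}$ and $\NN^I$ evaporates.
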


\begin{proof}
The $\NN^{\tilde\P^{[n-1]}_\inte}$-valued MPA-function $\varphi_0$ fulfills the
universal property in (b). In fact, if $\varphi$ is a $Q$-valued MPA-function
the equation $\varphi=h\circ\varphi_0$ holds if and only if $h$ is defined by
$h(e_{\ul \rho}):= \kappa_{\ul\rho}(\varphi)$ for
$\ul\rho\in\tilde\P^{[n-1]}_\inte$. In particular, $\MPA(B,\NN)= \Hom\big(
\NN^{\tilde\P^{[n-1]}_\inte}, \NN\big)$, which shows (a).
\end{proof}

\begin{example}
\label{Expl: torus}
Let $M=\ZZ^n$ be a lattice, $M_{\RR}=M\otimes_{\ZZ}\RR$,
$\Gamma\subseteq M$ a rank $n$ sublattice. Consider the real $n$-torus
$B=M_{\RR}/\Gamma$, with affine structure induced by the natural
affine structure on $M_{\RR}$. A polyhedral decomposition $\P$ of
$B$ is induced by a $\Gamma$-periodic polyhedral decomposition
$\bar\P$ of $M_{\RR}$. Because there are no singularities one
imposes \eqref{Eqn: Balancing condition} and thus restricts to
multi-valued piecewise linear functions which are locally
single-valued, even around codimension $\ge 2$ cells of $\P$. Going
to the universal cover of $B=B_0$ implies that such a section
$\varphi$ of  ${\mathcal{MPA}}(B,\ZZ)$ is given up to an affine
linear function by a piecewise affine function $\bar\varphi:M_{\RR}
\rightarrow\RR$ affine linear with integral slope on each cell of
$\bar\P$ and satisfying a periodicity condition
\begin{equation}
\label{periodicity}
\bar\varphi(x+\gamma)=\bar\varphi(x)+\alpha_{\gamma}(x)\quad \forall
x\in M_{\RR},\gamma\in\Gamma,
\end{equation}
where $\alpha_{\gamma}$ is an integral affine linear  function
depending on $\gamma$. Let $P$ be the monoid of all such functions
which are in addition (not necessarily strictly) convex: these are
those functions whose kink at each codimension one cell of $\P$ is
non-negative. (Note that in this case, the kink only depends on the
codimension one cell of $\P$, and not on a cell of the barycentric
subdivision of $\P$). Then $P^\times=0$, as the zero multi-valued
piecewise linear function is the only convex function all of whose kinks
are invertible, that is, $0$.

Let $Q=\Hom(P,\NN)$. We can then assemble all the piecewise linear
functions in $P$ into a single function in $\MPA(B,Q^{\gp})$,
defined as a function $\varphi_0:M_{\RR} \rightarrow
Q^{\gp}\otimes_{\ZZ}\RR=\Hom(P,\RR)$ given by the formula
\[
\varphi_0(x)=(\varphi\mapsto \varphi(x)).
\]
The kink of $\varphi_0$ along $\rho\in\P^{[n-1]}$ is 
\[
\kappa_{\rho}(\varphi_0)=(\varphi\mapsto \kappa_{\rho}(\varphi))\in 
\Hom(P,\NN).
\]
Note that $\kappa_{\rho}(\varphi_0)\in Q$, so $\varphi_0$ is a
convex function.

Here we have fixed a single polyhedral decomposition $\P$. It is
possible to consider all polyhedral decompositions arising as the
domains of linearity of some convex multi-valued piecewise linear
function, producing an analogue of the secondary fan for periodic
decompositions: this was explored  by Alexeev in \cite{alexeev}.
\end{example}
\medskip

Given an MPA-function $\varphi\in\MPA(B,Q^\gp)$ we can construct a
new polyhedral pseudomanifold $(\BB_\varphi,\P_\varphi)$ of dimension
$\dim B +\rk Q^\gp$, along with a $Q_\RR^\gp$-action and an integral
affine map $\pi:\BB_\varphi\to B$ making $\BB_\varphi$ into a
$Q_\RR^\gp$-torsor over $B$. In fact, $\BB_\varphi= B\times
Q_\RR^\gp$ as a set, but the affine structure of $\BB_\varphi$ is
twisted by $\varphi$ as we will explain shortly. The zero section
$B\to B\times\{0\}\subseteq \BB_\varphi$ defines a piecewise affine
right-inverse to $\pi$. The image of this section can be viewed as the
graph of $\varphi$.

\begin{construction}
\label{Construction: B_varphi}
\emph{(The $Q_\RR^\gp$-torsor $\BB_\varphi\to B$)}\; Let $(B,\P)$ be
a polyhedral pseudomanifold, $Q$ a toric monoid and $\varphi\in
\MPA(B,Q^\gp)$. Take $\BB_\varphi:=B\times Q^\gp_\RR$ with
polyhedral decomposition
\[
\P_\varphi:=\{\tau\times Q_\RR^\gp\,|\,\tau\in\P\}.
\]
To define the affine structure along a codimension one cell
$\ul\rho\times Q_\RR^\gp$, $\ul\rho\in\tilde\P_\varphi^{[n-1]}$ an
interior cell, let $\sigma,\sigma'\in\P_\max$ be the cells adjacent
to $\ul\rho$. Let $\delta: \Int\sigma\cup \Int\sigma'
\cup\Int\ul\rho\to \RR$ be the integral affine map with
$\delta(\ul\rho)=\{0\}$, $\delta(\sigma')\subseteq \RR_{\ge 0}$ and
surjective differential $D\delta:\Lambda_\sigma\to \ZZ$. In other
words, $\delta$ is the signed integral distance from $\rho$ that is
positive on $\sigma'$. Then for a chart $f: U\to \RR^n$ for $B_0$
with $U\subseteq \Int\sigma\cup \Int\sigma'\cup\Int\ul\rho$, define a
chart for $\BB_\varphi$ by
\begin{equation}
\label{charts for BB}
U\times Q_\RR^\gp\lra \RR^n\times Q_\RR^\gp,\quad
(x,q)\longmapsto \begin{cases} (f(x),q),&x\in \sigma\\
(f(x),q+\delta(x)\cdot\kappa_{\ul\rho}(\varphi)),&x\in\sigma'.\end{cases}
\end{equation}
Here $\kappa_{\ul\rho}(\varphi)\in Q^\gp$ is the kink of $\varphi$
along $\ul\rho$ defined in Definition~\ref{Def: kink}. The
projection $\pi:\BB_\varphi\to B$ is integral affine and the
translation action of $Q_\RR^\gp$ on the second factor of
$\BB_\varphi= B\times Q_\RR^\gp$ endows $\BB_\varphi$ with the
structure of a $Q_\RR^\gp$-torsor over $B$. We may now interpret
$\varphi$ as the zero section $B\to B\times Q_\RR^\gp=\BB_\varphi$
since in an affine chart the composition with the projection
$\BB_\varphi\to Q_\RR^\gp$ indeed represents $\varphi$. Note that by
\eqref{charts for BB} the zero section of $\BB_\varphi$ is only a
piecewise integral affine map.

If $\varphi$ is convex (Definition~\ref{Def: convex MPA-function})
we can also define the \emph{upper convex hull of $\varphi$} as the
subset $\BB_\varphi^+:= B\times Q_\RR\subseteq \BB_\varphi$. Here
$Q_\RR= \RR_{\ge0}\cdot Q\subseteq Q_\RR^\gp$ is
the convex cone generated by $Q$. In this case $\partial
\BB_\varphi^+ \subseteq \BB_\varphi$ is the image of $\varphi$, viewed as
a map $B\to \BB_\varphi$, plus the preimage of $\partial B$ under
the projection $\BB_\varphi^+\to B$.
\end{construction}

In the situation of Construction~\ref{Construction: B_varphi} there
are also two sheaves of monoids on $B$. Later these will carry the
exponents of certain rings of Laurent polynomials that provide the
local models of the total space of our degeneration. 

\begin{definition}
\label{Def: shP}
Let $(B,\P)$ be a polyhedral pseudomanifold, $Q$ a toric monoid,
$\varphi\in \MPA(B,Q)$ a $Q$-valued convex MPA-function and $\pi:
\BB_\varphi\to B$ the $Q_\RR^\gp$-torsor defined in
Construction~\ref{Construction: B_varphi} with canonical section
$\varphi:B\to \BB_\varphi$. Define the locally constant sheaf of
abelian groups
\[
\shP:= \varphi^*\Lambda_{\BB_\varphi}
\]
on $B_0$ with fibres $\ZZ^n\oplus Q^\gp$.
\end{definition}

For the second sheaf observe that on the interior of a maximal cell
$\sigma$, the product decomposition $\BB_\varphi= B\times Q_\RR^\gp$
is a local isomorphism of affine manifolds independent of any
choices. Hence for any $\sigma\in\P_\max$ we have a canonical
identification
\begin{equation}
\label{shP on Int(sigma)}
\Gamma(\Int\sigma, \shP)= \Lambda_\sigma\times Q^\gp.
\end{equation}
Furthermore, if $\sigma\cap\partial B\neq\emptyset$ and
$\rho\in\P^{[n-1]}$, $\rho\subseteq\sigma \cap\partial B$, define
\[
\Lambda_{\sigma,\rho}\subseteq\Lambda_\sigma
\]
as the submonoid of
tangent vector fields on $\sigma$ pointing from $\rho$ into
$\sigma$. In other words, $\Lambda_{\sigma,\rho}\simeq
\Lambda_\rho\times\NN$ is the preimage of $\NN$ under the
homomorphism $\Lambda_\sigma\to \Lambda_\sigma/\Lambda_\rho\simeq
\ZZ$ for an appropriate choice of sign for the isomorphism.

\begin{definition}
\label{Def: shP+}
Denote by $\shP^+\subseteq \shP$ the subsheaf with sections over an
open set $U\subseteq B_0$ given by $m\in\shP(U)$ with $m|_{\Int\sigma}
\in \Lambda_\sigma\times Q$ under the identification \eqref{shP on
Int(sigma)}, for any $\sigma\in\P_\max$. Moreover, if $\rho\cap
U\neq\emptyset$ for $\rho\in \P^{[n-1]}$, $\rho\subseteq\partial B$,
we require $m|_{\Int \sigma}\in \Lambda_{\sigma,\rho} \times
Q$, for $\sigma\in\P_\max$ the unique maximal cell containing
$\rho$. 
\end{definition}

The affine projection $\pi:\BB_\varphi\to B$ induces a homomorphism
$\pi_*: \shP\to \Lambda$ and hence an exact sequence
\begin{equation}
\label{shP versus Lambda}
0\lra \ul Q^\gp\lra \shP\stackrel{\pi_*}{\lra} \Lambda\lra 0
\end{equation}
of sheaves on $B_0$. Note that the action of $Q^\gp$ on the stalks
of $\shP$ defined by this sequence is induced by the
$Q_\RR^\gp$-action on $\BB_\varphi$.

%===========================================================
%===========================================================

\section{Wall structures}
\label{Sect: Wall structures}
Throughout this section we fix a polyhedral pseudomanifold $(B,\P)$ as
introduced in Construction~\ref{Construction: B} and a Noetherian base ring
$A$. The base ring is completely arbitrary subject to the Noetherian condition
unless otherwise stated.
Let moreover be given a toric monoid $Q$ and a convex MPA function
$\varphi$ on $B$ with values in $Q$ (Definition~\ref{Def: convex
MPA-function}). Let $I\subseteq A[Q]$ be an ideal and write
$I_0:=\sqrt{I}$ for the radical ideal of $I$. For any
$\ul\rho\in\tilde\P^{[n-1]}_\inte$ we assume
$z^{\kappa_{\ul\rho}(\varphi) }\in I_0$. As a matter of notation,
the monomial in $A[Q]$ associated to $m\in Q$ is denoted $z^m$. So
throughout the paper $z$ has a special meaning as a dummy variable
in our monoid rings.

An important special case is that $I\subseteq I_0$ is generated by a
monoid ideal in $Q$, but we do not want to restrict to this
case.\footnote{The meaning of writing the base ring as $A[Q]/I$ is
that in this form it comes with a chart $Q\to A[Q]/I$ for a log
structure that is implicit in the construction.} Note also that we
do not assume $Q^\times=\{0\}$. 

 From this data we are first going to construct a non-normal but
reduced scheme $X_0$ over $\Spec (A[Q]/I_0)$ by gluing together toric
varieties along toric divisors. Write $X_0^\circ\subseteq X_0$ for the
complement of the toric strata of codimension at least two.

In a second step we assume given a \emph{wall structure}. We
then produce a deformation $\foX^\circ\to \Spec (A[Q]/I)$ of $X_0^\circ$.

In the affine and projective cases, a third step, treated only in
Sections~\ref{Sect: Global functions} and \ref{Sect: Theta
functions}, extends the deformation over the deleted
codimension two locus by constructing enough global sections of an
ample line bundle.
\begin{examples}
\label{Expls: Q}
1)\ \ In the setup of \cite{logmirror1}, \cite{affinecomplex} we
considered the case $Q=\NN$ and $A$ some base ring of
characteristic~$0$, usually a field or $\ZZ$.\\[1ex]
2)\ \ In the case of \cite{GHK1}, as outlined in Example~\ref{Expl:
standard examples},2, the monoid $Q$ is taken to be a submonoid of
$H_2(Y,\ZZ)$ such that $Q^{\gp}=H_2(Y,\ZZ)$ and $\NE(Y)\subseteq Q$,
where $\NE(Y)$ denotes the cone of effective curves. The latter
monoid frequently is not finitely generated, so it is usually
convenient to choose $Q$ to be a larger but finitely generated
monoid. The ideal $I_0$  is often taken to be the ideal of a closed toric
stratum of $\Spec\kk[Q]$, for example the ideal of the smallest toric
stratum.

In the case of \cite{GHKS}, also outlined in Example~\ref{Expl:
standard examples},2, we typically work with a finitely generated
monoid $Q$ with  $\NE(\shY/T)\subseteq Q \subseteq N_1(\shY/T)$, and
again $I_0$ will be the ideal of a closed toric stratum of
$\Spec\kk[Q]$. Here $N_1(\shY/T)$ is the group of numerical
equivalence classes of algebraic $1$-cycles with integral
coefficients.
\end{examples}

%===========================================================
\subsection{Construction of $X_0$}
\label{Subsect: X_0}

Given a polyhedral pseudomanifold $(B,\P)$ and convex MPA function
$\varphi$ with values in the toric monoid $Q$, we construct here the
scheme $X_0$ along with a projective morphism to an affine scheme
$W_0$. Both $X_0$ and $W_0$ are reduced but reducible schemes over
$A[Q]/I_0$ whose irreducible components are toric varieties.

The construction is easiest by writing down the respective
(homogeneous) coordinate rings. Recall that if $\sigma\subseteq \RR^n$
is an integral polyhedron, the \emph{cone over $\sigma$} is  
\begin{equation}
\label{Eqn: cone over unbounded cell}
\cone{\sigma}:= \cl\big( \RR_{\ge0}\cdot (\sigma\times\{1\})\big)
\subseteq \RR^n\times\RR.
\end{equation}
The closure is necessary to deal with unbounded polyhedra. In fact,
if $\sigma=\sigma_0+ \sigma_\infty$ with $\sigma_0$ bounded and
$\sigma_\infty$ a cone, then $\cone{\sigma}$ is the Minkowski sum of
$\RR_{\ge0}\cdot (\sigma_0\times\{1\})$ with $\sigma_\infty
\times\{0\}$, and the two subcones only intersect in the origin, the
tip of $\cone{\sigma}$. The proof of this statement is
straightfoward by writing down the inequalities defining $\sigma$.
Note also that the cone $\sigma_\infty$ is uniquely determined by
$\sigma$; it is called the \emph{asymptotic cone} (or
\emph{recession cone}) of $\sigma$.

For $d>0$ an integer, rescaling by $1/d$ defines a bijection
\[
\cone{\sigma}\cap \big(\ZZ^n\times\{d\}\big) \lra
\sigma\cap \frac{1}{d} \ZZ^n
\]
between the integral points of $\cone{\sigma}$ of height $d$ and
$1/d$-integral points of $\sigma$. As a matter of notation define for
$d\ge 0$ (now including $0$)
\[
B\big({\textstyle\frac{1}{d}\ZZ}\big) := \bigcup_{\sigma\in\P_\max}
\cone{\sigma}\cap \big(\Lambda_\sigma\times\{d\}\big).
\]
Here we identify elements in common faces. Thus for $d>0$ the set
$B\big(\frac{1}{d}\ZZ\big)$ can be identified with the subset of $B$
of points that in some integral affine chart of a cell can be
written with coordinates of denominator $d$.

For any ring $S$ consider the free $S$-module
\[
S[B]:= \bigoplus_{d\in \NN} S^{B(\frac{1}{d}\ZZ)}
\]
with basis elements $z^m$, $m\in B\big( \frac{1}{d}\ZZ\big)$ for
some $d\in\NN$. We turn $S[B]$ into an $S$-algebra by defining the
multiplication of basis elements $z^m\cdot z^{m'}=0$ unless there
exists $\sigma\in\P$ with $m,m'\in\sigma$, and in this case
$z^m\cdot z^{m'}:=z^{m+m'}$, the sum taken in the monoid
$\cone{\sigma}$. The index $d$ defines a $\ZZ$-grading on $S[B]$,
and the homogeneous part of degree $d$ is denoted $S[B]_d$. Note
also that because our polytopes have integral vertices, $S[B]$ is
generated in degree one.

Now take $S=A[Q]/I_0$ and define
\begin{equation}
\label{Eqn: X_0}
W_0:=\Spec (S[B]_0),\quad
X_0:= \Proj\big( S[B]\big).
\end{equation}
By construction $X_0$ is naturally a projective scheme over $W_0$.
To characterize the irreducible components of $W_0$ and $X_0$
consider the primary decomposition $I_0=\bigcap_{i=1}^r \fop_i$ of
$I_0$ and let $S_i:= S/\fop_i$. Since $I_0$ is reduced the
$\fop_i$ are prime ideals, and hence $\Spec S=\bigcup_i \Spec S_i$
is a decomposition into integral subschemes.

For an integral polyhedron $\sigma$ we have the $S_i$-algebra
$S_i[\sigma_\infty\cap\Lambda_\sigma]$
defined by the asymptotic cone $\sigma_\infty$
of $\sigma$ and
\[
\PP_{S_i}(\sigma):=
\Proj \big(S_i[\cone{\sigma}\cap (\Lambda_\sigma\oplus\ZZ)]\big),
\] 
the projective toric variety over $S_i[\sigma_\infty
\cap\Lambda_\sigma]$ defined by $\cone{\sigma}$. For a bounded cell
$\sigma$ the asymptotic cone is trivial and hence
$S_i[\sigma_\infty\cap\Lambda_\sigma] = S_i$.

\begin{proposition}
\label{Prop: X_0}
The schemes $W_0$ and $X_0$ are reduced. The irreducible components
of $X_0$ are $\PP_{S_i}(\sigma)$ with $\sigma$ running over the
maximal cells of $\P$. The irreducible components of
$W_0$ are $\Spec(S_i[\sigma_\infty \cap\Lambda_\sigma])$ with
$\sigma$ running over a subset of the unbounded maximal cells of $\P$.
\end{proposition}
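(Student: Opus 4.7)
The plan is to realize $S[B]$ as a reduced subring of a product of toric monoid algebras, which will make both reducedness and the identification of irreducible components transparent.

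First I would reduce to the case that $S$ is an integral domain. Since $I_0$ is radical, the primary decomposition $I_0=\bigcap_i\fop_i$ yields an injection $S\hookrightarrow\prod_i S_i$ and hence a graded injection $S[B]\hookrightarrow\prod_i S_i[B]$; so one may work with a single $S_i$, which is a domain.

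Next, for each maximal cell $\sigma\in\P_\max$ I would construct a graded $S_i$-algebra homomorphism
\[
\pi_\sigma:S_i[B]\lra S_i\big[\cone{\sigma}\cap(\Lambda_\sigma\oplus\ZZ)\big],\qquad
z^m\longmapsto \begin{cases}z^m,&m\in\sigma,\\ 0,&\text{otherwise.}\end{cases}
\]
The nontrivial point is multiplicativity. If $z^m z^{m'}\neq 0$ in $S_i[B]$ then $m,m'$ share some cell $\tau\in\P$, and one needs $m+m'\in\cone{\sigma}$ iff both $m,m'\in\sigma$. This uses the polyhedral identity $\cone{\sigma}\cap\cone{\tau}=\cone{\sigma\cap\tau}$ together with the face-extremality of a convex cone: a sum of two elements of $\cone{\tau}$ lies in the face $\cone{\sigma\cap\tau}$ if and only if each summand does. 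The product $\Phi=(\pi_\sigma)_{\sigma\in\P_\max}$ is then injective, because any basis vector $z^m$ has nonzero image under $\pi_\sigma$ for every maximal $\sigma\supset \{m\}$. Since each target is the monoid algebra of a saturated, torsion-free monoid over a domain, and hence a domain itself, $S_i[B]$ embeds in a product of domains and is reduced. Therefore $S[B]$ is reduced and so are $X_0$ and $W_0$.

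For the irreducible components, each $\pi_\sigma$ (over $S_i$) is surjective with prime kernel and hence induces a closed immersion $\PP_{S_i}(\sigma)\hookrightarrow X_0$ with irreducible image; injectivity of $\prod_{i,\sigma}\pi_\sigma$ shows these images cover $X_0$, and the pairs $(i,\sigma)$ with $\sigma\in\P_\max$ give pairwise incomparable closed subschemes, so they are exactly the irreducible components. Passing to the degree-zero part identifies the components of $W_0$: the degree-zero piece of $S_i[\cone{\sigma}\cap(\Lambda_\sigma\oplus\ZZ)]$ is $S_i[\sigma_\infty\cap\Lambda_\sigma]$. A bounded $\sigma$ contributes only $\Spec S_i$, which is the origin stratum of $\Spec S_i[\sigma_\infty\cap\Lambda_\sigma]$ for any unbounded maximal $\sigma$; so the surviving components correspond to those $(i,\sigma)$ with $\sigma_\infty$ maximal under inclusion among asymptotic cones of maximal cells, which explains the restriction to a subset of the unbounded maximal cells. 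The main obstacle is the multiplicativity check for $\pi_\sigma$ in the mixed-cell case; once that identity is pinned down, everything else is a formal consequence.
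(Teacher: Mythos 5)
Your argument is essentially the paper's proof presented dually: the paper works with the monomial ideals $J_\sigma$ generated by $z^m$, $m\notin\cone\sigma$, shows $\bigcap_\sigma J_\sigma=0$ and that each $S[B]/(J_\sigma+\fop_i)\cong S_i[\cone\sigma\cap(\Lambda_\sigma\oplus\ZZ)]$ is a domain with the $J_\sigma$ pairwise incomparable, whereas you package exactly the same facts by exhibiting the projections $\pi_\sigma$ (whose kernels are the $J_\sigma$) and the resulting embedding of $S_i[B]$ into a product of domains. The multiplicativity/face-extremality check you spell out is precisely the content left implicit in the paper's phrase ``canonical isomorphisms,'' so the two arguments match step for step.
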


\begin{proof}
We first give the proof for $X_0$. For each $\sigma\in\P_\max$
denote by $J_\sigma\subseteq S[B]$ the monomial ideal generated by
$z^m$ with $m\not\in\cone{\sigma}$. We have canonical isomorphisms
\[
S[B]/(J_\sigma + \fop_i) \simeq S_i[B]/J_\sigma \simeq
S_i[\cone{\sigma}\cap (\Lambda_\sigma\oplus\ZZ)].
\]
The ring on the right-hand side is the homogeneous coordinate ring
of $\PP_{S_i}(\sigma)$, an integral domain as $S_i$ is one. Hence
the $J_\sigma+\fop_i$ are prime ideals. Since
$\bigcap_{\sigma\in\P_\max} J_\sigma= 0$ and the $J_\sigma$ are not
contained in one another we see that $J_\sigma+\fop_i$ with
$\sigma\in\P_\max$ and $i=1,\ldots,r$ are the minimal prime ideals
in $S[B]$.

For $W_0$ the analogs of $J_\sigma$ defined from the asymptotic
cones $\sigma_\infty$ may not be distinct and one has
to pick a minimal subset. Otherwise the proof is completely
analogous to the case of $X_0$.
\end{proof}

\begin{remark}
We note that there is not a precise correspondence between unbounded
maximal cells of $\P$ and irreducible components of $W_0$. For example,
if $B=\RR\times \RR/3\ZZ$ with a subdivision $\P$ induced by the subdivision
of $\RR$ into two rays with endpoint the origin and the subdivision
of $\RR/3\ZZ$ into three unit intervals, $W_0$ consists of two copies
of $\AA^1$ glued at the origin. This occurs because all unbounded
cells $\RR_{\ge 0} \times [i,i+1]$ have the same asymptotic cone, and
hence are responsible for the same irreducible component of $W_0$.

Furthermore, $W_0$ need not be equidimensional if $B$ has several
ends. For example, the above $B$ can easily be modifed by cutting
along $\RR_{\ge 0}\times \{0\}$ and then gluing in the two
edges of $(\RR_{\ge 0})^2$
along the cut. Then $W_0$ has a one-dimensional and two-dimensional
irreducible component.
\end{remark}

\begin{remark}
\label{Rem: Further properties of X_0}
Note also that $X_0$ and $W_0$ can be written as base change to
$A[Q]/I_0$ of the analogous schemes over $\ZZ$ defined with $A=\ZZ$,
$Q=\NN$ and $I_0=\NN\setminus\{0\}$. The scheme $X_0$ over $\ZZ$ has one
irreducible component for each maximal cell of $\P$.
\end{remark}

\begin{example}
\label{Expl: the n-vertex}
Following up on Examples~\ref{Expl: standard examples},2 and
\ref{Expl: MPA-functions}, if we choose $Q$ so that $Q^\times=0$ and
$I_0=Q\setminus Q^\times$, then in the case of \cite{GHK1}, the
corresponding scheme $X_0$ is the \emph{$n$-vertex}, a union of
coordinate planes in affine $n$-space as labelled:
\[
\VV_n=\AA^2_{x_1,x_2}\cup \cdots \cup \AA^2_{x_{n-1},x_n}
\cup\AA^2_{x_n,x_1}\subseteq
\AA^n_{x_1,\ldots,x_n}.
\]
Here $n$ is the number of irreducible components of $D\subseteq Y$. 
In the case of \cite{GHKS}, $X_0$ is a union of copies of $\PP^2$.
\end{example}

A coarser way to state Proposition~\ref{Prop: X_0} is that
$X_0$ is a union of toric varieties over
$S= A[Q]/I_0$ labelled by elements $\sigma\in\P_\max$,
\[
\PP_{S}(\sigma)= \Proj \big(S[\cone\sigma
\cap(\Lambda_\sigma\oplus\ZZ)]\big).
\]
This viewpoint motivates the definition of toric strata of higher
codimension.

\begin{definition}
\label{Def: Toric strata}
A closed subset $T$ of $X_0$ is called a \emph{toric stratum (of
dimension $k$)} if there exists $\tau\in\P$ of dimension $k$ such
that $T$ is the intersection of $\PP_{S}(\sigma)\subseteq X_0$, the
intersection taken over all $\sigma\in\P_\max$ containing $\tau$.
\end{definition}

Note that if $I_0$ is not a prime ideal then toric
varieties over $S$ are not irreducible and neither are our
toric strata.

It is not hard to see that $X_0$ is seminormal 
if the base ring is seminormal. Similarly, we have the essential:

\begin{proposition}
\label{prop: S2 condition}
Every fibre of $X_0\rightarrow \Spec S$ satisfies Serre's condition
$S_2$. If $S$ satisfies Serre's condition $S_2$, so does $X_0$.
\end{proposition}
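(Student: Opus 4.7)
The statement splits into two parts. For the global $S_2$ claim, observe that $S[B] = \bigoplus_d S^{B(\frac{1}{d}\ZZ)}$ is, by construction, free as an $S$-module, so $X_0 \to \Spec S$ is flat. Combined with the fibrewise $S_2$ statement, the global $S_2$ claim follows from the standard result that a flat morphism with $S_n$ fibres transfers $S_n$ from the base to the total space (Matsumura, Theorem~23.9). We are thus reduced to showing that for a field $k$, the scheme $X := \Proj(k[B])$ is $S_2$.

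The claim is local on $X$, and we verify it stratum by stratum. Every point of $X$ lies on a unique minimal toric stratum $T_\tau$ associated to a cell $\tau \in \P$ (Definition~\ref{Def: Toric strata}). If $\dim\tau = n$, then $\O_{X,x}$ is the local ring of a normal projective toric variety, hence Cohen--Macaulay and in particular $S_2$. If $\dim\tau = n-1$, then locally near a generic point of $T_\tau$, $X$ is either a single normal toric piece (when $\tau \subset \partial B$) or the union of two such along a common toric Cartier divisor; the latter case is $S_2$ by the short exact sequence
\begin{equation*}
0 \lra \O_X \lra \O_{\PP_k(\sigma_1)} \oplus \O_{\PP_k(\sigma_2)} \lra \O_{\PP_k(\tau)} \lra 0
\end{equation*}
together with the depth lemma applied to the Cohen--Macaulay rings $\O_{\PP_k(\sigma_i)}$.

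The main case, where hypothesis~(5) of Construction~\ref{Construction: B} is essential, is $\dim\tau \le n - 2$. Here the formal completion of $\O_{X,x}$ at a generic point of $T_\tau$ is, up to a smooth factor coming from $\Lambda_\tau$, the complete local ring at the cone point of the affine variety built from the \emph{star} of $\tau$ in $B$: the union of the cones $\cone{\sigma}/\cone{\tau}$ for $\sigma \in \P_{\max}$ containing $\tau$, glued along the cones associated to intermediate cells of $\P$. Via the standard Hochster--Reisner style computation of local cohomology of such polyhedral unions in terms of reduced cohomology of the link, depth~$\ge 2$ at the cone point translates into the punctured link of $\tau$ being connected---which is exactly the $S_2$ condition imposed in Construction~\ref{Construction: B}(5). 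Setting up this dictionary between the combinatorial axiom on $B$ and the local cohomology of $X$ is the main obstacle; once in place, the rest of the argument is straightforward.
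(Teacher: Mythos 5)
Your outline matches the paper's strategy almost exactly: reduce to the fibre over a field via flatness, strip off a smooth $\Gm^{\dim\tau}$ factor, and convert the depth bound at the cone point into connectedness of the punctured link, which is axiom~(5) of Construction~\ref{Construction: B}. The flatness/transfer step and the $\dim\tau \in \{n,n-1\}$ cases are fine as you've written them (the paper in fact dismisses those cases for free: after the reduction to the cone point, $\dim\tau = n$ or $n-1$ forces $x$ to have height $\le 1$, where $S_1$ already suffices).

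However, the proposal is an honest skeleton with the load-bearing step left undone, and you say so yourself. Two concrete points. First, the reduction ``up to a smooth factor coming from $\Lambda_\tau$'' is not automatic: the point $x$ in the open orbit of $T_\tau$ need not project to the generic point of the $\Gm^{\dim\tau}$ factor, and the two cases require different handling. The paper disposes of the non-generic case by producing a regular sequence of length two directly from the $\Gm$ factor, and for the generic case extends the base field so that $x$ becomes the closed point of the zero-dimensional stratum of $\Spec(S[B'_v]_0)$; your phrasing papers over this. Second, and more importantly, ``the standard Hochster--Reisner style computation'' does not apply literally: the section ring $S[B'_v]$ is not a Stanley--Reisner ring but a gluing of toric cone algebras along faces. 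The paper instead invokes the Brun--Bruns--R\"omer machinery for local cohomology of section rings of sheaves on posets (\cite{BBR}, Theorem 1.1), and to use it one must first verify that the relevant sheaf of $S$-algebras on the poset $\P'_v$ is flasque, which the paper does via \cite{Yu}, Cor.\ 1.12. That flasqueness check, together with matching the hypotheses of the BBR formula, is precisely the ``dictionary'' you defer, and it is not an invocation of a standard theorem but the actual content of the proof.
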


\begin{proof}
The second statement follows from the first by \cite{BH}, Prop.\ 2.1.16,(b).
Thus we can consider the case when $S$ is a field. Further, $X_0$
satisfies Serre's condition $S_1$ since $X_0$ is reduced.

Thus given $x\in X_0$ a point of height $\ge 2$, we need to show $\O_{X_0,x}$
has depth $\ge 2$. There is a minimal toric stratum of $X_0$ containing $x$,
indexed by $\tau\in\P$. If $v\in \Int (\cone\tau)
\cap(\Lambda_{\tau}\oplus\ZZ)$, let $z^v\in S[B]$ be the corresponding monomial.
Denoting by $S[B]_{(z^v)}$ the homogeneous degree $0$ part of the localization
$S[B]_{z^v}$, we obtain $U_v=\Spec (S[B]_{(z^v)})$ is an affine open
neighbourhood of $x$.

This neighbourhood can be described combinatorially as follows. Let $\bar v\in
\tau$ be the image of $v$ under the projection $\cone\tau \setminus
(\tau_{\infty}\times \{0\})\rightarrow \tau$ given by $(v,r)\mapsto v/r$.
Necessarily $\bar v\in\Int \tau$. We can construct a polyhedral cone complex
$B_v$ by gluing together the tangent wedges at $\bar v$ to maximal cells of $\P$
containing $\tau$, and then $U_v=\Spec(S[B_v]_0)$. If $\tau\subseteq\sigma
\in\P$, then we write $\sigma_v$ for the tangent wedge to $\sigma$ at $\bar v$,
and $\sigma_v$ is a cell in $B_v$. Note that $B_v$ retains the same $S_2$
condition of Construction~\ref{Construction: B},(5) as $B$, and in
particular if $\dim \tau\le n-2$, then $B_v\setminus\tau_v$ is connected.
Further, $\tau_v$ is a vector space and $B_v=B'_v\times \tau_v$, corresponding
to a decomposition $U_v=\Spec(S[B'_v]_0)\times \Gm^{\dim\tau}$.

If the image of $x$ under the projection $U_v\rightarrow \Gm^{\dim\tau}$
is height $\ge 1$, then a regular sequence of length two in $\O_{U_v,x}$
is easily constructed. Thus we can assume $x$ projects to the generic
point of $\Gm^{\dim\tau}$, and so after replacing $S$ with a field extension,
we can assume $x$ is the unique zero-dimensional stratum of $\Spec(S[B'_v]_0)$.
If $\dim\tau=n$ or $n-1$, then $x$ is a height zero or one point, and there is 
nothing to show. Thus we may assume that $\dim B'_v\ge 2$, and if $\tau$
now denotes the unique zero-dimensional cell of $B'_v$, then $B'_v\setminus
\tau$ is connected.

The result now follows from \cite{BBR}, Theorem 1.1. Indeed, write $\P'_v$
for the polyhedral cone complex on $B'_v$. This is a poset ordered by inclusion,
and carries the order topology. Let $\shF$ denote the sheaf of $S$-algebras
on $\P'_v$ whose stalk at $\sigma\in\P'_v$ is the ring 
$S[\sigma\cap\Lambda_{\sigma}]$. It follows
from the criterion of \cite{Yu}, Cor.\ 1.12 that $\shF$ is flasque. Also,
$\Gamma(\P'_v,\shF)=S[B'_v]$. If $I$ is the ideal of the point $x$, then
the hypotheses of \cite{BBR}, Theorem 1.1 are satisfied and 
the local cohomology $H^1_I(S[B'_v])$ is calculated using the formula of
that theorem. This is seen to be zero from the connectedness
of $B'_v\setminus \tau$, which implies the desired depth statement.
\end{proof}

%===========================================================
\subsection{Monomials, rings and gluing morphisms}
\label{Subsect: rings}

Recall from Construction~\ref{Construction: B_varphi} that we
interpreted $\varphi$ as a piecewise affine section of the
$Q_\RR^\gp$-torsor $\pi:\BB_\varphi\to B$, and recall the sheaves
$\shP=\varphi^* \Lambda_{\BB_\varphi}$ and $\shP^+\subseteq\shP$ on
$B_0$ from Definitions~\ref{Def: shP} and \ref{Def: shP+}. 
Denote by $A[\shP^+]$ the sheaf of $A[Q]$-algebras on $B_0$
with stalk at $x$ the monoid ring $A[\shP^+_x]$. The sheaf of ideals
generated by $I\subseteq A[Q]$ is denoted $\shI$.

\begin{definition}
\label{Def: monomials}
A \emph{monomial} at $x\in B_0$ is a formal expression $a z^m$ with
$a\in A$ and $m\in\shP^+_x$. A monomial $az^m$ at $x$ has
\emph{tangent vector} $\ol m:=\pi_*(m)\in \Lambda_x$ with $\pi_*$
defined in \eqref{shP versus Lambda}. Moreover, for
$\sigma\in\P_\max$ containing $x$, the \emph{$\sigma$-height}
$\height_\sigma(m) \in Q$ of $m\in\shP_x^+$ is the projection of $m$
to the second component under the identification~\eqref{shP on
Int(sigma)}.
\end{definition}

By abuse of notation we also refer to elements $m\in\shP_x^+$ as
monomials.

\begin{example}
Let $B=\RR^n$, $\P$ be the fan defining $\PP^n$, with rays generated
by the standard basis vectors $e_1,\ldots,e_n$ and
$e_0:=-e_1-\cdots-e_n$. Let $Q=\NN$, and take $\varphi:B\rightarrow
Q^{\gp}_{\RR}=\RR$ to be the piecewise linear function taking the
value $0$ at $0,e_1,\ldots,e_n$ and the value $1$ at $e_0$. Then
$\shP$ is the constant sheaf with stalks $\ZZ^n\times \ZZ$. The
stalk $\shP^+_0$ of $\shP^+$ at $0$ is the monoid $\{(m,r)\,|\, m\in
\ZZ^n, r\ge \varphi(m)\}\subseteq \ZZ^{n+1}$. Note this monoid is
isomorphic to $\NN^{n+1}$, generated by $(e_1,0),\ldots,
(e_n,0),(e_0,1)$. For general $x\in \RR^n$, if $x$ lies in the
interior of the cone generated by $\{e_i\,|\, i\in I\}$, then
$\shP^+_x$ is the localization of $\shP^+_0$ at the elements
$\{(e_i,\varphi(e_i))\,|\,i\in I\}$. This localization is abstractly
isomorphic to $\ZZ^{\# I}\times\NN^{n+1-\# I}$. Note that
$\Spec\kk[\shP^+_0] \rightarrow \Spec\kk[Q]$ induced by the obvious
inclusion $Q\rightarrow \shP^+_0$ is a reduced normal crossings
degeneration of an algebraic torus to a union of affine spaces.
\end{example}

The aim of this section is to construct a flat $A[Q]/I$-scheme
$\foX^\circ$ by gluing spectra of $A[Q]/I$-algebras that are
quotients of $A[\shP^+_x]$ for $x\in B_0$.  Note first that for
$\tau\in\P$ parallel transport inside $\tau\setminus\Delta$ induces
canonical identifications $\shP_x=\shP_y$ for $x,y$ in the same
connected component of $\tau\setminus\Delta$. This identification
maps $\shP^+_x$ to $\shP^+_y$ and $\shI_x$ to $\shI_y$ and hence
induces an identification of rings
\begin{equation}
\label{Eq: ring iso via parallel trp}
A[\shP^+_x] \lra A[\shP^+_y]
\end{equation}
mapping $\shI_x$ to $\shI_y$. There are thus only finitely many
rings to be considered, one for  each $\sigma\in\P_\max$ and one for
each $\ul\rho\in\tilde\P^{[n-1]}_\inte$ an $(n-1)$-cell of the
barycentric subdivision of some $\rho\in\P^{[n-1]}_\inte$. In the
case of a maximal cell $\sigma$ with $\dim (\sigma\cap\partial
B)=n-1$ there is in addition one more ring for each
$\rho\in\P^{[n-1]}$ with $\rho\subseteq\sigma\cap\partial B$.

For $\sigma\in\P_\max$ choose $x\in\Int\sigma$ and define
\begin{equation}
\label{Eq: R_sigma}
R_\sigma:= A[\shP_x^+]/\shI_x.
\end{equation}
In view of \eqref{Eq: ring iso via parallel trp} the
$A[Q]/I$-algebra $R_\sigma$ is defined uniquely up to unique
isomorphism. Moreover, by \eqref{shP on Int(sigma)} there is a
canonical isomorphism
\begin{equation}
\label{Eq: R_sigma can iso}
R_\sigma= (A[Q]/I)[\Lambda_\sigma].
\end{equation}
In particular, $\Spec(R_\sigma)$ is an algebraic
torus over $\Spec(A[Q]/I)$ of dimension $n=\rk\Lambda_\sigma$.

Similarly, if $\rho\in\P^{[n-1]}$ is a non-interior
codimension one cell with adjacent maximal cell $\sigma$
choose $x\in \Int\rho$ and define
\begin{equation}
\label{Eqn: R_sigmarho}
R_{\sigma,\rho}:= A[\shP^+_x]/\shI_x = (A[Q]/I)[\Lambda_{\sigma,\rho}].
\end{equation}
The canonical inclusion
\begin{equation}
\label{Eq: R_rho -> R_sigma non-interior}
R_{\sigma,\rho}\lra R_\sigma
\end{equation}
exhibits $R_\sigma$ as the localization of  $R_{\sigma,\rho}$ by the
monomial associated to the (unique) toric divisor of $\Spec
R_{\sigma,\rho}$.

For an interior codimension one cell $\ul\rho\in\tilde\P^{[n-1]}$
the situation is a little more subtle. If $x\in\ul\rho$,
$y\in\ul\rho'$ are contained in the same $\rho\in\P^{[n-1]}$ then
parallel transport inside an adjacent maximal cell $\sigma$ still
induces an isomorphism $A[\shP^+_x]/\shI_x \to A[\shP^+_y]/\shI_y$;
but if the affine structure does not extend over $\Int\rho$ the
isomorphism depends on the choice of $\sigma$ and hence is not
canonical. The naive gluing would thus not fulfill the cocycle
condition even locally. To cure this problem we now adjust
$A[\shP^+_x]/\shI_x$ to arrive at the correct rings $R_{\ul\rho}$.

For $x\in \rho\setminus\Delta$, $\rho\in\P^{[n-1]}$, there is a
submonoid $\Lambda_\rho\times Q^\gp\subseteq \shP_x$. Under the
identification $\shP_x= \Lambda_{\BB_\varphi,\varphi(x)}$
(Definition~\ref{Def: shP}) this submonoid equals
$\Lambda_{\rho\times Q_\RR^\gp}$, the integral tangent space of the
cell $\rho\times Q_\RR^\gp\subseteq \BB_\varphi$. This submonoid
is invariant under parallel transport in a neighbourhood of
$\Int\rho$. In particular, for any $x\in\rho\setminus\Delta$ we
obtain a subring $A[\Lambda_\rho\times Q]\subseteq A[\shP^+_x]$ and
similarly modulo $\shI_x$. To generate $A[\shP^+_x]$ as an
$A[\Lambda_\rho\times Q]$-algebra let $\xi\in\Lambda_x$ generate
$\Lambda_x/\Lambda_\rho\simeq \ZZ$. Then there are unique lifts
$Z_+,Z_-\in \shP_x$ of $\pm\xi$ with
\begin{equation}
\label{description of A[shP_x]}
A[\shP^+_x]/\shI_x\simeq (A[Q]/I)[\Lambda_\rho][Z_+,Z_-]/
(Z_+Z_- - z^{\kappa_{\ul\rho}}).
\end{equation}
Here $\kappa_{\ul\rho}= \kappa_{\ul\rho}(\varphi)\in Q$ is the kink
of $\varphi$ along the $(n-1)$-cell $\ul\rho\in\tilde\P^{[n-1]}_\inte$ of
the barycentric subdivision containing $x$. Indeed, if $\varphi_x:
\Lambda_x\to Q^\gp$ is a local representative of $\varphi$ at $x$
with $\varphi_x(x)=0$ then
\[
\shP_x^+= \big\{ (m,q)\in\Lambda_x\times Q^\gp\,\big|\,
q\in\varphi(m)+Q  \big\},
\]
and $Z_+= z^{(\xi,\varphi(\xi))}$, $Z_-= z^{(-\xi, \varphi(-\xi))}$,
$Z_+Z_-= z^{(0,\kappa_{\ul\rho})}$. Note that changing $\xi$ to
$\xi+m$ with $m\in\Lambda_\rho$ changes $Z_+$ to $z^{(m,\varphi(m))}
\cdot Z_+$ and $Z_-$ to $z^{(-m,-\varphi(m))}\cdot Z_-$. In
particular, the isomorphism of~\eqref{description of A[shP_x]}
implicitly depends on the choice of $\xi$. For each $\rho$ we
therefore choose an adjacent maximal cell $\sigma=\sigma(\rho)$ and
a tangent vector $\xi=\xi(\rho)\in\Lambda_\sigma$ with
\begin{equation}
\label{xi(rho)}
\Lambda_\rho+\ZZ\cdot\xi(\rho)= \Lambda_\sigma.
\end{equation}
To fix signs we require that $\xi(\rho)$ points from $\rho$ into
$\sigma(\rho)$.

We now assume that for each $\ul\rho\in\tilde\P^{[n-1]}_\inte$ we have a
polynomial $f_{\ul\rho}\in (A[Q]/I)[\Lambda_\rho]$ with the
compatibility property that if $\ul\rho,\ul\rho'\subseteq\rho$ then
\begin{equation}
\label{f_rho' versus f_rho}
z^{\kappa_{\ul\rho'}}f_{\ul\rho'}= 
z^{m_{\ul\rho'\ul\rho}} \cdot z^{\kappa_{\ul\rho}}f_{\ul\rho}.
\end{equation}
Now define
\begin{equation}
\label{Eqn: R_ul rho}
R_{\ul\rho}:= (A[Q]/I)[\Lambda_\rho][Z_+,Z_-]/(Z_+Z_- - f_{\ul\rho}
\cdot z^{\kappa_{\ul\rho}}).
\end{equation}
As an abstract ring $R_{\ul\rho}$ depends only on $f_{\ul\rho}$, but the
interpretation of $Z_\pm$ as a monomial defined by the affine geometry of $B$
also depends on the above choice of a maximal cell $\sigma(\rho)\supset\rho$ and
$\xi=\xi(\rho)\in \Lambda_\sigma$. This choice will become important in the
gluing to the rings $R_\sigma$, $\sigma\in\P_\max$, see~\eqref{Eq: Localization
morphism rho -> sigma} below.

The rings $R_{\ul\rho}$ are now compatible with local parallel
transport. Specifically, let $\rho\in \P^{[n-1]}$ contain
$\ul\rho,\ul\rho'\in\tilde\P^{[n-1]}_\inte$. Let $Z_\pm\in R_{\ul\rho}$ be
the lifts of $\pm\xi(\rho)$ as defined above, and $Z'_\pm\in
R_{\ul\rho'}$ the lifts for $\ul\rho'$. Since $\xi(\rho)$ is a
vector field on $\sigma=\sigma(\rho)$, parallel transport of
monomials inside $\sigma$ maps $Z_+$ to $Z'_+$. Use parallel
transport inside the other maximal cell $\sigma'\supset\rho$ to
define the image of $Z_-$ in $R_{\ul\rho'}$. Let $y\in\Int\ul\rho'$.
The result $\xi'$ of parallel transport of $-\xi(\rho)\in\Lambda_x$ through
$\sigma'$ differs from $-\xi(\rho)\in
\Lambda_y$ by monodromy around a loop passing from
$y\in\Int\ul\rho'$ via $\sigma$ to $x\in\Int \ul\rho$ and back to $y$ via
$\sigma'$. By \eqref{Eqn: monodromy vectors} we obtain
\begin{equation}
\label{xiprimexieq}
\xi'= -\xi(\rho)+\check d_{\rho}(-\xi(\rho))\cdot m_{\ul\rho'\ul\rho}
= -\xi(\rho)- m_{\ul\rho'\ul\rho}= -\xi(\rho)+m_{\ul\rho\,\ul\rho'}.
\end{equation}
This computation suggests that we identify $R_{\ul\rho}$ and $R_{\ul\rho'}$ by
mapping $Z_+$ to $Z'_+$ and $Z_-$ to $z^{m_{\ul\rho\,\ul\rho'}} Z'_-$.

\begin{lemma}
\label{Lem: R_rho well-defined}
Let $\rho\in\P^{[n-1]}$, $\rho\not\subseteq\partial B$, contain
$\ul\rho$, $\ul\rho'\in\tilde\P^{[n-1]}_\inte$ and let $Z_\pm\in R_{\ul
\rho}$, $Z'_\pm\in R_{\ul\rho'}$ be lifts of $\pm \xi(\rho)$ as
defined above. Then there is a canonical isomorphism of
$(A[Q]/I)[\Lambda_\rho]$-algebras
\begin{equation}
\label{Eq: gluing of R_{ul rho}}
R_{\ul\rho}\lra R_{\ul\rho'}
\end{equation}
mapping $Z_+$ to $Z'_+$ and $Z_-$ to $z^{m_{\ul\rho\,\ul\rho'}} Z'_-$.
\end{lemma}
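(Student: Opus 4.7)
The plan is to define the required map on the presentations of the two rings and check it respects the defining quadratic relation. Both $R_{\ul\rho}$ and $R_{\ul\rho'}$ are presented as $(A[Q]/I)[\Lambda_\rho]$-algebras by two generators modulo one relation, so I would define $\Phi\colon R_{\ul\rho}\to R_{\ul\rho'}$ to be the identity on the subring $(A[Q]/I)[\Lambda_\rho]$ together with $Z_+\mapsto Z'_+$ and $Z_-\mapsto z^{m_{\ul\rho\ul\rho'}} Z'_-$. The lemma then reduces to showing that (i) $\Phi$ sends the relation $Z_+Z_- = f_{\ul\rho}\cdot z^{\kappa_{\ul\rho}}$ into the relation ideal of $R_{\ul\rho'}$, and (ii) the analogous recipe $Z'_+\mapsto Z_+$, $Z'_-\mapsto z^{m_{\ul\rho'\ul\rho}}Z_-$ gives a two-sided inverse.

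For step (i), the key computation is
\[
\Phi(Z_+Z_-) \;=\; z^{m_{\ul\rho\ul\rho'}}\, Z'_+Z'_- \;=\; z^{m_{\ul\rho\ul\rho'}}\, f_{\ul\rho'}\, z^{\kappa_{\ul\rho'}},
\]
using the defining relation in $R_{\ul\rho'}$. Now I would apply the compatibility assumption \eqref{f_rho' versus f_rho}, which gives $z^{\kappa_{\ul\rho'}} f_{\ul\rho'} = z^{m_{\ul\rho'\ul\rho}}\cdot z^{\kappa_{\ul\rho}}f_{\ul\rho}$, together with the antisymmetry $m_{\ul\rho'\ul\rho} = -m_{\ul\rho\ul\rho'}$ recorded after \eqref{eqn: monodromy vectors}. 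The two monodromy factors cancel and we obtain exactly $f_{\ul\rho}\cdot z^{\kappa_{\ul\rho}}$, which is $\Phi$ of the right-hand side of the defining relation in $R_{\ul\rho}$. Hence $\Phi$ descends to the quotient and is a well-defined $(A[Q]/I)[\Lambda_\rho]$-algebra homomorphism. Step (ii) is symmetric: the same cancellation, with the roles of $\ul\rho$ and $\ul\rho'$ swapped, shows the putative inverse is well-defined, and composing the two maps in either order clearly fixes the generators.

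The only remaining item is to justify the word \emph{canonical}: the map $\Phi$ must agree with what one obtains by parallel-transporting the lifts $Z_\pm$ through the two adjacent maximal cells. By construction $Z_+$ lifts $\xi(\rho)\in\Lambda_{\sigma(\rho)}$, and parallel transport inside $\sigma(\rho)$ identifies this with $Z'_+$ directly. For $Z_-$, parallel transport through the opposite maximal cell $\sigma'\supset\rho$ yields the tangent vector $\xi' = -\xi(\rho) + m_{\ul\rho\ul\rho'}$ computed in \eqref{xiprimexieq}, and the corresponding monomial in $R_{\ul\rho'}$ is precisely $z^{m_{\ul\rho\ul\rho'}}\,Z'_-$. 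Thus $\Phi$ is the ring-theoretic shadow of parallel transport, independent of auxiliary choices once $\sigma(\rho)$ and $\xi(\rho)$ have been fixed.

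I do not anticipate a real obstacle: the entire content lies in the sign bookkeeping of the monodromy vectors and in the compatibility condition \eqref{f_rho' versus f_rho}, which was imposed exactly so that this cancellation goes through. The mild subtlety to keep in mind is that the abstract presentations of $R_{\ul\rho}$ and $R_{\ul\rho'}$ are isomorphic in more than one way, and it is the geometric interpretation via parallel transport through $\sigma'$ that singles out $\Phi$ as the correct gluing morphism — this is what later ensures cocycle compatibility when these local isomorphisms are assembled into the global space $\foX^\circ$.
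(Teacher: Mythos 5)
Your proof is correct and follows essentially the same route as the paper: apply \eqref{f_rho' versus f_rho} together with $m_{\ul\rho'\ul\rho}=-m_{\ul\rho\ul\rho'}$ to show the defining relation of $R_{\ul\rho}$ is sent into the relation ideal of $R_{\ul\rho'}$, then invoke symmetry for the inverse. The paper packages the computation slightly differently — it factors the image of $Z_+Z_- - f_{\ul\rho}z^{\kappa_{\ul\rho}}$ as $z^{m_{\ul\rho\ul\rho'}}\big(Z'_+Z'_- - z^{\kappa_{\ul\rho'}}f_{\ul\rho'}\big)$, exhibiting it as a unit times the target relation — but the content is identical.
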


\begin{proof}
By \eqref{f_rho' versus f_rho} we have the equality
$z^{\kappa_{\ul\rho}}f_{\ul\rho}= z^{m_{\ul\rho\,\ul\rho'}}
\cdot z^{\kappa_{\ul\rho'}}f_{\ul\rho'}$ in
$(A[Q]/I)[\Lambda_\rho]$. Thus under the stated map the relation
$Z_+ Z_- - f_{\ul\rho} z^{\kappa_{\ul\rho}}$ in $R_{\ul\rho}$
maps to
\[
Z'_+ z^{m_{\ul\rho\,\ul\rho'}} Z'_- - z^{\kappa_{\ul\rho}} f_{\ul\rho}
=z^{m_{\ul\rho\,\ul\rho'}}\big(Z'_+Z'_-
- z^{\kappa_{\ul\rho'}} f_{\ul\rho'}\big).
\]
 From this computation the statement is immediate.
\end{proof}
\medskip

If $\sigma\in\P_\max$ contains $\ul\rho\in\tilde\P^{[n-1]}_\inte$ there is also a
canonical localization homomorphism
\begin{equation}
\label{Eq: Localization morphism rho -> sigma}
R_{\ul\rho}\lra \begin{cases} 
(R_{\ul\rho})_{Z_+}= R_\sigma&,\ \sigma=\sigma(\rho)\\
(R_{\ul\rho})_{Z_-}= R_\sigma&,\ \sigma\neq \sigma(\rho).
\end{cases}
\end{equation}
The isomorphism $(R_{\ul\rho})_{Z_+}= R_\sigma$ is defined by
eliminating $Z_-$ via the equation $Z_+Z_-=f_{\ul\rho} \cdot
z^{\kappa_{\ul\rho}}$ and mapping $Z_+$ to $z^{\xi(\rho)}$. The
other monomials are identified via $\Lambda_\rho\subseteq
\Lambda_\sigma$. Note that this map is not injective since $Z_-^l$
maps to zero for $l\gg 0$, due to the fact that $\kappa_{\ul\rho}\in
I_0$ and $I_0=\sqrt{I}$. A similar reasoning holds for
$(R_{\ul\rho})_{Z_-}$, using parallel transport through $\ul\rho$ to
view $\xi(\rho)\in\Lambda_{\sigma(\rho)}$ as an element of
$\Lambda_\sigma$.

%===========================================================
\subsection{Walls and consistency}

The rings $R_\sigma$, $R_{\sigma,\rho}$ and $R_{\ul\rho}$ together
with the isomorphisms \eqref{Eq: gluing of R_{ul rho}} and the
localization homomorphisms~\eqref{Eq: Localization morphism rho ->
sigma}, \eqref{Eq: R_rho -> R_sigma non-interior} form a category
(or inverse system) of $A[Q]/I$-algebras. Choosing for each
$\rho\in\P^{[n-1]}$ one $\ul\rho\in\tilde\P^{[n-1]}_\inte$ with
$\ul\rho\subseteq\rho$ defines an equivalent subcategory. Taking
$\Spec$ of this subcategory then defines a direct system of affine
schemes and open embeddings with the property that the only
non-trivial triple fibre products come from maximal cells $\sigma$
with $\sigma\cap\partial B\neq\emptyset$ and the codimension one
cells $\rho\subseteq\partial B\cap\sigma$. Fixing $\sigma$, this
latter system of schemes has a limit, the open subscheme
$\bigcup_{\rho\subseteq\sigma\cap\partial B} \Spec\big(
R_{\sigma,\rho}\big)$ of the toric variety $\PP_\sigma$ with
momentum polytope $\sigma$. Let $D_\inte\subseteq \PP_\sigma$ be the
union of toric divisors corresponding to facets $\rho\subseteq\sigma$
with $\rho\not\subseteq\partial B$. Then the complement of this open
subscheme in $\PP_\sigma\setminus D_\inte$ is the union of toric
strata of codimension larger than~$1$. Hence there exists a colimit
of our category of schemes as a separated scheme over $\Spec
(A[Q]/I)$. It has an open cover by the affine schemes $\Spec
R_\sigma$, $\Spec R_{\sigma,\rho}$ and $\Spec R_{\ul\rho}$, for the
chosen subset of $\ul\rho$'s in $\tilde\P_\inte^{[n-1]}$. 

This scheme is not quite what we want, since it is both a bit too
simple and it may not possess enough regular functions
semi-locally.\footnote{On a technical level, consistency in
codimension two (Definition~\ref{Def: Consistency in codim two})
may fail for this uncorrected scheme.} Rather we will introduce higher
order corrections to the functions $f_{\ul\rho}$ and to the gluing
morphisms. The latter are carried by locally polyhedral
subsets of $B$ of codimension one, called walls. For a polyhedral
subset $\fop$ of some $\sigma\in\P_\max$ with
$\Delta\cap \Int\fop=\emptyset$ we write $\Lambda_\fop
\subseteq\Lambda_\sigma$ for the vectors tangent to $\fop$.

\begin{definition}
\label{Def: Wall structure}
1)\ \ A \emph{wall} on our polyhedral pseudomanifold $(B,\P)$ is a
codimension one rational polyhedral subset $\fop\not\subseteq \partial
B$ of some $\sigma\in\P_\max$ with $\Int\fop\cap\Delta=\emptyset$,
along with an element
\[
f_\fop=\sum_{m\in\shP_x^+,\, \ol m\in\Lambda_\fop}
c_m z^m\in A[\shP_x^+],
\]
for $x\in \Int\fop$. Identifying $\shP_y$ with $\shP_x$ by
parallel transport inside $\sigma$ we require $m\in\shP_y^+$ for all
$y\in \fop\setminus\Delta$ when $c_m\neq0$. Moreover, the
following holds:
\begin{enumerate}
\item[] \underline{$\codim=0$:}\quad
If $\fop\cap\Int\sigma\neq\emptyset$
then $f_\fop\equiv1$ modulo $I_0$. 
\item[] \underline{$\codim=1$:}\quad
If $\fop\subseteq\ul\rho$  for some
$\ul\rho\in\tilde\P^{[n-1]}_\inte$ then $f_\fop\equiv f_{\ul\rho}$ modulo
$I_0$.
\end{enumerate}
Here $\codim$ refers to the \emph{codimension of $\fop$}, defined as
the codimension of the minimal cell of $\P$ containing $\fop$.
Codimension one walls are also called \emph{slabs}, denoted
$\fob$.\\[1ex]
2)\ \ A \emph{wall
structure} on $(B,\P)$ is a set $\scrS$ of walls such that the underlying
polyhedral sets of $\scrS$ are the codimension one cells of a rational
polyhedral decomposition $\P_\scrS$ of $B$ refining $\P$. In particular, the
interior of a wall does not intersect any other wall. We also require
that if $\fou\in\P_\scrS$ is a maximal cell, then there is at most one
$\rho\in\P^{n-1}$ with $\rho\subseteq\partial B$ and $\dim(\fou\cap\rho)=n-1$.
\\[1ex]
A maximal cell $\fou$ of $\P_\scrS$ is called a \emph{chamber} of the
wall structure. Two chambers $\fou,\fou'$ are \emph{adjacent} if
$\dim\fou\cap\fou'=n-1$. A chamber $\fou$ with $\dim(\fou\cap\partial B)=n-1$ is
called a \emph{boundary chamber}, otherwise an \emph{interior chamber}. Elements
$\foj\in\P_\scrS$ of codimension two are called \emph{joints}. A joint $\foj$
with $\foj\subseteq\partial B$ is called a \emph{boundary joint}, otherwise an
\emph{interior joint}. The \emph{codimension} $k\in\{0,1,2\}$ of a joint is the
codimension of the smallest cell of $\P$ containing $\foj$.
\end{definition}

\begin{remark}
\label{Rem: wall structures}
1)\ In the definition of wall structure we do not require that the walls and
chambers form a polyhedral decomposition of $B$. A typical phenomenon is that a
wall $\fop\subseteq\sigma$, $\sigma\in\P_\max$, intersects the interior of
$\rho\in\P^{[n-1]}_\inte$, but $\rho\cap\fop$ is not contained in a union of
walls on the other adjacent maximal cell $\sigma'\neq\sigma$,
$\rho=\sigma\cap\sigma'$.\\[1ex]
2)\ By the definition of walls and the condition that $\P_\scrS$
refines $\P$, the discriminant locus $\Delta$ is covered by joints. In
particular, if $\fou$, $\fou'$ are adjacent chambers then
$\Int(\fou\cap\fou')\cap\Delta=\emptyset$. This is different from the convention
in \cite{affinecomplex} where $\Delta$ was chosen trans\-cend\-ental and
transverse to all joints. In particular, a slab $\fob$ in \cite{affinecomplex}
could be disconnected by $\Delta$. See Appendix~\ref{Subsect: GS one-parameter}
for how the construction of \cite{affinecomplex} also produces wall structures
with the present conventions.

Furthermore, in \cite{affinecomplex} walls could intersect
in a subset of dimension~$n-1$, which in the present definition is
excluded. This is, however, no restriction for one can always
first subdivide walls to make this only happen if the underlying
polyhedral subsets of two walls $\fop',\fop''$ agree. Then in a
second step, replace all walls $\fop_i$ with the same underlying
polyhedral set by the wall $\fop:=\fop_i$ for any $i$ and define $f_\fop:=
\prod_i f_{\fop_i}$. This process is compatible with taking the
composition of the automorphisms associated to walls to be defined
in \eqref{Eqn: theta_fop}.\\[1ex]
3)\ By definition any chamber $\fou$ is contained in a unique
maximal cell $\sigma=\sigma_\fou$. Thus chambers $\fou,\fou'$ can be
adjacent in two ways. (I)~If $\sigma_\fou=\sigma_{\fou'}$ then
$\fou\cap\fou'$ must be an $(n-1)$-cell of $\P_\scrS$ intersecting
the interior of a maximal cell, hence the underlying set of a
codimension zero wall. Otherwise, (II), $\sigma_{\fou} \cap
\sigma_{\fou'}$ has dimension smaller than $n$, but contains the
$(n-1)$-dimensional subset $\fou\cap\fou'$. Hence
$\rho=\sigma_{\fou}\cap\sigma_{\fou'}\in\P^{[n-1]}_\inte$,
and $\fou\cap\fou'$ is covered by the underlying sets of slabs.\\[1ex]
4)\ The condition on boundary chambers is purely technical and can
always be achieved by introducing some walls $\fop$ with
$f_\fop=1$.\\[1ex]
5)\ In this paper the $f_{\ul\rho}$ are redundant information once we
assume a wall structure $\scrS$ to be given. Moreover,
condition~\eqref{f_rho' versus f_rho} follows from consistency of
$\scrS$ in codimension one introduced in Definition~\ref{Def:
Consistency in codim one} below. On the other hand, the reduction of
$f_{\ul\rho}$ modulo $I_0$ determines (and is indeed equivalent to)
the log structure induced by the degeneration on the central fibre,
see \cite{logmirror1}. Thus these functions already contain crucial
information. In the case with locally rigid singularities treated in
\cite{affinecomplex} the whole wall structure can even be
constructed inductively just from this knowledge. 
\end{remark}

Let now be given a wall structure $\scrS$ on $(B,\P)$. There are
three kinds of rings associated to $\scrS$. First, for any chamber
$\fou$ define
\begin{equation}
\label{Eqn: R_fou}
\fbox{$R_\fou:= R_\sigma= (A[Q]/I)[\Lambda_\sigma].$}
\end{equation}
for the unique $\sigma\in\P_\max$ containing $\fou$. Second, if
$\fou$ is a boundary chamber, then according to Definition~\ref{Def:
Wall structure},2(c) there is a unique $\rho\in\P^{[n-1]}$ with
$\dim(\rho\cap\fou\cap \partial B)=n-1$. We then
have the subring
\begin{equation}
\label{R_fou^partial}
\fbox{$
R_\fou^\partial:= R_{\sigma,\rho}\subseteq R_\fou.$}
\end{equation}
Thus $R_\fou$ and $R_\fou^\partial$ are just different notations for
the rings already introduced in \eqref{Eq: R_sigma} and
\eqref{Eqn: R_sigmarho}. The third kind of ring is
a deformation of the ring $R_{\ul\rho}$ from \eqref{Eqn: R_ul
rho} given by a slab $\fob\subseteq\ul\rho$:
\begin{equation}
\label{Def: R_fob}
\fbox{
$R_\fob:= (A[Q]/I)[\Lambda_\rho][Z_+,Z_-]/(Z_+Z_- - f_\fob
\cdot z^{\kappa_{\ul\rho}}).$}
\end{equation}
We indeed have $R_\fob/I_0= R_{\ul\rho}$ since $f_\fob\equiv
f_{\ul\rho}$ modulo $I_0$ according to Definition~\ref{Def: Wall
structure},1.

Between the rings $R_\fou$, $R_\fou^\partial$ and $R_\fob$ there are
two types of localization homomorphisms, namely
\begin{equation}
\label{Eqn: Localization for slabs}
\fbox{$\chi_{\fob,\fou}: R_\fob\lra R_\fou,
\quad \chi_\fou^\partial: R_\fou^\partial\lra R_\fou$}
\end{equation}
defined as in \eqref{Eq: Localization morphism rho -> sigma}
for $\fob\subseteq\fou$ and in \eqref{Eq: R_rho -> R_sigma non-interior}
for $\fou$ a boundary chamber, respectively.

Furthermore, to a codimension zero wall $\fop$ separating interior
chambers $\fou$, $\fou'$ (contained in $\sigma\in\P_\max$, say) we
associate an isomorphism $\theta_\fop: R_\fou\to R_{\fou'}$ as
follows. Let $n_\fop$ be a generator of $\Lambda_\fop^\perp
\subseteq\check\Lambda_x$ for some $x\in\Int\fop$. Denote by $\fou$,
$\fou'$ the two chambers separated by $\fop$ with $n_\fop\ge 0$ as a
function on $\fou$ in an affine chart mapping $x$ to the origin.
Then define
\begin{equation}
\label{Eqn: theta_fop}
\fbox{$\theta_\fop: R_\fou \lra R_{\fou'},\quad
z^m\longmapsto f_\fop^{\langle n_\fop,\ol m\rangle} z^m.$}
\end{equation}
Here we view $f_\fop$ as an element of $R_\sigma^\times=
R_{\fou'}^\times$ by reduction modulo $I$. We refer to $\theta_\fop$
as the automorphism associated to \emph{crossing the wall $\fop$} or
to \emph{passing from $\fou$ to the adjacent chamber $\fou'$}.

If $\dim \fop\cap\partial B=n-2$ then $\fop$ separates two boundary
chambers $\fou,\fou'$. Assuming $\fop$ intersects also the interior
of some $\rho\in\P_\partial^{[n-1]}$ the rings $R_\fou^\partial$ and
$R_{\fou'}^\partial$ are the same localization $R_{\sigma,\rho}$ of
$R_\sigma$. In this case there is an induced isomorphism
\begin{equation}
\label{Eqn: theta_fop^partial}
\fbox{$\theta_\fop^\partial: R_\fou^\partial \to
R_{\fou'}^\partial.$}
\end{equation}
In fact, the requirement of the monomials occurring in $f_\fop$ to
lie in $\shP_y^+$ for all $y\in \fop\setminus\Delta$ implies that
they do not point outward from $\partial B$. Thus $f_\fop$ makes
sense as an element of $R_{\fou'}^\partial$. This shows
$\theta_\fop( R_\fou^\partial)\subseteq R_{\fou'}^\partial$. The
converse inclusion follows from considering $\theta_\fop^{-1}$.
\medskip

Next we would like to glue the affine schemes $\Spec R_\fob$, $\Spec
R_\fou$ and $\Spec R_\fou^\partial$ via the natural localization
morphisms \eqref{Eqn: Localization for slabs}, analogous to the
discussion in the introductory paragraph of this subsection, but
observing the wall crossing isomorphisms \eqref{Eqn:
theta_fop},\eqref{Eqn: theta_fop^partial} between the $R_\fou$. The
scheme $\foX^\circ$ is thus constructed as the colimit of a category
with morphisms generated by all possible wall crossings and the
localization homomorphisms. Since now we have many triple
intersections we need a compatibility condition for this colimit to
be meaningful. Eventually there will be three consistency
conditions: (1)~Around codimension zero joints. (2)~Around
codimension one joints. (3)~Local consistency in higher codimension
tested by broken lines (see Section~\ref{Sect: Global functions}).
The last point is only necessary for the construction of enough
functions and does not concern us for the moment.

As for consistency around a codimension zero joint $\foj$ let
$\fop_1,\ldots,\fop_r$ be the walls containing $\foj$. Working in
the quotient space $\Lambda_{\sigma,\RR}/
\Lambda_{\foj,\RR}\simeq\RR^2$, any quotient
$\fop_i/\Lambda_{\foj,\RR}$ is a line segment emanating from the
origin. Note that since the $\fop_i$ are maximal cells of the
polyhedral decomposition $\P_\scrS$ the line segments intersect
pairwise only at the origin. We may assume the $\fop_i$ are labelled
in such a way that these line segments are ordered cyclically.
Define $\theta_{\fop_i}$ by \eqref{Eqn: theta_fop} with signs fixed
by crossing the walls in a cyclic order.

\begin{definition}
\label{Def: Consistency in codim zero}
The set of walls $\fop_1,\ldots,\fop_r$ containing the codimension
zero joint $\foj$ is called \emph{consistent} if
\[
\theta_{\fop_r}\circ\ldots\circ\theta_{\fop_1}=\id,
\]
as an automorphism of $R_\sigma$, for $\sigma\in\P_\max$ the unique
maximal cell containing $\foj$.

A wall structure $\scrS$ on the polyhedral pseudomanifold $(B,\P)$ is
\emph{consistent in codimension zero} if for any codimension zero
joint $\foj$ the set $\big\{\fop\in\scrS\,\big|\,
\fop\subseteq\foj\big\}$ of walls containing $\foj$ is consistent. 
\end{definition}

Consistency around a codimension one joint $\foj$ is a little more
subtle. There is no condition for a codimension one joint contained
in $\partial B$. Otherwise, let $\rho\in\P$ be the codimension one cell 
containing
$\foj$ and let $\sigma,\sigma'$ be the unique maximal cells
containing $\rho$.  By the polyhedral decomposition property of
$\scrS$ and since $\foj\not\subseteq\partial B$ there are unique slabs
$\fob_1,\fob_2\subseteq\rho$ with $\foj=\fob_1\cap\fob_2$.  Denote by
$\fop_1,\ldots,\fop_r\subseteq\sigma$ and $\fop'_1,\ldots,\fop'_s
\subseteq\sigma'$ the codimension zero walls containing $\foj$. We
assume that the sequence $\fob_1,\fop_1,\ldots,\fop_r,
\fob_2,\fop'_1,\ldots,\fop'_s$ is a cyclic ordering around $\foj$
similarly to the case of codimension one walls.\footnote{If
$\foj\subseteq\Delta$ the quotient space is not well-defined as an
affine plane, but only as a union of two affine half-planes, the
tangent wedges of $\sigma$ and $\sigma'$ along
$\sigma\cap\sigma'\in\P^{[n-1]}$. This is enough for our purposes.}
There are then (non-injective) localization homomorphisms
\begin{equation}
\label{Eqn: Localization hom}
\chi_{\fob_i,\sigma}: R_{\fob_i}\lra R_\sigma,\quad
\chi_{\fob_i,\sigma'}: R_{\fob_i}\lra R_{\sigma'},\quad i=1,2,
\end{equation}
and a composition of wall crossings on either side of $\rho$:
\begin{eqnarray*}
\theta:= \theta_r\circ\theta_{r-1}\circ\ldots\circ\theta_1:&&
R_\sigma\to R_\sigma\\
\theta':= \theta'_1\circ\theta'_2\circ\ldots\circ\theta'_s:&&
R_{\sigma'}\to R_{\sigma'}.
\end{eqnarray*}
Now observe that
\[
(\chi_{\fob_i,\sigma},\chi_{\fob_i,\sigma'}): R_{\fob_i}\lra
R_\sigma\times R_{\sigma'}
\]
is injective. In fact, assuming without restriction
$\sigma=\sigma(\rho)$ for $\rho=\sigma\cap\sigma'$, we have
$\ker(\chi_{\fob_i,\sigma}) \subseteq (Z_-)$, and
$\chi_{\fob_i,\sigma'}(Z_-)$ is invertible in $R_{\sigma'}$. The
consistency condition is the requirement that $\theta\times\theta'$
induces a well-defined map $R_{\fob_1}\to R_{\fob_2}$.

\begin{definition}
\label{Def: Consistency in codim one}
The set
$\{\fop_1,\ldots,\fop_r,\fop'_1,\ldots,\fop'_s,\fob_1,\fob_2\}$ of
walls and slabs containing the codimension one joint $\foj$ is
\emph{consistent} if
\[
(\theta\times\theta')\big((\chi_{\fob_1,\sigma},\chi_{\fob_1,\sigma'})
(R_{\fob_1})\big)= (\chi_{\fob_2,\sigma},\chi_{\fob_2,\sigma'})
(R_{\fob_2}).
\]
In this case we define
\begin{equation}
\label{Eqn: theta_foj}
\theta_\foj: R_{\fob_1}\lra R_{\fob_2}
\end{equation}
as the isomorphism induced by $\theta\times\theta'$.

A wall structure $\scrS$ on the polyhedral pseudomanifold $(B,\P)$ is
\emph{consistent in codimension one} if for any codimension one interior
joint $\foj$ the set $\big\{\fop\in\scrS\,\big|\,
\foj\subseteq\fop\big\}$ of walls and slabs containing $\foj$ is
consistent. 
\end{definition}

\begin{example}
\label{Expls: wall structures}
1)\ \ Wall structures were introduced in \cite{affinecomplex}, with
a slight difference in the treatment of slabs. In
\cite{affinecomplex}, a slab $\fob$ (a codimension 1 wall) could
have $(\Int\fob)\cap\Delta\not=\emptyset$. There was not a single
function attached to a slab, but rather, one choice of function for
each connected component of $\fob\setminus\Delta$, with relations
between these functions determined by the local monodromy analogous
to \eqref{f_rho' versus f_rho}. Indeed, in loc.cit.\ the
discriminant locus was taken with irrational position in such a way
that no codimension zero wall could ever contain an open part of
$\Delta$. In such a situation consistency in codimension one is
equivalent to an equation of the form~\eqref{f_rho' versus f_rho}
relating the functions on the various connected components of
$\fob\setminus\Delta$. In \cite{affinecomplex}, a wall structure
consistent in all codimensions was constructed; in the current
setup, we cannot define consistency in codimension two directly but
only after the construction of local functions, see \S\ref{Subsect:
Consistency in codim two}. This wall structure was used to construct
a deformation $\foX$ of $X_0$, rather than just a deformation
$\foX^\circ$ of the complement of codimension two strata of $X_0$ as
given in Proposition~\ref{Prop: foX^o exists} below. The
construction of \cite{affinecomplex} makes use of local models for
the smoothings of $X_0$ in neighbourhoods of higher codimension
strata; here we just use codimension one strata, where the local
model is given by \eqref{Def: R_fob}. This makes the construction
technically much easier than in \cite{affinecomplex}.\\[1ex]
2)\ \ \cite{GHK1} defined the notion of \emph{scattering diagram} on
the pair  $(B,\Sigma)$ arising from a pair $(Y,D)$ as in
Example~\ref{Expl: standard examples},2. This is a special case of a
wall structure on $(B,\Sigma)$, in which every wall has support a
ray with endpoint $0\in B$. In this case consistency in codimensions
zero and one are automatic.

If one is interested in a compact example, with $\bar B\subseteq B$ a
compact two-dimensional subset as described in Example~\ref{Expl:
standard examples},2,  a scattering diagram $\foD$ on $(B,\Sigma)$
gives rise to a wall structure $\scrS$ on $(\bar B, \P=\{\tau\cap
\bar B\,|\,\tau\in\Sigma\})$. One takes
\begin{align*}
\scrS= {} & \{(\fod\cap \bar B,f_{\fod})\,|\, (\fod,f_{\fod})\in \foD,
\codim\fod=0\}\\
&\cup\{(\underline \rho,f_{\fod})\,|\, \hbox{$(\fod, f_{\fod})\in\foD$,
$\codim\fod=1$, $\ul\rho\in\tilde\P^{[1]}$, $\ul\rho\subseteq\fod$}\}.
\end{align*}
Note that the only singularity of the affine structure on $\bar B$
is at the origin. Thus the barycentric subdivision for the slabs only
appears here to conform to the conventions of the present paper. Again,
consistency in codimension zero and one is automatic.
\end{example}

%===========================================================
\subsection{Construction of $\foX^\circ$}
\label{Subsect: foX^o}

With the notion of consistency of a wall structure in codimension zero
and one at hand (Definitions~\ref{Def: Consistency in codim zero}
and \ref{Def: Consistency in codim one}) we are now in position to
construct our family $\foX\to \Spec(A[Q]/I)$ outside codimension
two.

\begin{proposition}
\label{Prop: foX^o exists}
Let $\scrS$ be a wall structure on the polyhedral pseudomanifold $(B,\P)$.
If $\scrS$ is consistent in codimensions zero and one there exists a
unique scheme $\foX^\circ$ flat over $\Spec(A[Q]/I)$ together with
open embeddings $\Spec R_\fou\to \foX^\circ$, $\Spec
R_\fou^\partial\to \foX^\circ$ and $\Spec R_\fob\to \foX^\circ$ for
(boundary) chambers $\fou$ and slabs $\fob$ of $\scrS$ that are
compatible with the morphisms $\theta_\fop$ and
$\theta_\fop^\partial$ for codimension zero walls \eqref{Eqn:
theta_fop},\eqref{Eqn: theta_fop^partial}, $\theta_\foj$ for
codimension one joints \eqref{Eqn: theta_foj} and with the open
embeddings from \eqref{Eqn: Localization for slabs}, that is, $\Spec
R_\fou\to \Spec R_\fob$ for $\fob\subseteq\fou$ and with $\Spec
R_\fou\to\Spec R_\fou^\partial$ for $\fou$ a boundary chamber.
\end{proposition}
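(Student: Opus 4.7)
The plan is to realize $\foX^\circ$ as the colimit of an explicit diagram $\shD$ of affine schemes in which every morphism is an open immersion, and then to invoke the standard gluing lemma for schemes. The objects of $\shD$ are $\Spec R_\fou$, $\Spec R_\fou^\partial$ and $\Spec R_\fob$ indexed by chambers, boundary chambers and slabs of $\scrS$; the generating morphisms are the localizations $\chi_{\fob,\fou}$ and $\chi_\fou^\partial$ of \eqref{Eqn: Localization for slabs} together with the isomorphisms $\theta_\fop$, $\theta_\fop^\partial$, $\theta_\foj$ of \eqref{Eqn: theta_fop}, \eqref{Eqn: theta_fop^partial} and \eqref{Eqn: theta_foj}. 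Because $f_\fop\equiv 1\pmod{I_0}$ for a codimension zero wall and $I_0/I$ is nilpotent in $A[Q]/I$, the element $f_\fop$ is a unit of $R_\fou$, so $\theta_\fop$ and $\theta_\fop^\partial$ are genuine ring automorphisms. Flatness of each chart is immediate: $R_\fou$ and $R_\fou^\partial$ are monoid algebras over $A[Q]/I$ and hence free, while the relation $Z_+Z_-=f_\fob z^{\kappa_{\ul\rho}}$ exhibits $R_\fob$ as a free $A[Q]/I$-module on $\{z^m Z_+^a, z^m Z_-^b\mid m\in\Lambda_\rho,\,a\ge 0,\,b>0\}$; hence the colimit, once produced, will be flat over $\Spec(A[Q]/I)$.

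By the gluing lemma it remains to verify the cocycle condition on triple overlaps. This amounts to checking that every loop of transition morphisms in $\shD$ composes to the identity, and such loops decompose combinatorially into short loops around joints of $\P_\scrS$. For a codimension zero joint $\foj\subset\Int\sigma$, the surrounding walls are codimension zero walls $\fop_1,\ldots,\fop_r\subset\sigma$ and the associated loop equals $\theta_{\fop_r}\circ\cdots\circ\theta_{\fop_1}$ acting on $R_\sigma$, whose triviality is exactly Definition~\ref{Def: Consistency in codim zero}. For an interior codimension one joint $\foj\subset\rho=\sigma\cap\sigma'$, the surrounding elements of $\scrS$ are two slabs $\fob_1,\fob_2\subset\rho$ together with codimension zero walls $\fop_i\subset\sigma$ and $\fop'_j\subset\sigma'$; the injectivity of $(\chi_{\fob_i,\sigma},\chi_{\fob_i,\sigma'})\colon R_{\fob_i}\to R_\sigma\times R_{\sigma'}$ established just above Definition~\ref{Def: Consistency in codim one} reduces closure of the joint loop to the requirement that $\theta\times\theta'$ carry the image of $R_{\fob_1}$ onto the image of $R_{\fob_2}$, which is precisely the condition defining $\theta_\foj$ in Definition~\ref{Def: Consistency in codim one}.

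For codimension two joints there is nothing to check: by Construction~\ref{Construction: B} every such joint lies in $\Delta$, and the corresponding triple intersections would occur outside $\foX^\circ$. This is, of course, precisely why the present construction produces only the partial compactification $\foX^\circ$ rather than a full $\foX$. With the cocycle condition in hand, the gluing lemma yields a scheme $\foX^\circ$ together with the specified open immersions that are compatible with all the prescribed transition maps, and flat over $\Spec(A[Q]/I)$ chart-by-chart; uniqueness up to unique isomorphism is the universal property of the colimit.

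The most delicate step I anticipate is the combinatorial reduction in the second paragraph: one must confirm, using the $S_2$-type hypothesis~(5) of Construction~\ref{Construction: B} and the polyhedral decomposition axioms for $\P_\scrS$ in Definition~\ref{Def: Wall structure}, that the directed graph of charts is locally simply connected away from $\Delta$, so that closure of arbitrary loops really is controlled only by the two joint-local consistencies and not by some further global obstruction. Once this reduction is phrased cleanly, the verification at each joint is algebraic and essentially formal.
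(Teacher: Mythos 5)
Your proposal is correct and follows essentially the same route as the paper: define a diagram of affine schemes and open immersions indexed by chambers, slabs and boundary chambers, show that consistency in codimensions zero and one forces well-definedness of the transition maps, and invoke the scheme-gluing lemma. The one place where the paper's packaging is slightly tighter is in handling the step you flag as ``delicate'': rather than arguing directly that arbitrary loops in the transition graph decompose into joint-loops, the paper first uses consistency to show that any two composite morphisms $U_\fou\to U_{\fou'}$ (chambers in a common $\sigma$), $U_\fob\to U_{\fob'}$ (slabs in a common $\rho$), and $U_\fou\to U_\fob$ (adjacent) coincide — citing Step~3 of the proof of \cite{affinecomplex}, Lemma~2.30 for the topological reduction — and then passes to a skeleton subcategory with a single representative object $U_\rho$ per $\rho\in\P^{[n-1]}$ and $U_\sigma$ per $\sigma\in\P_\max$, glued along the identity on $U_\sigma$; in that skeleton the Hartshorne gluing conditions are trivially satisfied because $z^{\kappa_{\ul\rho}}$ is nilpotent, so the open sets $D(Z_+)$, $D(Z_-)\subset\Spec R_\fob$ are disjoint and there are no nontrivial triple overlaps to check. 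Your reasoning about codimension two joints is slightly misdirected (they are excluded because no chart of the cover corresponds to a cell of codimension $\ge 2$, not because the joint ``lies in $\Delta$''), but this does not affect the correctness of the argument, since the conclusion — no further consistency required — is the same. Everything else (units from nilpotence of $I_0$, chart-by-chart flatness from the explicit free $A[Q]/I$-module bases) is correct.
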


\begin{proof}
Define a category $\ul C$ whose objects are the following 
schemes over $A[Q]/I$. We have
$U_\fou:=\Spec R_\fou$ for chambers $\fou$ of
$\scrS$, $U_\fou^\partial:=\Spec R_\fou^\partial$ for boundary
chambers $\fou$ and $U_\fob:=\Spec R_\fob$ for slabs $\fob\in\scrS$.
The morphisms in $\ul C$ are defined by compositions of the
following three types of morphisms on the ring level:
(1)~Localization homomorphisms $R_\fob\to R_\fou$ and
$R_\fou^\partial\to R_\fou$ \eqref{Eqn: Localization for slabs}; (2)
Automorphisms $\theta_\fop$ and $\theta_\fop^\partial$ associated to
crossing a codimension zero wall \eqref{Eqn: theta_fop},\eqref{Eqn:
theta_fop^partial}; (3)~Isomorphisms $\theta_\foj: R_\fob\to
R_{\fob'}$ associated to crossing a codimension one joint
\eqref{Eqn: theta_foj}.

Consistency implies that for chambers $\fou$ and $\fou'$ in the same
$\sigma\in\P_\max$ any two morphisms $U_\fou\to U_{\fou'}$ coincide.
This follows from a simple topological argument presented in detail
in Step~3 of the proof of \cite{affinecomplex}, Lemma~2.30. The same
argument holds for showing uniqueness of the morphism
$U_\fou^\partial\to U_{\fou'}^\partial$ for boundary chambers
$\fou,\fou'$ intersecting the same $\rho\in\P^{[n-1]}$
full-dimensionally. Similarly, for slabs $\fob,\fob'$ contained in
the same $\rho\in\P^{[n-1]}$ any two morphisms $U_\fob\to U_{\fob'}$
coincide. Finally, for a slab $\fob\subseteq\rho\in \P^{[n-1]}$ and a
chamber $\fou\subseteq\sigma\in\P_\max$ with $\rho\subseteq\sigma$
all morphisms $U_\fou\to U_\fob$ agree.

We have thus shown that the full subcategory with exactly one object
$U_\rho:=U_\fob$ for each $\rho\in\P^{[n-1]}$ with
$\fob\subseteq\rho$ any slab, one object
$U_\sigma:=U_\fou$ for each $\sigma\in\P_\max$ with
$\fou\subseteq\sigma$ and one object $U_\rho:=\Spec R_\fou^\partial$
for each $\rho\in \P^{[n-1]}$, $\rho\subseteq\partial B$, with $\dim
\fou\cap\rho=n-1$, defines a skeleton for $\ul C$. 
In particular, whenever $\rho\subseteq\sigma$, we obtain an open embedding
$U_{\sigma}\rightarrow U_{\rho}$. This gives gluing data for the set of
schemes $\{U_{\rho}\,|\,\rho\in\P^{[n-1]}\}$ in the sense of 
\cite{hartshorneAG}, Ex.\ II.2.12, gluing $U_{\rho}$ and $U_{\rho'}$
along the open subsets $U_{\sigma}\subseteq U_{\rho}, U_{\rho'}$
whenever $\rho,\rho'\subseteq \sigma$, using the identity map on $U_{\sigma}$.
The conditions of \cite{hartshorneAG}, Ex.\ II.2.12 are trivially satisfied.
Hence one obtains a colimit $\foX^{\circ}$
of the category $\ul{C}$ in the category of schemes covered by
the open sets $U_{\rho}$. The remaining properties are then obvious
by construction.
\end{proof}

\begin{example}
\label{Expl: Conical case with boundary}
Consider as $B$ the cone in $\RR^2$ generated by $(-1,0)$ and
$(1,1)$ with the standard affine structure:
\[
B=\RR_{\ge 0}\cdot (-1,0)+\RR_{\ge0}\cdot (1,1).
\]
Take the polyhedral decomposition with two maximal cells
\[
\sigma_1=\RR_{\ge 0}\cdot (-1,0)+\RR_{\ge0}\cdot (0,1),\quad
\sigma_2=\RR_{\ge 0}\cdot (0,1)+\RR_{\ge0}\cdot (1,1).
\]
We then have one vertex $v=(0,0)$, and three codimension one cells
$\rho_1=\sigma_1\cap\partial B$, $\rho_2=\sigma_1\cap\sigma_2$,
$\rho_3=\sigma_2\cap\partial B$. Taking $\Delta=\{v\}$ each
$\rho_i\setminus\Delta$ is connected. The universal choice of MPA
function $\varphi$ takes values in $\NN$ with kink one along the
interior edge $\rho_2$. We can fix a representative of $\varphi$
which takes the value $0$ on $\sigma_1$. This gives a splitting of
the sheaf $\shP$ as the constant sheaf $\ZZ^2\oplus \ZZ$, with the
first $\ZZ^2$ factor being the integral tangent vectors to $B$. We
thus write exponents as elements of $\ZZ^3$. As final ingredient we
take one slab $\fob$ with underlying set $\rho_2$ and $f_\fob=
1+z^{(0,-1,0)}$. We thus have two chambers $\fou_1=\sigma_1$ and
$\fou_2=\sigma_2$. We work over $A=\kk$ some field, write
$A[Q]=\kk[t]$ and take $I=(t^{k+1})$.

Now $\foX^\circ$ is covered by the spectra of the following three rings,
written with $t= z^{(0,0,1)}$, $x= z^{(-1,0,0)}$, $y=z^{(1,1,1)}$,
$w=z^{(0,1,0)}$ for readability:
\begin{eqnarray*}
R_{\fou_1}^\partial&=& \kk[x^{\pm 1}, w,t]/(t^{k+1})\\
R_{\fob}&=& \kk[x,y,w^{\pm1},t]/(t^{k+1}, xy-(1+w^{-1})wt)\\
R_{\fou_2}^\partial&= &\kk[y^{\pm 1},w,t]/(t^{k+1}).
\end{eqnarray*}
To glue we also need the localizations $R_{\fou_1}=
(R_{\fou_1}^\partial)_w$ and $R_{\fou_2}= (R_{\fou_2}^\partial)_w$.
In any case, it is not hard to see that $\foX^\circ$ is isomorphic
to the complement of the single point $V(X,Y,W,t)$ in the affine
scheme
\[
\foX=\Spec \big( \kk[X,Y,W,t]/(t^{k+1}, XY-(1+W)t)\big).
\]
If we represent $f\in \Gamma(\foX^\circ,\O_{\foX^\circ})$ by the
tuple $(f_1,f_2,f_3)$ of restrictions $f_i$ to
$R_{\fou_1}^\partial$, $R_\fob$, $R_{\fou_2}^\partial$, the three
generators $X,Y,W$ are given by
\[
X|_{\foX^\circ}= (x,x,(1+w)y^{-1}t),\quad
Y|_{\foX^\circ}= ((1+w)x^{-1}t,y,y),\quad
W|_{\foX^\circ}= (w,w,w).
\] 
These triples of functions are clearly compatible with the gluing
morphisms, and they exhibit the relation $XY=(1+W)t$ on each of
the three covering affine open sets, hence on $\foX^\circ$.

We will see in \S\ref{Subsect: The conical case} how $X, Y$ and $W$ are
instances of global canonical functions that always exist and that
generate the ring $R$ of global functions. Moreover, in the present
case of a conical $B$ these functions provide an embedding of
$\foX^\circ$ as the complement of a codimension two subset in $\Spec R$.
See Example~\ref{Expl: Conical case with boundary revisited} for
details.
\end{example}

\begin{remark}
\label{Rem: Boundary divisor}
If $\partial B\neq\emptyset$ our degeneration $\foX^\circ\to
\Spec(A[Q]/I)$ comes with a divisor $\foD^\circ\subseteq \foX^\circ$ as
follows. For each boundary chamber $\fou$ we have a monomial ideal
in $R_\fou^\partial =(A[Q]/I)[\Lambda_{\sigma,\rho}]$ generated by
$\Lambda_{\sigma, \rho}\setminus \Lambda_{\sigma,\rho}^\times$.  As
observed in the discussion leading to \eqref{Eqn: theta_fop^partial}
these monomial ideals are compatible with the gluing of rings
$R_\fou$ for chambers $\fou\subseteq\sigma$. Since these are the only
gluings involving boundary chambers intersecting $\rho$
we thus obtain a reduced divisor $\foD^\circ_\rho\subseteq \foX^\circ$.

Moreover, $\foD^\circ$ can be described in the same way as $\foX^\circ$ by a
wall structure $\scrS_\rho$ on $\rho$, albeit with the codimension
one locus removed. To this end define a wall structure $\scrS_\rho$
by considering those walls $\fop\in\scrS$ with $\fop\cap
\Int\rho\neq\emptyset$. Then each such wall $\fop$ defines the wall
with underlying polyhedral set $\fop_\rho:=\fop\cap\rho$ and
function $f_{\fop_\rho}$ the image of $f_\fop$ under $R_\sigma\to
R_\rho$. Here $R_{\rho}$ is the ring analogous to $R_{\sigma}$ associated
to the cell $\rho$ of the decomposition $\P_\partial$ of $\partial B$.
Then $\scrS_\rho$ has no slabs and it is clear that the
construction of $\foD_\rho^\circ$ as a closed subscheme of $\foX^\circ$ agrees
with the construction by applying the gluing construction to $\rho$
and $\scrS_\rho$.

Note that the $\foD^\circ_\rho\subseteq \foX^\circ$ are pairwise disjoint and
hence $\foD^{\circ}:=\bigcup_{\rho\subseteq\partial B} \foD^\circ_\rho$ defines
a closed subscheme of $\foX^\circ$ of codimension one with reduced
fibres over $\Spec(A[Q]/I)$.
\end{remark}

Reduction of $\foX^\circ$ modulo $I_0$ yields an open dense subscheme
of the scheme $X_0$ considered in \S\ref{Subsect: X_0}.

\begin{proposition}
\label{Prop: foX^o modulo I_0}
The reduction of $\foX^\circ$ modulo $I_0$ is canonically isomorphic to
the complement of the union of codimension two strata in $X_0$. In
particular, $\foX^\circ$ is separated as a scheme over $A[Q]/I$.
\end{proposition}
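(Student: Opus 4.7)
The plan is to reduce everything modulo $I_0$ chart by chart and observe that all wall-crossing automorphisms trivialize, so that what remains is precisely the standard gluing of the open toric strata defining $X_0^\circ$.

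First I would pass to the central fibre of each of the three types of affine charts entering the construction of $\foX^\circ$. By \eqref{Eq: R_sigma can iso} and \eqref{Eqn: R_sigmarho} the chamber rings reduce to
\[
R_\fou/I_0 = (A[Q]/I_0)[\Lambda_\sigma], \qquad
R_\fou^\partial/I_0 = (A[Q]/I_0)[\Lambda_{\sigma,\rho}],
\]
with $\sigma\in\P_\max$ containing $\fou$. Crucially, the standing hypothesis that $z^{\kappa_{\ul\rho}(\varphi)}\in I_0$ for every $\ul\rho\in\tilde\P_\int^{[n-1]}$ kills the defining relation in \eqref{Def: R_fob}, so
\[
R_\fob/I_0 = (A[Q]/I_0)[\Lambda_\rho][Z_+,Z_-]/(Z_+Z_-),
\]
which is exactly the coordinate ring of the union of the two open toric strata $\Spec R_\sigma/I_0$ and $\Spec R_{\sigma'}/I_0$ glued along the toric divisor corresponding to $\rho\in\P^{[n-1]}_\int$.

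Next I would show that the wall-crossing automorphisms become identities modulo $I_0$. For a codimension zero wall $\fop$ we have $f_\fop\equiv 1\bmod I_0$ by Definition~\ref{Def: Wall structure},1, so $\theta_\fop$ and $\theta_\fop^\partial$ reduce to the identity on the torus chart; hence the resulting scheme over $\Spec(A[Q]/I_0)$ is independent of the chamber decomposition of each $\sigma$ and glues along $\sigma$ to a single copy of $\Spec R_\sigma/I_0$. For codimension one joints, the same vanishing together with the fact that the localizations $\chi_{\fob_i,\sigma}$ modulo $I_0$ coincide with the localizations \eqref{Eq: Localization morphism rho -> sigma} of the undeformed rings $R_{\ul\rho_i}$ shows that $\theta_\foj$ reduces to the canonical parallel-transport isomorphism of Lemma~\ref{Lem: R_rho well-defined}. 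Putting everything together, $\foX^\circ\times_{A[Q]/I}A[Q]/I_0$ is canonically isomorphic to the scheme obtained by gluing the open tori $\Spec R_\sigma/I_0$ across each codimension one cell via the node $Z_+Z_-=0$, which is precisely the gluing description of $X_0\setminus\bigcup_{\dim\tau\le n-2}\PP_S(\tau)$ one reads off from the homogeneous coordinate ring $S[B]$ of \eqref{Eqn: X_0} (after localization to the affine cover by the distinguished opens $\Spec(S[B]_{(z^v)})_0$ for $v$ ranging over codimension $\le 1$ cells, compare the argument of Proposition~\ref{prop: S2 condition}). This yields the claimed canonical isomorphism.

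For the separatedness statement I would use the general fact that, since $\Spec(A[Q]/I_0)\hookrightarrow \Spec(A[Q]/I)$ is a surjective closed immersion (having the same underlying topological space), the diagonal of $\foX^\circ\to\Spec(A[Q]/I)$ is closed if and only if the diagonal of its reduction $\foX^\circ\times_{A[Q]/I}A[Q]/I_0\to\Spec(A[Q]/I_0)$ is closed. The latter reduction is an open subscheme of $X_0=\Proj(S[B])$, which is projective, hence separated, over $\Spec S[B]_0$ and therefore also over $\Spec(A[Q]/I_0)$. The only subtlety I anticipate is verifying that the open cover of the reduction by the affine pieces $\Spec(R_\fou/I_0)$, $\Spec(R_\fou^\partial/I_0)$ and $\Spec(R_\fob/I_0)$ really does exhaust $X_0$ minus its codimension $\ge 2$ toric strata, which will come down to the covering property (b) in Definition~\ref{Def: Wall structure},2 together with the observation that every maximal cell and every codimension one cell of $\P$ is represented by at least one chamber, boundary chamber or slab.
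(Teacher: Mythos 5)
Your proof is correct and spells out, chart by chart, exactly what the paper leaves implicit when it says the claim "follows immediately from the construction": modulo $I_0$ the slab relations collapse to $Z_+Z_-=0$ because $z^{\kappa_{\ul\rho}}\in I_0$, the wall-crossing automorphisms trivialize because $f_\fop\equiv 1$, the localizations reduce to the untwisted ones of \eqref{Eq: Localization morphism rho -> sigma}, and the resulting gluing is the standard affine cover of $X_0$ by the $D_+(z^v)$ for $v$ in cells of codimension at most one. Your reduction of separatedness to the central fibre via the nilpotent thickening $\Spec(A[Q]/I_0)\hookrightarrow\Spec(A[Q]/I)$ and projectivity of $X_0$ over $W_0$ is also a valid way to obtain the final assertion.
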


\begin{proof}
This follows immediately from the construction.
\end{proof}

%===========================================================
%===========================================================
\section{Broken lines and canonical global functions}
\label{Sect: Global functions}

The main objective in this paper is the construction of a canonical set
of globally defined functions on $\foX^\circ$. There is one such
function $\vartheta_m$ for each asymptotic monomial $m$ on an
unbounded cell (Definition~\ref{Def: asymptotic monomial}). If $X_0$
is affine the reduction of the $\vartheta_m$ modulo $I_0$ form a basis
of the coordinate ring of $X_0$ as an $A[Q]/I_0$-module. Hence they
can be used to construct a flat affine scheme $\foX$ over $A[Q]/I$
containing $\foX^\circ$ as an open subscheme. In the projective case we
apply the procedure to the total space $\foL$ of the inverse of
the polarizing line bundle $\shL$ to construct a canonical basis of
sections of $\shL^d$ for any $d\ge 0$. These are the theta functions
in the title of the paper.

Throughout the section $\scrS$ is a wall structure on a polyhedral
pseudomanifold $(B,\P)$ that is consistent in codimensions zero and one and
$\varphi$ is a convex MPA-function with values in a toric
monoid $Q$ with $z^{\kappa_{\ul\rho}(\varphi)}\in I_0$ for any
$\ul\rho\in\tilde\P^{[n-1]}$.

Here is the definition of the set of monomials labelling the
functions $\vartheta_m$.

\begin{definition}
\label{Def: asymptotic monomial}
For a polyhedron $\tau\in\P$, an \emph{asymptotic monomial on $\tau$} is
a section $m$ of the restriction of $\shP$ to any connected component
$V\subseteq\tau\setminus\Delta$ with $\height_\sigma(m)=0$ (Definition~\ref{Def:
monomials}) for each $\sigma\in\P_\max$ containing $\tau$, and such that for any
$x\in V\cap\Int\tau$,
\[
\tau+\RR_{\ge0} \ol m_x\subseteq\tau.
\]
An asymptotic monomial on a polyhedral pseudomanifold $(B,\P)$ is an asymptotic
monomial on any $\tau\in\P$. Here we identify asymptotic monomials via inclusion
of faces and extension by parallel transport.

If $m$ is an asymptotic monomial, $\ol m$ is called its
\emph{tangent vector}.
\end{definition}

Note that any $\ol m\in\Lambda_x$ has at most one lift to a
monomial $m$ with $\height_\sigma(m)=0$ for a given
$\sigma\in\P_\max$ containing $x$. Hence an asymptotic monomial is
uniquely determined by its tangent vector.

For a bounded polyhedron only the zero vector is an asymptotic
monomial. In general, writing $\tau=\tau_0+\tau_\infty$ with
$\tau_0$ a bounded polyhedron and $\tau_\infty$ the asymptotic cone,
the asymptotic monomials on $\tau$ are precisely the integral points
of $\tau_\infty$.

%===========================================================
\subsection{Broken lines}

The construction of the canonical function $\vartheta_m$ is based on the
propagation of monomials along piecewise straight paths. A path can
bend when crossing a wall, and the possible new directions of propagation
depends on the result of applying the wall crossing isomorphism.

Let $\fou,\fou'$ be adjacent chambers of $\scrS$. If $\fou$ and $\fou'$
are separated by a codimension zero wall $\fop$ let $\theta_\fop$ be
the automorphism of $R_\sigma$ associated to passing from $\fou$ to
$\fou'$ by crossing the wall $\fop$. As a matter of notation we
now write $\theta_{\fou'\fou}$ instead of $\theta_{\fop}$:
\begin{equation}
\label{Def: Change of chamber I}
\theta_{\fou'\fou}:=\theta_\fop: R_\fou= R_\sigma\lra
R_\sigma= R_{\fou'}.
\end{equation}
If $\fou$ and $\fou'$ are separated by a slab $\fob\subseteq\ul\rho$
let $\sigma,\sigma'$ be the maximal cells containing $\fou,\fou'$,
respectively. Denote by $R_\fou^\fob\subseteq R_\fou= R_\sigma$ the
$A[Q]/I$-subalgebra generated by $\Lambda_\rho$ and by the image of
$Z_+$ under the localization homomorphism $\chi_{\fob,\sigma}$, see
\eqref{Eqn: Localization for slabs} and \eqref{Eqn: Localization
hom}. The conventions are such that $Z_+$ has tangent vector
$\xi(\rho)$ which points from $\rho$ into $\sigma=\sigma(\rho)$. Now
define
\begin{equation}
\label{Def: Change of chamber II}
\theta_{\fou'\fou}:  R_\fou^\fob\lra R_{\fou'}
\end{equation}
as follows.
Note that $R_\fou^\fob$ is generated
as an $A[Q]/I$-algebra by $\Lambda_\rho$ and by
$\chi_{\fob,\sigma}(Z_+)$, while $R_{\fou'}$ is generated by
$\Lambda_\rho$ and by $\chi_{\fob,\sigma'} (Z_-)^{\pm1}$. 
We then define $\theta_{\fou'\fou}$ to be the identity on $\Lambda_\rho$ and
\begin{equation}
\label{eq: thetaslab}
\theta_{\fou'\fou}\big(\chi_{\fob,\sigma}(Z_+)\big)=
\chi_{\fob,\sigma'}(Z_-)^{-1} \cdot f_\fob \cdot
z^{\kappa_{\ul\rho}}.
\end{equation}
From this one sees easily that if $h\in R^{\fob}_{\fou}$, then there exists a
unique 
\[
\tilde h\in (A[Q]/I)[\Lambda_{\rho}][Z_+]\subseteq R_{\fob}
\]
with
$\chi_{\fob,\sigma}(\tilde h)=h$ and $\chi_{\fob,\sigma'}(\tilde
h)=\theta_{\fou'\fou}(h)$.

With $\theta_{\fou'\fou}$ defined for adjacent chambers $\fou,\fou'$
we are now able to propagate certain monomials from $\fou$ to
$\fou'$.

\begin{definition}
\label{Def: transport of monomials}
Let $\fou,\fou'$ be adjacent chambers of $\scrS$ and
$\sigma=\sigma_\fou$, $\sigma'=\sigma_{\fou'}$ the maximal cells
containing $\fou$ and $\fou'$, respectively. Let $az^m$, $a\in
A[Q]/I$, $m\in \Lambda_x$ for some $x\in \Int \sigma$, be an
expression defined at a point of $\Int(\fou\cap\fou')$, using the
canonical identification \eqref{shP on Int(sigma)} on $\sigma$, and
assume that $m$ points from $\fou'$ to $\fou$.\footnote{This means
precisely that $m\in \Lambda_\sigma$, $\sigma=\sigma_\fou$, lies in
the half-space generated by tangent vectors pointing from
$\fou\cap\fou'$ to $\fou$.} Then in the expansion in $R_{\fou'}=
R_{\sigma'}$,
\begin{equation}
\label{Eqn: transport}
\theta_{\fou'\fou}(az^m)=\sum_i a_iz^{m_i}
\end{equation}
with $m_i\in \Lambda_{\sigma_{\fou'}}$ mutually distinct and 
$a_i\in A[Q]/I$, we call any summand
$a_i z^{m_i}$ a \emph{result of transport of $az^m$} from $\fou$ to
$\fou'$.
\end{definition}

Note that in the case that $\fou,\fou'$ are separated by a slab
$\fob$ the assumption on $m$ implies that $az^m\in
R_\fou^\fob$. It is also important to note that by the definition of
$\theta_{\fou'\fou}$ any of the exponents $m_i$ with $a_i\neq 0$
also have the property that $m_i$ points from $\fou'$ to
$\fou$.

Next we define the piecewise straight paths carrying propagations
of monomials.

\begin{definition}
\label{Def: broken lines}
(Cf.\ \cite{PP2mirror}, Definition~4.9.)
A \emph{broken line} for a wall structure $\scrS$ on $(B,\P)$ is a
proper continuous map
\[
\beta: (-\infty,0]\to B_0
\]
with image disjoint from any joints of $\scrS$, along with a
sequence $-\infty=t_0<t_1<\cdots< t_r=0$ for some $r\ge 1$ with
$\beta(t_i)\in |\scrS|$ for $i\le r-1$, and for each $i=1,\ldots,r$
an expression $a_iz^{m_i}$ with $a_i\in A[Q]/I$, $m_i\in \Lambda_{\beta(t)}$
for any $t\in (t_{i-1},t_i)$, defined at all points of
$\beta([t_{i-1},t_i])$ (for $i=1$: $\beta((-\infty,t_1])$),
and subject to the following conditions.
\begin{enumerate}
\item
$\beta|_{(t_{i-1},t_i)}$ is a non-constant affine map with image
disjoint from $|\scrS|$, hence contained in the interior of a unique
chamber $\fou_i$ of $\scrS$, and $\beta'(t)=-m_i$ for all $t\in
(t_{i-1},t_i)$.
\item
For each $i=1,\ldots,r-1$ the expression $a_{i+1} z^{m_{i+1}}$ is a
result of transport of $a_i z^{m_i}$ from $\fou_i$ to $\fou_{i+1}$
(Definition~\ref{Def: transport of monomials}).\footnote{Note that
$\beta(t_i)\in\Int(\fou\cap\fou')$ since $\im(\beta)$ is disjoint
from joints, so the transport of monomials makes sense.}
\end{enumerate}
Denote by $\sigma\in\P_\max$ the cell containing
$\beta((-\infty,t_1])$. The broken line is called \emph{normalized}
if $a_1=1$.

A broken line with $\beta(0)$ contained in a wall is said to
\emph{end on a wall}. The \emph{type} of $\beta$ is the tuple of all
$\fou_i$ and $m_i$. By abuse of notation we suppress the data
$t_i,a_i, m_i$ when talking about broken lines, but introduce the
notation
\[
a_\beta:= a_r,\quad m_\beta:=m_r.
\]
%For $p\in B$ the set of broken lines $\beta$ with $\beta(0)=p$ is
%denoted $\foB(p)$.
\end{definition}

\begin{remark}
\label{Rem: broken lines}
1)\ \ Since $\beta\big((-\infty, t_1]\big)$ is an affine half-line
in $B\setminus\Delta$ in direction $m_1$ it follows that
$m_1$ is an asymptotic monomial (Definition~\ref{Def:
asymptotic monomial}). We therefore call $m_1$ 
the \emph{asymptotic monomial of $\beta$}.
\\[1ex]
2)\ \ A normalized broken line $\beta$ is determined uniquely by its endpoint
$\beta(0)$ and its type. In fact, the coefficients $a_i$ are
determined inductively from $a_1=1$ by Equation~\eqref{Eqn:
transport}. \\[1ex]
3)\ \ If $\partial B\neq\emptyset$ it may happen that a broken line
has its endpoint $\beta(0)$ on $\partial B$. By condition~(1) in
Definition~\ref{Def: broken lines} the broken line is then maximal,
that is, it is not the restriction of another broken line to a
proper subset of its domain. Moreover, if $\fou=\fou_r$ is the last
chamber visited by $\beta$, the monomial $z^{m_\beta}$ is an element
of $R_\fou^\partial$. This follows by the same condition~(1) and the
definition of $R_\fou^\partial= R_{\sigma,\rho}$, see \eqref{Eqn:
R_sigmarho}.
\end{remark}

According to Remark~\ref{Rem: broken lines},2 the map
$\beta\mapsto \beta(0)$ identifies the space of broken lines of a
fixed type with a subset of $\fou_r$, the last chamber visited by
$\beta$. This subset is the interior of a polyhedron:

\begin{proposition}
\label{prop: broken line moduli polyhedral}
For each type $(\fou_i,m_i)$, $i=1,\ldots,r$, of broken lines there
is a rational closed convex polyhedron $\Xi$, of dimension $n$ if
non-empty, and an affine immersion
\[
\Phi: \Xi\lra \fou_r,
\]
so that $\Phi\big (\Int\Xi\big)$ is the set of endpoints $\beta(0)$ of
broken lines $\beta$ of the given type not ending on a wall.
\end{proposition}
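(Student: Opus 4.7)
The plan is to parameterize broken lines of the given type $(\fou_i, m_i)_{i=1}^r$ by their endpoint $x_r := \beta(0)$, viewed as a point of the ambient affine space $(\Lambda_{\sigma_r})_\RR$ of $\fou_r$. Once the type is fixed, the rest of $\beta$ is determined by reversing time: starting from $x_r$, move in direction $+m_r$ (using the affine structure of $\sigma_r$) until hitting the interface $\fou_{r-1}\cap\fou_r$ at a unique point $x_{r-1}$; transport $x_{r-1}$ through the wall to the affine structure of $\sigma_{r-1}$; continue in direction $+m_{r-1}$ to obtain $x_{r-2}$; and so on down to $x_1$. At each step, the travel time $s_i := t_i - t_{i-1}$ is determined by the single linear equation requiring $x_{i-1}$ to lie on the affine hyperplane spanned by $\fou_{i-1}\cap\fou_i$.

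Iterating, each $x_i$ and each $s_i$ becomes an explicit affine function of $x_r$. I would then define $\Xi \subset (\Lambda_{\sigma_r})_\RR$ by the closed conditions: (i) $x_r \in \fou_r$, (ii) $x_i \in \fou_{i-1}\cap\fou_i$ for $i = 2, \ldots, r$, (iii) $s_i \geq 0$ for $i = 2, \ldots, r$, and (iv) the asymptotic ray $\{x_1 + s m_1 : s \geq 0\}$ is contained in $\fou_1$. Each is a finite conjunction of rational affine inequalities in $x_r$, so $\Xi$ is a rational closed convex polyhedron, and the inclusion $\Phi : \Xi \hookrightarrow \fou_r$ is an affine embedding, in particular an affine immersion.

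The identification $\Phi(\Int \Xi)$ with endpoints of broken lines of the given type not ending on a wall follows because $x_r \in \Int \Xi$ exactly means every inequality is strict: $x_r \in \Int \fou_r$ (so $\beta$ does not end on a wall), each $x_i$ lies in the relative interior of $\fou_{i-1}\cap\fou_i$ (so $\beta$ avoids joints), and $s_i > 0$ (no collapsed segment). Conversely, any broken line of the given type not ending on a wall arises uniquely from such an $x_r$ by Remark~\ref{Rem: broken lines},2. Dimension $n$ when $\Xi \neq \emptyset$ follows because any interior point admits a small neighborhood of points satisfying the same strict inequalities, so $\Int \Xi$ is open in $(\Lambda_{\sigma_r})_\RR$.

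The main technical point is the bookkeeping across slabs: when $\sigma_i \neq \sigma_{i+1}$, the recursive step requires transporting $x_{i-1}$ from $(\Lambda_{\sigma_i})_\RR$ to $(\Lambda_{\sigma_{i-1}})_\RR$, using exactly the identification implicit in Definition~\ref{Def: broken lines}. The recursion is well-defined because the mere existence of a broken line of the given type forces $m_i$ not to be parallel to $\fou_{i-1}\cap\fou_i$, ensuring the linear equation for $s_i$ has a unique solution; apart from this the argument is elementary.
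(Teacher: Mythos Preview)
Your proposal is correct and is exactly the kind of elaboration the paper has in mind: the paper's own proof reads in full ``This is an exercise in polyhedral geometry left to the reader. For the statement on dimensions it is important that broken lines are disjoint from joints.'' Your backward recursion from the endpoint, yielding each $x_i$ and each $s_i$ as an affine function of $x_r$ and hence $\Xi$ as a finite intersection of rational half-spaces, is the natural way to do this exercise; your $\Phi$ is simply the inclusion, which is of course an affine immersion. Your dimension argument implicitly uses precisely the hint the paper gives: because a genuine broken line has each $\beta(t_i)$ away from joints and each segment in the interior of its chamber, every defining inequality is strict at the corresponding $x_r$, so a full $n$-dimensional neighbourhood of $x_r$ lies in $\Xi$.
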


\begin{proof}
This is an exercise in polyhedral geometry left to the reader. For
the statement on dimensions it is important that broken lines are
disjoint from joints.
\end{proof}

\begin{remark}
\label{degenerate broken lines}
A point $p\in \Phi(\partial \Xi)$ still has a meaning as an endpoint
of a piecewise affine map $\beta:(-\infty,0]\to B$ together with
data $t_i$ and $a_i z^{m_i}$, defining a \emph{degenerate broken
line}. The point $p$ may correspond to a broken line which ends on a
wall, that is,   $\beta(0)\in\partial\fou_r$ while
$\beta^{-1}(\fou_r)$ contains an open set. Otherwise, this data does
not define a broken line, and $\im(\beta)$ has to intersect a joint.
Note that by convexity of the chambers, the non-empty intersection
with joints comprises the cases that $\beta$ maps a whole interval
to $|\scrS|$ or that $t_{i-1}=t_i$. All other conditions in the
definition of broken lines are open. 

By definition, the set of endpoints $\beta(0)$ of degenerate broken
lines of a given type is the $(n-1)$-dimensional polyhedral subset
$\Phi(\partial\Xi)\subseteq \fou$. The set of endpoints of degenerate
broken lines \emph{not transverse} to some joint of $\scrS$, that
is, with an interval mapping to a joint or intersecting the boundary
of a joint, is polyhedral of codimension at least two. Thus there is
a dense open subset of $\Phi(\partial\Xi)$ of endpoints of
degenerate broken lines that are transverse to all joints, but
intersecting at least one joint or with endpoint on a wall.
\end{remark}

For any fixed asymptotic monomial we have the following finiteness
result for types of broken lines.

\begin{lemma}
\label{Lem: Finiteness of broken lines}
For each asymptotic monomial $m$ the set of types of broken lines
with asymptotic monomial $m$ is finite.
\end{lemma}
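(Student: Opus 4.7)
Since $A$ is Noetherian and the toric monoid $Q$ is finitely generated, the ring $A[Q]$ is Noetherian, and because $I_0=\sqrt I$ we obtain an integer $N\ge 1$ with $I_0^N\subset I$. I classify each step $(\fou_i,m_i)\to(\fou_{i+1},m_{i+1})$ of a broken line as \emph{trivial} when $m_{i+1}=m_i$ and \emph{non-trivial} otherwise. Trivial steps can only occur at codimension zero walls, because the slab transport \eqref{eq: thetaslab} always shifts the tangent vector by a nonzero multiple of $\xi(\rho)$ when the broken line crosses a slab (which it must do transversally, since otherwise the segment would be parallel to the slab and would not cross it).

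The key observation is that every non-trivial crossing multiplies the coefficient $a_i$ by an element of $I_0$. For a codimension zero wall $\fop$, the formula $\theta_\fop(a_iz^{m_i})=a_if_\fop^{\langle n_\fop,\ol m_i\rangle}z^{m_i}$ together with $f_\fop\equiv 1\pmod{I_0}$ forces every summand in the expansion with tangent vector distinct from $m_i$ to have coefficient in $I_0\cdot a_i$; the analogous statement for slabs follows from the factor $z^{\kappa_{\ul\rho}}\in I_0$ appearing in \eqref{eq: thetaslab}. Iterating across $k$ non-trivial steps yields $a_i\in I_0^k\cdot(A[Q]/I)$, so once $k\ge N$ the coefficient vanishes, contradicting the existence of the summand $a_iz^{m_i}$. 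Hence every broken line has at most $N-1$ non-trivial steps, and at each such step there are only finitely many possibilities for $(m_{i+1},\fou_{i+1})$: the wall functions $f_\fop,f_\fob$ are finite sums of monomials by Definition~\ref{Def: Wall structure},1, and each chamber has only finitely many neighbours by Definition~\ref{Def: Wall structure},2(a).

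Between two consecutive non-trivial crossings (or between the asymptotic end and the first, or between the last and $\beta(0)$) the broken line is a straight affine segment in a constant direction $-\ol m_j$. The initial semi-infinite segment is contained in a single unbounded chamber whose asymptotic cone contains $\RR_{\ge 0}\ol m$, and there are only finitely many such chambers. Each internal straight segment has direction in a finite set determined by the previous step, starts on one of finitely many walls, and ends either on one of finitely many walls (at the next non-trivial crossing) or at $\beta(0)$. Combining the at-most-$(N-1)$-fold finite branching of non-trivial events with the finite combinatorics of trivial crossings in between gives the desired finite bound on types.

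The main obstacle is the last step, namely showing that the trivial crossings along each straight segment contribute only finitely many combinatorial possibilities. This requires the local finiteness of the polyhedral decomposition $\P$ and a careful combinatorial analysis using the polyhedral moduli of broken lines (Proposition~\ref{prop: broken line moduli polyhedral}): for each combinatorial skeleton of non-trivial events, the corresponding endpoints form a finite union of closed polyhedral regions in $B$, each determined by a single sequence of chambers visited by the trivial portions of the broken line, so that only finitely many detailed types refine each essential one.
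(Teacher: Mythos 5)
Your overall strategy (bound the number of ``important'' crossings using the nilpotency of $I_0$ in $A[Q]/I$, then invoke the local finiteness of the wall structure) is the same as the paper's, but one of your key claims is false and this opens a real gap.

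You assert that ``trivial steps can only occur at codimension zero walls, because the slab transport \eqref{eq: thetaslab} always shifts the tangent vector by a nonzero multiple of $\xi(\rho)$.'' This is not correct. Writing a monomial pointing into $\fou$ as $z^m$ with $\ol m = a\xi(\rho)+m'$, $a>0$, $m'\in\Lambda_\rho$, the slab transport \eqref{eq: thetaslab} sends it to $\chi_{\fob,\sigma'}(Z_-)^{-a}f_\fob^a z^{a\kappa_{\ul\rho}}z^{m'}$. The factor $\chi_{\fob,\sigma'}(Z_-)^{-a}$ has tangent vector exactly the parallel transport of $a\xi(\rho)$ through $\ul\rho$ into $\sigma'$ (not that vector shifted by a multiple of $\xi(\rho)$), and $z^{a\kappa_{\ul\rho}}$ lies in the coefficient ring $A[Q]/I$, so it contributes nothing to the tangent vector. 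The expansion of $f_\fob^a$ in general contains a degree-zero term in $\Lambda_\rho$, and the corresponding summand of the transport has $\ol m_{i+1}$ equal to the parallel transport of $\ol m_i$ --- a trivial slab crossing in your sense. (Example~\ref{Expl: Conical case with boundary} already exhibits this: the broken line computing $W=\vartheta_{(0,1)}$ crosses the slab with unchanged direction.) Because your argument only bounds non-trivial crossings, it leaves unbounded the number of maximal cells a straight run may traverse via trivial slab crossings, and your final paragraph --- which gestures at Proposition~\ref{prop: broken line moduli polyhedral} --- does not repair this. (That proposition controls the moduli of broken lines of a \emph{fixed} type, not the set of types.)

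The paper's proof avoids this by a cleaner accounting: every slab crossing, trivial or not, multiplies the coefficient by $z^{\kappa_{\ul\rho}}\in I_0$ since $a\ge 1$, and every codimension-zero bend also produces a factor in $I_0$ because $f_\fop\equiv 1\pmod{I_0}$. Hence both the total number of slab crossings \emph{and} the number of bends are each less than $k$ (where $I_0^k\subset I$). Fewer than $k$ slab crossings means only boundedly many maximal cells are visited; fewer than $k$ bends plus the finiteness of $\P_\scrS$ within each maximal cell (Definition~\ref{Def: Wall structure}, 2(a)) and the finiteness of the expansion \eqref{eqn: transport} then give finiteness of types directly, with no need to analyze trivial crossings separately. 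You should replace your trivial/non-trivial dichotomy by the observation that the $I_0$-order increases at every slab crossing regardless of whether $\ol m$ changes; the rest of your argument then closes.
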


\begin{proof}
There is a $k$ such that $I_0^{k}\subseteq I$, since $A$ is assumed
to be Noetherian. Let $\beta$ be a broken line.
 From \eqref{eq: thetaslab} it follows that 
if $\beta(t_i)$ lies in a codimension one cell $\rho$ and $a_iz^{m_i}
\in I_0^{k'} R_{\sigma_{\fou_i}}$,  then $a_{i+1}z^{m_{i+1}}\in
z^{\kappa_{\ul \rho}} I_0^{k'} R_{\sigma_{\fou_{i+1}}}
\subseteq I_0^{k'+1} R_{\sigma_{\fou_{i+1}}}$. Similarly, if $\beta(t_i)
\in\Int \sigma$ for some maximal $\sigma$, then
$\beta(t_i)\in\fop$ for some wall $\fop$, and $f_{\fop}\equiv 1\mod I_0$.
Thus if $m_i\not=m_{i+1}$ and $a_iz^{m_i}\in I_0^{k'}R_{\sigma}$, we must have
$a_{i+1}z^{m_{i+1}}\in I_0^{k'+1}R_{\sigma}$. 
Thus any broken line crosses less than $k$ codimension one walls and bends
less than $k$ times. Furthermore, the expansion \eqref{Eqn: transport} 
is finite,
and hence there are at most a finite number of choices for $m_{i+1}$ given
$m_i$. Since every maximal cell in $\P$ contains only a finite
number of chambers and walls, it is then clear that the number of types
of broken lines for a given asymptotic monomial is finite.
\end{proof}
\medskip

By Lemma~\ref{Lem: Finiteness of broken lines} and
Proposition~\ref{prop: broken line moduli polyhedral} the following
definition is meaningful.

\begin{definition}
\label{Def: general points}
A point $p\in B$ is called \emph{general} (for the given structure
$\scrS$) if it is not contained in $\Phi(\partial \Xi)$, for any
$\Phi$ as in Proposition~\ref{prop: broken line moduli polyhedral}
for any type of broken line.
\end{definition}

%===========================================================
\subsection{Consistency and rings in codimension two}
\label{Subsect: Consistency in codim two}

The canonical global functions will be defined on $U_\fou= \Spec
R_\fou$ as a sum of expressions $a_\beta z^{m_\beta}$ over broken
lines ending at a point $x\in\Int\fou$. For this definition to lead
to a globally well-defined function we need an additional
consistency condition, localized at joints of codimension two. We
continue to assume that $\scrS$ is a wall structure on the
polyhedral pseudomanifold $(B,\P)$ and $\varphi$ is a convex MPA-function
with values in the toric monoid $Q$ with
$z^{\kappa_{\ul\rho}(\varphi)}\in I_0$ for any
$\ul\rho\in\tilde\P^{[n-1]}$. For the moment we do not impose any
consistency assumption in codimensions zero and one.

Let $\foj$ be a joint of codimension two and let $\omega\in
\P^{[n-2]}$ be the smallest cell containing $\foj$. Build a new
polyhedral pseudomanifold $(B_\foj,\P_\foj)$ by replacing any $\tau\in\P$
with $\tau\supseteq\foj$ by the tangent wedge of $\omega$ in $\tau$.
Note that the inclusion $\tau\subseteq\tau'$ of faces induces an
inclusion of the respective tangent wedges. So $B_\foj$ is a local
model for $(B,\P)$ near $\foj$ all of whose cells are cones. By the $S_2$
condition on $B$, in fact $B_{\foj}$ is a manifold (with boundary if $\foj$ 
is a boundary joint).
Moreover, each such cell contains the codimension two linear space
$\Lambda_{\foj,\RR}$. Thus $(B_\foj,\P_\foj)$ is topologically the
preimage of a fan in $\RR^2$ by a piecewise integral affine submersion
$\RR^n\to \RR^2$. Similarly, the wall structure $\scrS$ induces a
wall structure $\scrS_\foj$ by considering only the walls containing
$\foj$ and going over to tangent wedges based at $\omega$ for the
underlying polyhedral subsets of codimension one. Since the only
joint of $\scrS_\foj$ is the codimension two cell
$\Lambda_{\foj,\RR}$ this wall structure is trivially consistent in
codimensions zero and one. Denote by $\foX_\foj^\circ$ the scheme
over $A[Q]/I$ constructed in \S\ref{Subsect: foX^o} for
$(B_\foj,\P_\foj)$.

Now let $m$ be an asymptotic monomial on
$(B_\foj,\P_\foj)$. For a general point $p\in B_\foj$, say contained
in the chamber $\fou$ for $\scrS_\foj$, define
\begin{equation}
\label{Def: vartheta^foj_m(p)}
\vartheta^\foj_m(p):=\sum_\beta a_\beta z^{m_\beta}\in R_\fou.
\end{equation}
The sum runs over all normalized broken lines on $(B_\foj,\P_\foj)$ with
asymptotic monomial $m$ and endpoint $p$. 

\begin{definition}
\label{Def: Consistency in codim two}
The wall structure $\scrS$ is \emph{consistent along the codimension
two joint $\foj$} if the $\vartheta^\foj_m(p)$ (a)~do not depend on
the choice of general point $p$ in the same chamber $\fou$ and (b)~are
compatible with the change of chambers morphisms
$\theta_{\fou'\fou}$ for $\scrS_\foj$ defined in \eqref{Def: Change
of chamber I} and \eqref{Def: Change of chamber II}.

A wall structure $\scrS$ is \emph{consistent} if it is consistent in
codimensions zero, one (Definitions~\ref{Def: Consistency in codim
zero} and \ref{Def: Consistency in codim one}) and along each
codimension two joint. \end{definition}

\begin{remark}
\label{Rem: 2d nature of consistency}
Consistency at a joint $\foj$ can be reduced to the two-dimensional
case as follows. Denote by $\doublebar B_\foj$ the image of
$B_\foj$ under the piecewise integral affine submersion $B_\foj\to
\RR^2$ that contracts $\foj$ to the origin. If $\fop\in\scrS_\foj$
is a wall denote its image in $\doublebar B_\foj$ by
$\doublebar\fop$. By extending the base ring from $A[Q]$ to
$A[Q\oplus \Lambda_\foj]$ the function $f_\fop$ attached to $\fop$
can be interpreted as a function attached to $\doublebar\fop$, thus
endowing $\doublebar B_\foj$ with a wall structure
$\doublebar\scrS_\foj$. Then there is a one-to-one correspondence
between broken lines $\beta$ for $ \scrS_\foj$ with asymptotic
direction not contained in $\Lambda_\foj$ with fixed endpoint $p$
and broken lines for $\doublebar\scrS_\foj$ with fixed endpoint the image
of $p$ in $\doublebar B_{\foj}$.
\end{remark}

\begin{example}
\label{Expl: Consistency at vertices}
Continuing with Example~\ref{Expls: wall structures},2, we noted
that an arbitrary scattering diagram on the pair $(B,\Sigma)$
arising from a pair $(Y,D)$ provides a wall structure consistent in
codimensions zero and  one. The only joint in codimension two is
$\foj=\{0\}$, and $(B_{\foj},\Sigma_{\foj})=(B,\Sigma)$. It is
highly non-trivial to construct a wall structure which is consistent
in codimension two; in fact, the construction of such a wall
structure can be viewed as the main result of \cite{GHK1}. In
particular, Definition 3.3 of \cite{GHK1} defines the
\emph{canonical scattering diagram}  which gives a wall structure of
the current paper as in Example  \ref{Expls: wall structures},2.
This data, motivated by \cite{GPS}, is determined by certain
relative Gromov-Witten invariants of $(Y,D)$. The definition of this
diagram requires the choice of the monoid $Q$ and multi-valued
function $\varphi$. As in Example~\ref{Expls: Q}, one chooses a
monoid $Q$ containing $\NE(Y)$. The function $\varphi$ is chosen to
have kink $\kappa_{\rho_i}(\varphi)$ the class $[D_i]\in
H_2(Y,\ZZ)$ of the irreducible component of $D$ corresponding to
$\rho_i$. If $Q$ is chosen so that $Q^\times=0$, it follows from
\cite{GHK1}, Theorem~3.8 that the canonical scattering diagram  is
consistent in codimension two.

Again if we choose a set $\bar B\subseteq B$ a compact two-dimensional
subset containing the origin, the canonical scattering diagram gives
a wall structure on $(\bar B, \P)$ as in  Example~\ref{Expls: wall
structures},2. It has only one interior codimension two joint
$\foj=\{0\}$, and $(\bar B_{\foj},\P_{\foj})= (B,\Sigma)$. From
the previous paragraph, it follows that the wall structure on $(\bar
B,\P)$ induced by the canonical scattering diagram on $(B,\Sigma)$
is consistent along this interior codimension two joint.

Consistency at the boundary joints is more subtle. In the present
case with all monomials outgoing, that is, extending to the
compactifying divisor, consistency is equivalent to local convexity
of $\bar B$ along the boundary, see Proposition~\ref{Prop:
Boundary consistency}. One can show \cite{GHK2} that $\bar B$ with
locally convex boundary exists if and only if the divisor $D$
supports a big and nef divisor for $Y$.
\end{example}

At an interior joint consistency poses a condition on the behaviour
of broken lines when crossing the joint. In contrast, at a boundary
joint the question is about sufficient local convexity of the
boundary to balance the incoming monomials on the walls containing
$\foj$, see Proposition~\ref{Prop: Boundary consistency} below. To
formulate this convexity condition recall that by Remark~\ref{Rem:
2d nature of consistency} we may restrict to $\dim B_\foj=2$.
Because the only singular point is the origin, $B_\foj$ can be
embedded into $\RR^2$ as a not necessarily convex cone containing
$\RR_{>0} \cdot(0,1)$ in its interior and with boundary
$\RR_{\ge0}\cdot (-1,0)\cup \RR_{\ge0}\cdot (a,b)$, $a>0$. Denote
the walls (of codimensions $0$ and $1$) not contained in $\partial
B$ by $\fop_j= \RR_{\ge0} \ol m_j$, $j=1,\ldots, r$, ordered
clockwise and with $\ol m_j= (a_j,b_j)$ primitive. Any monomial in
the function $f_{\fop_j}$ has tangent vector $-\delta \ol m_j$ for
$\delta\in\ZZ$. Let $\delta_j$ be the maximum of the $\delta$ occurring
in $f_{\fop_j}$. As we will see in the proof of
Proposition~\ref{Prop: Boundary consistency} a broken line
approaching $\fop_j$ in direction $(1,\lambda_j)$ and maximally
bent away from the boundary leaves $\fop_j$ in direction
\begin{equation}
\label{Eqn: Change of slope}
\big(1+ (b_j-a_j\lambda_j)\delta_ja_j, \lambda_j+(b_j-a_j\lambda_j)
\delta_j b_j\big).
\end{equation}
This computation motivates us to define $\lambda_j\in\QQ$ for $j\ge 0$
inductively by $\lambda_0:=0$ and
\begin{equation}
\label{Eqn: Recursive slopes}
\lambda_{j+1}:= \frac{\lambda_j+(b_j-a_j\lambda_j)
\delta_j b_j}{1+(b_j-a_j\lambda_j) \delta_j a_j}.
\end{equation}

\begin{definition}
\label{Def: boundary convexity}
The wall structure $\scrS$ is called \emph{convex at a boundary joint
$\foj\subseteq\partial B$} if $\RR_{\ge0}\cdot (1,\lambda_r)\not\subseteq 
\Int(B_\foj)$.
\end{definition}

This notion of convexity at a boundary joint $\foj$ a priori depends
on the choice of orientation of the normal space to $\foj$. The
recursive equation~\eqref{Eqn: Recursive slopes} for $\lambda_j$,
however, is equivalent to
\[
\lambda_j= \frac{\lambda_{j+1}+(-b_j+a_j\lambda_{j+1})
\delta_j b_j}{1+(-b_j+a_j\lambda_{j+1}) \delta_j a_j},
\]
which agrees with the change of slope when approaching the wall
$\fop_j$ from the other side.

\begin{proposition}
\label{Prop: Boundary consistency}
The wall structure $\scrS$ is consistent at a joint
$\foj\subseteq\partial B$ if it is convex at $\foj$.
\end{proposition}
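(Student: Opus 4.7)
The plan is to reduce to the 2-dimensional case via Remark~\ref{Rem: 2d nature of consistency}, embedding $B_\foj\subset\RR^2$ as in the setup preceding Definition~\ref{Def: boundary convexity}, with walls $\fop_1,\ldots,\fop_r$ and chambers $\fou_0,\ldots,\fou_r$ ordered clockwise between the boundary rays $\ell_-=\RR_{\ge 0}(-1,0)$ and $\ell_+=\RR_{\ge 0}(a,b)$. I will then verify both conditions (a) and (b) of Definition~\ref{Def: Consistency in codim two} by a direct geometric analysis of how $\vartheta^\foj_m(p)$ varies with $p$.

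By Proposition~\ref{prop: broken line moduli polyhedral}, the set of endpoints of broken lines of each fixed type is the image of the interior of a polyhedron $\Xi$, so $\vartheta^\foj_m(p)$ can change only as $p$ crosses the image of some such boundary $\partial\Xi$. Following Remark~\ref{degenerate broken lines}, at a generic such boundary point the limiting broken line either (i)~ends on a wall $\fop$ of $\scrS_\foj$, or (ii)~has an intermediate bending point slipping to the unique codimension-two joint $\foj = 0$. For case~(i), the plan is a direct comparison of the contribution of broken lines whose last bending occurs on $\fop$ with the wall-crossing formula~\eqref{eq: thetaslab}, showing that the jump of $\vartheta^\foj_m$ across $\fop$ matches $\theta_{\fou'\fou}$ exactly; this is the standard argument of the style of \cite{CPS}, it does not use convexity, and it simultaneously establishes the wall-crossing compatibility (b) and rules out spurious jumps of type~(i) within any chamber.

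The essential role of convexity is to exclude case~(ii). Using the recursion~\eqref{Eqn: Recursive slopes}, whose one-step extremality is the content of~\eqref{Eqn: Change of slope}, I plan to prove by induction on the number of walls crossed that a broken line in which every intermediate bending point tends to $\foj$ must have its final segment of slope at most $\lambda_r$. The hypothesis $\RR_{\ge 0}(1,\lambda_r)\not\subset\Int(B_\foj)$ then forces the endpoint of such a limiting broken line to lie on or beyond the opposite boundary $\ell_+$, so no broken line can degenerate through $\foj$ while keeping its endpoint in the open interior of a chamber. Combined with step~(i), this yields condition~(a), hence the full consistency at $\foj$.

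The principal obstacle will be the inductive slope estimate. It requires verifying that a broken line which achieves maximum slope after bending at $\fop_j$ is precisely the one selecting the monomial of largest $\delta=\delta_j$ in $f_{\fop_j}$, that is, establishing the monotonicity of the slope function in~\eqref{Eqn: Recursive slopes} in the free parameter $\delta$, and that no combination of sub-maximal bendings at subsequent walls can overtake the bound produced by iterating the recursion. This is a two-dimensional tropical computation, and the careful sign conventions attached to the walls (together with our choice of cyclic ordering and the positivity requirement in the definition of $f_{\fop_j}$) should make it a direct, if somewhat delicate, calculation.
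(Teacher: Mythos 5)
Your proposal is correct and takes essentially the same approach as the paper's proof: both reduce to the two-dimensional local picture, classify degenerations of broken-line moduli into those ending on a wall (where the jump in $\vartheta^\foj_m$ is absorbed into the change-of-chamber morphisms, giving condition (b) with no role for convexity) versus those where bending points slip to $\foj$ (the paper's case $-\ol m_\beta\in\Int\fou_j$), and then eliminate the latter by combining the maximal-bend slope recursion~\eqref{Eqn: Recursive slopes}, the observation that $\partial\lambda_{j+1}/\partial\lambda_j\ge 0$, and the convexity hypothesis $\RR_{\ge0}(1,\lambda_r)\not\subset\Int(B_\foj)$. The monotonicity step you flag as the principal obstacle is exactly what the paper handles (somewhat tersely) via the derivative remark; your phrasing via boundary strata of the moduli polyhedra $\Xi$ from Proposition~\ref{prop: broken line moduli polyhedral} is equivalent to the paper's direct description of cones of endpoints.
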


\begin{proof}
Let us first verify the claim above that \eqref{Eqn: Recursive
slopes} describes the maximal change of slope away from $\partial B$
of a broken line when passing through the wall $\fop_j$. Let
$(c,d)\in\ZZ^2$ be the tangent vector of the monomial $z^m$ of the
broken line before hitting $\fop_j$. Then the result of transport
through $\fop_j$ selects a monomial of $f_{\fop_j}^{-b_j c +a_j
d}\cdot z^m$. The tangent vector of such a monomial is of the
form
\[
\mu\ol m_j +(c,d) =c\cdot\Big( 1+\frac{\mu}{c} a_j, \frac{d}{c}
+\frac{\mu}{c} b_j\Big),
\]
with 
\begin{equation}
\label{eq:mumj}
\mu\ge -\delta_j (-b_j c+a_j d)
\end{equation}
an integer. Putting $\lambda_j =d/c$ and $\mu/c=
\delta_j(b_j-a_j\lambda_j)$ gives \eqref{Eqn: Change of slope}.

To prove the proposition observe that each type of broken line
$\beta$ on $B_\foj$ with asymptotic monomial $m$ has its endpoint in
a chamber $\fou_j$, one of the cones $\RR_{\ge0}\cdot \ol m_j+
\RR_{\ge0}\cdot \ol m_{j+1}$, $j=0,\ldots,r$. To cover the cases
$j=0$ and $j=r$ we define $\ol m_0:= (-1,0)$ and $\ol m_{r+1}:=
(a,b)$. 

We now consider two cases for broken lines. The first case
is that the monomial $z^{m_\beta}$ at the endpoint of $\beta$
has  $-\ol m_\beta\in \Int\fou_j$. In this case, the sum defining
$\vartheta^\foj_m(p)$ loses one term when $p$ is moved across the
ray $-\RR_{\ge 0}\cdot \ol m_{\beta}$ in $\fou_j$, with $p$ moving
from the side of the ray containing the asymptotic direction $m$.
Indeed, if, say, the asymptotic direction $m$ lies in $\RR_{\ge 0}
\ol m_0-\RR_{\ge 0} \ol m_{\beta}$, the last segment of $\beta$ must
begin on the ray $\RR_{\ge 0} \ol m_j$ and hence lie in the cone
$\RR_{\ge 0} \ol m_j-\RR_{\ge 0} \ol m_{\beta}$.
The second case is that the monomial $z^{m_{\beta}}$ at the endpoint of
$\beta$ does not satisfy $-\ol m_{\beta}\in B$. Then for $j<r$ any broken line
of this type can be extended until it hits $\fop_{j+1}$, and then
$\vartheta^\foj_m(p)$ is compatible with the change of chambers
morphism $\theta_{\fop_{j+1}}$. For $j=r$ there is no further wall
to be considered.

The upshot of this discussion is that consistency fails if
there is a type of broken line where the closure of the cone of
endpoints does not fill the last chamber $\fou_r$. By monotonicity
of $\lambda_{j+1}$ as a function of $\lambda_j$  (noting that
$\partial \lambda_{j+1}/\partial \lambda_j$ is easily seen to be
non-negative) this is the case if it holds for the
extreme cases of broken lines with asymptotic monomial $(-1,0)$ or
$(a,b)$ and maximal possible bend, that is, where the inequality
\eqref{eq:mumj} is an equality for each $j$. By symmetry it
suffices to consider the first case. Thus we consider a type of
broken lines $\beta$ with $\beta'(t)$ positively proportional to
$(1,\lambda_j)$  inside the chamber $\fou_j$. The convexity
condition $\RR_{\ge 0} \cdot (1,\lambda_r)\not\subseteq \Int(B_\foj)$
implies that the endpoints of broken
lines of this type fill the chamber containing $\RR_{\ge 0}\cdot
(a,b)$. Thus consistency at $\foj$ is implied by convexity of
$\scrS$ at $\foj$.
\end{proof}
\medskip

\begin{remark}
Consistency at a joint $\foj\subseteq \partial B$ only fails to be
equivalent to convexity at $\foj$ because there may be no broken
line for which the inequality \eqref{eq:mumj} is in fact an
equality. This is because a product of coefficients in $f_{\fop_j}$
making up a term  in $f_{\fop_j}^{-b_jc+a_jd}$ may in fact be $0$ as
the coefficients lie in $A[Q]/I$, which has nilpotents. However, if
one is interested in working over the formal completion
$\widehat{A[Q]}$ of $A[Q]$ with respect to the ideal $I_0$, then
this problem disappears. This is not particularly satisfactory as
$\delta_j$ may no longer exist as $f_{\fop_j}$ is now a formal power
series, but in many cases, it is reasonable to check consistency by
hand using the proof of  Proposition~\ref{Prop: Boundary
consistency}
\end{remark}

\begin{example}
The convexity notion of Definition \ref{Def: boundary convexity}
does not imply that $B$ is convex in the usual sense: the wall structure
$\scrS$ can ``repair'' a non-convex boundary point. For example, 
let $B$ be the union of the two cones 
\[
\sigma_1=\RR_{\ge 0}\cdot (-1,0)+\RR_{\ge0}\cdot (0,1),\quad
\sigma_2=\RR_{\ge 0}\cdot (0,1)+\RR_{\ge0}\cdot (1,-1).
\]
Let $\rho=\RR_{\ge 0}\cdot (0,1)$.
We can take $\varphi$ to take the values
$0$ at $(-1,0)$ and $(0,1)$ and the value $1$ at $(1,-1)$ Finally,
we take a structure $\scrS=\{(\rho_2, z^{(0,1,0)})\}$. It is easy to
see this satisfies our modified definition of convexity.

Applying our construction to this data in fact gives the same result
as applying it to $B=\sigma_1\cup\sigma_2$ with $\sigma_1$ as before
and $\sigma_2$ the first quadrant, with $\scrS$ empty.
\end{example}

The point of the definition of consistency in codimension two is
that the $\vartheta_m^\foj(p)$ now patch to regular functions on
$\foX_\foj^\circ$, as we will now show. The analogous global statement is
the content of Theorem~\ref{Thm: vartheta_m exist} below. 

\begin{proposition}
\label{Prop: vartheta^foj_m exist}
Assume that $\scrS$ is consistent along the codimension two joint
$\foj$ (Definition~\ref{Def: Consistency in codim two}). Then for an
asymptotic monomial $m$ on $(B_\foj,\P_\foj)$ there
is a function $\vartheta^\foj_m$ on $\foX_\foj^\circ$ that restricts to
$\vartheta^\foj_m (p)\in R_\fou$ at any general point $p$ of a
chamber $\fou$.
\end{proposition}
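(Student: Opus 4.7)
The scheme $\foX_\foj^\circ$ was built in Proposition~\ref{Prop: foX^o exists} as a colimit of the affine charts $\Spec R_\fou$, $\Spec R_\fou^\partial$, $\Spec R_\fob$ glued by the localization maps \eqref{Eqn: Localization for slabs} and the wall-crossing isomorphisms $\theta_\fop$, $\theta_\fop^\partial$, $\theta_\foj$. A global regular function on $\foX_\foj^\circ$ is therefore nothing but a compatible family of elements, one in each of these rings, respecting every map in the gluing diagram. The plan is to read off such a family from the recipe \eqref{Def: vartheta^foj_m(p)} and then use the two conditions in Definition~\ref{Def: Consistency in codim two} to verify all the compatibilities. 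Finiteness of the sum defining $\vartheta_m^\foj(p)$, which is needed for all the manipulations to make sense, is guaranteed by Lemma~\ref{Lem: Finiteness of broken lines}.

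\textbf{Element in each chamber.} For each chamber $\fou$ of $\scrS_\foj$, condition (a) of consistency asserts that $\vartheta_m^\foj(p) \in R_\fou$ is independent of the choice of general point $p \in \Int\fou$; I denote this common value by $\vartheta_m^\foj(\fou)$. If $\fou$ is a boundary chamber, every broken line contributing to $\vartheta_m^\foj(p)$ at a point $p$ near $\partial B$ in $\fou$ has its last segment inside $\fou$, and Remark~\ref{Rem: broken lines},3 shows that each contribution $a_\beta z^{m_\beta}$ already lies in the subring $R_\fou^\partial \subset R_\fou$. Thus $\vartheta_m^\foj(\fou) \in R_\fou^\partial$, providing the element on $\Spec R_\fou^\partial$, and this element is automatically compatible with the open inclusion $\Spec R_\fou \hookrightarrow \Spec R_\fou^\partial$ and with $\theta_\fop^\partial$ (where applicable) by the same reasoning used for codimension zero walls below.

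\textbf{Compatibility across walls and slabs.} For adjacent chambers $\fou,\fou'$ of $\scrS_\foj$, condition (b) says that $\theta_{\fou'\fou}(\vartheta_m^\foj(\fou))=\vartheta_m^\foj(\fou')$. When $\fou$ and $\fou'$ are separated by a codimension zero wall this is exactly the required compatibility under $\theta_\fop$. When they are separated by a slab $\fob\subset\rho$, condition (b) implicitly requires that $\vartheta_m^\foj(\fou)\in R_\fou^\fob$ so that the map \eqref{Def: Change of chamber II} applies, and symmetrically for $\fou'$; this inclusion is forced by the definition of broken lines, since the final tangent vector $m_\beta$ of every contributing line points from $p$ away from the wall it most recently crossed, hence lies in the half-space defining $R_\fou^\fob$. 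The uniqueness statement recorded immediately after \eqref{eq: thetaslab} then yields a unique element $\vartheta_m^\foj(\fob)\in R_\fob$ with
\[
\chi_{\fob,\sigma}\bigl(\vartheta_m^\foj(\fob)\bigr)=\vartheta_m^\foj(\fou),\qquad
\chi_{\fob,\sigma'}\bigl(\vartheta_m^\foj(\fob)\bigr)=\vartheta_m^\foj(\fou'),
\]
which is precisely the compatibility demanded along the slab.

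\textbf{Conclusion and main obstacle.} The collection $\{\vartheta_m^\foj(\fou),\,\vartheta_m^\foj(\fob)\}$ constructed above is compatible with every morphism in the skeleton of the gluing category used in the proof of Proposition~\ref{Prop: foX^o exists}, so it descends to a global section $\vartheta_m^\foj\in\Gamma(\foX_\foj^\circ,\O_{\foX_\foj^\circ})$ with the claimed restriction property. The one genuinely substantive point is the construction of the slab element $\vartheta_m^\foj(\fob)$ from the two chamber elements: this relies on the observation that the broken-line construction automatically lands in the subring $R_\fou^\fob$ on which $\theta_{\fou'\fou}$ is defined, and on the uniqueness of lifts along $(\chi_{\fob,\sigma},\chi_{\fob,\sigma'})$. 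Everything else is bookkeeping repackaging the definition of codimension two consistency into the language of the colimit.
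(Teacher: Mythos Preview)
Your treatment of chambers and codimension-zero walls matches the paper. The gap is in the slab case. You claim $\vartheta_m^\foj(\fou)\in R_\fou^\fob$, justifying this by saying that the final tangent vector $m_\beta$ of every contributing broken line points from $p$ away from the wall it most recently crossed. The direction is reversed: since $\beta'(t)=-m_\beta$ on the last segment and that segment moves \emph{away} from the most recently crossed wall, $m_\beta$ itself points \emph{towards} that wall. In particular, any broken line reaching $p\in\fou\subset\sigma$ by last crossing the slab $\fob$ contributes a monomial $z^{m_\beta}$ with $m_\beta$ pointing into $\sigma'$, so $z^{m_\beta}\notin R_\fou^\fob$. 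Consequently $\vartheta_m^\foj(\fou)$ is in general not in the domain of $\theta_{\fou'\fou}$, and the lifting statement after \eqref{eq: thetaslab}---which only produces elements of the subring $(A[Q]/I)[\Lambda_\rho][Z_+]\subset R_\fob$---cannot be invoked. Condition~(b) of Definition~\ref{Def: Consistency in codim two} does not rescue this either: reading it as asserting $\vartheta_m^\foj(\fou)\in R_\fou^\fob$ would render the hypothesis false whenever some broken line actually crosses $\fob$.

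The paper fills this gap by constructing $\vartheta_m^\foj(\fob)\in R_\fob$ directly. One places the endpoint $p$ on $\Int\fob$ at a point avoiding degenerate broken lines, then decomposes the contributing broken lines into those whose last segment lies in $\fou$ (each lifted to $R_\fob$ as $Z_+^a z^{m'}$), those whose last segment lies in $\fou'$ (lifted as $Z_-^a z^{m'}$), and a parallel part $\vartheta_m^\|\in(A[Q]/I)[\Lambda_\rho]$; condition~(b) is used precisely to check that this parallel part agrees when computed from either chamber. A deformation argument, pushing $p$ off $\fob$ into $\fou$ or $\fou'$, then shows that $\chi_{\fob,\sigma}$ and $\chi_{\fob,\sigma'}$ applied to $\vartheta_m^\foj(\fob)$ recover $\vartheta_m^\foj(\fou)$ and $\vartheta_m^\foj(\fou')$. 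This explicit construction on the slab, using both $Z_+$ and $Z_-$, is the missing ingredient in your argument.
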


\begin{proof}
Condition~(a) in Definition~\ref{Def: Consistency in codim two}
implies that for any chamber $\fou$ of $\scrS_\foj$ there is a
well-defined element $\vartheta^\foj_m(\fou)\in R_\fou$. Then (b)
means that for chambers $\fou,\fou'$ of $\scrS_\foj$ separated by a
codimension zero wall $\fop$ it holds $\vartheta^\foj_m(\fou')=
\theta_\fop \big( \vartheta^\foj_m(\fou)\big)$.

If $\fou,\fou'$ are separated by a slab $\fob$ we claim the
existence of an element $\vartheta^\foj_m(\fob)\in R_\fob$ with
$\vartheta^\foj_m(\fou)= \chi_{\fob,\sigma}\big(
\vartheta^\foj_m(\fob) \big)$, $\vartheta^\foj_m(\fou')=
\chi_{\fob,\sigma'}\big( \vartheta^\foj_m(\fob) \big)$ for
$\sigma,\sigma'\in \P_\max$ the maximal cells containing
$\fou,\fou'$, respectively. By injectivity of the diagonal map
$R_\fob\to R_\sigma\times R_{\sigma'}$ the element
$\vartheta^\foj_m(\fob)$ is unique if it exists.

To show existence consider the set of possibly degenerate
broken lines which end on the wall $\fob$, (that is, $\beta(0)\in
\fob$) but with $\beta((-\epsilon,0])\not \subseteq \fob$ for any
$\epsilon>0$. Similarly to Proposition~\ref{prop: broken line moduli
polyhedral} there is a finite union $A\subseteq \fob$ of rational
polyhedral subsets of dimension at most $n-2$ such that any such
degenerate broken line  (see Remark \ref{degenerate broken lines})
with asymptotic monomial $m$ ending at $\fob\setminus A$ is in fact
a broken line. Fix $p\in \fob\setminus A$. The set of broken lines
$\beta$ with final segment not contained in $\fob$ and with
asymptotic monomial $m$ and $\beta(0)=p$ decomposes as a disjoint
union $\foB_I\sqcup \foB_{II}$ depending if $\beta$ maps the last
interval $(t_{r-1},0)$ into (I) $\Int\fou$ or (II) $\Int\fou'$. Let
$\vartheta^{\|}_m$ be the sum of terms in either
$\vartheta_m^{\foj}(\fou)$ or $\vartheta_m^{\foj}(\fou')$ lying in
$(A[Q]/I)[\Lambda_{\fob}]$. By the definition of consistency and
of $\theta_{\fou'\fou}$, $\vartheta^{\|}_m$ is well-defined. We now
define $\vartheta^\foj_m(\fob)$ as an element of $R_\fob$ by
\[
\vartheta^\foj_m(\fob):=\sum_{\beta\in\foB_I} a_\beta z^{m_\beta}
+\sum_{\beta\in\foB_{II}} a_\beta z^{m_\beta}
+\vartheta^{\|}_m,
\]
with $a_\beta\in A[Q]/I$ and the individual monomials interpreted as
follows. Let $\xi=\xi(\rho)\in \Lambda_p$ be the chosen generator of
$\Lambda_p/\Lambda_\rho$, assumed without loss of generality to
point into $\fou$. Then for $\beta\in \foB_I$ the exponent $m_\beta$
can be written as $a\xi+m'_\beta$ with $a>0$ and $m'_\beta\in 
\Lambda_\rho$. Now interpret $z^{m_\beta}$ as the monomial
$Z_+^a\cdot z^{m'_\beta}$ in $R_\fob$, which is the unique lift of
$z^{m_\beta}\in R_\fou$ of the stated form under the localization
homomorphism $\chi_{\fob,\sigma}$. Similarly, for
$\beta\in\foB_{II}$ there is a unique lift of $z^{m_\beta}\in
R_{\fou'}$ of the form $Z_-^a \cdot z^{m'_\beta}$. Finally, 
$\vartheta^{\|}_m$ is interpreted as an element of $R_{\fob}$ via
the inclusion
$(A[Q]/I)[\Lambda_\rho]\subseteq R_\fob$. This maps to $\theta^{\|}_m\in
R_{\fou}, R_{\fou'}$ under the respective localizations
$\chi_{\fob,\sigma}$, $\chi_{\fob,\sigma'}$.

Moving $p$ into $\fou$, a broken line in $\foB_I$ 
deforms uniquely without changing $a_\beta z^{m_\beta}$. If
$\beta\in\foB_{II}$ it follows from the definition of the transport
of monomials through slabs that a broken line splits into several
broken lines according to the expansion of $\chi_{\fob,\sigma}(Z_-^a
\cdot z^{m'_\beta})$ into monomials. This shows $\chi_{\fob,\sigma}
(\vartheta^\foj_m(\fob)) =\vartheta^\foj_m(\fou)$. A similar
discussion holds with $\fou'$ replacing $\fou$. This proves the
claim on existence of $\vartheta_m^\foj(\fob)$.

If $\foj$ is a consistent boundary joint and $\fou$ is a boundary
chamber we observed in the proof of Proposition~\ref{Prop: Boundary
consistency} that each broken line with endpoint in $\fou$
extends to $\partial B$. This implies that the tangent vector of
$m_\beta$ points from $\partial B$ into $B$. This shows that
$z^{m_\beta}$ in fact lies in the subring $R_\fou^\partial\subseteq
R_\fou$ defined in \eqref{R_fou^partial}.

Summarizing, if $\foj$ is a consistent joint then the
$\vartheta^\foj_m(\fou)$, $\vartheta^\foj_m(\fob)$ glue to a global
regular function on $\foX^\circ_\foj$.
\end{proof}

\begin{proposition}
\label{Prop: Properties of foX^foj}
In the situation of Proposition~\ref{Prop: vartheta^foj_m exist}
the $\vartheta^\foj_m$ freely generate the $A[Q]/I$-algebra
\[
R_\foj:= \Gamma(\foX_\foj^\circ, \O_{\foX_\foj^\circ})
\]
of global functions on $\foX_\foj^\circ$ as an $A[Q]/I$-module. In
particular, $R_\foj$ is flat over $A[Q]/I$. Moreover, the canonical
map
\[
\foX_\foj^\circ\lra \foX_\foj:=\Spec(R_\foj).
\]
is an open embedding.
\end{proposition}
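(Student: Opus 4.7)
The plan is to verify the claim on the central fibre $\foX_\foj^\circ \otimes_{A[Q]/I} S$ with $S = A[Q]/I_0$ and then lift via induction along the finite filtration $I \subset I_0^N \subset \cdots \subset I_0$ on $A[Q]/I$, finite because $A$ is Noetherian and $I_0 = \sqrt{I}$. By Proposition~\ref{Prop: foX^o modulo I_0}, $\foX_\foj^\circ \otimes S$ is the complement of codimension-two strata in the reduced union $X_{0,\foj} = \bigcup_{\sigma \in \P_\foj^{[n]}} \PP_S(\sigma)$ glued along codimension-one strata. Using the $S_2$ property (Proposition~\ref{prop: S2 condition}), global sections extend across the codimension-two locus, and a direct toric computation identifies $\Gamma(X_{0,\foj}, \O)$ with the free $S$-module on asymptotic monomials of $(B_\foj, \P_\foj)$: the element corresponding to an asymptotic monomial $m$ on $\tau$ restricts to $z^m$ on each $R_\sigma = S[\Lambda_\sigma]$ reachable from $\tau$ by straight propagation, and to zero on the other maximal cells.

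The key observation is that $\vartheta^\foj_m$ reduces modulo $I_0$ to this basis element $z^m$. Indeed, any broken line that bends at a codimension-zero wall picks up a factor $f_\fop - 1 \in I_0$, and any transverse slab crossing picks up a factor $z^{a \kappa_{\ul\rho}} \in I_0$ via the transport formula~\eqref{eq: thetaslab}; hence only the straight broken line from infinity in direction $\ol m$ contributes modulo $I_0$. Linear independence of the $\vartheta^\foj_m$ over $A[Q]/I$ then follows by induction: any vanishing combination $\sum a_m \vartheta^\foj_m = 0$ with all $a_m \in I_0^k/I$ reduces mod $I_0^{k+1}/I$ to force $a_m \in I_0^{k+1}/I$ by the central-fibre freeness, terminating at $k = N$. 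Spanning is the mirror argument: any $f \in R_\foj$ reduces mod $I_0$ to a finite $S$-combination $\sum \bar a_m z^m$ of asymptotic monomials (finite by Lemma~\ref{Lem: Finiteness of broken lines} together with the fact that the restriction of $f$ to any chamber $\fou$ is a Laurent polynomial in $R_\fou$ with finitely many terms); lifting each $\bar a_m$ to $a_m \in A[Q]/I$ and iterating on $f - \sum a_m \vartheta^\foj_m \in I_0 R_\foj$ terminates after $N$ steps. Flatness is then immediate from free generation.

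For the open embedding $\foX_\foj^\circ \hookrightarrow \foX_\foj = \Spec R_\foj$, we check each chart of the affine cover $\{\Spec R_\fou,\, \Spec R_\fob\}$ separately. For a chamber $\fou \subset \sigma$, pick a basis $e_1, \ldots, e_n$ of $\Lambda_\sigma$ such that the straight broken lines in directions $\pm e_i$ remain in $\sigma$; the corresponding theta functions $\vartheta^\foj_{\pm e_i}$ restrict on $R_\fou$ to $z^{\pm e_i}$ modulo $I_0$, hence to units in $R_\fou$ since $I_0$ acts nilpotently on $A[Q]/I$. Localizing $R_\foj$ at these finitely many elements then recovers $R_\fou$, so $\Spec R_\fou \hookrightarrow \Spec R_\foj$ is an open immersion; slabs are handled analogously using theta functions adapted to the two flanking maximal cells. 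The main technical obstacle is the central-fibre computation $\Gamma(X_{0,\foj}, \O) = \bigoplus_m S \cdot z^m$ and the coherent identification of $\vartheta^\foj_m \bmod I_0$ with the corresponding basis element across all chambers and slabs; once these are in hand, the remaining steps are standard bookkeeping given the Noetherian assumption and Lemma~\ref{Lem: Finiteness of broken lines}.
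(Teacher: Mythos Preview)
Your argument for free generation is correct and is essentially the same induction carried out in the proof of Theorem~\ref{Thm: vartheta_m exist} later in the paper: reduce modulo $I_0$, use that $\vartheta^\foj_m\equiv z^m$ (or $0$) on each chamber, and climb the filtration $(I+I_0^k)$. This is a genuinely different and more elementary route than the paper's own proof, which instead introduces the ringed space $\foX_\foj:=\big(|X_{\foj,0}|,\, i_*\O_{\foX_\foj^\circ}\big)$ and cites \cite{GHK1}, Lemmas~2.29 and~2.30 (with Lemma~\ref{Lem: Extension lemma} supplying the $S_2$ input) to obtain flatness and the basis property. Your approach trades the external reference for the filtration bookkeeping; both work. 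One minor point: the finiteness of the central-fibre expansion $\bar f=\sum\bar a_m z^m$ does not come from Lemma~\ref{Lem: Finiteness of broken lines}, but simply from the fact that $\Gamma(X_{\foj,0},\O)=\bigoplus_m S\cdot z^m$ is the coordinate ring of an affine scheme.

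Your argument for the open embedding, however, has a genuine error. You ask for a basis $e_1,\ldots,e_n$ of $\Lambda_\sigma$ such that the straight broken lines in both directions $\pm e_i$ remain in $\sigma$. A straight broken line with asymptotic monomial $e_i$ and endpoint $p\in\fou$ is the ray $p+\RR_{\ge0}\,\ol e_i$; this stays in $\sigma$ iff $e_i$ lies in the asymptotic cone of $\sigma$, which here is $\sigma$ itself. Requiring this for both signs forces $e_i$ into the lineality space of $\sigma$, which equals $\Lambda_{\foj,\RR}$ and is only $(n-2)$--dimensional. So no such basis exists once the tangent wedge $\sigma$ is a genuine wedge. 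The fix is to drop the $\pm$: choose $e_1,\ldots,e_n\in\sigma\cap\Lambda_\sigma$ with $\sum e_i\in\Int\sigma$; then each $\vartheta^\foj_{e_i}|_{R_\fou}\equiv z^{e_i}\pmod{I_0}$ is already a unit. You must then actually prove that $R_\foj[\vartheta_{e_1}^{-1},\ldots,\vartheta_{e_n}^{-1}]\to R_\fou$ is an isomorphism: on the central fibre this is the toric statement that inverting $z^{e_1},\ldots,z^{e_n}$ in $\Gamma(X_{\foj,0},\O)$ kills all components except $\sigma$ and opens up its torus; the lift is a Nakayama argument using nilpotence of $I_0$ and flatness of $R_\fou$. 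The slab charts require a separate (not merely ``analogous'') computation, inverting a monomial in $\Int\rho$. The paper sidesteps all of this: once $\big(|X_{\foj,0}|,\, i_*\O_{\foX_\foj^\circ}\big)$ is shown to be an affine scheme with ring of global sections $R_\foj$, the inclusion $\foX_\foj^\circ\hookrightarrow\Spec R_\foj$ is tautologically the open embedding of the complement of the codimension-two locus.
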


\begin{proof}
Proposition~\ref{Prop: foX^o modulo I_0} and the description of
$X_0$ in Proposition~\ref{Prop: X_0} imply that the statements
are true modulo $I_0$. Denote by $X_{\foj,0}$ the flat
$A[Q]/I_0$-scheme obtained from $B_{\foj}$ via \eqref{Eqn: X_0}.
See Proposition~\ref{Prop: X_0} for an explicit
description. Following the proof of \cite{GHK1},
Theorem~2.28, consider the ringed space $\foX_\foj$ with underlying
topological space $|X_{\foj,0}|$ and sheaf of $A[Q]/I$-algebras
\[
\O_{\foX_\foj}:=  i_* \O_{\foX_\foj^\circ}
\]
where $i:|X_{\foj,0}^{\circ}|\rightarrow |X_{\foj,0}|$ is the inclusion. By the
existence of the functions $\vartheta^\foj_m$ the reduction morphism
$\O_{\foX_\foj}\to \O_{X_{\foj,0}}$ modulo $I_0$ is surjective. Thus by
\cite{GHK1}, Lemma~2.29, $\foX_\foj$ is flat over $A[Q]/I$. While \cite{GHK1}
only discusses the two-dimensional case, the proof of the cited lemma holds
literally in all dimensions provided Lemma~2.10 in \cite{GHK1} in the proof is
replaced by Lemma~\ref{Lem: Extension lemma} below. The $S_2$ condition
in this lemma is fulfilled by Proposition~\ref{prop: S2 condition}. Moreover,
$\foX_\foj$ as an infinitesimal extension of the affine scheme $X_{\foj,0}$ is
itself affine. Now \cite{GHK1}, Lemma~2.30, shows that the $\vartheta^\foj_m$
are an $A[Q]/I$-module basis of $\Gamma(\foX_\foj, \O_{\foX_\foj})$.

By the same token, $\Spec(R_\foj)$ is a flat deformation of $X_{\foj,0}$
with the same $A[Q]/I$-module basis for the ring of global regular
functions. Thus the embedding $\foX_\foj^\circ\to
\foX_\foj$ induces an isomorphism
\[
\Gamma(\foX_\foj,\O_{\foX_\foj})\lra R_\foj=
\Gamma(\foX_\foj^\circ, \O_{\foX_\foj^\circ}).
\]
In particular, $\foX_\foj=\Spec(R_\foj)$ and $R_\foj$ is freely
generated by the $\theta^\foj_m$.
\end{proof}
\medskip

In the proof we used the following technical lemma generalizing
\cite{hacking}, Lemma~A.3.

\begin{lemma}
\label{Lem: Extension lemma}
Let $X\rightarrow S$ be a flat family of schemes of pure dimension $n$
such that each fibre $X_s$ satisfies Serre's $S_2$ condition. Let $U\subseteq
X$ be an open subset such that $\dim\, (X\setminus U)\cap X_s\le n-2$ for
all $s\in S$. Then if $i:U\hookrightarrow X$ is the inclusion, the canonical
map $\O_X\rightarrow i_*i^*\O_X$ is an isomorphism.
\end{lemma}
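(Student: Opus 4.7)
The plan is to reduce the sheaf-theoretic statement to a pointwise local-cohomological one, then use flatness to lift a length-two regular sequence from the fibres to the total space.

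\textbf{Step 1 (Reduction to local cohomology).} The canonical map $\O_X\to i_*i^*\O_X=i_*\O_U$ has kernel $\shH^0_Z(\O_X)$ and cokernel contained in $\shH^1_Z(\O_X)$, where $Z:=X\setminus U$. Thus it suffices to show that for every $x\in Z$, writing $B:=\O_{X,x}$ and $I\subset B$ for the stalk of the ideal sheaf of $Z$, one has $H^0_I(B)=H^1_I(B)=0$, or equivalently that $I$ contains a $B$-regular sequence of length two.

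\textbf{Step 2 (Regular sequence on the fibre).} Let $s\in S$ be the image of $x$ and set $A:=\O_{S,s}$, $\bar B:=B/\mathfrak{m}_A B=\O_{X_s,x}$ and $\bar I:=I\bar B$, the local ideal of $Z\cap X_s$ in $X_s$. Since $X_s$ has pure dimension $n$ and $\dim(Z\cap X_s)\le n-2$, the ideal $\bar I$ has height at least $2$ in $\bar B$. Using that $\bar B$ satisfies $S_2$, I would show that $\bar I$ contains a regular sequence $\bar f_1,\bar f_2$ of length two by the familiar argument: $\bar B$ is $S_1$, hence $(0)$ has no embedded primes, so any ideal of height $\ge 1$ contains a non-zerodivisor $\bar f_1$; then $\bar B/\bar f_1$ is $S_1$ by the $S_2$ hypothesis on $\bar B$, so again any ideal containing elements of height $\ge 2$ in $\bar B$ contains a non-zerodivisor $\bar f_2$ on $\bar B/\bar f_1$.

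\textbf{Step 3 (Lifting to $B$ via flatness).} Choose lifts $f_1,f_2\in I$ of $\bar f_1,\bar f_2$. I would invoke the standard ``slicing'' lemma for flat local homomorphisms of Noetherian local rings (e.g.\ \cite[Cor.~22.5]{matsumura}-style): if $A\to B$ is flat and $\bar f\in\bar B$ is a non-zerodivisor, then $f$ is a non-zerodivisor in $B$ and $B/fB$ remains flat over $A$. Applying this first to $\bar f_1$ shows $f_1$ is $B$-regular and $B/f_1 B$ is flat over $A$ with fibre $\bar B/\bar f_1$; applying it again to $\bar f_2$ in $\bar B/\bar f_1$ shows $f_2$ is a non-zerodivisor on $B/f_1 B$. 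Hence $f_1,f_2\in I$ is a $B$-regular sequence, proving $\grade_I(B)\ge 2$.

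\textbf{Main obstacle.} The genuinely substantive input is Step~3, the preservation of flatness under slicing, which is where the hypothesis of flatness of $X\to S$ is essential and where one must avoid being fooled at non-closed points: the argument applies verbatim to any $x\in Z$ since $\O_{X,x}\to \O_{S,s}$ is still a flat local homomorphism of Noetherian local rings. Steps~1 and~2 are standard commutative-algebra reformulations, whereas Step~3 is the place where the fibral $S_2$ condition is actually leveraged to produce depth on the total space.
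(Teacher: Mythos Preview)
Your argument is correct and rests on the same two ingredients as the paper's: produce a length-two regular sequence in the ideal of $Z$ on the fibre from the $S_2$ hypothesis, then lift it to the total space via flatness (the corollary to \cite{matsumura}, Theorem~22.5). The paper packages this slightly differently---it argues by contradiction at a generic point of the support of the kernel or cokernel and passes to powers of a maximal-ideal regular sequence to land in the ideal of $Z$---but this is only an organizational difference, and your direct grade computation via local cohomology is if anything cleaner.
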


\begin{proof}
By \cite{EGAIV.2}, Proposition~5.11.1, the sheaf $i_*i^*\O_X$ is
coherent. Let $K$ and $C$ be the kernel and cokernel of the
canonical map $\O_X\rightarrow i_*i^*\O_X$.  These are supported on
closed subsets $Z_K$ and $Z_C$ of $X\setminus U$ respectively. Let
$Z=Z_K$ or $Z_C$. We assume $Z$ is non-empty, so that we can choose 
a generic point $p$ of $Z_s=Z\cap X_s$ for some $s\in S$.
Necessarily the closure of $\{p\}$ is a closed subset of $X_s$ of
codimension $\ge 2$. So by the $S_2$ condition, there is a regular
sequence $x_s,y_s\in \fom_{X_s,p}$ for $\O_{X_s,p}$. By assumption
that $p$ is a generic point of $Z_s$,  we can replace $x_s,y_s$ with
$x_s^{\nu}, y_s^{\nu}$ for some $\nu\gg 0$ and assume $x_s, y_s$ lie
in the ideal of $Z_s$ in the local ring  $\O_{X_s,p}$. By
\cite{matsumura}, Theorem 16.1, $x_s, y_s$ is still a  regular
sequence for $\O_{X_s,p}$. We can lift $x_s, y_s$ to elements of the
ideal of $Z$ in $\O_{X,p}$, so that $x,y$ is a regular sequence for
$\O_{X,p}$ (see \cite{matsumura}, pg.\ 177, Cor.\ to Theorem 22.5).
Equivalently, we have an exact sequence
\[
0\rightarrow \O_{X,p} \mapright{(y,-x)} \O_{X,p}\oplus \O_{X,p}
\mapright{(x,y)} \O_{X,p}.
\]
Now consider first the case $Z=Z_K$. Since $Z_K$ is the support of $K$,
any given element of $K$ is annihilated by some power of the ideal
$\shI_Z\subseteq \O_X$. So since $K_p\not=0$, there exists a non-zero element
$g\in K_p$ such that $\shI_Z g=0$ locally at $p$. Then $xg=yg=0$,
contradicting the exactness of the above sequence. Thus $Z_K=\emptyset$.
Similarly, take $Z=Z_C$. Then there is a $g\in (i_*i^*\O_X)_p\setminus
\O_{X,p}$ such that $\shI_Zg\subseteq \O_{X,p}$. Again using the exact
sequence above, since $(yg,-xg)\mapsto 0$ under the second map, we
obtain $(yg,-xg)=(yg',-xg')$ for some $g'\in \O_{X,p}$.
But then $g=g'$, a contradiction. Thus $Z_C=\emptyset$.
\end{proof}

%===========================================================
\subsection{The canonical global functions $\vartheta_m$}
\label{Subsect: vartheta_m}

We now give the construction of the canonical global functions
$\vartheta_m$ in the general case. 

\begin{theorem}
\label{Thm: vartheta_m exist}
Let $\scrS$ be a consistent wall structure on the polyhedral pseudomanifold
$(B,\P)$, and let $\foX^\circ$ be the corresponding flat scheme over
$A[Q]/I$ (Proposition~\ref{Prop: foX^o exists}). 

Then for each asymptotic monomial $m$ (Definition~\ref{Def:
asymptotic monomial}) there exists a function
$\vartheta_m\in  \Gamma(\foX^\circ,\O_{\foX^\circ})$
restricting on $R_\fou$, $\fou$ a chamber of $\scrS$, to the sum
\begin{equation}
\label{Def: vartheta_m(p)}
\vartheta_m(p):=\sum_\beta a_\beta z^{m_\beta}.
\end{equation}
over normalized broken lines with asymptotic monomial $m$ and ending
at a general point $p\in\fou$. Moreover, the $\vartheta_m$ form an
$A[Q]/I$-module basis of $\Gamma(\foX^\circ, \O_{\foX^\circ})$:
\[
\Gamma(\foX^\circ, \O_{\foX^\circ}) =
\bigoplus_{m} \big(A[Q]/I\big)\cdot \vartheta_m.
\]
\end{theorem}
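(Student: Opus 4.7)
The plan is to construct $\vartheta_m$ by gluing local expressions on the covering $\{\Spec R_\fou, \Spec R_\fou^\partial, \Spec R_\fob\}$ of $\foX^\circ$ given by Proposition~\ref{Prop: foX^o exists}, and then to deduce the basis statement by reduction modulo $I_0$ combined with flatness.

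For existence, I first fix a chamber $\fou$ and a general point $p\in\Int\fou$, and define $\vartheta_m(\fou) := \sum_\beta a_\beta z^{m_\beta}\in R_\fou$, where the sum is over normalized broken lines with asymptotic monomial $m$ ending at $p$; this is a finite sum by Lemma~\ref{Lem: Finiteness of broken lines}. The first task is independence of $p$: as $p$ varies within $\Int\fou$ along a generic path, the only way the formal expression changes is when $p$ crosses some set $\Phi(\partial\Xi)$ for a type $(\fou_i,m_i)$ of broken line (Proposition~\ref{prop: broken line moduli polyhedral}). At a generic point of such a $\Phi(\partial\Xi)$ the associated degenerate broken line either ends on a wall or bends at a joint; such a transverse crossing can be analyzed locally and reduces by Remark~\ref{Rem: 2d nature of consistency} to the situation of a single joint $\foj$. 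Invariance then follows from consistency at $\foj$ in codimensions $0$, $1$ and $2$ (Definitions~\ref{Def: Consistency in codim zero}, \ref{Def: Consistency in codim one}, and \ref{Def: Consistency in codim two}), via the local theta functions $\vartheta^\foj_m$ constructed in Proposition~\ref{Prop: vartheta^foj_m exist}.

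The next task is to glue. For adjacent chambers $\fou,\fou'$ separated by a codimension zero wall $\fop$, the proof of Proposition~\ref{Prop: vartheta^foj_m exist} applied at any joint of $\fop$ shows $\theta_\fop(\vartheta_m(\fou))=\vartheta_m(\fou')$, so the local sections agree on $\Spec R_\fou \cap \Spec R_{\fou'}$. For a slab $\fob\subset\rho$ separating chambers $\fou,\fou'$, I lift to an element $\vartheta_m(\fob)\in R_\fob$ by exactly the construction given in the proof of Proposition~\ref{Prop: vartheta^foj_m exist}: partition broken lines with endpoint $p\in\fob\setminus A$ into those whose last segment lies in $\fou$ versus $\fou'$ versus parallel to $\fob$, lift each $z^{m_\beta}$ uniquely to $R_\fob$ using the generators $Z_\pm$, and add the parallel term $\vartheta^{\|}_m\in(A[Q]/I)[\Lambda_\rho]\subset R_\fob$. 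Consistency along joints contained in $\fob$ together with the arguments already used in that proposition ensure $\chi_{\fob,\sigma}(\vartheta_m(\fob))=\vartheta_m(\fou)$ and $\chi_{\fob,\sigma'}(\vartheta_m(\fob))=\vartheta_m(\fou')$. For boundary chambers $\fou$ with boundary component $\rho$, every broken line ending in $\fou$ extends backwards and has $\overline{m}_\beta$ pointing from $\rho$ into $\sigma$, so $\vartheta_m(\fou)\in R_\fou^\partial$; consistency on the boundary (Proposition~\ref{Prop: Boundary consistency}) again patches these together. This produces the required global section $\vartheta_m\in\Gamma(\foX^\circ,\O_{\foX^\circ})$.

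For the basis statement, reduce modulo $I_0$. By Proposition~\ref{Prop: foX^o modulo I_0}, $\foX^\circ\otimes_{A[Q]/I} A[Q]/I_0 = X_0^\circ$, and for any chamber $\fou\subset\sigma\in\P_\max$ a broken line of asymptotic monomial $m$ ending in $\fou$ must (by Lemma~\ref{Lem: Finiteness of broken lines}) have no bending modulo $I_0$, so $\vartheta_m|_{X_0^\circ}$ is the canonical monomial section on $\PP_S(\sigma)$ corresponding to $m$. By Proposition~\ref{Prop: X_0} and Lemma~\ref{Lem: Extension lemma} (applied to the $S_2$ scheme $X_0$ by Proposition~\ref{prop: S2 condition}), these monomial sections form an $A[Q]/I_0$-basis of $\Gamma(X_0,\O_{X_0})=\Gamma(X_0^\circ,\O_{X_0^\circ})$ indexed precisely by the asymptotic monomials. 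One now argues as in the proof of Proposition~\ref{Prop: Properties of foX^foj}: using $\foX^\circ$-flatness over $A[Q]/I$ and the fact that $\Gamma(\foX^\circ,\O_{\foX^\circ})\twoheadrightarrow\Gamma(X_0^\circ,\O_{X_0^\circ})$ is surjective with the $\vartheta_m$ mapping to the specified basis, the analogue of \cite{GHK1}, Lemmas~2.29--2.30 (with Lemma~\ref{Lem: Extension lemma} replacing \cite{GHK1}, Lemma~2.10) shows that $\{\vartheta_m\}$ is an $A[Q]/I$-module basis of $\Gamma(\foX^\circ,\O_{\foX^\circ})$.

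The main obstacle is the independence step: verifying that as $p$ crosses a locus $\Phi(\partial\Xi)$ of degenerate broken lines, every possible degeneration (ending on a wall, bending at a joint of codimension $0$, $1$ or $2$, passing through a boundary joint) is accounted for by one of the three consistency conditions. All of this has been packaged locally into Proposition~\ref{Prop: vartheta^foj_m exist}, and the only new input needed globally is the reduction of an arbitrary transverse crossing to the joint-local picture via tangent-wedge surgery, which is precisely Remark~\ref{Rem: 2d nature of consistency}.
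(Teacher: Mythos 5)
The existence and gluing half of your argument follows essentially the same path as the paper, which cites \cite{CPS}, Lemma~4.7 for the $p$-independence and patching (with Proposition~\ref{Prop: vartheta^foj_m exist} as the local input at codimension-two joints). You spell out the patching across chambers and slabs in more detail, which is fine; the key observations (degenerate broken lines at transverse crossings of $\Phi(\partial\Xi)$ meet joints or end on walls, Remark~\ref{Rem: 2d nature of consistency} reduces to the joint-local picture) are the right ones.

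The gap is in the basis statement. You invoke an ``analogue of \cite{GHK1}, Lemmas~2.29--2.30'' to pass from a basis modulo $I_0$ to a basis over $A[Q]/I$. But those lemmas, as used in Proposition~\ref{Prop: Properties of foX^foj}, operate in the affine setting: one forms the ringed space $(|X_{\foj,0}|, i_*\O_{\foX_\foj^\circ})$, checks flatness of the \emph{sheaf} via surjectivity of the reduction map, and then -- crucially because $X_{\foj,0}$ is affine and an infinitesimal extension of an affine scheme is affine -- identifies $\Gamma$ with the coordinate ring so that flatness of the sheaf equals flatness of $\Gamma$. In the general (non-conical, non-affine) setting of Theorem~\ref{Thm: vartheta_m exist}, flatness of $\O_{\foX^\circ}$ over $A[Q]/I$ does \emph{not} automatically imply that $\Gamma(\foX^\circ,\O_{\foX^\circ})$ is a flat or free $A[Q]/I$-module; taking global sections of a flat sheaf on a non-affine scheme can fail to be exact. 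What is needed instead is a direct induction on the infinitesimal order: filter $\foX^\circ$ by the closed subschemes $X_k^\circ$ defined by $\shI_0^k$, use flatness to get the exact sequence
\[
0\lra (I+I_0^k)/(I+I_0^{k+1})\otimes\Gamma(X_0^\circ,\O_{X_0^\circ})\lra
\Gamma(X_k^\circ,\O_{X_k^\circ})\lra\Gamma(X_{k-1}^\circ,\O_{X_{k-1}^\circ}),
\]
and then show that any $s\in\Gamma(X_k^\circ,\O_{X_k^\circ})$ is a $\vartheta_m$-combination by lifting coefficients from the $k-1$ step and absorbing the defect in the kernel term. The crucial point the paper exploits is that the $\vartheta_m$ are themselves globally defined lifts on $\foX^\circ$, so the rightmost map in the sequence hits each $\vartheta_m$ -- surjectivity onto the whole of $\Gamma(X_{k-1}^\circ,\O_{X_{k-1}^\circ})$ is not assumed, only reached inductively. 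Linear independence is handled the same way. Your argument skips all of this by substituting a reference to affine-case machinery; you should replace that citation with the inductive lifting argument.

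A minor remark on the base case: your detour through $\Gamma(X_0,\O_{X_0})$ and Lemma~\ref{Lem: Extension lemma} is correct (the fibrewise $S_2$ condition of Proposition~\ref{prop: S2 condition} is enough), but it is unnecessary -- $\Gamma(X_0^\circ,\O_{X_0^\circ})$ can be computed directly from the toric description, which is how the paper handles $k=0$.
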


\begin{proof}
Without joints contained in the singular locus $\Delta$ of the
affine structure, compatibility of $\vartheta_m(p)$ with varying $p$
within a chamber is covered by \cite{CPS}, Lemma~4.7. This proof
works literally the same in the present case with the assumption of
consistency in codimension two. Here Proposition~\ref{Prop:
vartheta^foj_m exist} replaces \cite{CPS}, Proposition~3.2 at
codimension two joints. This latter proposition describes the result
of transporting a monomial across $B_\foj$ for a joint $\foj$, in
the context of \cite{affinecomplex} with $\Delta$ transverse to
joints, in particular defining the local canonical functions
$\vartheta^\foj_m$.

To see that the $\vartheta_m$ just defined generate
$\Gamma(\foX^\circ,\O_{\foX^\circ})$, denote by $\shI_0$
the pull-back of the ideal $I_0\subseteq A[Q]/I$ to
$\foX^{\circ}$, and let $X^{\circ}_k$ be the closed subscheme
of $\foX^\circ$ defined by the ideal $\shI_0^k$. We will show by induction
on $k$ that the $\vartheta_m$ form an $A[Q]/(I+I_0^{k+1})$-module
basis of $\Gamma(X^{\circ}_k,\O_{X^{\circ}_k})$. For sufficiently large
$k$, $I_0^{k+1}\subseteq I$ since $\sqrt{I}=I_0$, so we conclude the result
for $\foX^{\circ}$.

For the $k=0$ case, $X_0^{\circ}$ is the complement in $X_0$ of the union
of toric strata of codimension two, see Proposition 
\ref{Prop: foX^o modulo I_0}. In this case the statement follows from
standard toric geometry over $A[Q]/I_0$.

Suppose the result is true for $k-1$ with $k\ge 1$. By flatness, there is
a short exact sequence 
\[
0\lra (I+I_0^k)/(I+I_0^{k+1})\otimes \O_{X_0^{\circ}}
\lra \O_{X_k^\circ}\lra \O_{X_{k-1}^\circ}\lra 0
\]
of abelian sheaves on $X_0^\circ$, 
see \cite{matsumura}, Theorem~22.3.
Taking global sections gives the
following exact sequence of $A[Q]$-modules:
\begin{equation}
\label{Eqn: X_k versus X_(k-1)}
0\lra(I+I_0^k)/(I+I_0^{k+1})\otimes\Gamma(X_0^\circ,\O_{X_0^{\circ}})\lra 
\Gamma(X_0^\circ,\O_{X_k^\circ})
\lra \Gamma(X_0^\circ,\O_{X_{k-1}^\circ}).
\end{equation}
By the induction hypothesis, the $\vartheta_m$ form an $A[Q]/(I+I_0^k)$-basis
for $\Gamma(X_0^{\circ},\O_{X^{\circ}_{k-1}})$. Thus given any
$s \in \Gamma(X_0^{\circ},\O_{X_k^{\circ}})$, the image of 
$s$ in $\Gamma(X_0^{\circ},\O_{X_{k-1}^{\circ}})$ can be written as a finite
sum $\sum_i \bar c_i\vartheta_{m_i}$ with $\bar c_i\in A[Q]/(I+I_0^k)$.
Lifting each $\bar c_i$ to $c_i\in A[Q]/(I+I_0^{k+1})$, we have that
$s'=\sum_i c_i\vartheta_{m_i}\in \Gamma(X_0^{\circ},\O_{X_k^{\circ}})$
has the same image as $s$ in $\Gamma(X_0^{\circ},\O_{X_{k-1}^{\circ}})$.
Hence $s-s'\in (I+I_0^k)/(I+I_0^{k+1})\otimes \Gamma(X_0^{\circ},
\O_{X_0^{\circ}})$, which by the base case can be written as a sum
$\sum_j d_j \vartheta_{m'_j}$ with $d_j \in I+I_0^k$. Thus $s$ itself
can be written as a linear combination of theta functions.

Linear independence is shown similarly: if $\sum c_i\vartheta_{m_i}=0$
in $\Gamma(X_0^{\circ},\O_{X^{\circ}_k})$, then by the induction hypothesis
$c_i\in I+I_0^k$ for each $i$ and by the base case $c_i=0$.
\end{proof}

%===========================================================
\subsection{The conical case}
\label{Subsect: The conical case}

A particular case arises when all cells of $\P$ are cones
(``conical''). Then $\P$ has exactly one vertex, and this vertex is
the only bounded cell. On the scheme-theoretic side the condition
means that $X_0$ is affine. In the most general situation we will
want to construct theta functions as sections of a line bundle using
the cone over $B$, so conical pseudomanifolds play a crucial role in the
most general construction. It therefore seems appropriate to develop
the conical case here before treating the most general case.

\begin{definition}
\label{Def: Conical B}
A polyhedral pseudomanifold $(B,\P)$ is called \emph{conical} if each
element of $\P$ is a cone. A conical
polyhedral pseudomanifold has a single vertex $v$.
A wall structure $\scrS$ on a conical
polyhedral pseudomanifold is called \emph{conical} if each wall $\fop$ in
$\scrS$ is a cone with vertex $v$.
\end{definition}

Assume now that $\scrS$ is a conical wall structure on the conical polyhedral
pseudomanifold $(B,\P)$ that is consistent. Then by Theorem~\ref{Thm: vartheta_m
exist} for each asymptotic monomial $m$ we have one distinguished
global function $\vartheta_m$ on $\foX^\circ$. In the present conical case these
functions provide an embedding of $\foX^\circ$ into an affine scheme with the
complement of the image of codimension at least two.

\begin{proposition}
\label{Prop: foX in the conical case}
Let $\scrS$ be a consistent wall structure on the conical polyhedral
pseudomanifold $(B,\P)$ and let $\foX^\circ$ be the associated scheme over
$A[Q]/I$. Then the $\vartheta_m$ freely generate
$R:=\Gamma(\foX^\circ,\O_{\foX^\circ})$ as an $A[Q]/I$-module, and the
induced canonical morphism
\[
\foX^\circ\lra \foX:=\Spec R
\]
is an open embedding restricting to $X^\circ_0\to X_0$ modulo $I_0$.
In particular, $R$ is a flat $A[Q]/I$-module, so that $\foX$ is flat
over $\Spec A[Q]/I$.
\end{proposition}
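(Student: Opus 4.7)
The freely-generating statement and the flatness of $R$ are immediate from Theorem \ref{Thm: vartheta_m exist}, which already asserts that the $\vartheta_m$ form an $A[Q]/I$-module basis of $R=\Gamma(\foX^\circ,\O_{\foX^\circ})$. The plan is therefore to identify the canonical morphism $\foX^\circ\to\foX:=\Spec R$ with an open embedding that reduces modulo $I_0$ to the standard inclusion $X_0^\circ\hookrightarrow X_0$, following the two-step strategy already used in the proof of Proposition \ref{Prop: Properties of foX^foj}.

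In the conical case $\P$ has a unique vertex, so by Proposition \ref{Prop: X_0} the scheme $X_0$ is affine; moreover Proposition \ref{Prop: foX^o modulo I_0} exhibits $X_0^\circ\subset X_0$ as the reduction of $\foX^\circ$ modulo $I_0$. First I would verify that $\vartheta_m\equiv z^m\pmod{I_0}$ for every asymptotic monomial $m$. This reduces to tracking that every non-trivial wall crossing in the broken-line construction introduces a factor in $I_0$: for a codimension zero wall, $f_\fop\equiv 1\pmod{I_0}$ by Definition \ref{Def: Wall structure}, so each bending contributes a factor in $I_0$; for a slab $\fob\subset\ul\rho$, the formula \eqref{eq: thetaslab} for $\theta_{\fou'\fou}$ shows that any non-straight transport acquires a factor $z^{\kappa_{\ul\rho}(\varphi)}\in I_0$ (the same mechanism driving Lemma \ref{Lem: Finiteness of broken lines}). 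Consequently only the unique straight broken line in asymptotic direction $-\ol m$ contributes mod $I_0$, giving $\vartheta_m\equiv z^m$. It follows that $R/I_0 R\to \Gamma(X_0,\O_{X_0})=S[B]_0$ is a bijection on the theta/monomial bases, hence an isomorphism.

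For the second step I would, in analogy with Proposition \ref{Prop: Properties of foX^foj}, form the ringed space $\tilde\foX$ with underlying topological space $|X_0|$ and structure sheaf $i_*\O_{\foX^\circ}$, where $i\colon |X_0^\circ|\hookrightarrow|X_0|$. The previous step implies that the reduction $\O_{\tilde\foX}\to\O_{X_0}$ modulo $I_0$ is surjective, so \cite{GHK1}, Lemma 2.29 (extended to arbitrary dimension by replacing \cite{GHK1}, Lemma 2.10, with Lemma \ref{Lem: Extension lemma} above, and using the $S_2$-property of $X_0$ guaranteed by Proposition \ref{prop: S2 condition}) will upgrade $\tilde\foX$ to a flat $A[Q]/I$-scheme. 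As an infinitesimal thickening of the affine scheme $X_0$, $\tilde\foX$ is itself affine with coordinate ring $R$, so $\tilde\foX=\foX$, and the inclusion of open ringed spaces $\foX^\circ\hookrightarrow\tilde\foX$ becomes the desired open embedding, whose mod-$I_0$ reduction is $X_0^\circ\hookrightarrow X_0$ by construction. I expect the main technical obstacle to be the extension of \cite{GHK1}, Lemma 2.29, beyond the surface case; as in Proposition \ref{Prop: Properties of foX^foj} this should reduce cleanly to the combination of Proposition \ref{prop: S2 condition} and Lemma \ref{Lem: Extension lemma}.
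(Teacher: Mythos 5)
Your proof is correct and follows the same route the paper takes: the paper's proof simply cites the argument of Proposition~\ref{Prop: Properties of foX^foj} as "completely analogous," and you have written out that analogy in full, including the check that $\vartheta_m\equiv z^m\pmod{I_0}$ (which underlies the paper's stated property (2), namely that the reductions of the $\vartheta_m$ generate $\Gamma(X_0,\O_{X_0})$) and the appeal to \cite{GHK1}, Lemma~2.29 upgraded via Lemma~\ref{Lem: Extension lemma} and Proposition~\ref{prop: S2 condition}. Nothing is missing.
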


\begin{proof}
The proof is completely analogous to the proof of Proposition~\ref{Prop:
Properties of foX^foj}. In fact, the only properties used are (1)~$X_0$ is
affine and $S_2$, (2)~the reductions modulo $I_0$ of the $\vartheta_m$
generate $\Gamma(X_0,\O_{X_0})$ and (3)~flatness of $\foX^\circ$ over $A[Q]/I$.
\end{proof}

\begin{example}
\label{Expl: Conical case with boundary revisited}
We are now in position to finish the discussion of
Example~\ref{Expl: Conical case with boundary}. In this example, the
asymptotic monomials of $(B,\P)$ are in bijection with integral
points of $B\cap\ZZ^2\setminus\{(0,0)\}$. If $m=(a,b)$ with $a\le 0$
then $\vartheta_m=z^{(a,b,0)}=x^{-a}w^b$ in $R^{\partial}_{\fou_1}$, 
while if $a\ge 0$ then
$\vartheta_m= z^{(a,b,a)}=y^aw^{b-a}$ in $R^{\partial}_{\fou_2}$. 
Taking into account the transport of monomials we see that
\[
X= \vartheta_{(-1,0)},\quad Y=\vartheta_{(1,1)},\quad
W=\vartheta_{(0,1)}.
\]
In fact, say for $X$, we find that an interior point of $\sigma_2$
is the endpoint of two broken lines with asymptotic monomial
$(-1,0)$. This yields the expression $(1+w)y^{-1}t$ that we gave for
the restriction of $X$ to $\Spec R^{\partial}_{\fou_2}$.

Moreover, by working in $R^{\partial}_{\fou_1}$ or in $R^{\partial}_{\fou_2}$, 
any other $\vartheta_m$ can be written as a polynomial in $X,W$ or in $Y,W$.
Thus by Proposition~\ref{Prop: foX in the conical case} $X,Y$ and $W$
generate $R=\Gamma(\foX^\circ,\O_{\foX^\circ})$, and they provide the
description of $\foX^\circ$ as the open subset of $\Spec R$ claimed in
Example~\ref{Expl: Conical case with boundary}.
\end{example}

\begin{example}
In the case of $(B,\Sigma)$ arising from a Looijenga pair $(Y,D)$ as
covered in Examples~\ref{Expl: standard examples},2, \ref{Expl:
MPA-functions},2, \ref{Expl: the n-vertex}, \ref{Expls: wall
structures},2 and \ref{Expl: Consistency at vertices} we note
$(B,\Sigma)$ is conical. In particular, since the canonical
scattering diagram provides a consistent wall structure,
Proposition~\ref{Prop: foX in the conical case} provides a flat
deformation of the $n$-vertex $\VV_n$. Note that
Proposition~\ref{Prop: foX in the conical case} is a
generalization of Theorem~2.26 of \cite{GHK1}.
\end{example}

%===========================================================
\subsection{The multiplicative structure}

In this section we give an a priori definition of the ring structure
on $\bigoplus_m \big(A[Q]/I\big)\cdot \vartheta_m$ turning the map
\[
\bigoplus_m \big(A[Q]/I\big)\vartheta_m\lra
\Gamma\big( \foX^\circ,\O_{\foX^\circ} \big)
\]
into an isomorphism of $A[Q]/I$-algebras. Our multiplication rule is
tropical in the sense that it is purely in terms of broken lines.

\begin{theorem}
\label{Thm: Multiplication}
Let $\scrS$ be a consistent wall structure on the polyhedral
pseudomanifold $(B,\P)$, and let $\foX^\circ$ be the corresponding flat scheme
over $A[Q]/I$ (Proposition~\ref{Prop: foX^o exists}).  For
asymptotic monomials $m_1$, $m_2$ let
\begin{equation}
\label{Eqn: product of vartheta's}
\vartheta_{m_1}\cdot\vartheta_{m_2}
=\sum_m \alpha_m(m_1,m_2)\cdot\vartheta_m
\end{equation}
be the expansion according to the direct sum decomposition of
Theorem~\ref{Thm: vartheta_m exist}. Thus the sum runs over the
asymptotic monomials of $(B,\P)$ and $\alpha_m(m_1,m_2)\in A[Q]/I$
is non-zero only for finitely many $m$.

For an asymptotic monomial $m$ let $\fou$ be an unbounded chamber of
$\scrS$ such that $m$ is an asymptotic monomial on $\fou$. Let $p\in
\fou$ be a point that is general for broken lines of asymptotics
$m_1$ and $m_2$. Then
\[
\alpha_m(m_1,m_2)= \sum_{(\beta_1,\beta_2)} a_{\beta_1}a_{\beta_2}, 
\]
where the sum is over all pairs $(\beta_1,\beta_2)$ of broken lines
with asymptotics $m_1, m_2$, with endpoint $p$ and
such that $\ol m_{\beta_1}+\ol m_{\beta_2}=m$, viewed as an equation
in $\Lambda_{\sigma_\fou}$.
\end{theorem}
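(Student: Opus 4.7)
\emph{Plan.} The existence and (essential) finiteness of the coefficients $\alpha_m(m_1,m_2)\in A[Q]/I$ are immediate from Theorem~\ref{Thm: vartheta_m exist}: the set $\{\vartheta_m\}$ is a free $A[Q]/I$-basis of $\Gamma(\foX^\circ,\O_{\foX^\circ})$ and the product $\vartheta_{m_1}\cdot\vartheta_{m_2}$ is a single element, hence has a unique expansion with only finitely many nonzero coefficients. It remains to identify each $\alpha_m(m_1,m_2)$ with the prescribed broken-line count. I will do this by restricting \eqref{Eqn: product of vartheta's} to the chart $R_\fou=(A[Q]/I)[\Lambda_\sigma]$ with $\sigma=\sigma_\fou$ and comparing the coefficients of the monomial $z^{\ol m}\in R_\fou$ on both sides.

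For any general point $p\in\fou$, Theorem~\ref{Thm: vartheta_m exist} yields the expansions $\vartheta_{m_i}(p)=\sum_{\beta_i}a_{\beta_i}z^{m_{\beta_i}}$ in $R_\fou$. Multiplying these and using $z^{m_{\beta_1}}\cdot z^{m_{\beta_2}}=z^{m_{\beta_1}+m_{\beta_2}}$, the coefficient of $z^{\ol m}$ on the left of \eqref{Eqn: product of vartheta's} is exactly the proposed sum $\sum_{(\beta_1,\beta_2)\colon m_{\beta_1}+m_{\beta_2}=\ol m} a_{\beta_1}a_{\beta_2}$. On the right, extracting the same coefficient gives $\sum_{m'}\alpha_{m'}(m_1,m_2)\,[z^{\ol m}]\vartheta_{m'}(p)$. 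It therefore suffices to exhibit a general $p$ for which $[z^{\ol m}]\vartheta_m(p)=1$ and $[z^{\ol m}]\vartheta_{m'}(p)=0$ for every $m'\neq m$.

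The geometric heart of the argument is to choose $p$ deep in the asymptotic direction of $\fou$. Since $\ol m$ lies in the asymptotic cone of $\fou$ and there are only finitely many walls inside $\sigma$ (Definition~\ref{Def: Wall structure},2(a)), one can select $p\in\Int\fou$ general (Definition~\ref{Def: general points}) and sufficiently far along $\ol m$ so that the open ray $p+\RR_{>0}\ol m$ is disjoint from $|\scrS|$. Now consider any broken line $\beta$ ending at $p$ with $m_\beta=\ol m$. By Definition~\ref{Def: broken lines}(1) we have $\beta'(t)=-\ol m$ on $(t_{r-1},0)$, so if $r\ge 2$ then $\beta(t_{r-1})=p+|t_{r-1}|\ol m\in|\scrS|$, contradicting the choice of $p$. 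Hence $r=1$, i.e.\ $\beta$ is the unique straight half-line from infinity to $p$ in direction $\ol m$, with $a_\beta=1$ and asymptotic monomial $m$. This proves both coefficient assertions simultaneously and completes the identification at this $p$; the formula at every other general point follows automatically, since both sides of the resulting identity are coefficients of the same fixed element of $R_\fou$.

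The main technical obstacle is the simultaneous realisation of the two conditions on $p$: genericity in the sense of Definition~\ref{Def: general points} for the finitely many types of broken lines with asymptotics $m_1,m_2$ and for those $m'$ with $\alpha_{m'}(m_1,m_2)\neq 0$, and the ray-disjointness $(p+\RR_{>0}\ol m)\cap|\scrS|=\emptyset$. Each condition removes a finite union of rational polyhedral subsets of $\sigma$ of codimension $\ge 1$, and the unbounded region of $\Int\fou$ lying far along $\ol m$ intersects the complement in a non-empty open set, so such a $p$ exists. Apart from that bookkeeping, the entire proof is a direct matching of coefficients enabled by the asymptotic rigidity of broken lines far along their asymptotic ray.
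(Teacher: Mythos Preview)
Your proof is correct and follows essentially the same approach as the paper: restrict \eqref{Eqn: product of vartheta's} to $R_\fou$ and compare coefficients of $z^m$, using that the only broken line ending at $p$ with final exponent $m$ is the straight one with asymptotic $m$ and coefficient $1$.

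One remark: the ``sufficiently far along $\ol m$'' manoeuvre and the subsequent extension to other general points are unnecessary. The hypothesis that $m$ is an asymptotic monomial on the chamber $\fou$ means $m\in\fou_\infty$, and for any convex polyhedron this already forces $p+\RR_{>0}m\subset\Int\fou$ for \emph{every} $p\in\Int\fou$; since $\Int\fou\cap|\scrS|=\emptyset$, the ray is automatically disjoint from $|\scrS|$. Thus the key uniqueness claim about broken lines with $m_\beta=m$ holds at every general $p\in\Int\fou$, not just far ones, and the theorem follows directly without the extra bookkeeping in your last paragraph.
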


\begin{proof}
This proof is a straightforward adaptation from \cite{GHK1}, \S2.4.
To find the coefficient $\alpha_m(m_1,m_2)$ in the stated expansion
we look at the coefficients of $z^m$ in $R_\fou=
(A[Q]/I)[\Lambda_\sigma]$ of both sides of \eqref{Eqn: product of
vartheta's}. Now the only broken line $\beta$ with endpoint $p\in
\Int\fou$ and with $\ol m_\beta=m$ lies entirely in $\fou$ and has
no bends. Thus in the local expression of the canonical functions in
$R_\fou$ only $\vartheta_m$ has a non-zero coefficient of $z^m$,
which is $1$. Thus $\alpha_m(m_1,m_2)$ agrees with the coefficient
of $z^m$ in the expansion of the left-hand side in $R_{\fou}$. The
statement now follows readily by plugging in the local definition of
$\vartheta_{m_1}$ and $\vartheta_{m_2}$ in terms of broken lines
with the respective asymptotics.
\end{proof}

%===========================================================
%===========================================================
\section{The projective case --- theta functions}
\label{Sect: Theta functions}

In the case that $X_0$ is not affine we are going to construct an
extension $\shL^\circ$ of the ample line bundle on $X_0$ to
$\foX^\circ$, and an $A[Q]/I$-module basis of global sections of
powers $(\shL^\circ)^{\otimes d}$ for $d\ge 0$. This is done by
constructing the total space $\foL^\circ$ of $(\shL^\circ)^{-1}$ as
an affine scheme over $\foX^\circ$. The canonical sections of
$(\shL^\circ)^{\otimes d}$ are then constructed as fibrewise
homogeneous canonical functions on $\foL^\circ$ of the kind
considered in Section~\ref{Sect: Global functions}. Eventually we
can then define the partial completion $\foX$ of $\foX^\circ$ as
$\Proj \left(\bigoplus_ d \Gamma(\foX^\circ, (\shL^\circ)^{\otimes
d})\right)$.

On the tropical side the transition from $\foX^\circ$ to
$\foL^\circ$ corresponds to taking a truncated cone over $(B,\P)$.
We begin with an investigation of the cone construction. 

%===========================================================
\subsection{Conical affine structures}
\label{Subsect: conical affine geom}

Let $B_0$ be an affine manifold (without singularities and not
necessarily integral for the moment, see \S\ref{Subsect: Polyhedral
affine manifolds}, and possibly with $\partial B\neq\emptyset$).
Thus $B_0$ is a real manifold of dimension~$n$ with an atlas such
that the transition functions are affine transformations $T\in
\Aff(\RR^n)= \GL(n,\RR) \ltimes \RR^n$. Our notation for affine
transformations of a real vector space $V$ is $T=A+b$ with
$A\in\GL(V)$, $b\in V$.

\begin{construction}
\label{Constr: cone B_0}
\emph{(The cone over an affine manifold)}
The \emph{cone over $B_0$} is the cone of $B_0$ as a topological
space
\[
\cone B_0:= \big(B_0\times\RR_{\ge 0}\big)\big/ \big(B_0\times \{0\}\big),
\]
endowed with the following affine structure with singularity at the
\emph{apex} $O\in \cone B_0$, the image of $B_0\times\{0\}$ in $\cone
B_0$. For $\psi:U\to \RR^n$ an affine chart for $B_0$, defined on an
open set $U\subseteq B_0$ we define the chart
\begin{equation}
\label{chart for CB}
\tilde\psi: \cone U\setminus \{O\}\lra \RR^{n+1},\quad
(x,h)\longmapsto (h\cdot \psi(x),h)
\end{equation}
for $\cone B_0$.

We remark that if $B_0$ is unbounded, it is not really appropriate for
the cone to have an apex, but rather the apex should be replaced by
an asymptotic version of $B_0$. This is easier to do when given a polyhedral
pseudomanifold, see Definition \ref{Def: cone(B,P)}. However, the precise nature of the
apex will not play a role in the discussion in this subsection.
\qed
\end{construction}
Thus if two charts $\psi_1$, $\psi_2$ are related by
$\psi_2=A\circ \psi_1+b$ for $A\in \GL(n,\RR)$, $b\in\RR^n$ then
\[
\tilde\psi_2=\tilde A \circ\tilde\psi_1
\]
with $\tilde A(x,h)= (Ax+hb,h)$. Intrinsically, if $\AA$ is an affine space with
underlying real vector space $V$, then the map associating to a pair $(A,b) \in
\GL(\RR^n)\times \RR^n$ the linear transformation $\tilde A\in \GL(\RR^{n+1})$
generalizes to
\[
\Aff(V)\lra \GL(V\oplus\RR),\quad
A+b\longmapsto \tilde A.
\]
We refer to this process as \emph{homogenization} of the affine
transformation $A+b$. Clearly, if $A+b\in \Aff(T_x B_0)$ is the
affine holonomy along a closed path $\gamma$ on $B_0$ starting and
ending at $x$, then $\tilde A$ is the affine monodromy of
$(\gamma,h)$ for any $h>0$. We think of the cone as standing on the
apex and call the second entry $h$ the \emph{height} of $(x,h)\in
\cone B_0$.

Note that all transition functions of $\cone B_0\setminus\{O\}$ are
linear. Hence $\cone B_0\setminus\{O\}$ is a radiant affine
manifold, that is, has vanishing radiance obstruction
(\cite{logmirror1}, Definition~1.6). Further features are that for
$h>0$ the rescaled affine manifold $h B_0$ with charts $h\psi$ is
embedded as the affine submanifold $B_0\times\{h\}$ of constant
height. Moreover, for any $x\in B_0$ the ray
\[
L_x:=\big\{(y,h)\in \cone B_0\setminus\{O\}\,\big|\, y=x\big\}
\]
is an affine line. However, if $x\neq y$ then $L_x$ and $L_y$ are
not parallel as they would be in the product affine manifold
$B_0\times\RR_{\ge0}$. To quantify this, we can consider the flat
\emph{affine} connection on $\cone B_0\setminus \{0\}$ induced by the
affine structure on $\cone B_0$.\footnote{There is
a confusion in the literature about the attributes ``linear'' versus
``affine'' for connections. Affine connections in the sense used here
take into account the moving of the base point also, see e.g.\
\cite{kobnim}, Chapter~III.}
Identifying the tangent space of $\cone B_0\setminus\{O\}$ at
$(x,h)$ with $T_x B_0\oplus \RR$ with the second factor the tangent
space to $L_x$, we  have the following description of parallel
transport with respect to this connection.

\begin{proposition}
\label{Prop: parallel transport cone}
Let
\[
T_\gamma=A+b: T_x B_0\lra T_y B_0,
\]
be the affine parallel transport for a path $\gamma$ in $B_0$ from
$x$ to $y$ and let $b\in\Lambda_y$ be the affine displacement vector
(in an affine chart, $b=x-y$). Then the linear part of the parallel
transport on $\cone B_0\setminus\{O\}$ from $(x,h_1)$ to $(y,h_2)$
along a path of the form $t\mapsto (\gamma(t),h(t))$ is given by
\[
T_x B_0\oplus\RR\lra T_y B_0\oplus\RR,\quad
(v,\eta) \longmapsto \big(h_2^{-1} (h_1 Av+\eta b),\eta\big).
\]
\end{proposition}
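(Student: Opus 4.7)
The plan is to reduce to a computation in a single cone chart, then extend by functoriality.

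First I would verify that the homogenization map
\[
\Aff(V)\lra \GL(V\oplus\RR),\quad A+b\longmapsto \tilde A,\qquad \tilde A(x,h)=(Ax+hb,h),
\]
is a group homomorphism: composing $(A_1+b_1)\circ(A_2+b_2)=A_1A_2+A_1b_2+b_1$ and comparing with $\tilde A_1\circ\tilde A_2(x,h)=(A_1A_2x+hA_1b_2+hb_1,h)$ gives equality. Since the cone charts \eqref{chart for CB} satisfy $\tilde\psi_2=\tilde A\circ\tilde\psi_1$ whenever $\psi_2=A\psi_1+b$, this identifies the transition functions of $T(\cone B_0\setminus\{O\})$ in the trivializations induced by cone charts with the homogenizations of those of $T B_0$. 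In particular, parallel transport along any path in $\cone B_0\setminus\{O\}$ decomposes consistently into single-chart pieces.

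Next I would carry out the single-chart computation. Fix a chart $\psi:U\to\RR^n$ containing $\gamma$, set $p=\psi(x)$, $q=\psi(y)$, and identify $T_zB_0\simeq\RR^n$ via $d\psi_z$ for $z\in U$. Differentiating $s\mapsto\tilde\psi(x(s),h(s))=(h(s)\psi(x(s)),h(s))$ at $s=0$ with initial velocity $(v,\eta)$ shows that, in cone-chart coordinates,
\[
d\tilde\psi_{(x,h_1)}(v,\eta)=(h_1v+\eta p,\eta).
\]
Since $\tilde\psi$ gives affine coordinates on the image, linear parallel transport from $(x,h_1)$ to $(y,h_2)$ is the identity on $\RR^{n+1}$. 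Pulling back via $d\tilde\psi_{(y,h_2)}^{-1}$, i.e.\ solving $(h_2w+\xi q,\xi)=(h_1v+\eta p,\eta)$, gives $\xi=\eta$ and $w=h_2^{-1}(h_1v+\eta(p-q))$. Since $p-q$ represents the displacement $x-y\in\Lambda_y$, this is exactly the formula in the case $A=\id$.

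For a general path $\gamma$, I would subdivide $[0,1]$ so that each subpath lies in a single chart, apply the previous computation on each piece, and compose. The compatibility between pieces follows from the homomorphism property of homogenization established in the first step: if $T_{\gamma_1}=A_1+b_1$ and $T_{\gamma_2}=A_2+b_2$ are transports along consecutive subpaths, then the composed linear part in the cone is $\tilde A_1\tilde A_2$, which matches the homogenization of $T_{\gamma_2}\circ T_{\gamma_1}$; the height factors $h_i$ combine telescopically. The only real care needed is tracking the sign convention $b=x-y$ declared in the statement and checking that the height function $h(t)$ along the path does not enter the final formula (it cannot, by flatness of the connection and the fact that only the endpoints $h_1,h_2$ appear). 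No step is a substantive obstacle; the main bookkeeping issue is keeping the identifications $T_zB_0\oplus\RR\simeq T_{(z,h)}(\cone B_0)$ straight through chart changes.
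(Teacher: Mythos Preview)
Your proof is correct and follows essentially the same strategy as the paper: reduce to a single chart by compatibility with composition, then do a direct coordinate computation. The only difference is packaging---the paper computes the flat frame $\partial_{w_i}$ in the $(x,h)$-coordinates, while you compute the differential $d\tilde\psi$ in the other direction; these are inverse linear-algebra problems and yield the same formula.
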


\begin{proof}
By a straightforward computation the claimed formula is compatible
with compositions of paths. Hence we can restrict to the domain of a
single chart, and in turn to $B_0$ an open subset of $\RR^n$. Let
$x_1,\ldots,x_n$ be the affine coordinates on $B_0$ thus defined and
consider $x_i$ as functions on $\cone B_0$ by pull-back via the
projection $\cone B_0\setminus\{O\}\to B_0$. Affine
parallel transport on $B_0$ in this chart gives $A=\id$ and
$b=\sum_i (x_i(x)-x_i(y))\partial_{x_i}$. The $x_i$ together with
the height function $h$ define a non-affine coordinate chart on
$\cone B_0\setminus \{O\}$. Let $w_1,\ldots, w_{n+1}$ be the affine
coordinate functions on $\cone B_0\setminus\{O\}$ defined
by~\eqref{chart for CB} for the given chart of $B_0$. In particular,
$\partial_{w_1},\ldots,\partial_{w_{n+1}}$ define a basis of flat
vector fields on $\cone B_0\setminus\{O\}$. Since $h=w_{n+1}$ and
$x_i= w_{n+1}^{-1} w_i = h^{-1}w_i$ we have
\[
\partial_{w_i}=h^{-1}\partial_{x_i},\quad
\partial_{w_{n+1}}=\partial_h+\sum_{i=1}^n(-h^{-2}w_i)\partial_{x_i}
= \partial_h- h^{-1}\sum_{i=1}^n x_i\partial_{x_i}.
\]
Thus $h^{-1}\partial_{x_i}$ and $\partial_h- h^{-1}\sum_i
x_i\partial_{x_i}$ are a basis of flat vector fields on
$\cone B_0\setminus\{O\}$. Evaluating at $(x,h_1)$ and on $(y,h_2)$
now establishes the claimed formula for the linear part of the parallel
transport on $\cone B_0\setminus\{O\}$. 
\end{proof}

\begin{remark}
1)\ The proposition shows that the parallel transport of the
\emph{linear} connection on $\cone B_0\setminus\{O\}$ contains all
the information of \emph{affine} parallel transport on $B_0$. Note
also that for a closed loop on $\cone B_0\setminus\{O\}$ affine
parallel transport is linear because $\cone B_0\setminus\{O\}$ is
radiant.\\[1ex]
2)\ A special case is that $h_1=h_2=h$, for example if $\gamma$ is
a closed loop. Then the map reads
\begin{equation}
\label{equal height parallel transport}
(v,\eta) \longmapsto \big( Av+h^{-1}\eta b,\eta\big).
\end{equation}
3)\ If $B_0$ is integral then also $\cone B_0$ is integral, and all
of the stated formulas respect the integral structure. But note that
the affine embedding $B_0\times\{h\} \hookrightarrow \cone B_0$ is
integral only for $h=1$.
\end{remark}

%===========================================================
\subsection{The cone over a polyhedral pseudomanifold}
\label{Subsect: cone of polyhedral}

Let us now assume that $B_0= B\setminus\Delta$ for a polyhedral
pseudomanifold $(B,\P)$. Recall from \eqref{Eqn: cone over unbounded cell}
the definition of $\cone\sigma$ for $\sigma$ a (possibly unbounded)
polyhedron. In particular, if $\sigma\subseteq \RR^n$, then the
intersection of $\cone\sigma$ with $\RR^n\times \{0\}$ is the
asymptotic cone of $\sigma$. If $(\tau_1\to \tau_2)\in \hom(\P)$
identifies $\tau_1$ with a face of $\tau_2$ then taking cones yields
an identification of $\cone\tau_1$ with a face of $\cone\tau_2$.

\begin{definition}
\label{Def: cone(B,P)}
The \emph{cone over the polyhedral pseudomanifold $(B,\P)$} is the
topological space
\[
\cone B=\varinjlim_{\sigma\in\P} \cone{\sigma}
\]
with polyhedral decomposition $\cone\P:=\big\{\cone\tau\,\big|\,
\tau\in\P\big\}$ and affine structure on $\cone B_0\subseteq \cone
B\setminus \cone\Delta$ defined in Construction~\ref{Constr: cone
B_0}.
\end{definition}

Note that the affine structure on $\cone B_0$ extends uniquely to
the closure in \eqref{Eqn: cone over unbounded cell} in a way
compatible with the inclusion of faces. Thus $(\cone B,\cone\P)$ is
a polyhedral affine pseudomanifold as defined in
Construction~\ref{Construction: B}. 

Clearly, $(\cone B,\cone \P)$ is conical (Definition~\ref{Def:
Conical B}). Note that the projection to the
second factor $\RR$ in \eqref{Eqn: cone over unbounded cell} defines
a global affine function $h:\cone B\to \RR$, the height, and
$h^{-1}(0)$ is the union of the asymptotic cones of $\sigma\in\P$.
Normalizing by the height defines a deformation retraction
\[
\cone B\setminus h^{-1}(0)\to B\times\{1\}
\]
with preimage of a subset $A\subseteq B=B\times\{1\}$ the punctured
cone $\cone A\setminus h^{-1}(0)$ over $A$.

Our next objective is to lift a wall structure $\scrS$ on $(B,\P)$ to
$(\cone B,\cone \P)$. Note first that a
$Q^\gp$-valued MPA function $\varphi$ on $B$ induces the MPA
function $\cone\varphi$ on $\cone B$ with kinks
\[
\kappa_{\cone\ul\rho}(\cone\varphi):=\kappa_{\ul\rho}(\varphi).
\]
This definition makes sense because the connected components of
$\cone \rho\setminus\cone\Delta$ are cones over the connected
components of $\rho\setminus\Delta$. The restriction of a local
representative of $\cone{\varphi}$ to $B=B\times\{1\}$ is a local
representative of $\varphi$. In fact, this is a non-trivial
statement only at general points of a codimension one cell
$\cone{\ul\rho}$, $\ul\rho\in\tilde \P^{[n-1]}$. By the definition
of $\ul\rho$ there is a vertex $v\in\ul\rho$. Then
$\Lambda_{\cone\rho}= (\Lambda_\rho\times\{0\})
\oplus(\ZZ\cdot(v,1))$. With this description of
$\Lambda_{\cone\rho}$, if $x\in\Int\ul\rho$ and $\xi\in
\Lambda_x$ generates $\Lambda_{B,x}/\Lambda_\rho$ then $(\xi,0)$
generates $\Lambda_{\cone{B},x}/ \Lambda_{\cone{\rho}}$. The
statement now follows from the definition of the kink of an MPA
function from a local representative (Definition~\ref{Def: kink}).

For the monomials on $\cone B_0$ use the integral affine embedding
$B\times\{1\}\to \cone B$ and parallel transport along rays
emanating from the apex $O\in\cone B$ to lift a monomial at $x\in B$
to a monomial at any point on $\cone x= \{x\}\times\RR_{\ge 0}
\subseteq \cone B$. By abuse of notation we interpret a monomial $m\in
\shP_x$ at a point $x\in B\setminus \Delta$ (a monomial on $B_0$)
also as a monomial on $\cone B$ at any point $(x,h)\in\cone{x}$.

The lifting of a wall $\fop$ of codimension zero shows a certain
subtlety that we now explain. Let $\sigma\in\P_\max$ be the maximal
cell containing $\fop$ and let $n\in\check\Lambda_\sigma$ generate
$\Lambda_\fop^\perp\subseteq \check\Lambda_\sigma$. Projection to the
last component (the height) induces the map of lattices
\[
\Lambda_{\cone{\fop}}\lra \ZZ.
\]
If this map is surjective then there exists $b\in\NN$ with $(n,-b)$
a generator of $\Lambda_{\cone{\fop}}^\perp\subseteq
\check\Lambda_{\cone{\sigma}}$. In fact, if
$(m,1)\in\Lambda_{\cone{\fop}}$ is a lift of $1\in\ZZ$, then 
$\Lambda_{\cone{\fop}}=\Lambda_\fop\times\{0\} \oplus\ZZ\cdot
(m,1)$; in this case $(n,-b)$ with $b:= \langle n,m\rangle$
generates $\Lambda_{\cone{\fop}}^\perp$. In general, the image of
$\Lambda_{\cone{\fop}}\to \ZZ$ is only a subgroup of
$\ZZ$, hence of the form $a\cdot\ZZ$ for some $a\in\NN$. Let
$(m,a)\in\Lambda_{\cone{\fop}}$ be a lift. Then
$\Lambda_{\cone{\fop}}= \Lambda_\fop\times\{0\}\oplus \ZZ\cdot(m,a)$
and
\[
\Lambda^\perp_{\cone{\fop}} =\ZZ\cdot(an,-b)
\]
with $b=\langle n,m\rangle$.

\begin{definition}
\label{Def: index cone fop}
For a rational polyhedral subset $\foa\subseteq B$ the index $a\in\NN$
of the image of the projection $\Lambda_{\cone{\foa}}\to \ZZ$ to the height is
called the \emph{index of $\cone{\foa}$}.
\end{definition}

Thus if we want to lift the wall in such a way that the attached
automorphism is compatible with the automorphism attached to $\fop$
we need to take an $a$-th root of $f_\fop$ for $a$ the index of
$\cone{\fop}$. Such a root exists uniquely by the following
elementary lemma whose proof is left to the reader.

\begin{lemma}
\label{Lem: roots exist}
Let $R$ be a ring containing $\QQ$ and $I_0\subseteq R$ a nilpotent
ideal. Then for any $f\in 1+I_0$ and $a\in\NN\setminus\{0\}$ there
exists a unique $g\in 1+I_0$ with $g^a=f$. 
\qed
\end{lemma}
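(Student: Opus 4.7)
The plan is to use the classical exponential-logarithm correspondence, made rigorous in a nilpotent setting. Since $I_0\subset R$ is nilpotent, choose $N$ with $I_0^N=0$. Because $R\supset\QQ$, the truncated series
\[
\log(1+x):=\sum_{k=1}^{N-1}\frac{(-1)^{k+1}}{k}x^k,\qquad
\exp(y):=\sum_{k=0}^{N-1}\frac{y^k}{k!}
\]
define well-defined maps $\log\colon 1+I_0\to I_0$ and $\exp\colon I_0\to 1+I_0$.

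The first step is to verify that these maps are mutually inverse bijections and that $\exp(y_1+y_2)=\exp(y_1)\exp(y_2)$ whenever $y_1,y_2\in I_0$. These are formal identities in $\QQ[[x_1,x_2]]$, and each holds after truncation modulo any power $(x_1,x_2)^N$ because every term omitted from the truncations involves a factor from $I_0^N=0$. In particular, $\log\bigl((1+x)^a\bigr)=a\log(1+x)$ for any $x\in I_0$ and any $a\in\NN$.

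For existence, set $g:=\exp\bigl(\tfrac{1}{a}\log(f)\bigr)$, which lies in $1+I_0$; this uses that $a$ is invertible in $\QQ\subset R$. Then
\[
g^a=\exp\bigl(a\cdot\tfrac{1}{a}\log(f)\bigr)=\exp(\log(f))=f.
\]
For uniqueness, suppose $g\in 1+I_0$ satisfies $g^a=f$. Applying $\log$ to both sides gives $a\log(g)=\log(f)$, and since $a$ is a unit in $R$ we obtain $\log(g)=\tfrac{1}{a}\log(f)$; applying $\exp$ recovers the formula for $g$ above.

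The only step requiring any care is the verification of the functional equations for the truncated $\log$ and $\exp$, but this is purely formal: both identities are consequences of the corresponding identities for formal power series over $\QQ$, which survive truncation modulo the ideal of nilpotent elements. There is no genuine obstacle.
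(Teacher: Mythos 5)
The paper gives no proof of this lemma (it is explicitly left to the reader), so there is nothing to compare against; your exp/log argument is correct and complete. The truncated-power-series identities hold modulo $(x)^N$ for exactly the reason you give, and substituting an element of $I_0$ kills everything in $(x)^N$, so both the bijectivity of $\exp$, $\log$ and the homomorphism property survive truncation; existence and uniqueness then follow as you write. One small remark: a Hensel-style induction on $I_0^k$ would prove the statement under the slightly weaker hypothesis that $a$ alone is invertible in $R$ (rather than all of $\QQ$), but since the paper posits $\QQ\subset R$, your approach is perfectly adapted and arguably cleaner.
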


\begin{definition}
\label{Def: cone of wall structure}
The \emph{cone of a  wall} $(\fop, f_\fop)$ on the polyhedral
pseudomanifold $(B,\P)$ is the wall on $(\cone B,\cone\P)$ with underlying
set $\cone \fop$ and function $f_{\cone \fop}:=f_\fop^{1/a}$, with
the monomials on $B$ canonically interpreted as monomials on $\cone
B$ as explained. Here $a$ is the index of $\cone{\fop}$
(Definition~\ref{Def: index cone fop}) and $f_\fop^{1/a}$ is the
$a$-th root of $f_\fop$ according to Lemma~\ref{Lem: roots exist}.

Taking cones of the elements of a wall structure $\scrS$ on $(B,\P)$ defines the
wall structure $\cone\scrS$ on $(\cone B,\cone\P)$. Technically, under certain
circumstances, this will not satisfy the definition of wall structure. Indeed,
if $\scrS$ has a chamber $\fou$ whose asymptotic cone $\fou_{\infty}$ is $n=\dim
B$-dimensional, and if in addition $\fou$ intersects $\partial B$ in a set of
dimension $n-1$, then $\cone\fou$ will intersect two different $n$-dimensional
cells of $\partial \cone B$ in $n$-dimensional sets. This violates the condition
on chambers intersecting $\partial B$ in Definition \ref{Def: Wall
structure}. As already remarked in Remark~\ref{Rem: wall structures},5,
this problem can be rectified by adding some walls to $\cone\scrS$ which have
attached function $1$. Since such walls do not affect anything, we will ignore
this technical issue.
\end{definition}

\begin{remark}
Note that there are no roots involved in codimension one walls since
they are contained in facets of the adjacent maximal cells, which
contain integral points, and hence they have index one. Slab functions
are not of the form covered by Lemma~\ref{Lem: roots exist} and
may not have roots.
\end{remark}

\begin{proposition}
\label{Prop: Consistency for cone scrS}
If the wall structure $\scrS$ on $(B,\P)$ is consistent (in
codimension $k$) then so is the lifted wall structure $\cone\scrS$
on $(\cone B,\cone \P)$.
\end{proposition}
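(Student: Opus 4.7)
The plan is to exploit the natural $\Gm$-action on $\cone B\setminus h^{-1}(0)$ rescaling heights, which fixes every element of $\cone\scrS$ (each wall is a cone and $f_{\cone\fop}=f_{\fop}^{1/a}$ has only height-zero exponents). This induces a $\ZZ$-grading on every ring $R_{\cone\sigma}$, $R_{\cone\fob}$, $R_{\cone\sigma}^{\cone\fob}$ preserved by all wall-crossing, slab, and joint morphisms of $\cone\scrS$, so each of the three consistency conditions splits degree-by-degree. The key geometric input, used repeatedly, is the identity $a_i\mid a_\foj$ together with $\langle n_i,m_\foj\rangle=(b_i/a_i)\,a_\foj$ for any chosen lift $(m_\foj,a_\foj)\in\Lambda_{\cone\foj}\subset\Lambda_{\cone\fop_i}$; it is immediate from $(m_\foj,a_\foj)=(w_i,0)+(a_\foj/a_i)(m_i,a_i)$ with $w_i\in\Lambda_{\fop_i}$.

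For codimension zero at a joint $\foj$ with cyclically ordered walls $\fop_1,\dots,\fop_r\subset\sigma$, I fix $t=z^{(0,1)}$ so that $R_{\cone\sigma}\simeq R_\sigma[t^{\pm1}]$; a direct computation using $n_{\cone\fop_i}=(a_i n_i,-b_i)$ yields $\tilde\theta_{\cone\fop_i}|_{R_\sigma}=\theta_{\fop_i}$ and $\tilde\theta_{\cone\fop_i}(t)=f_{\fop_i}^{-b_i/a_i}\,t$. Consistency of $\cone\scrS$ at $\cone\foj$ is then equivalent to (i)~the given consistency of $\scrS$ at $\foj$, together with (ii)~the scalar identity $\prod_i(\theta_{\fop_r}\circ\cdots\circ\theta_{\fop_{i+1}})(f_{\fop_i}^{-b_i/a_i})=1$ in $1+I_0\subset R_\sigma$. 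Raising (ii) to the $a_\foj$-th power and applying the key identity replaces each fractional exponent by $-\langle n_i,m_\foj\rangle$, turning the product into the $\scrS$-consistency relation at $\foj$ applied to $v=-m_\foj\in\Lambda_\sigma$, which gives $1$; Lemma~\ref{Lem: roots exist} (uniqueness of $a_\foj$-th roots in $1+I_0$) then produces (ii). The codimension-one case is formally the same: since any $\rho\in\P^{[n-1]}$ contains a vertex $v$, the cone $\cone\rho$ has index one, $f_{\cone\fob}=f_\fob$, $\kappa_{\cone\ul\rho}(\cone\varphi)=\kappa_{\ul\rho}(\varphi)$, and choosing such a vertex yields $R_{\cone\fob}\simeq R_\fob[s^{\pm1}]$ via $\Lambda_{\cone\rho}=\Lambda_\rho\oplus\ZZ\cdot(v,1)$. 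The degree-zero part of codim-one consistency at $\cone\foj$ is the given codim-one consistency of $\scrS$ at $\foj$, while the positive-degree part reduces to the same scalar identity, verified by the same key identity (with $m_\foj\in\Lambda_\sigma$ extracted from $(m_\foj,a_\foj)\in\Lambda_{\cone\foj}\subset\Lambda_{\cone\rho}$).

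The main obstacle is codimension two. Here consistency at $\cone\foj$ demands that the broken-line sum $\vartheta^{\cone\foj}_{\tilde m}(\tilde p)$ of \eqref{Def: vartheta^foj_m(p)} be constant on chambers and compatible with chamber crossings. Since all monomials appearing in $f_{\cone\fop}$ have height $0$, every wall-crossing of $\cone\scrS$ preserves monomial height, and so the sum decomposes according to the height $d\ge0$ of the asymptotic monomial $\tilde m=(v,d)$. I plan to handle $d=0$ by restricting to the horizontal slice $\{h=1\}$ of $\cone B_{\cone\foj}$, which is integral-affine isomorphic to $B_\foj$: horizontal broken lines on $\cone B_{\cone\foj}$ correspond bijectively to broken lines on $B_\foj$, $\tilde\theta_{\cone\fop}$ restricts to $\theta_\fop$ on height-zero monomials, and hence $\vartheta^{\cone\foj}_{(v,0)}$ coincides with $\vartheta^\foj_v$, well-defined by the given consistency of $\scrS$ at $\foj$. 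For $d>0$, broken lines on $\cone B_{\cone\foj}$ descend linearly in height, and at each wall crossing the transport rule on a height-$d$ monomial differs from the transport on $B_\foj$ only by the scalar factor $f_\fop^{-(b/a)d}$ already analyzed in the codim-$0$ argument; repackaging the accumulated factors along any broken line via the key identity $\langle n_i,m_\foj\rangle=(b_i/a_i)a_\foj$ reduces the verification of chamber-independence and chamber-crossing compatibility at height $d$ to the corresponding $\scrS$-consistency statement at $\foj$, completing the proof.
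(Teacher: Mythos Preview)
Your treatment of codimensions zero and one is essentially the paper's argument, recast computationally: where you track the scalar factor $\prod_i f_{\fop_i}^{-b_i/a_i}$ on $t=z^{(0,1)}$ and raise to the $a_\foj$-th power, the paper equivalently writes $z^{(0,a_\foj)}=z^{(m_\foj,a_\foj)}\cdot z^{-(m_\foj,0)}$ with $(m_\foj,a_\foj)\in\Lambda_{\cone\foj}$ tangent to every wall containing $\cone\foj$, then invokes consistency of $\scrS$ for $z^{m_\foj}$ and Lemma~\ref{Lem: roots exist}. These are interchangeable.

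The codimension-two argument, however, has a genuine gap. The decisive feature you do not exploit is that a codimension-two joint $\foj$ lies in some $\tau\in\P^{[n-2]}$ with $\Int\foj$ open in $\tau$; since $\tau$ contains integral points, $\Lambda_{\cone\foj}=\Lambda_{\cone\tau}$ surjects onto $\ZZ$ under the height projection, so the index of $\cone\foj$ equals~$1$. Consequently $(0,1)\in\Lambda_{\cone\foj}\subset\Lambda_{\cone\fop}$ for \emph{every} wall $\fop\supset\foj$, forcing $a=1$ and $b=\langle n_\fop,0\rangle=0$ for each such wall. Your ``scalar factors'' $f_\fop^{-(b/a)d}$ are therefore all identically~$1$, and the passage from height-$d$ broken lines on $(\cone B)_{\cone\foj}$ to broken lines on $B_\foj$ is not a repackaging but a straightforward bijection obtained by quotienting out the invariant $(0,1)$-direction. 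Without this observation your argument does not close: the accumulated factor along a broken line would depend on the sequence of walls crossed, not just on the endpoint, and at a codimension-two joint there is no loop-type relation available to absorb such a path-dependent factor via your key identity (that identity controls compositions around a closed loop, which is what codimension-zero and codimension-one consistency are about, not broken-line sums). The paper's proof simply records that $(0,1)$ is tangent to every wall containing $\cone\foj$, so $\vartheta^{\cone\foj}_{(0,1)}(p)=z^{(0,1)}$ trivially, and then a general asymptotic monomial $(v,d)=(v,0)+d\cdot(0,1)$ inherits consistency from the height-zero case you already handled.
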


\begin{proof}
\emph{(Consistency in codimension zero.)}
Let $\foj\subseteq B$ be a joint for $\scrS$ of codimension zero,
contained in some $\sigma\in\P_\max$. Label the adjacent walls
$\fop_1,\ldots,\fop_r$ cyclically and let
$\theta_{\fop_1},\ldots,\theta_{\fop_r}$ be the associated
automorphisms of $R_\sigma$. Then consistency of
$\fop_1,\ldots,\fop_r$ reads
\[
\theta_{\fop_r}\circ\cdots\circ \theta_{\fop_1}=\id. 
\]
With the identification of monomials at $x\in\Int\sigma$ with
monomials on $\cone{x}$ this equation readily implies the
claimed consistency
\begin{equation}
\label{Eqn: tilde theta consistency}
\big(\theta_{\cone{\fop_r}}\circ\cdots\circ
\theta_{\cone{\fop_1}}\big)(z^m)=z^m 
\end{equation}
for all monomials $m$ coming from $B$. Indeed, if $m$ is a monomial
defined on a wall $\fop$ of $\scrS$ with $\cone\fop$ of index $a$
and $\theta_\fop(z^m) = f_\fop^{\langle n_\fop, \ol m\rangle} \cdot
z^m$, then viewing $m$ as a monomial on $\cone B$ it holds
\[
\theta_{\cone\fop}(z^m) =f_{\cone\fop}^{a\langle n_\fop,\ol
m\rangle}\cdot z^m = f_\fop^{\langle n_\fop,\ol m\rangle}\cdot z^m.
\]
Since $\Lambda_{\cone{\sigma}}=  \Lambda_\sigma\times\{0\}\oplus
\ZZ\cdot(0,1)$ it remains to show \eqref{Eqn: tilde theta
consistency} for $m=(0,1)$. Here $(0,1)\in \Lambda_\sigma\oplus\ZZ$
is viewed as a monomial on $\Int \cone{\sigma}$ with vanishing
$Q$-component via \eqref{shP on Int(sigma)}. Since
$\big(\theta_{\cone{\fop_1}}\circ\ldots\circ
\theta_{\cone{\fop_r}}\big) (z^m)= (1+h)\cdot z^m$ with $h\in
I_0\cdot R_\sigma$ and in view of the uniqueness statement in
Lemma~\ref{Lem: roots exist}, it suffices to prove \eqref{Eqn: tilde
theta consistency} for any power of $z^{(0,1)}$. Let
$(m,a)\in\Lambda_{\cone{\foj}}$ be such that $a\in\NN$ is the index
of $\cone{\foj}$. Now $(0,a)=(m,a)-(m,0)$ with $m\in\Lambda_\sigma$,
and \eqref{Eqn: tilde theta consistency} already holds for
$z^{(m,0)}$, while $z^{(m,a)}$ is left invariant by any of the
$\theta_{\cone{\fop_i}}$. Hence \eqref{Eqn: tilde theta consistency}
holds for all monomials $m$ on $\cone{\sigma}$.
\bigskip

\noindent
\emph{(Consistency in codimension one.)} Let
$\foj$ be a codimension one joint and $\rho\in\P^{[n-1]}$ the
codimension one cell containing $\foj$. As in Definition~\ref{Def:
Consistency in codim one} let $\fob_1,\fob_2\in\scrS$ be the slabs
adjacent to $\foj$ and $\theta,\theta'$ the automorphisms of
$R_\sigma$, $R_{\sigma'}$ induced by passing through the walls
containing $\foj$ in the correct order. Here $\theta$ and $\theta'$
collect the walls in the two maximal cells $\sigma$, $\sigma'$
containing $\rho$, respectively. Let $\chi_{\fob_i,\sigma/\sigma'} :
R_{\fob_i} \to R_{\sigma/\sigma'}$ be the natural ring
homomorphisms. Consistency of $\scrS$ around $\foj$ says
\[
(\theta\times\theta')\big((\chi_{\fob_1,\sigma},\chi_{\fob_1,\sigma'})
(R_{\fob_1})\big)= (\chi_{\fob_2,\sigma},\chi_{\fob_2,\sigma'})
(R_{\fob_2}).
\]
The argument now runs analogously to the codimension zero case. 
Using a chart around $x\in\fob_1$ with $0$ in the affine span of the
image of $\fob_1$ shows that
\[
\Lambda_{\cone{\rho}}= \Lambda_{\rho}\oplus\ZZ\cdot(0,1).
\]
In particular, for $x\in \Int\fob$ the generator of $\Lambda_{\cone
B,x}/\Lambda_{\cone\rho}$ leading to the monomials $Z_+$,$Z_-$ can
be chosen to lie in $\Lambda_\sigma\oplus 0$. With this identification and
choice we have
\[
R_{\cone{\fob_i}} = \big(A[Q]/I\big)[\Lambda_\rho]
[z^{(0,1)},Z_+,Z_-] / (Z_+ Z_-- f_{\fob_i}z^{\kappa_{\ul\rho_i}}), 
\]
with $\ul\rho_i\supseteq\fob_i$ the $(n-1)$-cell of the barycentric
subdivision containing $\fob_i$. Let $\tilde\theta$, $\tilde\theta'$
be the automorphisms of $R_{\cone{\sigma}}$, $R_{\cone{\sigma'}}$
induced by crossing the walls containing $\cone{\foj}$ on
$\cone{B}$. Writing $\tilde\chi_{\fob_i,\sigma/\sigma'}:
R_{\cone{\fob_i}}\to R_{\cone{\sigma}/\cone{\sigma'}}$ for the
natural localization homomorphisms, the equation
\begin{equation}
\label{Eqn: tilde consistency in codim 1}
(\tilde\theta\times\tilde\theta')
\big((\tilde\chi_{\fob_1,\sigma}, \tilde\chi_{\fob_1,\sigma'})
(R_{\cone{\fob_1}})\big)=
(\tilde\chi_{\fob_2,\sigma}, \tilde\chi_{\fob_2,\sigma'})
(R_{\cone{\fob_2}})
\end{equation}
for consistency around $\cone{\foj}$ already holds for monomials
lifted from $B$. In fact, for $m\in \Lambda_\sigma$ we have seen in
the treatment of consistency in codimension zero that
$\tilde\theta(z^m)=\theta(z^m)$ (with the usual abuse of notation of
interpreting monomials on $B$ as monomials on $\cone{B}$), and
similarly for $\sigma'$ and $\tilde\theta'$. Since $Z_+, Z_-$ are
monomials lifted from $B$, both for $R_{\cone{\fob_1}}$ and
$R_{\cone{\fob_2}}$, the equality~\eqref{Eqn: tilde consistency in
codim 1} holds for any monomial lifted from $B$.

It remains to treat $z^{(0,1)}\in R_{\cone{\fob_1}}$. Let
$a\in\NN\setminus\{0\}$ be the index of $\cone{\foj}$ and let
$m\in\Lambda_\rho$ be such that $(m,a)\in\Lambda_{\cone{\foj}}$.
Then $(m,a)$ is tangent to each wall containing $\cone{\foj}$ and
hence
\begin{equation}
\label{Eqn: z^(m,a) consistency}
\begin{aligned}
(\tilde\theta\times\tilde\theta')
\big((\tilde\chi_{\fob_1,\sigma},
\tilde\chi_{\fob_1,\sigma'})(z^{(m,a)})\big)
&=\big(\tilde\theta,\tilde\theta'\big)(z^{(m,a)})\\
&=\big( z^{(m,a)}, z^{(m,a)}\big) =
(\tilde\chi_{\fob_2,\sigma}, \tilde\chi_{\fob_2,\sigma'})(z^{(m,a)}).
\end{aligned}
\end{equation}
Moreover, $(m,0)$ is a monomial lifted from $B$, and $m$ is
invariant under monodromy around $\foj$ for $m\in\Lambda_\rho$. Thus
by consistency on $B$ there exists $h\in R_{\fob_2}$ with
\begin{equation}
\label{Eqn: (theta,theta')(z^m)}
(\theta,\theta')(z^m) =
(\chi_{\fob_2,\sigma}, \chi_{\fob_2,\sigma'})(h)
\end{equation}
and $h$ is congruent to $z^m$ modulo $I_0$. Since 
$\chi_{\fob_2,\sigma/\sigma'}(h)$ is thus obtained from $z^m$ by
wall crossing there exists $f\in 1+ I_0\cdot R_{\fob_2}$ with
$h=f\cdot z^m$. Hence it holds $\big(\tilde\theta,
\tilde\theta'\big)(z^{(m,0)}) = (\tilde\chi_{\fob_2,\sigma},
\tilde\chi_{\fob_2,\sigma'})(f\cdot z^{(m,0)})$. Together with
\eqref{Eqn: z^(m,a) consistency} this shows $\big(\tilde\theta,
\tilde\theta'\big)(z^{(0,a)}) = (\tilde\chi_{\fob_2,\sigma},
\tilde\chi_{\fob_2,\sigma'}) (f^{-1}\cdot z^{(0,a)})$. Taking roots
according to Lemma~\ref{Lem: roots exist} then yields
\[
\big(\tilde\theta,\tilde\theta'\big)(z^{(0,1)})
= (\tilde\chi_{\fob_2,\sigma}, \tilde\chi_{\fob_2,\sigma'})(f^{-1/a}\cdot
z^{(0,1)}),
\]
establishing~\eqref{Eqn: tilde consistency in codim 1} for the
remaining generator of $R_{\fob_1}$.

\bigskip
\noindent
\emph{(Consistency in codimension two.)} Let $\foj$ be a codimension
two joint of $B$, and let $\tau\in\P^{[n-2]}$ be the minimal cell containing
$\foj$. In contrast to the previous cases of codimension zero and one, the index
of $\cone{\foj}$ is always one. Indeed, since $\tau$ has integral points,
$\cone{\tau}$ has index one, and
\[
\Lambda_{\cone{\foj}}=\Lambda_{\cone{\tau}},
\]
because $\Int\foj$ is an open subset of $\tau$. Thus in a chart for
any $\sigma\in\P_\max$ containing $\foj$ and centered at an integral
point of $\tau$ we have the decomposition
\[
\Lambda_{\cone{\sigma}}= \big(\Lambda_\sigma\times\{0\}\big)
\oplus \ZZ\cdot (0,1).
\]
Now consistency around $\foj$ means that the functions $\vartheta^\foj_m(p)$
are independent of the choice of general point $p\in B_\foj$ inside one
chamber and are related by chamber morphisms on adjacent chambers
(Definition~\ref{Def: Consistency in codim two}). As in codimension zero and one
the analogous statements then hold for $\cone\foj$. Indeed, these
properties are immediate for $\vartheta^{\cone\foj}_{(m,0)}(p)$ with
$(m,0)\in\Lambda_\sigma\times\{0\}$, that is, for monomials lifted from $B$. On
the other hand, a monomial tangent to $\foj$ is left invariant by any of the
ring homomorphisms changing chambers. In particular,
$\vartheta^{\cone\foj}_{(m,a)}= z^{(0,a)}\cdot
\vartheta^{\cone\foj}_{(m,0)}$. This proves consistency around $\cone\foj$.
\end{proof}
\medskip

For later use we also express here the asymptotic monomials
(Definition~\ref{Def: asymptotic monomial}) of $\cone{B}$ in terms
of the geometry of $B$. First note that the projection to the height
maps any tangent vector $\ol m$ of a monomial $m$ on $\cone{B}$ to
an integral tangent vector on $\RR$. We call this integer the
\emph{degree of $m$}, written $\deg m$. If $m$ is an asymptotic
monomial of $\cone{B}$ then $\deg m\in\NN$.

\begin{proposition}
\label{Prop: Asymptotic monomials cone(B)}
The set of asymptotic monomials on $\cone{B}$ of degree $d>0$ are in
canonical bijection with the set $B\big(\frac{1}{d}\ZZ\big)$ of
$1/d$-integral points of $B$. The set of asymptotic monomials on
$\cone{B}$ of degree $d=0$ are in canonical bijection with the set of
asymptotic monomials of $B$.
\end{proposition}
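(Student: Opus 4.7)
The plan is to unwind Definition~\ref{Def: asymptotic monomial} as applied to $(\cone B,\cone\P)$ and match it term-by-term against the two right-hand sides. Recall that an asymptotic monomial on any cell is uniquely determined by its tangent vector $\ol m$: the vanishing of the $\sigma$-heights forces a unique lift via the canonical splitting \eqref{shP on Int(sigma)}, and the defining condition becomes the requirement that $\ol m$ lie in the recession cone of the cell.

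The first key step is the observation that, since each cell $\cone\tau\in\cone\P$ is a cone with apex at the origin of $\RR^{n+1}$, its recession cone equals $\cone\tau$ itself. Thus the asymptotic monomials supported on $\cone\tau$ are in canonical bijection with the integer tangent vectors lying in $\cone\tau\cap\Lambda_{\cone\tau}$. The vanishing of $\height_{\cone\sigma}$ for $\cone\sigma\supset\cone\tau$ a maximal cell of $\cone\P$ holds automatically, because \eqref{shP on Int(sigma)} applied to $\cone\sigma$ lifts each such integer tangent vector uniquely to a monomial with trivial $Q$-part.

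The second step is to split by $d:=\deg\ol m\in\NN$, which is just the height of $\ol m$ viewed as an element of $\cone\tau\subset\RR^{n+1}$. For $d>0$, rescaling $(v,d)\mapsto v/d$ gives a bijection between integer points of $\cone\tau$ at height $d$ and $1/d$-integral points of $\tau$; this is precisely the rescaling already used to define $B\bigl(\tfrac{1}{d}\ZZ\bigr)$ at the beginning of \S\ref{Sect: Global functions}. For $d=0$, the description of $\cone\tau$ in \eqref{Eqn: cone over unbounded cell} identifies $\cone\tau\cap(\Lambda_\tau\times\{0\})$ with $\tau_\infty\cap\Lambda_\tau$, the set of integer points of the asymptotic cone of $\tau$, which is exactly the set of asymptotic monomials on $\tau$.

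The third step is to check that these cell-by-cell bijections are compatible with the equivalence relations on both sides. On the left, asymptotic monomials on $\cone B$ are identified across the inclusions $\cone\tau\hookrightarrow\cone{\tau'}$ induced by face inclusions $\tau\hookrightarrow\tau'$ in $\P$; on the right, $1/d$-integral points of $B$ and asymptotic monomials on $B$ are identified across the very same face inclusions. Since rescaling and intersection with the height-zero hyperplane commute with these inclusions, the pointwise bijections descend to the claimed global bijections. The main (very mild) obstacle is merely the bookkeeping in the $d=0$ case, namely making precise that $\tau_\infty$ sits canonically inside $\cone\tau$ as the intersection with $h^{-1}(0)$; no input beyond Construction~\ref{Constr: cone B_0}, Definition~\ref{Def: asymptotic monomial}, and the formula \eqref{Eqn: cone over unbounded cell} is needed.
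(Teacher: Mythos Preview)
Your proposal is correct and follows essentially the same approach as the paper: work cell-by-cell, use that the recession cone of the cone $\cone\tau$ equals $\cone\tau$ itself so that asymptotic monomials on $\cone\tau$ are exactly its integral points, split by height $d$ to obtain $1/d$-integral points of $\tau$ for $d>0$ and integral points of $\tau_\infty$ for $d=0$, and then observe compatibility with face inclusions. Your write-up is slightly more explicit about the role of the canonical splitting \eqref{shP on Int(sigma)} in guaranteeing the height-zero condition, but the underlying argument is identical.
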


\begin{proof}
For an integral polyhedron $\sigma\subseteq \Lambda_\RR$ an asymptotic
monomial on $\cone{\sigma}$ is just an element of $\cone{\sigma}\cap
(\Lambda\times\ZZ)$, that is, an integral point in the cone. If
$(m,d)$ is such a point then $d$ is the degree of the asymptotic
monomial and $m\in d\cdot \sigma\cap \Lambda$. For $d>0$ this means
$\frac{1}{d}m\in \sigma\cap\big( \frac{1}{d}\Lambda\big)$, giving a
$1/d$-integral point of $\sigma$; for $d=0$ we have an asymptotic
monomial of $\cone{\sigma}\cap \big(\Lambda_\RR\times\{0\}\big)$,
that is, an asymptotic monomial of the asymptotic cone
$\sigma_\infty$ of $\sigma$.

The general statement follows from the statement for an individual
cell since the identification of asymptotic monomials on faces is
compatible with the stated identification of asymptotic monomials on
$\cone{\sigma}$.
\end{proof}

%===========================================================
\subsection{Theta functions and the Main Theorem}
\label{Subsect: Theta functions}

Starting from a consistent wall structure $\scrS$ on the polyhedral
pseudomanifold $(B,\P)$, we have now arrived at a consistent wall
structure $\cone{\scrS}$ on the cone $(\cone{B},\cone{\P})$ of
$(B,\P)$, see Definition~\ref{Def: cone of wall structure} and
Proposition~\ref{Prop: Consistency for cone scrS}. Then $\scrS$ and
$\cone{\scrS}$ lead to the schemes $\foX^\circ$ and
$\foY^\circ:=\foX^\circ_{\cone{\scrS}}$, respectively. We can then
construct $\foW:=\Spec\Gamma(\foX^{\circ}, \O_{\foX^{\circ}})$ and
$\foY:=\Spec\Gamma(\foY^{\circ},\O_{\foY^{\circ}})$. Each will be
flat over $A[Q]/I$, with $\foY^{\circ}$ an open subset of the affine
scheme $\foY$. The object of the present subsection is the
construction of a similarly canonical open embedding
$\foX^\circ\hookrightarrow \foX$, now with $\foX$ projective over
$\foW$. This will be done by relating $\foY$ to the total space of
$\O_\foX(-1)$, the dual of an ample invertible sheaf on $\foX$
coming naturally with the construction.

The first step in establishing this picture is the construction of
the total space $\foL^\circ$ of a line bundle over $\foX^\circ$. The sheaf
of sections of $\foL^\circ$ will be identified with the restriction to
$\foX^\circ$ of $\O_\foX(-1)$.

\begin{construction}
\label{Constr: Truncated cone}
\emph{(The truncated cone $\ol\cone{B}$ and the associated schemes
$\foL^{o,\times}\subseteq \foY^\circ\subseteq \foL^\circ$.) }
Let $(B,\P)$ be a polyhedral pseudomanifold. The \emph{truncated
cone} $(\ol\cone{B},\ol\cone{\P})$ over $(B,\P)$ is the polyhedral
pseudomanifold with underlying topological space
\[
\ol\cone{B}:= \big\{(x,h)\in\cone{B}\,\big|\, h\ge 1\big\},
\]
endowed with the induced affine structure and induced polyhedral
decomposition with cells $\ol\cone{\sigma}:=
\big\{(x,h)\in\cone{\sigma}\,\big|\, h\ge 1\big\}$, $\sigma\in\P$.
Clearly, the boundary of $\ol\cone{B}$ decomposes into two parts,
one coming from $\partial B$, one from the truncation:
\[
\partial\big(\ol\cone{B}\big)= \ol\cone(\partial B)\cup (B\times\{1\}).
\]

If $\scrS$ is a (consistent) wall structure on $(B,\P)$ the wall
structure $\cone{\scrS}$ restricts to a (consistent) wall structure
$\ol\cone{\scrS}$ on the truncated cone $(\ol\cone{B},\ol\cone{\P})$
(subject to the same caveat of  Definition \ref{Def: cone of wall
structure} of perhaps needing to add trivial walls). Indeed, the
only thing to check is consistency of joints introduced by the
truncation. These are either of the form  $\rho\times \{1\}$ where $\rho$
is an $(n-1)$-dimensional cell in $\partial B$ or $\fop\times\{1\}$
where $\fop$ is a wall in $\scrS$. However, there are
no walls of $\ol\cone{\scrS}$ containing joints of the first sort, and hence
consistency follows trivially from Proposition  \ref{Prop: Boundary
consistency}. For joints of the second sort, there is no consistency condition
if $\fop$ is a codimension zero wall. If $\fop$ is a slab, consistency
again follows from Proposition \ref{Prop: Boundary consistency}, this time
using the fact that all exponents $m$ appearing in $f_{\cone{\fop}}$ have
$\ol{m}$ tangent to $B\times\{1\}$. Thus in the consistent case we obtain from
$\cone{\scrS}$ and $\ol\cone{\scrS}$ two flat $A[Q]/I$-schemes
$\foY^\circ$ and $\foL^\circ$. Both schemes are covered by spectra
of rings with a $\ZZ$-grading defined by the degree of monomials,
introduced in the text before Proposition~\ref{Prop: Asymptotic
monomials cone(B)}, and the gluings respect the grading. In
particular, $\foL^{\circ}$ and $\foY^{\circ}$ come with a
$\Gm$-action.

Note that $\foL^{\circ}$ contains one stratum for every maximal cell
$\sigma\in\P$ induced by the cell of the lower boundary
$\sigma\times\{1\} \subseteq B\times\{1\}$. On the other hand, if
the asymptotic cone  $\sigma_{\infty}$ of $\sigma$ has dimension
$n$, then $\sigma_{\infty}$ is an $n$-cell of the lower
boundary of $\cone B$, and hence there is a stratum of
$\foY^{\circ}$ indexed by $\sigma_{\infty} \times\{0\}$.
Furthermore, if $\fou$ is a chamber of $\scrS$ contained in $\sigma$
with $\fou_{\infty}$ $n$-dimensional, then the rings 
$R^{\partial}_{\ol\cone \fou}$ contributing to $\foL^{\circ}$ and
$R^{\partial}_{\cone\fou}$ contributing to $\foY^{\circ}$ coincide.
In particular, $\foY^{\circ}$ is thus a subscheme of $\foL^{\circ}$.

Let $\foL^{\circ,\times}\subseteq \foL^{\circ}$ be the open
subscheme obtained by deleting the codimension one strata of
$\foL^{\circ}$ corresponding to the lower boundary cells $B \times
\{1\} \subseteq \partial(\ol\cone B)$. This is obtained by gluing
together only those charts of the form $\Spec R_{\fou}$ for any
$\fou$,  $\Spec R_{\fou}^{\partial}$ for  those $\fou$ intersecting
$\partial(\ol\cone B)\setminus B\times\{1\}$ in a codimension one
set, and $R_{\fob}$ for $\fob$ a slab. Note that the same set of
rings appears in the description of $\foY^{\circ}$, and hence
$\foL^{\circ,\times}\subseteq \foY^{\circ}$ also (and in fact we
have equality provided that all cells of $\P$ have asymptotic cone
of dimension less than $n$).

For the rings used for constructing $\foL^{\circ}$,
$\foL^{\circ,\times}$,  or $\foY^{\circ}$, each subring of elements
of degree zero can be identified with one of the rings in the
construction of $\foX^\circ$, with each ring for $\foX^\circ$
occurring. Hence $\foL^\circ$, $\foL^{\circ,\times}$ and
$\foY^{\circ}$ come with a $\GG_m$-invariant surjection to
$\foX^\circ$.

We claim that $\foL^{\circ,\times}$ has naturally the structure of
the total  space of a $\GG_m$-torsor over $\foX^\circ$, that is, a
line bundle minus the zero section. Moreover, we have
$\foL^{\circ,\times}\subseteq \foY^{\circ} \subseteq \foL^{\circ}$,
with the inclusion of $\foL^{\circ,\times} \subseteq \foL^{\circ}$
partially compactifying this $\GG_m$-torsor by filling in the zero
section over the complement of the codimension one strata in
$\foX^\circ$.

Local trivializations of $\foL^{\circ,\times}$ are given as
follows.  For $\rho\in \P^{[n-1]}$, $\rho\not\subseteq\partial B$, any
choice of integral point $v\in\rho$ induces an isomorphism
$\Lambda_{\cone{\rho}} = \big(\Lambda_\rho\times\{0\}\big) \oplus
\ZZ\cdot(v,1)$. Hence in view of \eqref{Def: R_fob}, for any slab
$\fob\in\scrS$ contained in $\rho$ the choice of an integral point
$v\in\rho$ induces an isomorphism of $R_\fob$-algebras
\[
R_{\cone{\fob}} \stackrel{\simeq}{\lra} R_{\fob}[u, u^{-1}],
\]
identifying $z^{(v,1)}$ with $u$. This induces a local
trivialization
\[
\Spec (R_{\cone{\fob}}) \simeq
\Spec(R_\fob)\times \GG_m.
\]
Here $\GG_m=\Spec\big( \ZZ[u,u^{-1}] \big)$ and the product is taken
over $\ZZ$. A different choice of integral point leads to the
multiplication of $u$ by some $z^m$ with $m\in \Lambda_\rho$, an
invertible homomorphism of $R_\fob$-algebras. Moreover, any crossing
of codimension one joint from $\fob$ to $\fob'$ leads to the
multiplication of $u$ by an invertible element in $R_{\fob'}$ and is
otherwise compatible with the isomorphism of rings $R_\fob\to
R_{\fob'}$.  Similarly, any integral point on a maximal cell
$\sigma$ induces a local trivialization $\Spec (R_{\cone{\fou}})
\simeq \Spec(R_\fou)\times\GG_m$ for chambers $\fou\subseteq\sigma$,
and wall crossings are again homogeneous of degree zero. This shows
that $\foL^{\circ,\times}$ comes with the structure of a
$\GG_m$-torsor over $\foX^\circ$.

The construction of $\foL^\circ$ only adds
$\Spec(R^{\partial}_{\tilde\fou})$  for $\tilde\fou$ a chamber of
$\ol\cone{\scrS}$ that intersects the lower boundary
$B\times\{1\}\subseteq \ol\cone{B}$ in $\fou\times\{1\}$, where $\fou$
is a chamber of $\scrS$. Then $R_{\tilde\fou}\subseteq
R^{\partial}_{\tilde\fou}$ leads to the partial $\GG_m$-equivariant
compactification
\[
\Spec(R_\fou)\times\GG_m\subseteq \Spec(R_\fou)\times\AA^1.
\]
This process adds the zero-section of a line bundle over the
complement of the codimension one strata in $\foX^\circ$, as claimed.
\end{construction}

We are now in the position to prove one of the main results of this
paper.

\begin{theorem}
\label{Thm: Main}
Let $\scrS$ be a consistent wall structure on the polyhedral
pseudomanifold $(B,\P)$. Denote by $\cone{\scrS}$ the induced consistent
wall structure\footnote{We assume here $\QQ\subseteq A$ to assure the
existence of the roots of the wall functions $f_\fop$ required in
Definition~\ref{Def: cone of wall structure}. In some other cases
one can derive the existence of $\cone{\scrS}$ by a priori methods
independently of this assumption. For example, for locally rigid
singularities one may run the inductive construction from
\cite{affinecomplex} directly on $\cone{B}$.} on
$(\cone{B},\cone{\P})$. Let $\foX^\circ$, $\foY^\circ$ be the
associated flat $A[Q]/I$-schemes according to Proposition~\ref{Prop:
foX^o exists}\, for $\scrS$ and $\cone{\scrS}$, respectively. Let
\[
R_\infty:=\Gamma\big( \foX^\circ,\O_{\foX^\circ}\big),\quad
S:=\Gamma\big( \foY^\circ,\O_{\foY^\circ}\big)
\]
be the $A[Q]/I$-algebras with canonical $A[Q]/I$-module basis of
sections $\vartheta_m$ constructed in Theorem~\ref{Thm: vartheta_m
exist}. Here $m$ runs through the set of asymptotic monomials on $B$
for $R_\infty$ and on $\cone{B}$ (cf.\ Proposition~\ref{Prop:
Asymptotic monomials cone(B)}) for $S$, respectively.

Then the following holds.
\begin{enumerate}
\item[(a)]
The affine schemes $\foW:=\Spec R_\infty$ and $\foY:=\Spec S$
are flat over $A[Q]/I$.
\item[(b)]
The ring $S$ is a $\ZZ$-graded $R_\infty$-algebra, with the grading
given by $\deg\vartheta_m:=\deg m$. Also, $S_0=R_{\infty}$, where $S_0$
denotes the degree $0$ part of $S$.
\item[(c)]
The scheme $\foX^\circ$ embeds canonically as an open dense subscheme
into $\foX:= \Proj (S)$, and $\foX$ is flat over $A[Q]/I$. Moreover,
$\foX$ is the unique flat extension of $X_0$ from $A[Q]/I_0$ to
$A[Q]/I$ containing $\foX^\circ$ as an open subscheme and proper over
$\foW$.
\item[(d)]
Denote by $\foL\to\foX$ the line bundle with sheaf of sections $\O_{\Proj(S)}
(-1)$. Then there is a canonical isomorphism $\Gamma(\foL,\O_{\foL}) \simeq S$
that induces a morphism $\foL\rightarrow \foY$ contracting the zero-section of
$\foL$ to the fixed locus of the $\GG_m$-action on $\foY$ defined by the grading
of $S$. In particular, $S=\bigoplus_{d\in\NN} \Gamma\big(\foX,\O_\foX(d)\big)$
is the homogeneous coordinate ring of $(\foX,\O_\foX(1))$.
\end{enumerate}
\end{theorem}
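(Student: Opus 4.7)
The plan is to bootstrap from Theorem \ref{Thm: vartheta_m exist} applied to both $\scrS$ and $\cone{\scrS}$ (consistent by Proposition \ref{Prop: Consistency for cone scrS}). Part (a) is immediate: the theta functions form $A[Q]/I$-module bases of $R_\infty$ and $S$, so both rings are flat over $A[Q]/I$. For (b), the height function $h$ on $\cone B$ equips every monomial $m \in \shP_{\cone B}$ with a degree $\deg m := \deg \ol m$, and since every wall and slab function for $\cone{\scrS}$ is built from monomials lifted from $B$ (all of degree zero), every wall-crossing isomorphism and localization morphism used to build $\foY^\circ$ preserves the $\ZZ$-grading on the local rings. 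This induces a $\ZZ$-grading on $S$ in which $\vartheta_m$ has degree $\deg m$, since broken lines preserve the degree of the propagated monomial. The identification $S_0 = R_\infty$ then follows from Proposition \ref{Prop: Asymptotic monomials cone(B)} together with the observation that a broken line for $\cone{\scrS}$ whose asymptotic monomial has degree zero lies entirely in a level set $B \times \{h_0\}$ and so corresponds canonically to a broken line for $\scrS$.

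For (c), set $\foX := \Proj(S)$; this is flat over $A[Q]/I$ since $S$ is. The embedding $\foX^\circ \hookrightarrow \foX$ arises from the $\GG_m$-torsor $\foL^{\circ,\times} \to \foX^\circ$ of Construction \ref{Constr: Truncated cone}. The $\GG_m$-equivariant inclusions $\foL^{\circ,\times} \subset \foY^\circ \subset \foY = \Spec S$ factor through $\foY \setminus V(S_+)$ because the $\GG_m$-action on a torsor is free; passing to quotients yields the canonical map $\foX^\circ \to (\foY \setminus V(S_+))/\GG_m = \Proj(S) = \foX$. I would verify locally that this is an open immersion by trivializing $\foL^{\circ,\times}$ chamber by chamber: for a chamber $\fou \subset \sigma$ and a vertex $v \in \sigma$, the identification $\Spec R_{\cone\fou} \cong \Spec R_\fou \times \GG_m$ (with coordinate $u = z^{(v,1)}$) exhibits $\vartheta_v$ as a non-vanishing degree-one section and realizes $\Spec R_\fou$ as the corresponding affine open in $D_+(\vartheta_v)$, with analogous arguments for $R_\fou^\partial$ and $R_\fob$. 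Modulo $I_0$ the ring $S$ reduces to the homogeneous coordinate ring $S[B]$ of \S\ref{Subsect: X_0}, which is generated in degree one because all polytopes in $\P$ have integral vertices; by flatness $S$ itself is generated in degree one over $S_0 = R_\infty$, so $\O_\foX(1)$ is relatively ample and $\foX$ is proper over $\foW$. Uniqueness follows from the fact that $X_0$ satisfies Serre's $S_2$ (Proposition \ref{prop: S2 condition}), which passes to every flat thickening, together with Lemma \ref{Lem: Extension lemma}: any proper flat extension of $X_0$ containing $\foX^\circ$ as an open set whose complement has fibrewise codimension $\ge 2$ has structure sheaf $i_*\O_{\foX^\circ}$ and is therefore determined.

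For (d), $\foL = \bSpec_\foX \bigoplus_{d \geq 0} \O_\foX(d)$ is the total space of $\O_\foX(-1)$, whence
\[
\Gamma(\foL, \O_\foL) = \bigoplus_{d \geq 0} \Gamma(\foX, \O_\foX(d)).
\]
The tautological inclusion $S_d \hookrightarrow \Gamma(\foX, \O_\foX(d))$ is an isomorphism: modulo $I_0$ this is standard projective normality for the toric irreducible components of $X_0$ equipped with their integral-polytope polarizations, and the equality lifts to $A[Q]/I$ by flatness. The identification $\Gamma(\foL, \O_\foL) \simeq S$ then gives the canonical $\GG_m$-equivariant morphism $\foL \to \Spec S = \foY$, which is the identity on $\foL^{\circ,\times}$ and contracts the zero section (the $\GG_m$-fixed locus of $\foL$) to $V(S_+) \subset \foY$. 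I expect the main obstacle to be the local verification in (c) that $\foX^\circ \to \foX$ is genuinely an open immersion with dense image of fibrewise codimension-$\ge 2$ complement: this requires carefully matching the combinatorics of $\ol\cone B$ (its chambers, slabs, and the truncation cells) against the $D_+(\vartheta_v)$-charts of $\foX$ through each of the local model rings $R_\fou$, $R_\fou^\partial$, $R_\fob$. The projective-normality equality $S_d = \Gamma(\foX, \O_\foX(d))$ in (d) is the other delicate step, and is the reason for routing the argument through the auxiliary affine scheme $\foY$.
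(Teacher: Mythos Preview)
Your proposal follows essentially the same architecture as the paper's proof: parts (a) and (b) match, and you correctly identify Construction~\ref{Constr: Truncated cone}, the $S_2$ property, and Lemma~\ref{Lem: Extension lemma} as the tools for (c) and (d). The one substantive difference is how you handle the open-immersion claim in (c). You propose a direct local verification, chamber by chamber, that $\Spec R_\fou$ lands as an open in $D_+(\vartheta_v)$; you correctly flag this as the main obstacle, and indeed identifying $R_\fou$ as a localization of $S_{(\vartheta_v)}$ is not transparent. The paper sidesteps this entirely: rather than comparing $\foX^\circ$ to $\foX$ directly, it first extends $\foX^\circ$ to the ringed space $(|X_0|, i_*\O_{\foX^\circ})$ and then compares \emph{two} flat deformations of $X_0$, namely $(|X_0|, i_*\O_{\foX^\circ})$ and $\foX=\Proj S$. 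On each $U_v$ both have an $A[Q]/I$-module basis of regular functions of the form $\vartheta_m/\vartheta_{m_0}^d$ (flatness and the basis statement come from \cite{GHK1}, Lemmas~2.29 and 2.30), and $\Phi$ matches these bases; together with Lemma~\ref{Lem: Extension lemma} this yields $(|X_0|, i_*\O_{\foX^\circ})\simeq\foX$, giving the open embedding and the uniqueness simultaneously. For (d) your projective-normality route works, but the paper's chain of inclusions $\Gamma(\foL,\O_\foL)\subset\Gamma(\foL^\circ,\O_{\foL^\circ})\subset\Gamma(\foY^\circ,\O_{\foY^\circ})=S$, reversed by lifting each $\vartheta_m$ to a section of $\O_\foX(d)$, is more direct once (c) is in place.
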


\begin{proof}
(a)\ \ 
In the case of $S$, this follows from Proposition 
\ref{Prop: foX in the conical case} because
$\cone\scrS$ is a conical wall structure on the conical polyhedral
pseudomanifold $\cone B$. For $R_{\infty}$, we apply Theorem
\ref{Thm: vartheta_m exist}.\\[1ex]
(b)\ \ In Construction~\ref{Constr: Truncated cone} we saw that
$\foY^\circ$ is a partial compactification of a $\GG_m$-torsor over 
$\foX^\circ$ inside its corresponding line bundle. The weight with respect
to the induced $\GG_m$-action on $S=\Gamma(\foY^\circ,\O_{\foY^\circ})$
defines the $\ZZ$-grading on $S$. Of course, an element of $S$ is
homogeneous of degree $d$ if and only if
its local representatives in the rings $R_{\cone{\fob}}$ and
$R_{\cone{\fou}}$ are homogeneous of degree $d$ as defined
in Construction~\ref{Constr: Truncated cone}.

The degree zero part of $S$ has an $A[Q]/I$-basis $\vartheta_m$ with
$m$ an asymptotic monomial on $\cone{B}$ of degree zero, which is
hence an asymptotic monomial on $B$ (Proposition~\ref{Prop:
Asymptotic monomials cone(B)}). Embedding $B$ as $B\times\{1\}$ into
$\cone{B}$ this shows that we can identify the degree zero part of
$S$ with the ring of global functions on $\foX^\circ$. In fact, the
latter has an $A[Q]/I$-basis of canonical global functions with the
same index set, and the multiplication rule only depends on broken
lines for monomials of degree zero. These broken lines are parallel
to $B\times\{1\}$, hence are in bijection with broken lines on
$B$.\\[1ex]
(c)\ \ Denote by $\shL^\circ$ the invertible sheaf on $\foX^\circ$
associated to the dual of the $\GG_m$-torsor 
$\foL^{\circ,\times}\to\foX^\circ$. By Construction~\ref{Constr:
Truncated cone} the linear space associated to $\shL^\circ$ extends
$\foL^\circ$ over the codimension one strata of $\foX^\circ$. Thus
the sheaf of sections of the dual of $\foL^\circ$ agrees with the
restriction of $\shL^\circ$ to the complement of the codimension one
strata of $\foX^\circ$. Denote by $\foL\to \foX= \Proj(S)$ the line
bundle with sheaf of sections $\O_\foX(-1)$. Defining a morphism
\[
\Phi: \foX^\circ\lra \foX
\]
together with an isomorphism $\shL^\circ\simeq \Phi^*
\big(\O_\foX(1)\big)$ amounts to writing down a homomorphism
\[
\phi: S\lra\bigoplus_{d\in\NN} \Gamma \big(\foX^\circ,(\shL^\circ)^{\otimes
d}\big)
\]
with the image of the first graded piece $S_1\subseteq S$ generating
$\shL^\circ$ (\cite{EGAII}, \S3.7). For the definition of $\phi$ note
that $S$ has an $A[Q]/I$-module basis of theta functions
$\vartheta_m$ labelled by asymptotic monomials on $\cone{B}$. Now
the asymptotic monomials on $\cone{B}$ and on $\ol\cone{B}$ agree, and
hence for any such $m$ there is also a theta function $\vartheta'_m$
on $\foL^\circ$. By the definition of the local trivializations of
$\foL^\circ$ (Construction~\ref{Constr: Truncated cone}), $\vartheta'_m$
with $\deg m=d$ is homogeneous of degree $d$ in the fibre
coordinate, and hence it defines a section of $(\shL^\circ)^{\otimes
d}$. Define $\phi$ by mapping $\vartheta_m$ to $\vartheta'_m$. Note
that $\phi$ is compatible with the multiplicative structures by
comparison on $\foL^{\circ,\times}\subseteq\foY^\circ$. To see that the
image of $S_1$ generates $\shL^\circ$, choose an interior codimension one
cell $\rho\in \P$, $\fob$ a slab in $\rho$, let $v\in \rho$ be an integral point
and $m$ the associated asymptotic monomial on $\cone{B}$ of
degree~$1$ (Proposition~\ref{Prop: Asymptotic monomials cone(B)}).
Then in the isomorphism $R_{\ol\cone{\fob}} \simeq R_\fob[u,u^{-1}]$
induced by the choice of $v$ (see Construction~\ref{Constr:
Truncated cone}),
\[
\vartheta_m= u+\cdots
\]
with the dots standing for elements obtained by wall crossing. In
particular, $\vartheta_m\equiv u$ modulo $I_0$, and hence
$\vartheta_m$ generates $\shL^\circ$ on the whole chart. A similar
argument applies for the charts with ring $R^{\partial}_{\ol\cone\fou}$.

It remains to show that $\Phi:\foX^\circ\to \foX$ is an open embedding.
The following argument is analogous to the affine case of
Propositions~\ref{Prop: Properties of foX^foj} and \ref{Prop: foX in
the conical case}. By Proposition~\ref{Prop: foX^o modulo I_0} the
statement is true modulo $I_0$. There are two flat deformations of
$X_0$, one given by $i_*\O_{\foX^\circ}$, the other by $\foX=\Proj S$.
In both cases flatness follows by the criterion of \cite{GHK1},
Lemma~2.29. In fact, if $v\in \P$ is a vertex and $x\in X_0$ the
corresponding zero-dimensional toric stratum, let $U_v\subseteq X_0$
be the affine open subset defined as the complement of toric strata
disjoint from $x$. Denote by $m_0$ the asymptotic monomial of degree
one defined by $v$. Then on $U_v$ there is an $A[Q]/I_0$-module
basis of regular functions of the form $\vartheta_m/
\vartheta_{m_0}^d$, $d=\deg m$. Any of these lift to both
deformations, as a quotient of theta functions. This proves flatness
of both deformations. Moreover, by \cite{GHK1}, Lemma~2.30, the
stated liftings are $A[Q]/I$-bases of the rings of regular
functions. Since $\Phi|_{U_v}$ maps these liftings onto each other,
we also obtain an isomorphism $(X_0,i_*\O_{\foX^\circ})\simeq \foX$ by
Lemma \ref{Lem: Extension lemma}.
In particular, $\foX^\circ\to \foX$ is an
open embedding and $\foX$ has the stated uniqueness property.\\[1ex]
(d)\ \ By (c) we can now identify $\foX^\circ$ with the complement of
the codimension two strata in $\foX$. With this identification we
have seen that $\foL^\circ$ is the restriction of the total space $\foL$
of $\O_\foX(-1)$ to $\foX^\circ$, with the codimension one strata of the
zero section removed. Since, for a vertex $v$ with corresponding
asymptotic monomial $m$, $\vartheta_m$ yields a trivialization of
$\shL^{\circ}$ on $\foX^\circ\cap U_v$,
we also see that $i_*\shL^\circ=
\O_\foX(1)$. For the statement on global functions
on $\foL$ note the following sequence of inclusions and
identifications
\begin{equation}
\label{Eqn: Global sections of foL}
\Gamma(\foL,\O_\foL)\subseteq \Gamma(\foL^{\circ},\O_{\foL^\circ})
\subseteq \Gamma(\foY^\circ,\O_{\foY^\circ})= \Gamma(\foY,\O_\foY)=S.
\end{equation}
Conversely, the $A[Q]/I$-module basis elements $\vartheta_m$ of $S$
lift to global sections of $\O_\foX(d)$, $d=\deg m$, hence to an
element of $\Gamma(\foL,\O_\foL)$, of fibrewise degree $d$. Hence
all inclusion in \eqref{Eqn: Global sections of foL} are indeed equalities.

The remaining statements follow from the usual correspondence
between the projective variety associated to a $\ZZ$-graded ring
generated in degree~$1$ and its affine cone.
\end{proof}

\begin{remark}
\label{Rem: foD, projective case}
Following up on Remark~\ref{Rem: Boundary divisor} we now also
obtain a partial completion $\foD\subseteq\foX$ of the divisor
$\foD^\circ\subseteq\foX^\circ$. Indeed, each $\rho\subseteq\partial B$ defines
a restriction map
\[
\Gamma(\foX^\circ, \O_\foX(d))\lra
\Gamma \big(\foD_\rho^\circ, \O_{\foX}(d)|_{\foD_\rho^\circ}\big).
\]
Taking the direct sum over $d$ of the kernel of these maps defines a
graded ideal $K_\rho\subseteq S$. It is then easy to see that $K_\rho$
is a free $A[Q]/I$-module with generators defined by the theta
functions $\vartheta_m$ with $m$ an asymptotic monomial of
$\cone{B}$ but not of $\cone{\rho}$. In particular, the quotient
$S_\rho:= S/K_\rho$ is a free $A[Q]/I$-module with basis (the
restrictions to $D_\rho^\circ$ of) the theta functions $\vartheta_m$
with $m$ running over the asymptotic monomials of $\cone{\rho}$.
Moreover, this construction of $S_\rho$ is obviously compatible with
the construction of the homogeneous coordinate ring of $\foD_\rho^\circ$
via the wall structure $\scrS_\rho$ on $\rho$ in Remark~\ref{Rem:
Boundary divisor}. In particular, $\foD_\rho:=\Proj(S_\rho)$ defines
a partial completion of $\foD^\circ_\rho$. Note however that by our
definition of $\Delta\subseteq B$ the complement of $\foD_\rho^\circ$ in
$\foD_\rho$ is a union of divisors rather than of codimension two
subsets as for $\foX^\circ\subseteq\foX$.

The divisor $\foD\subseteq\foX$ is then defined as the scheme
theoretic union of the $\foD_\rho$, that is, it is the closed
subscheme of $\foX$ given by the homogeneous ideal $\bigcap_\rho
K_\rho\subseteq S$.
\end{remark}

\begin{remark}
\label{Rem: Doubly truncated cone}
It is also easy to treat the completion $\PP(\O_\foX(1)\oplus
\O_\foX)= \PP(\O_\foX\oplus\shL)$ of $\foL$ to a $\PP^1$-bundle in
the current framework. For an integer $a>1$ consider the polyhedral
pseudomanifold
\[
\cone_{[1,a]}{B}:=\big\{ (x,h)\in \cone{B}\,\big|\, h\in[1,a]
\big\},
\]
and write $\foP^\circ$ for the associated flat scheme over $A[Q]/I$.
Then for any chamber $\fou$ for $\scrS$ and an integral point
$v\in \fou$ there are two non-interior slabs $\fou\times\{1\}$ and
$\fou\times\{a\}$ for the induced wall structure on
$\cone_{[1,a]}{B}$. These give rise to two charts for $\foP^\circ$,
\[
R_{\fou\times\{1\}} \stackrel{\simeq}{\lra} R_{\fou}[u],
\quad
R_{\fou\times\{a\}} \stackrel{\simeq}{\lra} R_{\fou}[v].
\]
Clearly, $\foP^\circ$ contains the $\GG_m$-torsor $\foL^{\circ,\times}$ as an
open dense subscheme and $u|_{\foY^\circ}=(v|_{\foY^\circ})^{-1}$
generate this $\GG_m$-torsor locally. Thus $\foP^\circ$ is an open
subscheme of the $\PP^1$-bundle
$\PP(\O_{\foX^\circ}\oplus\shL^\circ)$ with complement two copies of
the codimension one strata of $\foX^\circ$. It extends to the flat
deformation $\PP(\O_{\foX}\oplus\O_\foX(1))$ of $\PP(\O_{X_0}\oplus
\O_{X_0}(1))$ by pushing forward the sheaf of regular functions.

Note that the integral length $a-1$ of the interval $[1,a]$ agrees
with the degree (as a line bundle over $\PP^1$) of
$\O_{\PP(\O_{\foX} \oplus\O_\foX(1))}(1)$ on a fibre of
$\PP(\O_\foX\oplus\O_\foX(1))$. The lower and upper boundaries of
$\cone_{[1,a]}B$ represent the two distinguished sections of
$\PP(\O_{\foX}\oplus\O_\foX(1))$, with the lower boundary giving the
contractible (``negative'') section. Interestingly, the restriction
of the polarization to the other section does not produce the
polarization on $\foX$ but its $a$-fold multiple.
\end{remark}

%===========================================================
\subsection{The action of the relative torus}
\label{Subsect: torus action}

Another feature of the construction in many cases is the existence of a
canonical action of a large algebraic torus on $\foX$ as a projective scheme.
Our theta functions generate isotypical components for the induced action on
$\Gamma(\foX,\O_\foX(d))$.

We begin by identifying the group of automorphisms of $X_0$ over $A[Q]/I_0$
preserving the toric strata. In practice, the preservation of strata is either
automatic or desired, for example if $B$ is closed or if one also wants to
include the divisor in $\foX$ defined by $\partial B$. We view the decomposition
of $X_0$ into toric strata as being given by the construction of $X_0$ and
therefore write $\Aut_{A[Q]/I_0} (X_0)$ for the strata preserving closed
subgroup of the automorphism group of $X_0$ as a scheme over $A[Q]/I_0$. This
result is of motivational character for explaining the role of $\PL(B)^*$ both
in the present subsection as well as in \S\ref{Subsect: GS torus action}. Here
we write $\PL(B)= \PL(B,\ZZ)$ and $\PL(B)^*=\Hom (\PL(B),\ZZ)$ for brevity. Note
that $\PL(B)$ depends only on the affine structure on the interiors of the
maximal cells, just as the central fibre $X_0$.

\begin{proposition}
\label{Prop: Aut(X_0)}
The connected component of the identity of $\Aut_{A[Q]/I_0} (X_0)$ is the torus
over $\Spec(A[Q]/I_0)$ with character lattice $\PL(B)^*$.
\end{proposition}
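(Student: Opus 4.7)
The plan is to reduce to a componentwise toric computation and then identify the resulting compatibility group with $\PL(B)^*$. By Proposition~\ref{Prop: X_0}, the irreducible components of $X_0$ are the toric varieties $\PP_{S_i}(\sigma)$ indexed by pairs $(i,\sigma)$ with $\sigma\in\P_\max$ and $S_i := S/\fop_i$ ranging over the integral quotients of $S = A[Q]/I_0$. Since this set is finite, any element $g$ of the identity component $G^0 := \Aut^0_{A[Q]/I_0}(X_0)$ preserves each $\PP_{S_i}(\sigma)$ and each toric stratum of $X_0$.

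Next I would show that $g|_{\PP_{S_i}(\sigma)}$ lies in the dense torus $T_\sigma^{(i)} := \Spec(S_i[\Lambda_\sigma])$. Since $g$ preserves all components, it preserves their pairwise intersections; the union of these inside $\PP_{S_i}(\sigma)$ is the union of the toric divisors coming from those codimension-one faces of $\sigma$ shared with other maximal cells. When $\sigma$ does not meet $\partial B$, this is the full toric boundary, and a standard fact from toric geometry --- the identity component of the automorphism group of a toric variety preserving its toric boundary equals the dense torus --- gives $g|_{\PP_{S_i}(\sigma)} \in T_\sigma^{(i)}$.

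This places $G^0$ inside $\prod_{i,\sigma} T_\sigma^{(i)}$, with image the subgroup of tuples $(g_{i,\sigma})$ satisfying the natural gluing compatibility: for every codimension-one interior cell $\tau = \sigma \cap \sigma' \in \P^{[n-1]}_\int$, the elements $g_{i,\sigma}$ and $g_{i,\sigma'}$ must induce the same automorphism of the common toric stratum $\PP_{S_i}(\tau)$. Because the action of $T_\sigma^{(i)}$ on $\PP_{S_i}(\tau)$ factors through the surjection $T_\sigma^{(i)} \twoheadrightarrow T_\tau^{(i)}$ dual to the inclusion $\Lambda_\tau \hookrightarrow \Lambda_\sigma$, this amounts to requiring that $g_{i,\sigma}$ and $g_{i,\sigma'}$ have equal image in $T_\tau^{(i)}$. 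Hence $G^0$ is the limit, in the category of affine group schemes over $\Spec(A[Q]/I_0)$, of the inverse system $\{T_\sigma^{(i)} \twoheadrightarrow T_\tau^{(i)} : \tau \subset \sigma\}$.

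To identify this limit, I would pass to cocharacter lattices: the cocharacter lattice of the limit is the inverse limit of $\check\Lambda_\sigma$ under the restriction maps $\check\Lambda_\sigma \twoheadrightarrow \check\Lambda_\tau$, and a compatible tuple $(m_\sigma)_{\sigma \in \P_\max}$ is precisely the slope data of an integral piecewise-linear function on $B$ modulo (locally) constant functions --- i.e.\ an element of $\PL(B,\ZZ)$. The character lattice is therefore $\Hom(\PL(B,\ZZ), \ZZ) = \PL(B)^*$, as claimed. The main technical obstacle is the toric step: carefully justifying that $\Aut^0$ of $\PP_{S_i}(\sigma)$ preserving the relevant boundary equals the dense torus. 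This requires care when $\sigma$ is unbounded (so $\PP_{S_i}(\sigma)$ is non-complete, to be treated over the affine base $\Spec(S_i[\sigma_\infty \cap \Lambda_\sigma])$) and, more seriously, when $\sigma$ meets $\partial B$, in which case the boundary preserved by $G^0$ is strictly smaller than the full toric boundary and extra non-toric automorphisms must be ruled out by some additional input (such as compatibility with the polarization on the components provided by the construction of $X_0$).
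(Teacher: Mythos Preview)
Your approach is essentially the same as the paper's: reduce to the product of tori $\bT_\sigma=\Spec(S[\Lambda_\sigma])$ over maximal cells, impose codimension-one compatibility along the surjections $\bT_\sigma\to\bT_\rho$, and identify the resulting limit with the torus of character lattice $\PL(B)^*$. The paper phrases the last step dually (the character lattice is the colimit of $\Lambda_\rho\to\Lambda_\sigma$, which it writes as $\PL(B)^*$ after noting $\Lambda_\sigma=\PL(\sigma,\ZZ)^*$), whereas you compute the cocharacter lattice as the inverse limit of the $\check\Lambda_\sigma$ and recognise it as $\PL(B,\ZZ)$; these are equivalent.

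Two minor points. First, the paper invokes the $S_2$ property of $X_0$ to justify that codimension-one compatibility is sufficient; you use this implicitly and should say so. Second, the paper does not decompose over the primes $\fop_i$ of $S$ but works directly with the tori $\bT_\sigma$ over $S$; your extra index $i$ is harmless but unnecessary. As for the technical concerns you flag at the end (unbounded cells, boundary cells, non-toric automorphisms when $\sigma$ meets $\partial B$): the paper does not address these either, and in fact introduces the proposition as being ``of motivational character'', so the level of rigour you supply is already at least what the paper offers.
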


\begin{proof}
Write $S=A[Q]/I_0$ for brevity. Since the irreducible components of $X_0$ are
toric varieties over $S$ labelled by $\P_\max$, the connected component of the
identity of $\Aut_{S}(X_0)$ is a closed subgroup of a product of tori $\GG_m^n$,
one for each maximal cell of $\P$. Intrinsically, the torus for the component
labelled by $\sigma\in\P_\max$ is $\bT_\sigma:=\Spec \big( S [\Lambda_\sigma]
\big)$. For any facet $\rho\subseteq\sigma$ the inclusion
$\Lambda_\rho\subseteq\Lambda_\sigma$ defines an epimorphism $\bT_\sigma\to
\bT_\rho$, with $\bT_\rho= \Spec \big( S [\Lambda_\rho] \big)$ the torus for the
codimension one stratum labelled by $\rho$. Compatibility of the actions for
$\sigma,\sigma'\in\P_\max$ adjacent to $\rho\in\P^{[n-1]}$ restricts the
automorphism group of the union of the corresponding components to the fibre
product
\[
\bT_\sigma\times_{\bT_\rho} \bT_{\sigma'}
=\Spec\big(S[\Lambda_\sigma\oplus_{\Lambda_\rho}
\Lambda_{\sigma'} ] \big).
\]
By trivially writing $\Lambda_\sigma= \PL(\sigma,\ZZ)^*$, the fibred
sum can conveniently be interpreted as the dual of the group of
$\ZZ$-valued piecewise linear functions on $\sigma\cup\sigma'$.

Globally we need to take the limit of the category with morphisms
$T_\sigma\to T_\rho$ for $\rho\in\P^{[n-1]}$, $\sigma\in\P_\max$,
$\rho\subseteq \sigma$. In fact, since $X_0$ is $S_2$ it suffices to
check compatibility of the actions on the irreducible components in
codimension one. Dually this leads to the colimit of
$\Lambda_\rho\to\Lambda_\sigma$, which is $\PL(B)^*$.
\end{proof}
\medskip

Let us now discuss the general procedure for obtaining a torus
action. For the character lattice of the acting torus take a finitely
generated free abelian group $\Gamma$. For the various
$\Gamma$-graded rings, $\deg_\Gamma$ always denotes the degree, as a
map from the set of homogeneous elements to $\Gamma$. Assume we have
a $\Gamma$-grading on our ground ring $A[Q]$ induced by gradings on
$A$ and $Q$ and that $I\subseteq A[Q]$ is a homogeneous ideal.  Denote
by $\delta_Q: Q\to\Gamma$ the homomorphism defining the grading on
the monomials of $A[Q]$ and by $A^Q_0\subseteq A[Q]$ the degree zero
subring. Our torus is
\[
\TT:= \Spec\big(A^Q_0[\Gamma]\big).
\]
Then $\TT$ acts naturally on $\Spec A[Q]$, defined on the ring level by
\[
A[Q]\lra A^Q_0[\Gamma]\otimes_{A^Q_0} A[Q],\quad
a\longmapsto z^{\deg_\Gamma(a)}\otimes a,
\]
the map written on a homogeneous element $a$.
The fixed locus of the action is $\Spec A^Q_0$.

For lifting this action to $\foX$ recall from Proposition~\ref{Prop:
Universal MPA function} the universal monoid $Q_0=\MPA(B,\NN)^\vee$
and the homomorphism $h:Q_0\to Q$ defining the given $Q$-valued
MPA-function $\varphi$. By the explicit description in 
Proposition~\ref{Prop: MPA is defined by kinks}, one has $Q_0^\gp
=\MPA(B,\ZZ)^*$. Hence the dual of the map $\PL(B)\to \MPA(B,\ZZ)$
defines a homomorphism
\[
g:Q_0\hooklongrightarrow Q_0^\gp=\MPA(B,\ZZ)^*\lra \PL(B)^*.
\]
We now assume given a further homomorphism $\delta_B: \PL(B)^*\to
\Gamma$ fitting into the following commutative diagram.
\begin{equation}
\label{Eqn: delta_b and delta_Q}
\begin{CD}
Q_0 @>{g}>> \PL(B)^*\\
@V{h}VV @V{\delta_B}VV\\
Q@>{\delta_Q}>>\Gamma.
\end{CD}
\end{equation}
This data provides a grading of our monomials as follows. Recall
that a monomial $m$ is an integral tangent vector on $\BB_\varphi$
at a point $x$ of $\varphi(B_0)\subseteq \BB_\varphi$ and that
$\pi:\BB_\varphi\to B$ denoted the projection. Assuming that $\ol
m=\pi_*(m)$ points from $x$ into the tangent wedge of a cell $\tau$
at $x$, the directional derivative in the direction of $\ol m$
defines an element $\nabla_{\ol m}\in \PL(B)^*$. Said differently,
for $\psi\in \PL(B)$ the restriction to $\tau$ defines an element of
$\check\Lambda_\tau$, and we define
\begin{equation}
\label{Eqn: nabla(PL)}
\nabla_{\ol m}(\psi):= (\psi|_\tau)(\ol m).
\end{equation}
A monomial $m$ also yields an element $m_Q\in Q$, by subtracting the
lift of $\ol m$ to $\BB_\varphi$ via the piecewise affine section
$\varphi: B\to\BB_\varphi$ of $\pi$. Note that in
the canonical identification \eqref{shP on Int(sigma)} in the
interior of a maximal cell we have $m=(\ol m, m_Q)$.
The \emph{$\Gamma$-degree} of $m$ or of $z^m$ is now defined as
\[
\deg_\Gamma(m):= \delta_Q(m_Q)+\delta_B(\nabla_{\ol m}).
\]
We have thus made the rings $R_\sigma$ from \eqref{Eq: R_sigma can
iso} into $\Gamma$-graded rings. The basic result of this subsection
is that the whole construction is $\Gamma$-graded provided all
functions $f_\fop$ given by the walls are homogeneous of degree
zero.

\begin{definition}
\label{Def: Homog wall structure}
Assume $A[Q]/I$ and the monomials on $B_0$ are graded by a finitely
generated free abelian group $\Gamma$ via a homomorphism $\delta_B:
\PL(B)^*\to\Gamma$ making \eqref{Eqn: delta_b and delta_Q}
commutative, as just described. Let $\scrS$ be a wall structure on
$(B,\P)$. We say that $\scrS$ is a \emph{homogeneous wall structure}
if  all functions $f_\fop$ defining walls are homogeneous of degree
$0$. 
\end{definition}

\begin{theorem}
\label{Thm: Torus action}
Let $\scrS$ be a consistent homogeneous wall structure on $(B,\P)$. Then the
action of the algebraic torus $\TT= \Spec\big(A^Q_0[\Gamma]\big)$ on $X_0$ from
Proposition~\ref{Prop: Aut(X_0)} and the homomorphism
$\delta_B:\PL(B)^*\to\Gamma$ extends to an equivariant action on the flat family
$\foX\to \Spec (A[Q]/I)$ from Theorem~\ref{Thm: Main}. Moreover, for an
asymptotic monomial $\ol m$ of $B$, the degree zero theta function
$\vartheta_{\ol m}$ is homogeneous of degree $\delta_B(\nabla_{\ol
m})\in\Gamma$.
\end{theorem}

\begin{proof}
As all fibres of $\foX\to \Spec (A[Q]/I)$ satisfy Serre's condition $S_2$ 
by Proposition~\ref{prop: S2 condition}, Lemma~\ref{Lem: Extension lemma}
implies it is enough to
prove the statement after restricting to the complement
$\foX^\circ\subseteq \foX$ of codimension two strata. Recall that
$\foX^\circ$ is covered by rings of the form $\Spec R_\fou$, $\Spec
R_\fou^\partial$ and $\Spec R_\fob$ for chambers $\fou$ and slabs
$\fob$ for $\scrS$, with the gluing coming from canonical embeddings
and automorphisms governed by wall crossing. For a chamber $\fou$
contained in a maximal $\sigma$ we have already seen that
$R_\fou=R_\sigma$ is naturally $\Gamma$-graded by the grading of
monomials.

For the $\Gamma$-grading of the rings $R_\fob$ \eqref{Def:
R_fob} we have to check that $Z_+Z_- -f_\fob
z^{\kappa_{\ul\rho}}$ is homogeneous. Again, since $f_\fob$ is
homogeneous of degree zero this statement is equivalent to
\[
\deg_\Gamma(Z_+)+ \deg_\Gamma(Z_-)=
\deg_\Gamma\big( z^{\kappa_{\ul\rho}}\big).
\]
To prove this equality recall that if $m_+$, $m_-$ are the tangent
vectors on $\BB_\varphi$ with $Z_+= z^{m_+}$, $Z_-= z^{m_-}$, then
$\deg_{\Gamma}(m_{\pm})
=\delta_B(\nabla_{\ol m_{\pm}})$, while $\deg_{\Gamma}(z^{\kappa_{\ul\rho}})
=\delta_Q(\kappa_{\ul\rho})$. But $\nabla_{\ol m_{+}}+
\nabla_{\ol m_-}$ is the linear functional on $\PL(B)$ given by
$\psi\mapsto \kappa_{\ul\rho}(\psi)$. This is precisely the
image of $e_{\ul\rho}\in Q_0$ under the map $Q_0\rightarrow \PL(B)^*$ 
(see Proposition \ref{Prop: Universal MPA function}). But
$\kappa_{\ul\rho}\in Q$ is also the image of $e_{\ul\rho}$. Thus
the claimed equality now follows from
the commutativity of \eqref{Eqn: delta_b and delta_Q}.

The same degree computation shows homogeneity of the localization
maps $R_\fob\to R_\fou$ of \eqref{Eqn: Localization for slabs} for
a slab $\fob$ and an adjacent chamber $\fou$. If $\fou$ is
a boundary chamber the localization $R_\fou^\partial\to R_\fou$ of
\eqref{Eqn: Localization for slabs} is localization at a
homogeneous monomial, so respects the grading. The automorphisms
$\theta_\fop: R_\fou\to R_{\fou'}$ of \eqref{Eqn: theta_fop}
associated to crossing a wall $\fop$ separating chambers
$\fou,\fou'$ manifestly respects the grading since by hypothesis
$f_\fop$ is homogeneous of degree zero. Finally, for slabs
$\fob,\fob'$ separated by a joint, the isomorphism $R_\fob\to
R_{\fob'}$ from \eqref{Eqn: theta_foj} is induced by a composition
of wall crossing homomorphisms and localizations, hence also
respects the grading.
\end{proof}

\begin{remark}
\label{Rem: grading on cone(N)}
For lifting the statement of Theorem~\ref{Thm: Torus action} to the total space
$\foL\to\foX$ of the line bundle in Theorem~\ref{Thm: Main}, note that there is
a bijection between the group $\PA(B,\ZZ)$ of piecewise affine functions on $B$
and $\PL(\cone{B},\ZZ)$. Moreover, since there is a bijection between the
interior codimension one cells of $B$ and the interior codimension one cells of
$\cone{B}$, we can identify the universal monoids $Q_0$ of $B$ and of
$\cone{B}$. Thus taking for $\delta_{\cone B}$ a homomorphism
$\PA(B,\ZZ)^*\to\Gamma$ both in \eqref{Eqn: delta_b and delta_Q} and in the
statement of Theorem~\ref{Thm: Torus action}, the action of $\TT$ lifts to
$\foL$. Note that the condition on homogeneity of the wall functions $f_\fop$
can nevertheless be checked on $B$ by the definition of lifted wall structures
(Definition~\ref{Def: cone of wall structure}). A theta function
$\vartheta_m$ with $m\in B\big(\frac{1}{d}\ZZ\big)$ is homogeneous of degree
$\delta_{\cone B}(\tilde m)\in \Gamma$, with $\tilde m$ the asymptotic monomial
on $\cone B$ corresponding to $m$ via Proposition~\ref{Prop: Asymptotic monomials
cone(B)}.
\end{remark}

\begin{remark}
\label{Rem: torus action with gluing data}
In \S\ref{Subsect: Twisting} the construction of $\foX$ will be
modified by the introduction of gluing data. They are given by
homomorphisms $s_{\sigma\ul\rho}:\Lambda_\sigma\to A^*$. For
projective open gluing data (Definition~\ref{Def: Projective gluing
data}) the analogue of Theorem~\ref{Thm: Main} holds. In this
modified setup Theorem~\ref{Thm: Torus action} holds true provided
the $s_{\sigma\ul\rho}$ take values in degree zero, for then the
localization homomorphisms $\chi_{\fob,\fou}$ remain homogeneous.
The rest of the construction is untouched.

Without a projectivity assumption one only obtains $\foX^\circ\to
\Spec(A[Q]/I)$ and, again assuming the gluing data to be homogeneous
of degree zero, a torus action on $\foX^\circ$. 
\end{remark}

\begin{example}
In \cite{GHKS} (see Examples \ref{Expl: standard examples},2 and
\ref{Expl: MPA-functions},2)  one takes $A=\kk$ and for $Q$
(denoted $P$ in loc.cit.) a toric submonoid of $N_1(\shY/T)=Q^\gp$
containing $\NE(\shY/T)$, the group of effective $1$-cycles of the
mirror family $\shY\to T$ of K3 surfaces. The character lattice
$\Gamma$ of $\TT$ is $\ZZ^{B(\ZZ)}$, the free abelian group
generated by the integral points of $B$. In this example every
integral point $v$ of $B$ is a vertex, hence labels an irreducible
component $Y_v\subseteq \shY_0$. The $\Gamma$-grading on $\kk[Q]$ is
defined by intersection theory on $\shY$:
\[
\delta_Q: N_1 (\shY/T)\lra \Gamma,\quad
C\longmapsto \sum_{v\in B(\ZZ)} (C\cdot Y_v)\cdot e_v,
\]
$e_v\in\Gamma$ the canonical basis vector labelled by $v$.

As for $\delta_{\cone{B}}: \PA(B,\ZZ)^*\to\Gamma$ (Remark~\ref{Rem:
grading on cone(N)}) note that in this case all maximal cells are
standard simplices. Hence $\PA(B,\ZZ)= \ZZ^{B(\ZZ)}$. We take
$\delta_B=\id_{\ZZ^{B(\ZZ)}}$.

To check commutativity of \eqref{Eqn: delta_b and delta_Q} it
suffices to trace the generators of $Q_0$,
\[
\kappa_\rho: \MPA(B,\NN)\lra \NN,\quad
\psi\longmapsto \kappa_\rho(\psi),
\]
measuring the kink of an MPA function along an edge $\rho\in\P$,
through the diagram. The image of $\kappa_\rho$ in $\PA(B,\ZZ)^*=
\PL(\cone{B},\ZZ)^*$ measures the kink of a piecewise affine function
$\psi$ along $\rho$, still denoted $\kappa_\rho(\psi)$. Let
$v_0,v_1$ be the vertices of $\rho$ and let $v_2,v_3$ be the
remaining vertices of the two triangles containing $\rho$. Denote by
$D^2_{vw}$ the self-intersection number of the double curve $Y_v\cap
Y_w$ inside $Y_w$. A straightforward computation in the affine chart
\eqref{Eq: GHKS chart} shows
\[
\kappa_{\rho}(\psi)= \psi(v_2)+\psi(v_3)-(D^2_{v_1v_0}+2)\psi(v_0)
+D^2_{v_1v_0}\psi(v_1).
\]
Noting that $D^2_{v_0 v_1}+D^2_{v_1v_0}=-2$ we see that
$\kappa_\rho\in Q_0$ maps to the symmetric expression
\begin{equation}
\label{Eq: commuting in GHKS}
e_{v_2}+e_{v_3}-(D^2_{v_1v_0}+2) e_{v_0} -(D^2_{v_0 v_1}+2) e_{v_1}
\in \PA(B,\ZZ)^*.
\end{equation}
On the other hand, going via $Q\subseteq A_1(\shY/T)$ maps
$\kappa_\rho$ first to $C=Y_{v_0}\cap Y_{v_1}\subseteq\shY$ and then
on to $\sum_v (C\cdot Y_v)\cdot e_v$. Now $C$ intersects $Y_{v_2}$
and $Y_{v_3}$ transversely, while
\[
C\cdot Y_{v_0}= \deg_C \O_{\shY}(Y_{v_0}) = -\deg_C \O_{\shY} (
Y_{v_1}+Y_{v_2}+Y_{v_3}) = -2- \deg_C \O_{Y_{v_0}}(C)
= - 2 - D_{v_1v_0}^2,
\]
and similarly for $C\cdot Y_{v_1}$. Here we used that
$\O_\shY(\sum_v Y_v)= \O_\shY$. Since $C$ is disjoint from all other
$Y_v$ we obtain the same expression as in \eqref{Eq: commuting in
GHKS}.

In \cite{GHKS}, homogeneity of the wall functions for the walls
emanating from vertices will follow by an a priori argument. The
remaining walls will be seen to be homogeneous because the scattering procedure
via the Kontsevich-Soibelman lemma manifestly respects the grading.
\end{example}

The somewhat complementary case of GS-type singularities will be
treated in \S\ref{Subsect: GS torus action}.

%===========================================================
\subsection{Jagged paths}
\label{Subsect: Jagged paths}

An alternative point of view on the construction of our theta
functions from \S\ref{Subsect: Theta functions} works directly on
$B$ rather than on $\cone{B}$. Recall that
$\cone{B}=(B\times\RR_{\ge0})/(B\times\{0\})$ topologically. The
projection to the second factor induces an affine map
\[
h:\cone B\longrightarrow \RR_{\ge 0},
\]
the height functions, while the projection to the first factor (the
radial directions) defines a non-affine retraction
\[
\kappa:\cone{B}\setminus\{O\}= h^{-1}(\RR_{> 0})\longrightarrow B.
\]
The projection of broken lines via $\kappa$ leads to the notion of
\emph{jagged paths}. The image of a broken line still consists of a
union of straight line segments in $B$, but the slopes need not be
rational since the projection $\kappa$ is not affine linear. The
notion of jagged paths predates the notion of broken lines. It had
been discussed early in 2007 in a project on tropical Morse theory
of the first and third authors of this paper jointly with Mohammed
Abouzaid. Tropical Morse theory is a tropical version of Floer
theory for Lagrangian sections of the SYZ fibration, see \cite{DBr},
\S8.4 and \cite{thetasurvey}.

We begin by reexamining the affine geometry of $\cone{B}$ from
\S\ref{Subsect: conical affine geom} and \S\ref{Subsect: cone of
polyhedral}. Denote by
\[
j: B\lra \cone{B} 
\]
the identification of $B$ with $B\times\{1\}\subseteq \cone{B}$.

First we want to interpret the tangent vectors on $\cone{B_0}$
purely in terms of the affine geometry of $B_0$.

\begin{lemma}
\label{Lem: shAff(B_0,ZZ)^*}
There is a canonical isomorphism $\shAff(B_0,\ZZ)^*\simeq
j^*(\Lambda_{\cone{B_0}})$.
\end{lemma}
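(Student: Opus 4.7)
The plan is to construct the isomorphism explicitly by ``homogenization'' of integral affine functions. To an integral affine function $f$ defined on an open neighbourhood of $x \in B_0$ I would associate the function $\tilde f$ on the corresponding neighbourhood of $j(x) = (x,1)$ in $\cone B_0$ given by $\tilde f(y,h) := h f(y)$. In a chart $\tilde\psi(y,h) = (h\psi(y),h) = (w_1,\ldots,w_{n+1})$ coming from Construction~\ref{Constr: cone B_0}, if $f$ has local expression $\sum a_i \psi_i + b$ with $a_i, b \in \ZZ$, then $\tilde f$ has local expression $\sum a_i w_i + b w_{n+1}$, which is integral \emph{linear}; since $\cone B_0 \setminus \{O\}$ is radiant, this is a well-defined global notion. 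Taking differentials yields a morphism of locally constant sheaves on $B_0$,
\[
\alpha: \shAff(B_0, \ZZ) \lra j^*\check\Lambda_{\cone B_0}, \qquad f \longmapsto d\tilde f.
\]
The isomorphism claimed in the lemma is then the dual map $\alpha^*: j^*\Lambda_{\cone B_0} \to \shAff(B_0,\ZZ)^*$, which at the stalk over $x$ sends $v \in \Lambda_{\cone B_0,(x,1)}$ to the linear form $f \mapsto v(\tilde f)$.

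To verify that $\alpha^*$ is an isomorphism I would compare two exact sequences. Dualizing \eqref{Eqn: Aff} yields
\[
0 \to \Lambda \to \shAff(B_0,\ZZ)^* \to \ZZ \to 0,
\]
with the surjection given by evaluation on the constant function~$1$. On the cone side, the height $h$ is a global integral affine function on $\cone B_0$, and in any chart $\tilde\psi$ the locus $\{h=1\}$ is the integral affine subspace $\{w_{n+1}=1\}$. Hence $j$ is an integral affine embedding at height $1$, and $dh: \Lambda_{\cone B_0,(x,1)} \to \ZZ$ is a primitive covector whose kernel is precisely $j_* \Lambda_{B_0,x}$, giving a second exact sequence
\[
0 \to \Lambda_{B_0,x} \to \Lambda_{\cone B_0,(x,1)} \to \ZZ \to 0.
\]
Since $\tilde 1 = h$, the map $\alpha^*$ intertwines the surjections onto $\ZZ$. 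On kernels, for a linear $f$ vanishing at $x$ the restriction of $\tilde f$ to $B\times\{1\}$ is $f$, so $\alpha^*$ reduces to the canonical perfect pairing $\Lambda_{B_0,x} \times \check\Lambda_{B_0,x} \to \ZZ$. The five lemma then finishes the argument.

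The one step requiring care is the integral identification $\ker(dh) = j_*\Lambda_{B_0,x}$, which encodes the fact (noted after Construction~\ref{Constr: cone B_0}) that $B\times\{h\}\hookrightarrow \cone B_0$ is integral affine \emph{only} at $h=1$; this is, however, immediate from the chart formula for $\tilde\psi$, so the entire verification reduces to a single chart computation.
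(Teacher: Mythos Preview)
Your proof is correct and uses the same underlying construction as the paper: the homogenization $f\mapsto \tilde f(y,h)=hf(y)$, which turns integral affine functions on $B_0$ into integral linear functions on $\cone{B_0}$. The paper organizes the verification by fixing an integral point $x$, splitting $\shAff(\sigma,\ZZ)_x^*\simeq \Lambda_x\oplus\ZZ$ explicitly (with $(0,1)$ corresponding to the radial vector $\partial_r$ and the evaluation functional $\ev_x$), and then checking that this identification is compatible with parallel transport to a nearby integral point; you instead package the same computation as a morphism of short exact sequences and invoke the five lemma, which is a slightly cleaner bookkeeping of the same content.
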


\begin{proof}
It suffices to establish this isomorphism for an $n$-dimensional
lattice polyhedron $\sigma\subseteq\RR^n$. For $x\in \sigma$ an
integral point there is a canonical identification
\[
\shAff(\sigma,\ZZ)_x = \Aff(\Lambda_x,\ZZ)
\stackrel{\simeq}{\lra} \check\Lambda_x\oplus\ZZ,
\]
mapping $0\oplus \ZZ$ to the constant functions and
$\check\Lambda_x\oplus 0$ to the affine functions vanishing at $x$.
Dualizing gives
\begin{equation}
\label{shAff_p^*}
\shAff(\sigma,\ZZ)_x^*= \Lambda_x\oplus\ZZ.
\end{equation}
The latter is canonically isomorphic to
$\Lambda_{\cone{\sigma},(x,1)}$ by mapping $(0,1)\in
\Lambda_x\oplus\ZZ$ to $\partial_r$, the tangent vector in the
radial direction, which is integral at the integral point $x$, while
$\Lambda_x$ is canonically embedded into
$\Lambda_{\cone{\sigma},(x,1)}$ via $j_*$.

Changing coordinates clearly respects this identification of
$\Lambda_x$ with those homomorphisms $\shAff(\sigma,\ZZ)_x\to \ZZ$ that
vanish on constant functions, that is, which factor over
$\check\Lambda_x$. To generate $\shAff(\sigma,\ZZ)^*_x$ it suffices to
take in addition the evaluation homomorphism $\operatorname{ev}_p:
\shAff(\sigma,\ZZ)_x\to\ZZ$ at $x$. Under the isomorphism
\eqref{shAff_p^*} this element of $\shAff(\sigma,\ZZ)_x^*$ corresponds to
$(0,1)$, hence it maps to the primitive radial tangent vector
$\partial_r\in \Lambda_{\cone{\sigma}, (x,1)}$. Parallel transport to a
nearby integral point $y=x+v \in \sigma$, $v\in\Lambda_x$, takes
$(\alpha,c)\in \check\Lambda_{y}\oplus\ZZ= \Aff(\Lambda_{y},\ZZ)$ to
$(\alpha,c- \langle \alpha,v\rangle)\in \Aff(\Lambda_x,\ZZ)$. Thus
$\operatorname{ev}_x= (-v,1)\in \Lambda_{y}\oplus\ZZ$.
This result agrees with the parallel transport of $\partial_r$ at
$(x,1)$ to $(y,1)$ in $\cone{\sigma}$ (Proposition~\ref{Prop: parallel
transport cone}).
\end{proof}
\medskip

Lemma~\ref{Lem: shAff(B_0,ZZ)^*} demonstrates that
the tangent sequence for $B_0$ in $\cone{B_0}$
\begin{equation}
\label{Eq: tangent sequence on CB}
0\lra \Lambda \lra j^*\Lambda_{\cone{B_0}}\stackrel{h_*}{\lra}
\ul\ZZ\lra 0
\end{equation}
agrees with the dual of \eqref{Eqn: Aff},
\begin{equation}
\label{Eqn: dual of Eqn Aff}
0\lra \Lambda \lra \shAff(B_0,\ZZ)^*\stackrel{\deg}{\lra}\ul\ZZ\lra
0.
\end{equation}
The homomorphism $\deg$ can be characterized by the property that
$\deg(\tilde m)$ for $\tilde m\in \shAff(\sigma,\ZZ)^*_x$ is the
integer $d$ with $\tilde m- d\cdot\operatorname{ev}_x\in \Lambda_x$,
again for $\sigma\in \P$ a maximal cell. Said differently, $\deg(\tilde
m)=\tilde m(1)$, the value of $\tilde m$ at the constant affine
function $1$. Note that the sequence also shows that
$\shAff_d(B_0,\ZZ):= \deg^{-1}(d)$ is a $\Lambda$-torsor.

Given a $Q$-valued MPA-function $\varphi$ the correspondence can be
applied to the cone over $\BB_\varphi$
(Construction~\ref{Construction: B_varphi}). Note that
$\cone\BB_\varphi= \BB_{\cone{\varphi}}$. In turn, we have a
definition of monomials at a point $(x,h)\in\cone{B_0}$ as elements
in $\shAff(\BB_\varphi,\ZZ)_x^*$. Here we use parallel translation
in the radial direction to reduce to the case $h=1$ treated in
Lemma~\ref{Lem: shAff(B_0,ZZ)^*} and the discussion following it. In
other words, $\shAff(\BB_\varphi,\ZZ)^*= j^*\shP_{\cone{B_0}}$ for
$\shP_{\cone{B_0}}$ the sheaf on $\cone{B_0}$ according to
Definition~\ref{Def: shP}. Denote the pullback via the secton
$\varphi: B_0\to\BB_\varphi$ of either of these sheaves by
$\tilde\shP$, and the corresponding subsheaf of monomials by
$\tilde\shP^+$ (Definition~\ref{Def: Toric strata}). The
homomorphism $\deg$ agrees with the grading of the monomials on
$\cone{B_0}$ defined before Proposition~\ref{Prop: Asymptotic
monomials cone(B)}. The following diagram encapsulates the above
discussion:
\[
\xymatrix@C=30pt
{&0\ar[d]&0\ar[d]&&\\
&\ul{Q}^{\gp}\ar[d]\ar[r]^=&\ul{Q}^{\gp}\ar[d]&&\\
0\ar[r]&\shP\ar[d]^{\pi_*}\ar[r]&\tilde\shP\ar[d]^{\pi_*}
\ar[r]^{\deg}&\ul{\ZZ}\ar[d]^{=}\ar[r]&0\\
0\ar[r]&\Lambda\ar[r]\ar[d]&\shAff(B,\ZZ)^*\ar[r]\ar[d]&\ul{\ZZ}\ar[r]
&0\\
&0&0&&
}
\]
We now have a generalization of the notion of
monomials on $B$ to higher degree.

\begin{definition}
\label{Def: deg=d monomials}
Denote by $\tilde\shP_d:=\deg^{-1}(d)\subseteq \tilde\shP$. A
\emph{monomial of degree $d$} at $x\in B_0$ is a formal expression
$a z^m$ with $a\in A$ and $m\in (\tilde\shP_d)_x$. If $m$
is a monomial at $x\in B_0$ we still denote by $\ol m\in
\shAff(B_0,\ZZ)^*= \Lambda_{\cone{B_0},(x,1)}$ the image induced by
the affine projection $\BB_{\cone{\varphi}}\to \cone{B_0}$. 
\end{definition}

Monomials of degree zero are the monomials from Definition~\ref{Def:
monomials} that we worked with so far. To construct a section of
$\shL^d$ one restricts to monomials of degree $d$. Note also that
the notion of transport of monomials (Definition~\ref{Def: transport
of monomials}) readily generalizes to monomials of higher
degree, simply by working on $\cone{B_0}$ locally.

To translate the condition of constant velocity on the domains of
affine linearity of a broken line (Definition~\ref{Def: broken
lines},(1)) on $\cone{B_0}$ to $B_0$ we need to compose the map
$m\mapsto \ol m$ with the differential $\kappa_*:
\shT_{\cone{B_0},(x,1)} \to \shT_{B_0,x}$. Here $\shT_{B_0}$ is the
sheaf of \emph{differentiable} vector fields on $B_0$, and similarly
on $\cone{B_0}$. The resulting homomorphism is denoted
\[
\vect: \tilde\shP \lra \shT_{B_0},\quad
m\longmapsto \kappa_*(\ol m).
\]
The image of a local section of $\tilde\shP$ under $\vect$ provides
a flat section of $\shT_{B_0}$ with respect to the \emph{affine}
connection on $T_{B_0}$, see the discussion before 
Proposition~\ref{Prop: parallel transport cone}.

An alternative description of $\vect$ is by noting that it factors
via the map $m\to\ol m$ with target in $\shAff(B_0,\ZZ)^* \subseteq
\shAff(B_0,\RR)^* $ and
\[
\operatorname{vect}: \shAff(B_0,\RR)^*\lra \shT_{B_0}.
\]
This latter map sends a linear functional on $\shAff(B,\RR)_x$
for $x\in B_0$ to its restriction to the subspace of germs of affine 
linear functions vanishing at $x$. In fact, the dual of the space of germs of
affine linear functions vanishing at $x$ is the tangent space to $B$ at $x$.

\begin{definition}
\label{Def: Jagged paths}
A (normalized) \emph{jagged path} of \emph{degree $d$} for a wall
structure $\scrS$ on $(B,\P)$ is a proper continuous map
\[
\gamma: [0,t_r]\to B
\]
with $\gamma\big((0,t_r)\big)\subseteq B_0$ and disjoint from any
joints of $\scrS$, along with a sequence $0=t_0< t_1<\ldots< t_r$
for some $r\ge 1$ with $\gamma(t_i)\in |\scrS|$ for
$i=1,\ldots,r-1$, and for $i=1,\ldots,r$ monomials $a_i z^{m_i}$ of
degree $d$ defined at all points of $\gamma([t_{i-1},t_i])$, subject
to the following conditions.
\begin{enumerate}
\item
$\gamma|_{(t_{i-1},t_i)}$ is a map with image
disjoint from $|\scrS|$, hence contained in the interior of a
unique chamber $\fou_i$ of $\scrS$, and $\gamma'(t)=-\vect(m_i)$
for all $t\in (t_{i-1},t_i)$.
\item
For each $i=1,\ldots,r-1$ the monomial $a_{i+1} z^{m_{i+1}}$ is a
result of transport of $a_i z^{m_i}$ from $\fou_i$ to $\fou_{i+1}$.
\item $a_1=1$, $m_1= d\cdot\varphi_*
(\ev_{\gamma(0)})$, $\gamma(0)\in
B\big(\frac{1}{d}\ZZ\big)$.
\end{enumerate}
The \emph{type} of $\gamma$ is the tuple of all $\fou_i$ and $m_i$.
As for broken lines we suppress the data $t_i,a_i, m_i$ when
talking about jagged paths, but introduce the notation
\[
a_\gamma:= a_r,\quad m_\gamma:=m_r.
\]
\end{definition}

In (3) the push-forward $\varphi_*$ is understood by first
restricting $\varphi$ to a maximal cell $\sigma\in\tilde\P$
containing $\gamma\big( (0,t_1)\big)$ to obtain an affine map
$\Int\sigma\to \BB_{\varphi}$.

Comparing to the notion of broken line the one point to emphasize is
that while a broken line has an asymptotic vector (Remark~\ref{Rem:
broken lines},1), a jagged path has an initial point $\gamma(0)$.

\begin{proposition}
Let $\scrS$ be a wall structure on the polyhedral pseudomanifold $(B,\P)$.
Then the projection $\kappa: \cone{B}\to B$ induces a bijection
between the set of broken lines on $\cone{B}$ for $\cone{\scrS}$
with endpoint $p$ and the set of jagged paths on $B$ for $\scrS$
with endpoint $\kappa(p)$. If $\beta$ is a broken line on $\cone{B}$
with asymptotic monomial $\ol m$ of degree $d$, the initial point of
the associated jagged path is the point $x\in B(\frac{1}{d}\ZZ)$
corresponding to $\ol m$ according to Proposition~\ref{Prop:
Asymptotic monomials cone(B)}.
\end{proposition}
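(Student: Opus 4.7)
The plan is to exhibit the bijection at the level of the discrete combinatorial data carried by the two kinds of paths, rather than by reparametrizing one into the other. A broken line is determined up to parametrization by its type, its endpoint, and its asymptotic monomial; a jagged path is determined up to parametrization by its type, its endpoint, and its initial point. I would verify that under $\kappa$ the two sets of data are in natural bijection.

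First I would check that walls and chambers correspond under cone formation. By Definition~\ref{Def: cone of wall structure}, the walls of $\cone\scrS$ are exactly the cones $\cone\fop$ over walls $\fop$ of $\scrS$, and the chambers of $\cone\scrS$ are cones over chambers of $\scrS$. Hence $\kappa$ sets up an order-preserving correspondence between the sequence of chambers $(\cone{\fou_i})$ visited by a broken line and a sequence $(\fou_i)$ of chambers for a jagged path. Similarly, a monomial $m$ on $\cone B_0$ at a point $(x,h)$, parallel transported radially to $(x,1)$ and interpreted via $\tilde\shP_d = j^*\shP_{\cone B}\cap \deg^{-1}(d)$, is exactly a degree-$d$ monomial on $B$ at $x$ in the sense of Definition~\ref{Def: deg=d monomials}.

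Next I would verify that the transport-of-monomials rules at wall crossings agree on the two sides. The wall $\cone\fop$ carries the function $f_{\cone\fop} = f_\fop^{1/a}$ where $a$ is the index of $\cone\fop$, and has normal $n_{\cone\fop} = (a n_\fop, -b)$ with $b = \langle n_\fop, m\rangle$ for a chosen lift $(m,a) \in \Lambda_{\cone\fop}$ (Definition~\ref{Def: index cone fop}). A direct computation yields
\[
f_{\cone\fop}^{\langle n_{\cone\fop},\ol m_i\rangle} = f_\fop^{\langle n_\fop, v_i\rangle - b d/a}
\]
for $\ol m_i = (v_i,d)$, and the extra power $f_\fop^{-bd/a}$ is absorbed when the exponent is recombined with the lift of $\ol m_i$ to $\tilde\shP_d$ at the new point, so that the possible successor monomials match those produced by $\theta_\fop$ applied on $B$. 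This step is the one that actually uses the $a$-th root convention of Definition~\ref{Def: cone of wall structure}, and is the main technical point to handle carefully.

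Finally I would match the asymptotic monomial with the initial point, invoking Proposition~\ref{Prop: Asymptotic monomials cone(B)}: a degree-$d$ asymptotic monomial $\ol m_1$ of $\cone B$ corresponds canonically to the $1/d$-integral point $x_0 = v_1/d$ of $B$ where $\ol m_1 = (v_1,d)$, and this point is indeed the asymptotic limit of $\kappa(\beta(t))$ as $t\to-\infty$ along the first segment. One checks that the initial condition $m_1 = d\cdot\varphi_*(\ev_{x_0})$ in Definition~\ref{Def: Jagged paths}(3) is precisely the description of $\ol m_1$ as a degree-$d$ monomial based at $x_0$. Combining these steps in both directions, the assignment $\beta \mapsto \gamma$ by taking the same type, endpoint $\kappa(p)$, and initial point $x_0$ produces the desired bijection. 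The degree zero case is immediate: degree-$0$ asymptotic monomials on $\cone B$ are asymptotic monomials of $B$, and the broken lines and jagged paths in this case differ only in the conventions for recording the endpoint at height one versus its projection.
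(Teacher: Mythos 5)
Your overall conclusion is correct, and steps (1) and (3) are fine — the identification $\Lambda_{\cone B_0,(x,1)} = \shAff(B_0,\ZZ)^*$ and the computation showing $\ol m_1 = d\cdot\ev_{x_0}$ are exactly the content of Lemma~\ref{Lem: shAff(B_0,ZZ)^*} and Proposition~\ref{Prop: Asymptotic monomials cone(B)}. However, your step (2) is based on a misreading of the definition. You treat the wall-crossing rule for jagged paths as something to be checked against an independent rule ``$\theta_\fop$ applied on $B$,'' and present the $a$-th root bookkeeping as the ``main technical point.'' But there is no separate wall-crossing automorphism on $B$ for monomials of positive degree: the paragraph immediately preceding Definition~\ref{Def: Jagged paths} declares that transport of higher-degree monomials is \emph{defined} by working on $\cone B_0$ locally, i.e.\ with the wall functions $f_{\cone\fop}=f_\fop^{1/a}$ of $\cone\scrS$. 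Consequently the agreement of transport rules in your step (2) holds tautologically, not as the result of the exponent computation; and the formal expression $f_\fop^{\langle n_\fop,v_i\rangle - bd/a}$ with a possibly fractional exponent is not a statement about a wall crossing on $B$ that the cone version must ``match.''

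The reason the paper's proof is a one-liner is precisely this: the definition of jagged path was engineered so that its defining data (the chambers $\fou_i$, the monomials $m_i\in\tilde\shP_d$, and the transport rule) is literally the same data as for a broken line on $\cone B$, just recorded on $B$ via $\kappa$, with the parametrization of segments adjusted to read off $\vect(m_i)$ rather than $\ol m_i$. What genuinely needs a remark is the translation of the two boundary conditions — asymptotic monomial versus initial point — which is your step (3), and which you handled correctly by matching $\ol m_1=(v_1,d)$ with $d\cdot\ev_{v_1/d}$. So the argument goes through, but it would be cleaner to drop the pseudo-verification in step (2) and simply cite the defining convention for higher-degree transport.
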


\begin{proof}
This follows directly from the definitions.
\end{proof}
\medskip

Having related the notion of broken line on $\cone{B}$ to the
notion of jagged path on $B$ it is now immediate to express all
results in \S\ref{Subsect: Theta functions} in terms of
jagged paths.

%===========================================================
%===========================================================

\section{Additional parameters}
\label{Sect: Parameters}

So far $X_0$ is the pull-back of a scheme over $\Spec \ZZ$ to $\Spec\big
(A[Q]/I_0 \big)$. Moreover, by the definition of the rings $R_{\ul\rho}$ in
\eqref{Eqn: R_ul rho} the closed subscheme of $\Spec\big( A[Q]/I\big)$ defined
by $I_0$ describes a trivial deformation. This is enough for certain cases, for
example to describe projective deformations of certain degenerate K3 surfaces
with all irreducible components copies of $\PP^2$ \cite{GHKS}, but in general it
is important to include also non-trivial locally trivial\footnote{Recall that a
deformation is called \emph{locally trivial} if the total space has an \'etale
covering by open subsets of trivial families.} deformations. For example, in
\cite{logmirror2}, \S5.2 we describe a locally trivial family $X_0$ parametrized
by the algebraic torus $\Spec\big( \kk[\Gamma]\big)$ with $\Gamma$ the quotient
by the torsion subgroups of the abelian group $H^1(B, i_*\Lambda)$,
$i:B_0\hookrightarrow B$. This family comes with a log structure and is versal
as a family of log schemes keeping the singularity structure, and it usually is
non-trivial as a family of schemes. Assuming projectivity, \cite{affinecomplex}
yields a deformation $\foX$ of $X_0$ much of the same form as the construction
presented here, but involving parameters in the localization morphisms. To keep
the presentation simple we chose not to include these in the discussion up to
this point. The purpose of this section is to finally include these
additional parameters.

Another motivation comes from the case of abelian varieties discussed
in Example~\ref{Expl: Riemann theta functions}. To reproduce all Riemannian
theta functions from our theta functions requires the use of gluing data.

%===========================================================
\subsection{Twisting the construction}

We begin by a general consideration on including additional
parameters abstractly. In this general framework $A[Q]$ also
includes these parameters. The reader is advised to think of $\Spec
A$ as the space of such gluing parameters, although this may not
be strictly true in practice.

In the new setup the definition of the sheaf of rings $A[\shP]$, the
notion of wall structure and the rings $R_{\fou}$ and $R_\fob$ are
as before. The only data that has to be changed in our construction
is the localization morphism from the ring for a slab $\fob$ to an
adjacent chamber $\fou$, which previously was defined canonically in
terms of the affine geometry of $B_0$. For each such pair
$(\fob,\fou)$ we now have as additional datum a homomorphism of
$A[Q]/I$-algebras
\[
\chi_{\fob,\fou}: R_\fob\lra R_{\fou}.
\]
At this level of generality there are no restrictions on
$\chi_{\fob,\fou}$. This new definition of the transition between
$R_\fob$ and $R_\fou$ changes also the notion of consistency in
codimension one (Definition~\ref{Def: Consistency in codim one}) and
the definition of the isomorphism $\theta_\foj$ between rings
$R_\fob$ associated to crossing a codimension one joint \eqref{Eqn:
theta_foj}. Under the assumption of consistency of the wall
structure $\scrS$ in codimension zero and one in this modified sense,
Proposition~\ref{Prop: foX^o exists} on the construction and
properties of $\foX^\circ$ hold true. Moreover, there is still
a notion of consistency in codimension two which ensures the
existence of enough local functions. One can then proceed to
construct the canonical basis $\vartheta_m$ of the ring of global
functions of $\foX^\circ$ via broken lines as in Section~\ref{Sect:
Global functions}. Note however, that now there possibly is an
additional dependence of the initial coefficient $a_1$ of a broken
line on the initial maximal cell.

The construction of the partial compactification $\foX$ of $\foX^\circ$
in Section~\ref{Sect: Theta functions} depended on the fact that we
can lift the construction to the cone $\cone B$ over $B$. In the
present situation this means we need a lift
\[
\tilde\chi_{\fob,\fou}= \chi_{\cone\fob,\cone\fou}:
R_{\cone\fob}\lra R_{\cone\fou}
\]
of $\chi_{\fob,\fou}$. Unlike in the untwisted situation this does
not follow canonically and is an additional datum to be
provided along with $\scrS$.

To go any further we need to make closer contact with the affine
geometry. This is the content of the next subsection.

%===========================================================
\subsection{Twisting by gluing data}
\label{Subsect: Twisting}

We now restrict to the following class of transition maps
$\chi_{\fob,\fou}$ that covers all cases which have occured in practice
so far.  Denote by $\chi^\can_{\fob,\fou}$ the canonical
localization homomorphisms of \eqref{Eqn: Localization for slabs}.
\smallskip

\ul{Open gluing data.}
For any $\ul\rho\in\tilde\P^{[n-1]}_\inte$ and adjacent maximal cell
$\sigma$ choose a homomorphism of abelian groups
\[
s_{\sigma\ul\rho}:\Lambda_\sigma\to A^\times,
\]
subject to the constraint
\begin{equation}
\label{Eqn: X_0 consistency after twisting}
s_{\sigma\ul\rho}|_{\Lambda_\rho} \cdot
\big(s_{\sigma\ul\rho'}|_{\Lambda_\rho}\big)^{-1}
=s_{\sigma'\ul\rho}|_{\Lambda_\rho} \cdot
\big(s_{\sigma'\ul\rho'}|_{\Lambda_\rho}\big)^{-1},
\end{equation}
as homomorphisms $\Lambda_{\rho}\rightarrow A^\times$
%\begin{equation}
%\label{openconstraint}
%s_{\sigma\ul{\rho}}|_{\Lambda_{\rho}}=s_{\sigma\ul{\rho}'}|_{\Lambda_{\rho}}
%\end{equation}
whenever $\ul{\rho},\ul{\rho}'$ are contained in the same
codimension one cell $\rho\in\P$ with adjacent maximal cells
$\sigma$ and $\sigma'$. Following
\cite{logmirror1}\cite{affinecomplex} we call the collection
$\mathbf s=(s_{\sigma\ul\rho})$ \emph{(open) gluing data}. ``Open''
refers to the fact that these gluing data modify open embeddings,
while ``closed gluing data'' to be considered below can be
interpreted as changing the closed embeddings defined by the
inclusion of toric strata on $X_0$. Condition  \eqref{Eqn: X_0
consistency after twisting}  is a necessary and sufficient condition
to guarantee that the analogue  of $X_0^{\circ}$ exists.
\smallskip

\ul{Changing $\chi_{\fob,\fou}$.} 
Define the localization homomorphism $\chi_{\fob,\fou}$ modified by
open gluing data by composing $\chi_{\fob,\fou}^{\can}:R_{\fob}\rightarrow
R_{\fou}$ with the map\footnote{Our sign convention for gluing data
in this paper is opposite to the conventions in the previous work
of the first and last authors \cite{logmirror1},
\cite{affinecomplex}. The signs in these works were initially chosen
from the point of view of gluing toric strata, but it now is clear
that the point of view of gluing open sets is more important.}
\begin{equation}
\label{Eqn: Modified localization hom}
\begin{array}{rcl}
s_{\sigma\ul\rho}: R_\fou = (A[Q]/I)[\Lambda_\sigma]
&\lra& R_\fou\\
z^m&\longmapsto&
s_{\sigma\ul\rho}(m)z^m.
\end{array}
\end{equation}
Since now consistency of a wall structure $\scrS$ in
codimension one and two depends on the choice of gluing data we
speak of \emph{consistency for the gluing data $\mathbf s$}.

\begin{remark}
With trivial gluing data, \eqref{f_rho' versus f_rho} assured that
for any $\rho\in \P^{[n-1]}$ all the rings $R_{\ul\rho}$ for
$\ul\rho\subseteq \rho$ are canonically isomorphic. While this
statement is superseded by consistency in codimension one
(Remark~\ref{Rem: wall structures},5) and hence ultimately is
redundant, the local models determine a log structure on $X_0^\circ$
that sometimes is important information. In fact, by
\cite{logmirror1}, Theorem~3.27, the log structure on $X_0^\circ$ is
equivalent to giving functions $f_{\ul\rho} \in
(A[Q]/I_0)[\Lambda_\rho]$ fulfilling an equation of the form
\eqref{f_rho' versus f_rho}. The generalization to non-trivial
gluing data can be derived from consistency in codimension one
and no walls of codimension zero present. To this end consider
$\ul\rho$, $\ul\rho'\subseteq\rho$ with slab functions $f_{\ul\rho}$,
$f_{\ul\rho'}$. Then we have the two models $\Spec R_{\ul\rho}$ and
$\Spec R_{\ul\rho'}$ from \eqref{Eqn: R_ul rho} for the affine
neighbourhood in $\foX^\circ$ of the $(n-1)$-stratum
$\Spec(A[Q]/I_0)[\Lambda_\rho]$ of $X_0^\circ$. Requiring these models to
be compatible with respect to the localization morphisms twisted by
gluing data leads to the following conditions. 

First, consistency of gluing of monomials with exponents in
$\Lambda_\rho$ is equivalent to \eqref{Eqn: X_0 consistency after
twisting}. Second, for consistency of gluings of $Z_\pm$, let
$\xi=\xi(\rho)\in \Lambda_\sigma$ and denote by $Z_\pm$ and $Z'_\pm$
the associated generators of the rings $R_{\ul\rho}$ and
$R_{\ul\rho'}$, respectively. Write also $\xi'\in\Lambda_{\sigma'}$
for $-\xi$ via parallel transport through  $\ul{\rho}$ into
$\sigma'$, so that \eqref{xiprimexieq} holds along $\ul{\rho}'$.
Then going from $\ul\rho$ to $\ul\rho'$ via $\sigma$ maps $Z_+$ to
$s_{\sigma\ul\rho}(\xi) s_{\sigma \ul\rho'}^{-1}(\xi)\cdot Z'_+$.
Similarly, going via $\sigma'$ maps $Z_-$ to
$s_{\sigma'\ul\rho}(\xi') s_{\sigma'\ul\rho'}^{-1} (\xi')\cdot
Z'_-z^{m_{\ul{\rho}\ul{\rho'}}}$. Note the additional term
$m_{\ul\rho\,\ul\rho'}$ coming from monodromy. Comparing with the
respective equations $Z_+Z_-= f_{\ul\rho}z^{\kappa_{\ul\rho}}$,
$Z'_+Z'_-= f_{\ul\rho'}z^{\kappa_{\ul\rho'}}$,  and assuming
\eqref{Eqn: X_0 consistency after twisting} now leads to the
following analogue
of \eqref{f_rho' versus f_rho}:
\begin{equation}
\label{f_rho' versus f_rho with gluing}
f_{\ul{\rho}'}z^{\kappa_{\ul{\rho}'}}=
\frac{s_{\sigma\ul\rho'}(\xi) s_{\sigma'\ul\rho'}(\xi')}{
s_{\sigma\ul\rho}(\xi) s_{\sigma'\ul\rho}(\xi')}
s_{\sigma\ul{\rho}'}^{-1}(s_{\sigma\ul\rho}(f_{\ul\rho})) z^{\kappa_{\ul\rho}}
z^{m_{\ul\rho'\ul\rho}}
\end{equation}
This equation holding modulo $I_0$ is necessary and
sufficient for the induced log structure on $X_0$ to glue
consistently locally. In particular, it is a necessary condition for the
existence of $\foX^\circ$ also under the presence of additional
walls and refinements of slabs.
\end{remark}

The choice of gluing data can be formulated cohomologically as follows. Consider
the open cover $\ul\W$ of $B$ consisting of the open stars of the barycentric
subdivision. The notation is $W_{\ul\tau}$ for the open star of
$\ul\tau\in\tilde\P$. We also use the notation $W_\tau$ for $\tau\in\P$ to
denote the open star of $\tau$ with respect to $\tilde\P$. Denote by
$\ul\W_0\subseteq\ul\W$ the subset consisting of interiors of maximal cells
$\sigma$ (the open star of the barycenter of $\sigma$) and of the open stars of
$\ul\rho\in \tilde\P^{[n-1]}_\inte$ not contained in $\partial B$. Thus the
elements of this covering are $W_\sigma=\Int\sigma$ and pairwise disjoint open
neighbourhoods $W_{\ul\rho}$, one for each $\ul\rho$ not contained in $\partial
B$. Since the elements of $\ul\W_0$ and their non-trivial intersections
$W_{\ul\rho \sigma}:= W_{\ul\rho}\cap W_\sigma$ are contractible, $\ul\W_0$ is a
Leray covering for the locally constant sheaf $\check\Lambda\otimes_\ZZ \ul
A^\times$ on $B_0\setminus\partial B$. Moreover, one has
$\Gamma(W_{\ul\rho\sigma}, \check\Lambda\otimes\ul A^\times)=
\Hom(\Lambda_\sigma,A^\times)$. Thus $(s_{\sigma\ul\rho} )_{\ul\rho,\sigma}$
defines a \v Cech $1$-cocycle with values in $\check\Lambda\otimes_\ZZ\ul
A^\times$ for the covering $\ul\W_0$, but not all \v Cech $1$-cocycles satisfy
\eqref{Eqn: X_0 consistency after twisting}. Cohomologous cocycles lead to
isomorphisms between constructions of $\foX^\circ$. To state this, note that
each pair $(\scrS,\mathbf s)$ consisting of a wall structure and gluing data
consistent in codimensions zero and one gives rise to a directed system of rings
$(R_\fou, R_\fob)$. We are interested in isomorphisms of such associated
directed systems of rings acting trivially on the labelling set $\{\fou,\fob\}$.
We call such isomorphisms
\emph{special}.

\begin{proposition}
\label{Prop: Cohomologous twists}
Let $(B,\P)$ be a polyhedral pseudomanifold.  There is an action of the group
$C^0(\ul\W_0, \check\Lambda\otimes_\ZZ \ul A^\times)$ on the
set of pairs $(\mathscr S,\mathbf s)$ consisting of a wall structure
and open gluing data, with $\mathbf t=(t_\sigma,t_{\ul\rho})\in C^0(\ul\W_0,
\check\Lambda\otimes_\ZZ \ul A^\times)$ acting on $\mathbf s= (s_{\sigma\ul\rho}
)_{\ul\rho,\sigma}$ by
\[
s_{\sigma\ul\rho}\longmapsto t_\sigma\cdot
s_{\sigma\ul\rho}\cdot t_{\ul\rho}^{-1}.
\]
This action preserves the set of structures with open gluing data that are
consistent in codimension zero and one, and for these consistent structures, the
associated directed systems of rings $(R_{\fou},R_{\fob})$ are related by
special isomorphisms.
\end{proposition}

\begin{proof}
We let $\mathbf t=(t_\sigma,t_{\ul\rho})\in C^0(\ul\W_0,
\check\Lambda\otimes_\ZZ \ul A^\times)$ act on the rings $R_\sigma$ via
$z^m\mapsto t_\sigma(m)\cdot z^m$, and take the induced action on the
rings $R_\fou$ and on the functions $f_\fop$ carried by walls. Similarly,
$t_{\ul\rho}$ acts on the rings $(A[Q]/I)[\Lambda_{\rho}]$. For a slab $\fob$
define $\fob(\mathbf t)$ by applying this action to $f_{\fob}$. Then $t_{\ul
\rho}$ induces an isomorphism $R_\fob\to R_{\fob(\mathbf{t})}$ for any slab
$\fob\subseteq \rho$, taking $Z_{\pm}$ to $t_{\ul\rho}(\pm \xi)Z_{\pm}$, where
as usual $\xi$ is the chosen element of $\Lambda_x$ for $x\in \fob$ determining
$Z_+$. The data $\mathbf t$ then modifies $\scrS$ by replacing each slab $\fob$
with the slab $\fob(\mathbf t)$ and applying $t_{\sigma}$ to each wall function
$f_{\fop}$ contained in $\sigma$. It is then easy to see that this new structure
$\scrS(\mathbf t)$ is consistent in codimension zero and one with respect to the
twisted gluing data, assuming $\scrS$ was consistent in these codimensions with
respect to the original gluing data. Indeed, codimension zero follows trivially.
Codimension one consistency follows easily from the definition and the fact that
if $\theta$, $\theta'$ are the wall-crossing automorphisms occuring in
Definition \ref{Def: Consistency in codim one} for the structure $\scrS$ and
$\theta_{\mathbf t}$, $\theta'_{\mathbf t}$ the corresponding automorphisms for
$\scrS(\mathbf t)$, one has $t_{\sigma}\circ \theta =\theta_{\mathbf t}\circ
t_{\sigma}$ and $t_{\sigma'}\circ \theta' =\theta'_{\mathbf t}\circ
t_{\sigma'}$. It is then straightforward to check that the action of
$t_{\sigma}$ on the rings $R_{\fou}$ with $\fou\subseteq\sigma$ and the action
$t_{\ul\rho}:R_{\fob} \rightarrow R_{\fob(\ul t)}$ defines a special isomorphism
of directed systems of rings.
\end{proof}
\medskip

In particular, it makes sense to call two sets of open gluing data
\emph{equivalent} if they are cohomologous as \v Cech $1$-cocycles.
\smallskip

\ul{Closed gluing data.}
Since our gluing data already changes the gluing modulo $I_0$,
consistency in codimension one and two may fail modulo $I_0$. Thus
we may not even obtain a scheme $X_0^{\circ}$ over $\Spec
(A[Q]/I_0)$. If consistency holds in codimension zero and one, we do obtain
$\foX^{\circ}$, but have no guarantee that there is a scheme $X_0$
analogous to that of \S\ref{Subsect: X_0} containing the reduction
$X_0^\circ$ of $\foX^{\circ}$ modulo $I_0$ as a dense open
subscheme. In general, arbitrary choices of $X_0$ can be described
by \emph{closed gluing data}, which explains how to  assemble $X_0$
by gluing along closed strata. This was carried out in
\cite{logmirror1}, \S2.1. Furthermore, without access to local
models in codimension $\ge 2$, we will rely on projectivity
to compactify $\foX^{\circ}$, already modulo $I_0$. We will now explore what
additional conditions must be imposed on open gluing data to
guarantee the existence of $X_0$.

There are some obvious obstructions to the existence of $X_0$ in codimensions
one and two associated with interior joints, as follows. Let $\foj$ be an
interior joint of codimension one or two for the wall structure
$\scrS$ and $\tau=\sigma_\foj\in\P$ the minimal cell containing $\foj$. Then for
any $m\in\Lambda_\tau$ we have a monomial $z^m$ in the rings $R_\fob$ and
$R_\fou$ for slabs $\fob\supseteq\foj$ and chambers $\fou\supseteq\foj$, but
passing from $\fob\subseteq\ul\rho$ to an adjacent $\fou\subseteq\sigma$
introduces the factor $s_{\sigma\ul\rho}(m)$. Thus $z^m\in R_\fou$ extends to a
function on the scheme $X_{\foj,0}$ of \S\ref{Subsect: X_0} constructed from
$B_{\foj}$ only if
\begin{equation}
\label{Eqn: Gluing data along joint}
\prod_{i=1}^r s_{\sigma_{i+1}\ul\rho_i}(m)
\cdot s_{\sigma_i\ul\rho_i}^{-1}(m)=1.
\end{equation}
Here the $\sigma_i$ and $\ul\rho_i$ are the maximal and codimension one cells
containing $\foj$ ordered in such a way that
$\ul\rho_i\subseteq\sigma_i\cap\sigma_{i+1}$ and with $i$ taken modulo $r$. We
note that for $\foj$ a joint intersecting the interior of a codimension one
cell, the above equation is precisely \eqref{Eqn: X_0 consistency after
twisting} which is assumed to hold for open gluing data. The condition for
joints contained in codimension two cells is more subtle. As we will see in
Theorem \ref{Prop: ob and consistency in codim 2}, in the case that $B$
is a manifold possibly with boundary, \eqref{Eqn: Gluing data along joint} is
equivalent to the existence of a version $(\bar s_{\tau\omega})_{\omega,\tau}$
of the closed gluing data, labelled by any inclusion of cells
$\omega\subseteq\tau$, which twists the construction of $X_0$ and acts on the
starting monomials of broken lines. More generally, there is a local cohomology
obstruction to the existence of such closed gluing data, see
Proposition~\ref{Prop: Cohomology sheaf for CGD obstruction} below.

The collection of $\bar s_{\tau\omega}$ is a one-cocycle on $B$ for a sheaf
$\shQ$ which is constructible with respect to a decomposition $\check\P$ of $B$
that is dual to $\P$. This dual decomposition $\check\P$ is
canonically defined by taking the cell $\check\tau\in\check\P$ dual to
$\tau\in\P$ as the union of all cells of the barycentric subdivision
$\tilde\P$ labelled by $\tau=\tau_0\subseteq\tau_1
\subseteq\ldots\subseteq\tau_k$. Recall that by our conventions for the
barycentric subdivision in the unbounded case given in
Construction~\ref{Construction: B}, the smallest cell $\tau_0$ must be bounded
and the barycenters of unbounded cells are replaced by asymptotic directions.

We will need the following facts about $\check\P$ and the open cover $\ul{\W}$,
which require a little bit of care because of unbounded cells:

\begin{lemma}
\label{lem: dual P facts}
Suppose given a polyhedral pseudomanifold $B$ of dimension
$n$ with decomposition $\P$. We have:
\begin{enumerate}
\item
If $\tau\in\P$ is unbounded, then $\check\tau$ is empty; otherwise $\check\tau$
is non-empty, $\dim \check\tau=n-\dim\tau$ and $\tau \cap\check\tau$ consists of
a single point, the barycenter of $\tau$.
\item
If $\tau\cap\check\omega$ is non-empty, then $\omega\subseteq\tau$ and
$\dim\tau\cap\check\omega =\dim\tau-\dim\omega$.
\item
Let $p\in B$ and let $\ul{\tau}$ be the minimal cell of the barycentric
subdivision $\tilde\P$ of $\P$ containing $p$. Suppose that $\ul{\tau}$
corresponds to a sequence of cells $\tau_0\subseteq\cdots\subseteq\tau_k$. Then
there is a sequence of continuous maps $\kappa_i:W_{\ul\tau}\rightarrow
W_{\ul\tau}$ compatible with $\tilde\P$, with $\kappa_i(p)=p$, $\kappa_i$ a
homeomorphism onto its image, and $\bigcap_i \im(\kappa_i)=\{p\}$. Similarly,
there exists a sequence of maps $\kappa_i:W_{\ul\tau}\rightarrow W_{\ul\tau}$
compatible with $\tilde\P$, with $\kappa_i|_{W_{\ul\tau}\cap
\check\tau_0}$ the identity, $\kappa_i$ a homeomorphism onto its image, and
$\bigcap_i \im(\kappa_i)=W_{\ul\tau}\cap\check\tau_0$.
\end{enumerate}
\end{lemma}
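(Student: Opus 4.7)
The plan is to prove the three parts in sequence, exploiting in each case the combinatorial description of $\tilde\P$-cells by chains of faces of $\P$.

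For (1), I will first dispose of the unbounded case: any $\tilde\P$-cell whose label starts with $\tau$ requires a chain $\tau\subsetneq\tau_1\subsetneq\cdots$ whose convention in Construction~\ref{Construction: B} insists that at least the first cell be bounded (the $l\ge 0$ condition). Hence no such cell exists when $\tau$ is unbounded, giving $\check\tau=\emptyset$. In the bounded case, the singleton cell of $\tilde\P$ labelled by the length-one chain $\{\tau\}$ is $\{a_\tau\}\subset\check\tau$, so $\check\tau$ is non-empty. For the dimension, I use pure dimensionality of $B$ to lift $\tau$ to a chain $\tau\subsetneq\tau_1\subsetneq\cdots\subsetneq\tau_{n-\dim\tau}$ with $\tau_{n-\dim\tau}\in\P_{\max}$, whose corresponding $\tilde\P$-cell has the claimed dimension $l+(k-l)=k=n-\dim\tau$. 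Finally, for $\tau\cap\check\tau=\{a_\tau\}$, I will pick any cell of $\tilde\P$ labelled $\tau\subsetneq\tau_1\subsetneq\cdots\subsetneq\tau_k$ and, working inside the ambient polyhedron $\tau_k$, use a supporting linear functional $\phi:\tau_k\to\RR_{\ge 0}$ with zero locus exactly $\tau$: because $a_{\tau_i}\in\Int\tau_i$ with $\tau_i\supsetneq\tau$, and because $\tau$ (being bounded) contains no unbounded tangent vectors, $\phi$ is strictly positive on each $a_{\tau_i}$ ($i\ge 1$) and on each $u_{\tau_i}$, forcing any convex/conic combination landing in $\tau$ to concentrate all its weight on $a_\tau$.

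For (2), the key device is the \emph{home cell} of $x$, the unique cell of $\tilde\P$ whose relative interior contains $x$; if it carries label $\tau_0\subsetneq\cdots\subsetneq\tau_k$, then I first verify (by the barycenter convention and an argument analogous to the supporting functional in (1)) that $x\in\Int\tau_k$, so $\tau_k=\sigma_x$. From $x\in\tau$ I conclude $\sigma_x\subseteq\tau$. Next, $x\in\check\omega$ means $x$ lies in some $\tilde\P$-cell $C'$ whose label starts with $\omega$; since the home cell is a face of $C'$, the home cell's label is a subchain of $C'$'s, hence contains $\omega$ or begins with an element containing $\omega$, giving $\omega\subseteq\tau_0$. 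Combined: $\omega\subseteq\tau_0\subseteq\sigma_x\subseteq\tau$, establishing $\omega\subseteq\tau$. For the dimension statement, note that $\tau\cap\check\omega$ is a union of those $\tilde\P$-cells whose labels are chains $\omega=\sigma_0\subsetneq\cdots\subsetneq\sigma_r$ with $\sigma_r\subseteq\tau$, and the maximal such chain (achievable by pure dimensionality within $\tau$) has length $\dim\tau-\dim\omega$.

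For (3), which will be the main technical obstacle, I plan to build the $\kappa_i$ cell-by-cell and glue. The essential observation is that $W_{\ul\tau}$ is a union of relatively open $\tilde\P$-cells $C$ that admit $\ul\tau$ as a face, and each such $C$ is a convex polyhedron (possibly with an unbounded cone part) containing $\ul\tau$. For the first family I set $\kappa_i(x)=p+\tfrac{1}{i}(x-p)$, computed inside whichever $C$ contains $x$; since $p\in\ul\tau\subseteq C$, the segment $[p,x]$ stays inside $C$ by convexity, so the prescription glues consistently on overlaps $C\cap C'$ (where both segments lie in the common face) and preserves every cell of $\tilde\P\cap W_{\ul\tau}$. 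The images form a shrinking nested family whose intersection is $\{p\}$. For the second family I will exploit the fact, established en route to (2), that for each $C\supseteq\ul\tau$ with label $\sigma_0\subsetneq\cdots\subsetneq\sigma_r$ and $\tau_0=\sigma_j$, the intersection $C\cap\check\tau_0$ is precisely the face $F_C$ of $C$ with label $\sigma_j\subsetneq\cdots\subsetneq\sigma_r$; concretely $F_C=\conv\{a_{\sigma_j},\ldots,a_{\sigma_l}\}+\sum_{i>l}\RR_{\ge 0}u_{\sigma_i}$. I will write each $x\in C$ canonically as $x=\alpha v+\beta w$ with $\alpha+\beta=1$, $v\in\conv\{a_{\sigma_0},\ldots,a_{\sigma_{j-1}}\}$, $w\in F_C$, and define $\kappa_i(x)=\tfrac{\alpha}{i}v+(1-\tfrac{\alpha}{i})w$. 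This is the identity on $F_C=\check\tau_0\cap C$ (where $\alpha=0$), is a homeomorphism of $C$ onto its image, contracts $C$ toward $F_C$ as $i\to\infty$, and — this is the point requiring the most care — glues consistently across cells because on any common face $C\cap C'$ the decomposition $x=\alpha v+\beta w$ is inherited from the labelled structure of the face itself, independently of whether it is computed in $C$ or $C'$. The intersection of images is then $\bigcup_C F_C=\check\tau_0\cap W_{\ul\tau}$, which is the content of the claim.
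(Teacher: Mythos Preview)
Your treatment of parts (1) and (2) follows the same route as the paper, though you supply more detail via supporting functionals where the paper just says ``immediate from the definition.'' One minor lacuna: in the dimension counts, the $\tilde\P$-cell for a chain $\tau_0\subsetneq\cdots\subsetneq\tau_k$ with $\tau_{l+1},\ldots,\tau_k$ unbounded has dimension $l$ plus the dimension of the cone spanned by $u_{\tau_{l+1}},\ldots,u_{\tau_k}$, and this equals $k$ only when those direction vectors are linearly independent (cf.\ the footnote in Construction~\ref{Construction: B}). The paper secures this by choosing the chain with $l$ maximal; you should address it too.

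The substantive gap is in part (3), first family. Your radial contraction $\kappa_i(x)=p+\tfrac{1}{i}(x-p)$ does not achieve $\bigcap_i\im(\kappa_i)=\{p\}$ once $W_{\ul\tau}$ contains an unbounded cell, because an affine dilation preserves the asymptotic cone. Concretely, take $B=\RR_{\ge 0}$ with the single maximal cell $[0,\infty)$; then $\tilde\P$ has the $1$-cell $[0,\infty)$ (for the chain $\{0\}\subset[0,\infty)$), and for $p=1$ one finds $\ul\tau=[0,\infty)$, $W_{\ul\tau}=(0,\infty)$, and $\kappa_i\big((0,\infty)\big)=(1-\tfrac{1}{i},\infty)$, whose intersection over all $i$ is $[1,\infty)$, not $\{1\}$. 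The paper's remedy is to work in barycentric coordinates $(\beta_j)$ and to push each unbounded coordinate through an auxiliary homeomorphism $\phi_i:\RR_{\ge 0}\to\RR_{\ge 0}$ with $\phi_i(0)=0$ and $\bigcap_i\im(\phi_i)=\{0\}$, so that the image in the unbounded directions is genuinely bounded and shrinking. This nonlinearity is exactly what your affine contraction lacks; your construction is valid only when all cells of $\P$ meeting $W_{\ul\tau}$ are bounded.

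Your second family fares better, since the direction being contracted---the simplex on $a_{\sigma_0},\ldots,a_{\sigma_{j-1}}$---is automatically bounded ($\tau_0$ bounded forces each $\sigma_i$ with $i<j$ bounded). But for your formula $\kappa_i(x)=\tfrac{\alpha}{i}v+(1-\tfrac{\alpha}{i})w$ to be well-defined you need the splitting $x=\alpha v+(1-\alpha)w$ to be unique, and this again requires the unbounded direction vectors $u_{\sigma_i}$ to be linearly independent. The paper arranges this explicitly by passing to a subsequence of the chain without changing the underlying cell; you should do the same.
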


\begin{proof}
(1) In the definition of $\tilde\P$ in Construction~\ref{Construction:
B}, there is no cell of $\tilde\P$ corresponding to a chain $\tau_0\subseteq
\cdots\subseteq\tau_k$ with $\tau_0$ (and hence all $\tau_i$) unbounded, hence
the first statement. If $\tau$ is bounded, it is immediate from the definition
that $\tau\cap\check\tau$ consists just of the barycenter of $\tau$. To see the
dimension statement, choose a chain $\tau=\tau_0\subseteq \cdots\subseteq\tau_k$
which is maximal, and such that $\tau_0,\ldots,\tau_{\ell}$ are bounded and
$\tau_{\ell+1},\ldots,\tau_k$ are unbounded. Furthermore, choose these so that
$\ell$ is as large as possible. One can then check that
$u_{\tau_{\ell+1}}, \ldots,u_{\tau_k}$ are linearly independent and thus from the
definition the corresponding cell of $\tilde\P$ is of dimension $n-\dim\tau$.
Further, it is clear from the definition that every cell of $\tilde\P$ contained
in $\check\tau$ is dimension at most $n-\dim\tau$, hence the claim.

(2) The first statement follows immediately from the definition of
$\check\omega$. For the dimension statement, note that the
intersection is a union of cells of $\tilde\P$ corresponding to
chains $\omega=\omega_0\subseteq\cdots\subseteq\omega_k=\tau$. Such a
cell is always of dimension at most $\dim\tau-\dim\omega$, and a
similar argument as in (1) shows that there is at least one such
cell achieving this bound.

(3) In each of the two cases, it is sufficient to construct maps
$\kappa_i$ defined on each cell $\ul{\omega}$ of $\tilde\P$
containing  $\ul{\tau}$ which are compatible with inclusions of
faces. To this end,  suppose $\ul{\omega}$ corresponds to a sequence
of cells $\omega_0\subseteq \cdots\subseteq\omega_m$ of $\P$, with
$\omega_0,\ldots,\omega_{\ell}$ bounded and
$\omega_{\ell+1},\ldots,\omega_m$ unbounded. The condition
$\ul{\tau} \subseteq \ul{\omega}$ is equivalent to the sequence
$\tau_0,\ldots,\tau_k$ being a subsequence of
$\omega_0,\ldots,\omega_m$. Recall that
\[
\ul{\omega}=\conv\{a_{\omega_0},\ldots,a_{\omega_{\ell}}\}
+\sum_{\ell+1\le j \le p} \RR_{\ge 0} u_{\omega_j}.
\]
By passing to a subsequence of cells, we can assume that the vectors
$u_{\ell+1},\ldots, u_m$ are linearly independent without changing
$\ul{\omega}$. Then every element of $\ul{\omega}$ has a unique representative
as $\sum \beta_j a_{\omega_j} + \sum \beta_j u_{\omega_j}$, with
$\sum_{j=0}^{\ell} \beta_j=1$. In particular, for the first case, we write
$p=\sum \alpha_j a_{\omega_j} + \sum \alpha_j u_{\omega_j}$ with
$\alpha_j=0$ if $\omega_j$ does not appear in the sequence $\{\tau_j\}$.

To define the sequence of retractions $\kappa_i$, choose
once and for all a sequence of maps $\phi_i:\RR_{\ge 0}\rightarrow \RR_{\ge 0}$
such that $\phi_i$ is a homeomorphism onto its image, $\phi_i(0)=0$, and
$\bigcap_i \im(\phi_i)=\{0\}$. Also fix a sequence of real numbers $\lambda_i\in
(0,1]$ with $\lambda_i\rightarrow 0$. Define $\psi_{ij}:[0,1]\rightarrow [0,1]$
for $0\le j \le \ell$ by
\[
\psi_{ij}(\beta)=\lambda_i(\beta-\alpha_j)+\alpha_j
\]
and $\psi_{ij}:\RR_{\ge 0}\rightarrow \RR_{\ge 0}$ for $\ell+1\le j\le m$
by
\[
\psi_{ij}(\beta)=\begin{cases}
\phi_i(\beta-\alpha_j)+\alpha_j&\beta\ge\alpha_j\\
\lambda_i(\beta-\alpha_j)+\alpha_j&\beta\le \alpha_j.
\end{cases}
\]
Then define $\kappa_i:\ul{\omega}\rightarrow\ul{\omega}$ by
\[
\kappa_i\left(\sum_{j=0}^{\ell} \beta_j a_{\omega_j}+\sum_{j=\ell+1}^m 
\beta_j u_{\omega_j}\right)=
\sum_{j=0}^{\ell} \psi_{ij}(\beta_j) a_{\omega_j}+\sum_{j=\ell+1}^m 
\psi_{ij}(\beta_j) u_{\omega_j}.
\]

For the sequence of maps with image converging to
$W_{\ul\tau}\cap\check\tau_0$, suppose $\tau_0=\omega_q$. Necessarily
$\tau_0$ is bounded, so $q\le \ell$. We instead define
\begin{align*}
&\kappa_i\left(\sum_{j=0}^{\ell} \beta_j a_{\omega_j}+\sum_{j=\ell+1}^m 
\beta_j u_{\omega_j}\right)\\
= {} &
\sum_{j=0}^{q-1} \lambda_i\beta_j a_{\omega_j}
+\sum_{j=q}^{\ell} \left({1-\lambda_i\sum_{h=0}^{q-1}\beta_h\over\sum_{h=q}^{\ell}
\beta_h}\right)\beta_ja_{\omega_j}
+\sum_{j=\ell+1}^m 
\lambda_i\beta_j u_{\omega_j}.
\end{align*}
One checks easily that these maps are homeomorphism onto their
images and $\bigcap_i\im(\kappa_i)=\check\tau_0$.
\end{proof}

The sheaf $\shQ$ mentioned before Lemma~\ref{lem: dual P facts} is the
sheaf constructible with respect to $\check\P$ with constant stalks
\[
\shQ_{\check\tau}:=\check\Lambda_\tau=\Hom(\Lambda_{\tau},\ZZ)
\]
along $\Int\check\tau$. For $\check\tau\subseteq\check\omega$ the generization
map $\shQ_{\check\tau}\to \shQ_{\check\omega}$ is defined as the dual of the
inclusion $\Lambda_\omega\to \Lambda_\tau$.

For the cohomological treatment of closed gluing data we use the covering
$\W=\{W_\tau\,|\, \tau\in\P\}$ of $B$ introduced before Proposition~\ref{Prop:
Cohomologous twists}. For $\tau\in\P$ the open set $W_\tau$ is the union of the
interiors of all simplices of the barycentric subdivision $\tilde\P$ of $\P$
intersecting $\Int(\tau)$, that is, having the barycenter $a_\tau\in\tau$ as a
vertex.\footnote{\cite{logmirror1} assumes $B$ bounded, but with our
generalization of the barycentric subdivision $\tilde\P$ for unbounded $B$ the
results generalize.

\begin{lemma}
\label{Lem: Gamma(W_omega,shQ)}
For $\omega\in\P$ the stalk $\shQ_{\check\omega}$ surjects onto each stalk $\shQ_x$
for any $x\in W_\omega$. In particular, $\Gamma(W_\omega,\shQ)=\check\Lambda_\omega$,
and if $\omega\subseteq\tau$ then
\[
\Gamma(W_\omega\cap W_\tau,\shQ) =\check\Lambda_\omega.
\]
\end{lemma}
\begin{proof}
We only need to prove the first statement. For a strictly increasing sequence
$\tau_0\subset \tau_1 \subset\ldots\subset\tau_k$ in $\P$ with $\tau_0$ bounded,
by abuse of notation we write $\ul\tau$ both for the labelling set
$\{\tau_0,\ldots,\tau_k\}$ and the corresponding cell of the barycentric
subdivision $\tilde\P$. By definition, it holds
\begin{equation}
\label{Eqn: W_omega as union}
W_\omega = \bigcup_{\omega\in \ul\tau} \Int \ul\tau.
\end{equation}
Note that the right-hand side in this description is a union of the interiors of
cells in a simplicial complex and is hence a disjoint union. Thus given $x\in
W_\omega$ there is a unique $\ul\tau$ with $\omega\in\ul\tau$ such that $x\in
\Int \ul\tau $. There is also a unique $\tau\in\P$ with $x\in\Int\check\tau$. By
the definition of $\check\tau$ we have a similar description of $\Int\check\tau$
as a disjoint union
\[
\Int\check\tau= \bigcup_{\tau\in \ul\tau' \text{\ minimal}} \Int \ul\tau',
\]
with ``minimality'' referring to the ordering of the elements of
$\ul\tau=\{\tau_0,\ldots,\tau_k\}$ by inclusion as subsets of $B$. Thus if $x\in
\check\tau$ then the unique $\ul\tau'$ with $x\in \Int \ul\tau'$ agrees with
$\ul\tau$ that we obtained from \eqref{Eqn: W_omega as union}. This shows
\[
\shQ_x=\shQ_{\check\tau}.
\]
Moreover, since $\omega,\tau\in\ul\tau$ and $\tau$ is minimal, we also
conclude $\tau\subseteq\omega$, and conversely, for any $\tau$ with $\tau\subseteq
\omega$ we have $\Int\check\tau\cap W_\omega\neq\emptyset$. Thus for any $x\in
W_\omega$ we have a surjective generization map
\[
\shQ_{\check\omega} \lra \shQ_x,
\]
which is an isomorphism for $x=a_\omega$. The statement now follows by compatibility of generization maps under further generizations.
\end{proof}
}
By Lemma~\ref{Lem: Gamma(W_omega,shQ)} a one-cocycle for $\shQ\otimes_\ZZ
A^\times$ with respect to $\W=\{W_{\tau}\,|\, \tau\in\P\}$ is a collection of
homomorphisms $\bar s_{\tau\omega}: \Lambda_\omega\to A^\times$, one for each
inclusion of cells $\omega\subseteq\tau$, fulfilling the cocycle
condition $\bar s_{\tau''\tau'}|_{\Lambda_{\tau}} \cdot \bar s_{\tau'\tau} =
\bar s_{\tau''\tau}$. As in the case with GS-type singularities
\cite{logmirror1}, Definition~2.10, we refer to these one-cocycles as
\emph{closed gluing data}, with a notion of equivalence defined by coboundaries.
Note also that the same argument as in \cite{logmirror1}, Lemma~5.5 shows that
$\mathscr W$ is an acyclic cover for $\shQ\otimes \ul A^\times$ and hence
\[
H^k(B,\shQ\otimes \ul A^\times)= 
H^k(\mathscr W,\shQ\otimes \ul A^\times).
\]

In what follows, we denote by $\Delta_k\subseteq \Delta$ for $k\ge 2$
the codimension $k$ skeleton of $\P$. Note that $\Delta\setminus \Delta_2$ is
covered by the interiors of those $(n-2)$-cells of $\tilde\P$ intersecting the
interiors of $(n-1)$-cells of $\P$. We also make use of the open cover of
$B\setminus\Delta_2$ given by
\[
\W_1=\{W_{\tau}\,|\, \hbox{$\tau\in \P$, $\dim \tau$ is $n$ or $n-1$}\}.
\]
We then have
\[
H^k(B\setminus\Delta_2,\shQ\otimes \ul A^\times)= 
H^k(\mathscr W_1,\shQ\otimes \ul A^\times).
\]

\begin{lemma}
\label{Lem: open codim one}
Let $\mathbf s$ be open gluing data for $(B,\P,\varphi)$. 
Then $\mathbf s$ uniquely determines an element of $H^1(\mathscr
W_1,\shQ\otimes \ul A^\times)$.
\end{lemma}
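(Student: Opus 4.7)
The plan is to interpret the constraint \eqref{Eqn: X_0 consistency after twisting} on open gluing data as producing a canonical Čech $1$-cocycle on $\mathscr{W}_1$. First I would analyze the combinatorics of overlaps in $\mathscr{W}_1=\{W_\tau:\dim\tau\in\{n,n-1\}\}$. Because a cell of $\tilde\P$ is labelled by a chain of $\P$-cells, two sets $W_{\tau_1},W_{\tau_2}$ intersect only when $\tau_1,\tau_2$ are nested; with dimensions restricted to $\{n,n-1\}$, the non-trivial pairwise overlaps are exactly the $W_\sigma\cap W_\rho$ with $\rho\subset\sigma$, and every triple intersection of three distinct members is empty, as it would require a chain of three cells of dimensions in $\{n,n-1\}$. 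Consequently the group of $2$-cochains vanishes, every $1$-cochain is automatically a $1$-cocycle, and a $1$-cochain is just a family $\bar s_{\sigma\rho}\in\Gamma(W_\sigma\cap W_\rho,\shQ\otimes\ul A^\times)=\Hom(\Lambda_\rho,A^\times)$ indexed by pairs $\rho\subset\sigma$ with $\dim\rho=n-1$.

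Next I would construct the cocycle. For each interior $\rho\in\P^{[n-1]}$ I would pick an auxiliary $\ul\rho_0\subset\rho$ in $\tilde\P^{[n-1]}_\int$ and set $\bar s_{\sigma\rho}:=s_{\sigma\ul\rho_0}|_{\Lambda_\rho}$; for $\rho\subset\partial B$ there is a unique maximal cell $\sigma\supset\rho$ and no open gluing datum, so I would simply set $\bar s_{\sigma\rho}:=1$ (the canonical choice in absence of data). This assembles a $1$-cocycle in $\check C^1(\mathscr{W}_1,\shQ\otimes\ul A^\times)$ that a priori depends on the auxiliary choices $\ul\rho_0$.

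Finally I would prove independence of these choices, which yields the well-definedness of the cohomology class. Replacing one $\ul\rho_0$ by another $\ul\rho_0'\subset\rho$ multiplies $\bar s_{\sigma\rho}$ by $t_\rho:=(s_{\sigma\ul\rho_0}\cdot s_{\sigma\ul\rho_0'}^{-1})|_{\Lambda_\rho}$, and the defining constraint \eqref{Eqn: X_0 consistency after twisting} is precisely the statement that this expression is independent of the maximal cell $\sigma$ used to evaluate it. Hence $t_\rho\in\Hom(\Lambda_\rho,A^\times)$ is well-defined, and it is the Čech coboundary of the $0$-cochain with $T_\sigma\equiv 1$ and $T_\rho=t_\rho^{-1}$. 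Since the choices $\ul\rho_0$ can be altered one cell at a time, the cohomology class is unchanged, proving the lemma. The argument is essentially bookkeeping about the Čech complex of $\mathscr{W}_1$; the only substantive input is the constraint \eqref{Eqn: X_0 consistency after twisting}, which was tailored precisely to make the evaluation $s_{\sigma\ul\rho_0}|_{\Lambda_\rho}$ canonical modulo coboundaries, so the ``hard part'' is really recognising that this is exactly what the constraint says.
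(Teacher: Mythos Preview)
Your argument is correct and close in spirit to the paper's, but the organization differs in a way worth noting. You fix an auxiliary $\ul\rho_0\subset\rho$, set $\bar s_{\sigma\rho}=s_{\sigma\ul\rho_0}|_{\Lambda_\rho}$, and then use \eqref{Eqn: X_0 consistency after twisting} to show that a change of $\ul\rho_0$ alters the cocycle by a coboundary. The paper instead first \emph{normalizes} $\mathbf s$ via the action of Proposition~\ref{Prop: Cohomologous twists}: for each interior $\rho$ it picks one adjacent maximal cell $\sigma'$ and acts by $t_{\ul\rho}=s_{\sigma'\ul\rho}$, forcing $s_{\sigma'\ul\rho}\equiv 1$. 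After this, \eqref{Eqn: X_0 consistency after twisting} says directly that $s_{\sigma\ul\rho}|_{\Lambda_\rho}$ is independent of $\ul\rho$, so $\bar s_{\sigma\rho}$ is defined without any auxiliary choice; the paper then checks that cohomologous $\mathbf s$ yield cohomologous $\bar{\mathbf s}$, which in particular absorbs the choice of $\sigma'$.

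Your route is slightly more self-contained, since it does not invoke Proposition~\ref{Prop: Cohomologous twists}, and your observation that all triple overlaps in $\mathscr W_1$ are empty (so $C^2=0$) cleanly explains why no cocycle condition needs checking. The paper's normalization, on the other hand, produces a representative cocycle satisfying \eqref{Eqn: s sbar equality} on the nose, which is exploited later in Remark~\ref{Rem: precise relation open/closed}; it also establishes, as a byproduct, that equivalent open gluing data give the same class, a point your argument does not address (nor does the lemma strictly require it).
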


\begin{proof}
For each codimension one $\rho\in\P$, $\rho\not\subseteq \partial B$,
choose an ordering $\sigma, \sigma'$ of the two maximal cells
containing $\rho$. Replace $\mathbf s$ with the cohomologous cycle
using the action of Proposition \ref{Prop: Cohomologous twists}
with  $t_{\ul\rho_i}=s_{\sigma'\ul\rho_i}$ and
$t_\sigma=t_{\sigma'}=1$. Thus we can assume that
$s_{\sigma'\ul\rho_i}(m)=1$ for all $m$. Now let  $\ul\rho_1,
\ul\rho_2$ be two codimension one cells of the barycentric
subdivision contained in a common codimension one cell $\rho$ of
$\P$, and contained in two codimension zero cells $\sigma,\sigma'$.
Then \eqref{Eqn: X_0 consistency after twisting} now simply states
that 
\[
s_{\sigma\ul\rho_1}|_{\Lambda_{\rho}}=s_{\sigma\ul\rho_2}|_{\Lambda_{\rho}},
\]
while the same statement for $\sigma'$ is trivially true since each side of the
equality is $1$ by construction. Thus defining $\bar
s_{\sigma\rho}=s_{\sigma\ul\rho}|_{\Lambda_{\rho}}$ for any
$\ul\rho\subseteq\rho$ and $\bar s_{\sigma'\rho}=1$,
we obtain well-defined sections of $\shQ\otimes \ul A^\times$ over $W_{\sigma}
\cap W_{\rho}$ and $W_{\sigma'}\cap W_{\rho}$ respectively. So we obtain a \v
Cech one-cocycle $\bar{\mathbf s}=(\bar s_{\sigma\rho})$ for the sheaf
$\shQ\otimes\ul A^\times$ on the cover $\W_1$. One checks
easily that changing the ordering of maximal cells or replacing
$\mathbf s$ by a cohomologous cycle, changes $\bar{\mathbf s}$ by a
cohomologous cycle. Hence we obtain a well-defined element of
$H^1(\W_1,\shQ\otimes \ul A^\times)$ as desired.
\end{proof}

\begin{definition}
If $\mathbf s$ is open gluing data for $(B,\P,\varphi)$
we write $\bar{\mathbf
s}$ for the induced element of $H^1(B\setminus\Delta_2,\shQ\otimes
\ul A^\times)$.
\end{definition}

We now define an obstruction class $\ob_{\Delta_2}(\bar {\mathbf s})$ which
is the obstruction to extending $\bar {\mathbf s}$ from
$B\setminus\Delta_2$ to $B$, defined via the connecting
homomorphism in the long exact sequence for local cohomology:
\begin{equation}
\label{Eqn: Local cohomology sequence}
H^1_{\Delta_2}(B,\shQ\otimes \ul A^\times)\lra H^1(B,\shQ\otimes \ul A^\times)\lra
H^1(B\setminus\Delta_2, \shQ\otimes \ul A^\times)\stackrel{\ob_{\Delta_2}}{\lra}
H^2_{\Delta_2}(B,\shQ\otimes \ul A^\times).
\end{equation}

\begin{lemma}
\label{Lem: local cohomology of shQ}
The local cohomology sheaves $\shH^k_{\Delta_2}(\shQ)$ vanish for
$k=0,1$. In particular, $H^k_{\Delta_2}(\shQ)=0$ for
$k=0,1$,
\[
\shH^k_{\Delta_2}\big(\shQ)=
R^{k-1}j_*(\shQ|_{B\setminus\Delta_2}), \quad k\ge 2,
\]
with $j: B\setminus\Delta_2\to B$ the inclusion, and
$H^2_{\Delta_2}(B,\shQ) = H^0\big(B,\shH^2_{\Delta_2}(\shQ)\big)$.
Analogous statements hold for $\shQ\otimes \ul A^\times$.
\end{lemma}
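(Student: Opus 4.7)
The plan is to use the standard description of local cohomology sheaves with support in $\Delta_2$. Writing $j'\colon B\setminus\Delta_2\hookrightarrow B$ for the open inclusion, the distinguished triangle
\[
R\Gamma_{\Delta_2}(\shQ)\lra \shQ\lra Rj'_*\!\big((j')^*\shQ\big)\lra
\]
gives, on cohomology sheaves, both the four-term exact sequence
\[
0\to\shH^0_{\Delta_2}(\shQ)\to\shQ\to j'_*\!\big(\shQ|_{B\setminus\Delta_2}\big)\to\shH^1_{\Delta_2}(\shQ)\to 0
\]
and the canonical isomorphisms $\shH^k_{\Delta_2}(\shQ)\simeq R^{k-1}j'_*\!\big(\shQ|_{B\setminus\Delta_2}\big)$ for $k\ge 2$. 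The lemma therefore reduces to (a) showing that $\shQ\to j'_*\!\big(\shQ|_{B\setminus\Delta_2}\big)$ is an isomorphism, and (b) replacing $j'$ by $j\colon B_0\hookrightarrow B$ on the derived pushforward side.

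For (a) I would verify the statement on stalks at each $x\in\Int\check\omega$, $\omega\in\P$. By constructibility of $\shQ$ with respect to $\check\P$, the stalk $\shQ_x=\check\Lambda_\omega$ represents germs of sections on a small neighbourhood of $x$, and the value on any adjacent bigger stratum $\Int\check\tau$ (corresponding to a face $\tau\subsetneq\omega$) is forced to be the image of $s\in\check\Lambda_\omega$ under the surjective generization map $\check\Lambda_\omega\twoheadrightarrow\check\Lambda_\tau$. The key combinatorial observation is that for each $\tau$ with $\omega\subset\tau\subset\omega$ [i.e.\ $\omega$ as a face of $\tau$], the intersection $\Int\check\tau\cap(B\setminus\Delta_2)$ is nonempty and path-connected: nonemptiness because some barycentric simplex $[\tau,\tau_1,\dots,\tau_k]\subset\check\tau$ has $\dim\tau_k\ge n-1$ (pure-dimensionality of $B$) and therefore contributes points outside $\Delta_2$, while path-connectedness follows from the pseudomanifold and $S_2$ conditions of Construction~\ref{Construction: B}. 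Hence a germ of section over $U\setminus\Delta_2$ is uniquely determined by, and freely specifies, an element of $\check\Lambda_\omega$, showing that the stalk map is bijective.

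For (b), factor $j$ as $j'\circ j''$ with $j''\colon B_0\hookrightarrow B\setminus\Delta_2$. The Leray spectral sequence
\[
R^pj'_*\,R^qj''_*\!\big(\shQ|_{B_0}\big)\Longrightarrow R^{p+q}j_*\!\big(\shQ|_{B_0}\big)
\]
reduces the identification $R^{k-1}j'_*(\shQ|_{B\setminus\Delta_2})=R^{k-1}j_*(\shQ|_{B_0})$ to showing that $\shQ|_{B\setminus\Delta_2}\to Rj''_*\!\big(\shQ|_{B_0}\big)$ is a quasi-isomorphism. This is precisely the analogue of (a) applied to the codimension two closed subset $\Delta\setminus\Delta_2\subset B\setminus\Delta_2$; by the remark before the lemma $\Delta\setminus\Delta_2$ is covered by interiors of $(n-2)$-cells of $\tilde\P$ meeting $\Int\rho$ for some $\rho\in\P^{[n-1]}_\int$, and the identical stalk-by-stalk argument applies there, with the role of $n$-dimensional barycentric simplices $[\tau,\ldots,\sigma]$ ending at a maximal cell $\sigma$ used to exhibit a connected open piece of each relevant stratum in $B_0$. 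The global statement $H^2_{\Delta_2}(B,\shQ)=H^0\!\big(B,\shH^2_{\Delta_2}(\shQ)\big)$ is then immediate from the local-to-global spectral sequence $E_2^{p,q}=H^p\!\big(B,\shH^q_{\Delta_2}(\shQ)\big)\Rightarrow H^{p+q}_{\Delta_2}(B,\shQ)$: vanishing of $\shH^0$ and $\shH^1$ kills $E_2^{2,0}$, $E_2^{1,1}$ and all differentials into $E_2^{0,2}$, leaving $H^2_{\Delta_2}(B,\shQ)=E_2^{0,2}$. Finally, the entire argument is local and $\ZZ$-linear, so tensoring with the locally constant sheaf $\ul A^\times$ produces the variants for $\shQ\otimes\ul A^\times$ without modification.

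The principal obstacle is the combinatorial verification underlying (a) and its analogue in (b): namely the path-connectedness of $\Int\check\tau\cap(B\setminus\Delta_2)$, respectively $\Int\check\tau\cap B_0$, for every face relation $\omega\subset\tau$ in $\P$. This is an elementary but fiddly local topological statement about how the dual cell $\check\tau$ is assembled from barycentric simplices $[\tau,\tau_1,\dots,\tau_k]$ in $\tilde\P$ and how it sits with respect to $\Delta$ and $\Delta_2$; the required connectedness ultimately comes from pure-dimensionality of $B$ together with the $S_2$ and pseudomanifold hypotheses recorded in Construction~\ref{Construction: B}, which together guarantee that deleting a codimension two subset does not disconnect small neighbourhoods.
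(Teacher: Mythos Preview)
Your approach is essentially the same as the paper's: both set up the four-term exact sequence, reduce the vanishing of $\shH^0_{\Delta_2}$ and $\shH^1_{\Delta_2}$ to showing $\shQ\to j'_*(\shQ|_{B\setminus\Delta_2})$ is an isomorphism, verify this on stalks via a connectedness argument resting on the $S_2$ condition, and finish with the local-to-global spectral sequence.

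Two differences in execution are worth noting. First, for the stalk computation the paper does not argue directly via the $\check\P$-stratification as you do; instead it invokes the explicit retraction maps of Lemma~\ref{lem: dual P facts}(3) to identify $(j'_*\shQ)_p$ with $H^0(W_{\ul\tau}\setminus\Delta_2,\shQ)$, then retracts further onto $\check\tau_0$ where $\shQ$ is constant, and proves connectedness of $W_{\ul\tau}\setminus\Delta_2$ by an induction removing $\Delta_i$ for $i=\codim\tau_k,\ldots,2$ using $S_2$ at each stage. This is cleaner than your stratum-by-stratum analysis (your sentence ``for each $\tau$ with $\omega\subset\tau\subset\omega$'' is garbled, and the precise description of which $\Int\check\tau$ meet a small punctured neighbourhood needs more care). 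Second, your part~(b) explicitly bridges the gap between $j':B\setminus\Delta_2\hookrightarrow B$ (what Hartshorne's formula actually gives) and $j:B_0\hookrightarrow B$ (what the lemma states); the paper simply cites \cite{hartshorneLC}, Corollary~1.9 and writes $B_0$ without further comment, so your factorisation $j=j'\circ j''$ and the analogous vanishing argument on $B\setminus\Delta_2$ fill in a step the paper leaves implicit.
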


\begin{proof}
The first two cohomology sheaves with closed support fit into
the exact sequence
\[
0\lra \shH^0_{\Delta_2} (\shQ)\lra \shQ\lra j_*(\shQ|_{B\setminus
\Delta_2})\lra \shH^1_{\Delta_2} (\shQ)\lra 0,
\]
while $\shH^k_{\Delta_2} (\shQ) = R^{k-1}j_*
(\shQ|_{B\setminus\Delta_2})$ for $k\ge 2$, see \cite{hartshorneLC},
Corollary~1.9. For the vanishing of the first two local cohomology sheaves we
thus have to show that $\shQ\to j_*(\shQ|_{B\setminus \Delta_2})$ is an
isomorphism. Let $p\in \Delta_2$ and let $\ul{\tau}$ be the minimal cell of the
barycentric decomposition $\tilde\P$ containing $p$. Recall that $\shQ$ is a
constructible sheaf for $\check\P$, hence also for the refinement $\tilde\P$. By
Lemma \ref{lem: dual P facts},(3), there is a sequence of retractions
$\kappa_i:W_{\ul\tau}\rightarrow W_{\ul{\tau}}$ compatible with $\tilde\P$ and
with $\kappa_i^*\shQ \simeq \shQ$ and $\bigcap_i\im(\kappa_i)=\{p\}$. This shows
that $(j_*\shQ)_p=H^0(W_{\ul{\tau}}\setminus\Delta_2,\shQ)$.

On the other hand, if $\ul{\tau}$ corresponds to a sequence
$\tau_0\subseteq\cdots\subseteq\tau_k$ of cells of $\P$, then
$\dim\tau_0\le n-2$ since $\ul\tau\subset\Delta_2$. In particular, for each
maximal simplex $\ul\omega$ contained in $\check\tau_0$ the two maximal elements
in $\ul\omega$ are of dimensions $n-1$ and $n$, respectively, and hence
$\omega\not\subseteq\Delta_2$. In particular, $\Delta_2\cap\check\tau_0$ is
nowhere dense in $\check\tau_0$. Now using Lemma \ref{lem: dual
P facts},(3), a similar retraction argument shows that $H^0(W_{\ul{\tau}}
\setminus\Delta_2, \shQ)\cong H^0((W_{\ul{\tau}}\cap \check\tau_0)
\setminus\Delta_2, \shQ|_{(W_{\ul\tau}\cap\check\tau_0) \setminus\Delta_2})$.
Note $\shQ|_{W_{\ul\tau}\cap\check\tau_0}$ is a constant sheaf, and thus
$H^0(W_{\ul{\tau}},\shQ) =H^0(W_{\ul{\tau}} \cap\check\tau_0,
\shQ|_{\check\tau_0}) =\check\Lambda_{\tau_0}$. It remains to show that
the restriction map $H^0(W_{\ul{\tau}} \cap\check\tau_0, \shQ|_{\check\tau_0})
\to H^0((W_{\ul{\tau}} \cap\check\tau_0)\setminus \Delta_2,
\shQ|_{\check\tau_0})$ is surjective.

It follows from the $S_2$ condition on $B$ that $W_{\ul{\tau}}
\setminus\Delta_2$ is connected. Indeed, this can be shown inductively by
computing $H^0(W_{\ul{\tau}}\setminus \Delta_l,\ZZ)$ by downward induction
on $l$, with the base case $l=\codim \tau_k+1$ trivial since $W_{\ul\tau}\cap
\Delta_{\codim \tau_k+1}=\emptyset$. For the induction step, note that the $S_2$
condition implies that $\nu_*\ul\ZZ=\ul\ZZ$ for $\nu:W_{\ul\tau}\setminus
\Delta_l\to W_{\ul\tau}\setminus \Delta_{l+1}$ the inclusion. The local
cohomology sheaf sequence (\cite{hartshorneLC}, Corollary~1.9) together with the
local to global spectral sequence for cohomology with supports
(\cite{hartshorneLC}, Proposition~1.4) thus shows
\[
H^0_{\Delta_l\setminus\Delta_{l+1}}(W_{\ul{\tau}}\setminus
\Delta_{l+1},\ul{\ZZ})= 
H^1_{\Delta_l\setminus\Delta_{l+1}}(W_{\ul{\tau}}\setminus
\Delta_{l+1},\ul{\ZZ})=0.
\] 
Since $W_{\ul{\tau}}\setminus\Delta_2$ retracts onto
$(W_{\ul{\tau}}\cap\check\tau_0)\setminus\Delta_2$, it follows that the latter
is connected and thus $H^0((W_{\ul{\tau}}\cap \check\tau_0) \setminus\Delta_2,
\shQ|_{\tau_0})= \check\Lambda_{\tau_0}$, establishing the claimed
surjectivity of the restriction map. We conclude that the map $\shQ\rightarrow
j_*(\shQ|_{B\setminus\Delta_2})$ is an isomorphism, as desired.

The claims on $H^k_{\Delta_2}(B,\shQ)$, $k\le 2$, now follow from
the local to global spectral sequence for cohomology with supports
\cite{hartshorneLC}, Proposition~1.4.
\end{proof}

\begin{proposition}
\label{Prop: Cohomology sheaf for CGD obstruction}
A one-cocycle $\bar{\mathbf{s}}= (\bar s_{\sigma\rho})$ for
$\shQ\otimes \ul A^\times$ on $B\setminus\Delta_2$ extends to a
one-cocycle on $B$ if and only if the local obstruction
$\ob_{\Delta_2}(\bar{\mathbf{s}})\in \Gamma \big(B, 
\shH^2_{\Delta_2}(\shQ\otimes \ul A^\times)\big)$ for doing so
vanishes. An extension is unique up to equivalence.
\end{proposition}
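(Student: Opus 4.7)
The plan is to extract the result directly from the long exact local cohomology sequence \eqref{Eqn: Local cohomology sequence}, invoking the computations of Lemma \ref{Lem: local cohomology of shQ} (whose extension to $\shQ \otimes \ul A^\times$ is asserted there). First, using the vanishing $H^1_{\Delta_2}(B, \shQ\otimes \ul A^\times) = 0$ together with the identification $H^2_{\Delta_2}(B,\shQ\otimes \ul A^\times) = \Gamma(B, \shH^2_{\Delta_2}(\shQ\otimes \ul A^\times))$ furnished by the lemma, the sequence \eqref{Eqn: Local cohomology sequence} collapses to the left-exact sequence
\[
0 \lra H^1(B, \shQ\otimes \ul A^\times) \lra H^1(B\setminus \Delta_2, \shQ\otimes \ul A^\times) \stackrel{\ob_{\Delta_2}}{\lra} \Gamma(B, \shH^2_{\Delta_2}(\shQ\otimes \ul A^\times)).
\]

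Second, I would identify the middle map with the restriction of \v{C}ech one-cocycles from the cover $\mathscr W$ of $B$ to the subcover $\mathscr W_1$ of $B\setminus \Delta_2$. Since both covers are Leray for $\shQ\otimes \ul A^\times$ (as observed before Lemma \ref{Lem: open codim one}), their \v{C}ech $H^1$ computes the corresponding sheaf cohomology, and the middle arrow is literally induced by restriction of cocycle representatives. Exactness at $H^1(B\setminus \Delta_2, \shQ\otimes \ul A^\times)$ then yields precisely the stated extension criterion: the class of $\bar{\mathbf s}$ lies in the image of $H^1(B, \shQ\otimes \ul A^\times)$ if and only if $\ob_{\Delta_2}(\bar{\mathbf s}) = 0$, which translates, after passing to cocycle representatives, to the statement that $\bar{\mathbf s}$ itself admits a cocycle extension to $B$ precisely when the obstruction vanishes. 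Injectivity of the leftmost arrow furnishes the uniqueness up to coboundary, i.e.\ up to equivalence in the sense of closed gluing data.

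The argument is thus essentially formal once Lemma \ref{Lem: local cohomology of shQ} is in hand; I do not anticipate any genuine obstacle beyond a careful bookkeeping of the identification between the sheaf-theoretic and \v{C}ech-theoretic viewpoints. The real content lies upstream, in the computation of the low-degree local cohomology sheaves carried out in Lemma \ref{Lem: local cohomology of shQ}, which rests on the $S_2$ condition imposed on the polyhedral pseudomanifold $B$.
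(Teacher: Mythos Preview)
Your argument is correct and follows essentially the same route as the paper: both extract the result directly from the long exact local cohomology sequence \eqref{Eqn: Local cohomology sequence} together with the vanishing $H^1_{\Delta_2}=0$ and the identification $H^2_{\Delta_2}=\Gamma(B,\shH^2_{\Delta_2})$ supplied by Lemma~\ref{Lem: local cohomology of shQ}. Your write-up is simply more explicit about the \v{C}ech interpretation of the restriction map, whereas the paper dispatches the matter in two sentences.
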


\begin{proof}
The existence statement is immediate from \eqref{Eqn: Local cohomology sequence}
and Lemma~\ref{Lem: local cohomology of shQ}. The same sequence shows that the
equivalence class of the extension is unique up to the action of
$H^1_{\Delta_2}(\shQ\otimes\ul A^\times)$, which vanishes by Lemma~\ref{Lem:
local cohomology of shQ}.
\end{proof}
\medskip

We will now connect the vanishing of the local obstruction class
$\ob_{\Delta_2}(\bar{\mathbf s})$ with \eqref{Eqn: Gluing data along joint},
obtaining the strongest results in the case that $B$ is topological
manifold with boundary.

\begin{proposition}
\label{Prop: ob and consistency in codim 2}
Let $\mathbf s$ be open gluing data for $(B,\P,\varphi)$. If the
obstruction $\ob_{\Delta_2}( \bar{\mathbf s}) \in
H^2_{\Delta_2}(B,\shQ\otimes \ul A^\times)$ for extending $\mathbf
s$ to closed gluing data on all of $B$ vanishes then the consistency
condition \eqref{Eqn: Gluing data along joint} holds for interior
joints $\foj$ of the form $\ul\tau\in\tilde\P$ contained in
$\Delta_2$ and for all $m\in\Lambda_\tau$, $\tau\in\P$ the minimal
cell containing $\ul\tau$. Furthermore, this implication is an
equivalence if $B$ is a topological manifold with boundary.
\end{proposition}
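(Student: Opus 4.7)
The strategy is to reduce vanishing of $\ob_{\Delta_2}(\bar{\mathbf s})$ to a stalkwise computation on $\Delta_2$ and to identify the stalk at a generic point of an interior codimension two cell with the cyclic product in \eqref{Eqn: Gluing data along joint}. By Proposition~\ref{Prop: Cohomology sheaf for CGD obstruction} combined with Lemma~\ref{Lem: local cohomology of shQ}, the obstruction lies in $H^0(B,\shH^2_{\Delta_2}(\shQ\otimes\ul A^\times))$, so it vanishes iff all its stalks do.

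First I would compute the stalk at a generic point $p\in\Int\ul\tau$ for $\ul\tau\in\tilde\P$ contained in an interior cell $\tau\in\P^{[n-2]}$. In a small contractible neighbourhood $U$ of $p$ the sets of $\W_1$ meeting $U\setminus\Delta_2$ are the open stars $W_{\sigma_i}$ and $W_{\rho_i}$ of the maximal cells $\sigma_1,\ldots,\sigma_r\supset\tau$ and codimension one cells $\rho_1,\ldots,\rho_r\supset\tau$, cyclically labelled with $\rho_i\subset\sigma_i\cap\sigma_{i+1}$. Any two distinct $W_{\sigma_i}$ or $W_{\rho_i}$ meet only inside $\tau\subset\Delta_2$, and no cell of $\tilde\P$ contains both barycenters $a_{\rho_{i-1}}$ and $a_{\rho_i}$, so triple intersections vanish as well. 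The \v Cech complex for $\shQ\otimes\ul A^\times$ on this cover therefore has only terms $C^0$ and $C^1$, with non-trivial contributions $\check\Lambda_{\rho_i}\otimes A^\times$ from the $2r$ double intersections $W_{\sigma_i}\cap W_{\rho_i}$ and $W_{\sigma_{i+1}}\cap W_{\rho_i}$. Restricting to $m\in\Lambda_\tau\subset\Lambda_{\rho_i}$ and using that the restriction maps $\check\Lambda_{\sigma_i}\to\check\Lambda_{\rho_i}$ admit lifts, a telescoping computation identifies
\[
\coker\bigl(C^0\to C^1\bigr)\iso \check\Lambda_\tau\otimes A^\times
\]
via the character $m\mapsto \prod_{i=1}^r a^+_i(m)\cdot a^-_i(m)^{-1}$, where $a^+_i,a^-_i\in\check\Lambda_{\rho_i}\otimes A^\times$ are the components over $W_{\sigma_{i+1}}\cap W_{\rho_i}$ and $W_{\sigma_i}\cap W_{\rho_i}$. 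The cocycle representing $\bar{\mathbf s}$ has $a^+_i=\bar s_{\sigma_{i+1}\rho_i}$ and $a^-_i=\bar s_{\sigma_i\rho_i}$, and from the construction in the proof of Lemma~\ref{Lem: open codim one} together with gauge invariance of the cyclic product its image is exactly $m\mapsto\prod_i s_{\sigma_{i+1}\ul\rho_i}(m)\cdot s_{\sigma_i\ul\rho_i}^{-1}(m)$. This establishes the forward implication.

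For the converse assume $B$ is a manifold with boundary and that \eqref{Eqn: Gluing data along joint} holds at every interior codimension two joint. It remains to verify that the stalks of $\shH^2_{\Delta_2}(\shQ\otimes\ul A^\times)$ vanish everywhere on $\Delta_2$, not only at generic points of $\P^{[n-2]}$. At a boundary codimension two point the link of $\tau$ in $B$ is an interval whose first cohomology vanishes, so the analogous \v Cech computation collapses to give a trivial stalk. At a point $p\in\Int\omega$ for $\omega\in\P^{[j]}_\int$ with $k:=n-j\ge 3$, the manifold hypothesis forces the link of $\omega$ in $B$ to be $S^{k-1}$, stratified compatibly with $\P$: codimension two cells of $\P$ through $\omega$ appear as codimension two substrata of this sphere. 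A Leray–\v Cech argument on the link then expresses the stalk at $p$ as generated by residue classes along these substrata, each of which coincides with the stalk of $\shH^2_{\Delta_2}$ at an adjacent interior codimension two cell and therefore vanishes by the forward computation; boundary higher-codimension strata give no contribution since their link is a hemisphere. The hardest step is this last reduction at cells of codimension $\ge 3$, where the manifold hypothesis is essential: it forces the link to be a sphere so that $H^1$ of the punctured link is generated only by residues at codimension two strata already controlled by assumption, whereas for non-manifolds the link may carry independent $H^1$ classes and the converse can genuinely fail.
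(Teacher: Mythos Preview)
Your forward direction is essentially the paper's argument: the obstruction lies in $H^0\big(B,\shH^2_{\Delta_2}(\shQ\otimes\ul A^\times)\big)$ by Lemma~\ref{Lem: local cohomology of shQ}, so it suffices to compute its stalk at the barycenter $p$ of an interior $\tau\in\P^{[n-2]}$ and identify the result with the cyclic product of \eqref{Eqn: Gluing data along joint}. The paper carries out this stalk computation by retracting $W_\tau\setminus\tau$ onto $(\Int\check\tau)\setminus\{p\}$, on which $\shQ$ is the constant sheaf $\check\Lambda_\tau$, so that the stalk becomes $H^1(S^1,\check\Lambda_\tau\otimes A^\times)=\check\Lambda_\tau\otimes A^\times$. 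Your direct \v Cech computation on $\{W_{\sigma_i},W_{\rho_i}\}$ is a reasonable alternative, but note that sections of $\shQ$ over $W_{\sigma_i}\cap U$ and $W_{\sigma_i}\cap W_{\rho_i}\cap U$ are $\check\Lambda_{\sigma_i}$ and $\check\Lambda_{\rho_i}$ respectively, not $\check\Lambda_\tau$; the telescoping does collapse the cokernel to $\check\Lambda_\tau\otimes A^\times$, but this requires more care than your phrase ``restricting to $m\in\Lambda_\tau$'' suggests. The retraction argument avoids this bookkeeping.

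The converse has a genuine gap. At $p\in\Int\omega$ with $\codim\omega\ge 3$ you write that ``the stalk at $p$ is generated by residue classes along these substrata, each of which coincides with the stalk of $\shH^2_{\Delta_2}$ at an adjacent interior codimension two cell and therefore vanishes by the forward computation''. This conflates two different things: the forward computation shows that the \emph{section} $\ob_{\Delta_2}(\bar{\mathbf s})$ vanishes at codimension two stalks under hypothesis \eqref{Eqn: Gluing data along joint}, not that those stalks are zero (they are $\check\Lambda_\tau\otimes A^\times\neq 0$). To conclude that the section vanishes at $p$ you need the product of generization maps $(\shH^2_{\Delta_2})_p\to\prod_\tau(\shH^2_{\Delta_2})_{q_\tau}$ to be \emph{injective}, and your unspecified ``Leray--\v Cech argument on the link'' does not establish this. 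The paper instead proves directly that $\shH^2_{\Delta_3}(\shQ\otimes\ul A^\times)=0$ when $B$ is a manifold with boundary: the stalk at $p$ equals $H^1(W_{\ul\tau}\setminus\Delta_3,\shQ\otimes\ul A^\times)$, which after retraction to $\check\tau_0\cap W_{\ul\tau}$ is $H^1$ of a constant sheaf on an open ball with a codimension $\ge 3$ subset removed, hence zero. The exact sequence $\shH^2_{\Delta_3}\to\shH^2_{\Delta_2}\to\shH^2_{\Delta_2\setminus\Delta_3}$ then gives the needed injectivity in one stroke, reducing everything to the codimension two case. Your intuition that the manifold hypothesis enters precisely here is correct, but the mechanism is this single vanishing $\shH^2_{\Delta_3}=0$, not a stratum-by-stratum residue argument.
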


\begin{proof}
By Proposition~\ref{Prop: Cohomology sheaf for CGD obstruction} it
suffices to consider the vanishing statement locally. We first
consider the case that $B$ is a topological manifold with
boundary, and in this case show that for the vanishing
of the obstruction $\ob_{\Delta_2}( \bar{\mathbf s})
\in H^2_{\Delta_2}(B,\shQ\otimes \ul A^\times)$, it is sufficient
to consider the vanishing at general points of the
codimension two cells covering $\Delta_2$. We will then show that this
latter vanishing is in any case equivalent to 
\eqref{Eqn: Gluing data along joint}.

To this end, suppose $B$ is a manifold with boundary, and consider part
of the long exact sequence of cohomology with supports for $\Delta_3$
(\cite{hartshorneLC}, Proposition~1.9):
\[
\shH^2_{\Delta_3} (\shQ\otimes \ul A^\times)\lra
\shH^2_{\Delta_2} (\shQ\otimes \ul A^\times)\lra
\shH^2_{\Delta_2\setminus \Delta_3} (\shQ\otimes \ul A^\times).
\]
We claim that $\shH^2_{\Delta_3} (\shQ\otimes \ul A^\times)=0$. Then
in view of the excision formula (\cite{hartshorneLC}, Proposition~1.3)
vanishing of the local obstruction class can be tested on
$B\setminus \Delta_3$. To prove the claim denote by $j_3: B\setminus
\Delta_3\to B$ the inclusion. Then $\shH^2_{\Delta_3} (\shQ\otimes
\ul A^\times)= R^1 {j_3}_*(\shQ\otimes \ul A^\times)$
(\cite{hartshorneLC}, Corollary~1.9). Let $p\in\Delta_3$ and $\ul\tau$
the minimal cell of the barycentric decomposition $\tilde\P$
containing $p$. By the same retraction argument of $W_{\ul{\tau}}$ to
$p$ as in the proof of Lemma~\ref{Lem: local cohomology of shQ}, we obtain
\[
\big(R^1{j_3}_*(\shQ\otimes\ul A^\times)\big)_p=
H^1(W_{\ul\tau}\setminus \Delta_3, \shQ\otimes\ul A^\times).
\]
If $\ul\tau$ corresponds to a sequence $\tau_0\subseteq\cdots\subseteq\tau_k$
of cells of $\P$, then similarly to the proof of 
Lemma~\ref{Lem: local cohomology of shQ}, we have
\[
H^1(W_{\ul\tau}\setminus \Delta_3, \shQ\otimes\ul A^\times)\cong
H^1((W_{\ul\tau}\cap\check\tau_0)\setminus \Delta_3, 
(\shQ\otimes \ul A^\times)|_{W_{\ul\tau}\cap\check\tau_0}).
\]
Furthermore, by Lemma~\ref{lem: dual P facts}, (2),
it follows that $\Delta_3\cap \check\tau_0$ is codimension three in 
$\check\tau_0$. In particular, since $\shQ|_{W_{\ul\tau}\cap\check\tau_0}$
is a constant sheaf with stalk $\check\Lambda_{\tau_0}$ and, if we assume
$B$ is a manifold with boundary,
$W_{\ul\tau}\cap\check\tau_0$ is an open ball in a manifold with boundary, 
we see that 
\begin{equation}
\label{eq: H1vanishing}
H^1((W_{\ul\tau}\cap\check\tau_0)\setminus \Delta_3, 
(\shQ\otimes \ul A^\times)|_{W_{\ul\tau}\cap\check\tau_0})=0.
\end{equation}
This finishes the proof of the claim if $B$ is a manifold with boundary.

Now consider $B$ arbitrary, and consider the map
$\ob_{\Delta_2\setminus \Delta_3}: H^2(B\setminus \Delta_2,
\shQ\otimes\ul{A}^\times) \rightarrow H^2_{\Delta_2 \setminus
\Delta_3}(B\setminus \Delta_3,\shQ\otimes \ul{A}^\times)$. We have
just shown that if $B$ is a manifold with boundary, then
$\ob_{\Delta_2\setminus\Delta_3}(\bar{\mathbf s})=0$ if and only if
$\ob_{\Delta_2}(\bar{\mathbf s})=0$. We now show in any case that
vanishing of $\ob_{\Delta_2\setminus\Delta_3}(\bar s)$ is equivalent
to the stated consistency condition. We have $H^2_{\Delta_2
\setminus\Delta_3} (B\setminus\Delta_3, \shQ\otimes \ul{A}^\times)
\cong H^0(\shH_{\Delta_2\setminus \Delta_3}^2 (B\setminus\Delta_3,
\shQ\otimes\ul{A}^\times))$. By constructibility of $\shQ$ it
suffices to test the vanishing of a section of
$\shH_{\Delta_2\setminus\Delta_3}^2 (B\setminus\Delta_3,
\shQ\otimes\ul{A}^\times)$ at $p$ the barycenter
$\tau\cap\check\tau\in \tilde\P^{[0]}$ of a cell $\tau\in\P$ of
codimension two. By the same argument of constructibility as in the
discussion of the codimension three locus, there is an isomorphism
\[
\left(\shH^2_{\Delta_2\setminus\Delta_3}(\shQ\otimes \ul A^\times)\right)_p
= H^1(W_\tau\setminus \Delta_2,\shQ\otimes\ul A^\times)
= H^1(W_\tau\setminus \tau,\shQ\otimes\ul A^\times).
\]
In the present case there is a sequence of retractions
$\kappa_k:W_\tau\setminus\tau\to W_\tau\setminus\tau$ with
$\bigcap_k \im(\kappa_k)= (\Int\check\tau)\setminus \{p\}$, and
hence
\[
H^1(W_\tau\setminus \tau,\shQ\otimes\ul A^\times)
= H^1\big((\Int\check\tau)\setminus\{p\},\shQ\otimes \ul A^\times\big).
\]
As before, the restriction of $\shQ\otimes\ul A^\times$ to
$\Int \check \tau$ is a constant sheaf with stalks $\check\Lambda_\tau
\otimes\ul A^\times$. Hence we can compute
\[
\left(\shH^2_{\Delta_2\setminus\Delta_3}(\shQ\otimes \ul A^\times)\right)_p
= H^1\big((\Int\check\tau)\setminus\{p\},
\check\Lambda_\tau\otimes
A^\times).
\]
If $\tau\subseteq\partial B$, then by the $S_2$ condition
$(\Int\check\tau)\setminus\{p\}$ is homotopic to an interval, and this
group is zero. Otherwise, $(\Int\check\tau)\setminus\{p\}$ is homotopic to $S^1$
and we obtain
\[
\left(\shH^2_{\Delta_2\setminus\Delta_3}(\shQ\otimes \ul A^\times)\right)_p
=H^1(S^1,\check\Lambda_\tau\otimes\ul{A}^\times)=
\check\Lambda_\tau\otimes\ul{A}^\times.
\]
Under this sequence of identifications the restriction of the
obstruction class $\ob_{\Delta_2\setminus\Delta_3}
(\bar{\mathbf{s}})$ is mapped to
$\prod_{i=1}^r \bar s_{\sigma_{i+1}\rho_i}(m) \cdot \bar
s_{\sigma_i\rho_i}^{-1}(m)$. Thus the local obstruction vanishes
along $\Int\tau$ if and only if the consistency
condition~\eqref{Eqn: Gluing data along joint} holds for all 
$m\in\Lambda_{\tau}$.
\end{proof}
\medskip

\begin{definition}
We say open gluing data $\mathbf s$ for $(B,\P,\varphi)$ is 
\emph{consistent} if $\ob_{\Delta_2}(\bar{\mathbf s})=0$.
In this case, we obtain uniquely induced closed gluing data $\bar{\mathbf
s}\in H^1(B,\shQ\otimes\ul A^\times)$, by 
Proposition \ref{Prop: Cohomology sheaf for CGD obstruction}. 
\end{definition}

\begin{remark}
\label{Rem: precise relation open/closed}
In fact if $\bar{\mathbf s}$ exists for a given ${\mathbf s}$, then we
can assume that we have specific representatives of both, such that 
for $\rho\subseteq \sigma\in\P_{\max}$ with $\rho$ codimension one, and
any $\ul\rho\subseteq \rho$, $m\in\Lambda_{\rho}$, one has 
\begin{equation}
\label{Eqn: s sbar equality}
\bar s_{\sigma\rho}(m)=s_{\sigma\ul\rho}(m).
\end{equation}
Indeed, the argument of the proof of Lemma \ref{Lem: open codim one}
implies we can replace ${\mathbf s}$ with equivalent open gluing
data so that we can define a cocycle $\bar{\mathbf s}$ for
$\shQ\otimes \ul A^\times$ over $B\setminus\Delta_2$ by  \eqref{Eqn: s
sbar equality}. The vanishing of $\ob_{\Delta_2}(\bar{\mathbf s})$
then implies $\bar{\mathbf s}$ lifts as a cohomology class
$\bar{\mathbf s}'$ to $H^1(B,\shQ\otimes \ul A^\times)$. Thus the
restriction of  $\bar{\mathbf s}'$ to $B\setminus \Delta_2$ is
cohomologous to $\bar{\mathbf s}$, that is,  there exists a
collection of data $t_{\sigma}\in \check\Lambda_{\sigma}\otimes
\ul{A}^\times$, $t_{\rho}\in \check \Lambda_{\tau}\otimes
\ul{A}^\times$ such that
\[
\bar s_{\sigma\rho}'(m)=t_{\sigma}(m)\bar s_{\sigma\rho}(m) t_{\rho}(m)^{-1}
\]
for all $\rho\subseteq\sigma$, $m\in\Lambda_{\rho}$. For each
$\ul\rho\subseteq \rho$, choose a lift $t_{\ul\rho}$ of $t_{\rho}$ to
$\Gamma(W_{\ul\rho}, \check\Lambda\otimes_{\ZZ} \ul A^\times)$.
Replacing ${\mathbf s}$ by the equivalent open gluing data induced
by the $t_{\sigma}$ and $t_{\ul\rho}$ then yields open gluing data
${\mathbf s}$ satisfying  \eqref{Eqn: s sbar equality}.
\end{remark}

Let us now assume given consistent open gluing data ${\mathbf{s}}$.
Then we have a unique choice of induced
closed gluing data $\bar{\mathbf{s}}= (\bar
s_{\tau\omega})$. Unfortunately, having induced closed gluing data 
is insufficient to construct $X_0$ as a scheme; at best one can hope
only to construct $X_0$ as an algebraic space as a direct limit of
closed immersions of toric varieties twisted by closed gluing data. However,
we shall take an easier route in the case that $X_0$ still carries an
ample line bundle. This case is detected by another obstruction, which we
turn to now.

\smallskip 

\ul{Projectivity.}
As we want to
follow the strategy from Section~\ref{Sect: Theta functions} for the
partial completion of $\foX^\circ$, we need to go over to the cone
$\cone{B}$ and construct global functions on the corresponding
affine scheme. This process is obstructed in general already on
$X_0$ for there exist non-projective locally trivial deformations of
such schemes. An example is provided by certain regluings of the
degenerate quartic surface $X_0 X_1 X_2 X_3=0$ in $\PP^3$, see
\cite{Friedman}, Remark~2.12.

The first problem is the lifting of gluing data ${\mathbf s}$ and
$\bar{\mathbf s}$ to $\cone{B}$. Let $h:\cone{B}\to\RR_{\ge 0}$ be
the height function and identify
$B$ with the slice $h^{-1}(1)\subseteq \cone{B}$. Denote
by $r: \cone{B}\setminus h^{-1}(0)\to B$ the retraction along rays 
emanating from the
apex. Recall that $r$ does not respect the affine structure, but
$h$ does. 
If we denote by $\tilde\shQ$ the sheaf analogous to $\shQ$ on $\cone B$, 
we have the exact sequence on $\cone B\setminus h^{-1}(0)$
\begin{equation}
\label{Eqn: tilde Q}
0\lra\ul A^\times\stackrel{h^*}{\lra} \tilde\shQ\otimes\ul{A}^\times
\lra r^*\shQ\otimes\ul{A}^\times\lra 0.
\end{equation}
In this sequence the morphism to $r^*\shQ\otimes\ul{A}^\times$ is
induced by identifying $r^*\Lambda$ with $\ker h_*\subseteq
\tilde\Lambda$. Then we can view $\bar {\mathbf s}$ as an element in
$H^1(B,\shQ\otimes\ul{A}^\times)\cong  H^1(\cone B\setminus h^{-1}(0),
r^*\shQ\otimes \ul{A}^\times)$, and hence we have an element
\begin{equation}
\label{Eqn: ob_PP}
\ob_{\PP}(\bar {\mathbf s})\in H^2(\cone B\setminus h^{-1}(0),A^\times)
\cong H^2(B,A^\times)
\end{equation}
via the connecting homomorphism in the long exact cohomology sequence
of \eqref{Eqn: tilde Q}. 

\begin{definition}
\label{Def: equiv lifts}
Fix a \v Cech representative $(\bar
s_{\tau\omega})_{\omega\subseteq\tau}$ for closed gluing data
$\bar{\mathbf s}$. Suppose  $\tilde{\bar {\mathbf s}}$, $\tilde{\bar
{\mathbf s}}'$ are two lifts of $\bar{\mathbf s}$ to $H^1(\cone
B\setminus h^{-1}(0), \tilde\shQ\otimes A^\times) \cong
H^1(B,(\tilde\shQ\otimes A^\times)|_B)$, given by representatives
$(\tilde{\bar s}_{\tau\omega})$,  $(\tilde{\bar s}'_{\tau\omega})$
with the image of both  $\tilde{\bar s}_{\tau\omega}$ and
$\tilde{\bar s}'_{\tau\omega}$ in $\check\Lambda_{\omega}
\otimes\ul{A}^\times$ coinciding with  $\bar s_{\tau\omega}$. Then
we say $\tilde{\bar{\mathbf s}}$ and $\tilde{\bar{\mathbf s}}'$ are
\emph{equivalent} if there exists for all $\omega\in\P$ a choice of
$t_{\omega}\in A^\times$ such that,  viewing $t_{\omega}$ as a
section of $\tilde\shQ\otimes A^\times$ via $h^*$,
\[
\tilde{\bar s}'_{\tau\omega}= t_{\tau} \tilde{\bar s}_{\tau\omega}
t_{\omega}^{-1}
\]
for all $\omega\subseteq\tau$.
\end{definition}

We then have

\begin{proposition}
\label{Prop: CGD obstruction on C{B}}
For consistent open gluing data ${\mathbf s}$ with associated closed
gluing data $\bar{\mathbf s}$, 
$\ob_{\PP}(\bar{\mathbf s})$ vanishes if and only if $\bar {\mathbf s}$ lifts
to closed gluing data for $\cone B$. Moreover, if 
$\ob_{\PP}(\bar{\mathbf s})$ vanishes,
then the set of lifts $\tilde{\bar{\mathbf s}}$ up to equivalence is
a torsor for $H^1(B,\ul A^\times)$. Finally, for each such lift 
$\tilde{\bar{\mathbf s}}$,
there is a choice of open gluing data $\tilde{\mathbf s}$ for $\cone B$
inducing closed gluing data $\tilde{\bar{\mathbf s}}$.
\end{proposition}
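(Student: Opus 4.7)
The plan is as follows. First, I would apply the cohomology long exact sequence to the short exact sequence \eqref{Eqn: tilde Q} on $\cone B \setminus h^{-1}(0)$, using the radial retraction $r$ to identify $H^k(\cone B \setminus h^{-1}(0), r^*\shQ \otimes \ul A^\times) \cong H^k(B, \shQ \otimes \ul A^\times)$ and $H^k(\cone B \setminus h^{-1}(0), \ul A^\times) \cong H^k(B, \ul A^\times)$. This yields the exact sequence
\[
H^1(B, \ul A^\times) \to H^1(B, (\tilde\shQ \otimes \ul A^\times)|_B) \to H^1(B, \shQ \otimes \ul A^\times) \xrightarrow{\delta} H^2(B, \ul A^\times),
\]
in which $\delta$ is precisely $\ob_\PP$ by \eqref{Eqn: ob_PP}. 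Hence a lift $\tilde{\bar{\mathbf s}}$ exists if and only if $\ob_\PP(\bar{\mathbf s})=0$, proving the first assertion.

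For the second assertion, I would work at the level of \v Cech cocycles for a Leray cover of $B$ such as $\W$. Given a fixed cocycle representative of $\bar{\mathbf s}$, the difference of any two cocycle lifts takes values in the kernel $\ul A^\times \hookrightarrow \tilde\shQ \otimes \ul A^\times$ and is therefore a cocycle in $Z^1(B, \ul A^\times)$; conversely, any such cocycle produces another lift by multiplication. So the set of cocycle lifts forms a torsor over $Z^1(B, \ul A^\times)$. By Definition \ref{Def: equiv lifts}, two lifts are equivalent exactly when they differ by an $\ul A^\times$-coboundary $t_\tau t_\omega^{-1}$, so the set of equivalence classes of lifts is a torsor over $Z^1(B, \ul A^\times)/B^1(B, \ul A^\times) = H^1(B, \ul A^\times)$.

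For the third assertion, I would reverse-engineer Lemma \ref{Lem: open codim one}. Fix a \v Cech representative of $\tilde{\bar{\mathbf s}}$ with respect to the cover of $\cone B$ analogous to $\W$. For every maximal cell $\cone\sigma$ and every codimension one cell $\cone\rho \subset \cone\sigma$ with $\cone\rho$ not in the boundary, the representative prescribes a homomorphism $\tilde{\bar s}_{\cone\sigma \cone\rho}: \Lambda_{\cone\rho} \to A^\times$. For each $\ul{\cone\rho}$ in the barycentric subdivision contained in $\cone\rho$, pick a splitting $\Lambda_{\cone\sigma} = \Lambda_{\cone\rho} \oplus \ZZ\,\xi$ and define $\tilde s_{\cone\sigma, \ul{\cone\rho}}: \Lambda_{\cone\sigma} \to A^\times$ to restrict to $\tilde{\bar s}_{\cone\sigma \cone\rho}$ on $\Lambda_{\cone\rho}$ and send $\xi$ to $1$. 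Because all restrictions to $\Lambda_{\cone\rho}$ agree, the constraint \eqref{Eqn: X_0 consistency after twisting} is automatic, so $\tilde{\mathbf s} := (\tilde s_{\cone\sigma, \ul{\cone\rho}})$ is genuine open gluing data. By construction the induced closed gluing data on the codimension $\le 1$ subcover of $\cone B$ agrees with $\tilde{\bar{\mathbf s}}$; applying the $\cone B$-analogue of Proposition \ref{Prop: Cohomology sheaf for CGD obstruction} to identify this extension uniquely then gives $\tilde{\bar{\mathbf s}}$ itself.

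The main obstacle is the very last step: while extending homomorphisms pointwise is trivial, verifying that the class on the codimension $\le 1$ subcover determines the full lift $\tilde{\bar{\mathbf s}}$ requires the analogue on $\cone B$ of the local cohomology vanishings in Lemma \ref{Lem: local cohomology of shQ}. These in turn rest on the $S_2$ property together with a careful analysis at the apex and at the asymptotic part of $\cone B$ corresponding to unbounded cells of $B$.
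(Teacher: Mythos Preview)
Your arguments for the first two assertions coincide with the paper's: both amount to reading off the long exact sequence of \eqref{Eqn: tilde Q} in \v Cech cohomology for the cover $\W$, with your treatment of the second assertion simply making the torsor structure explicit at the cocycle level via Definition~\ref{Def: equiv lifts}.

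For the third assertion your route differs from the paper's. The paper first invokes Remark~\ref{Rem: precise relation open/closed} to arrange $\bar s_{\sigma\rho}(m)=s_{\sigma\ul\rho}(m)$ for $m\in\Lambda_\rho$, and then defines $\tilde s_{\sigma\ul\rho}$ on $\Lambda_{\cone\sigma}=\Lambda_\sigma+\Lambda_{\cone\rho}$ by using the \emph{original} $s_{\sigma\ul\rho}$ on $\Lambda_\sigma$ and $\tilde{\bar s}_{\sigma\rho}$ on $\Lambda_{\cone\rho}$; the normalization guarantees these agree on the overlap $\Lambda_\rho$. This buys two things: the induced cocycle on the codimension~$\le 1$ cover is \emph{visibly} $(\tilde{\bar s}_{\sigma\rho})$, so no separate uniqueness-of-extension argument is needed, and one gets the extra property $\tilde{\mathbf s}|_{\Lambda_\sigma}=\mathbf s$, which is used in the proof of Proposition~\ref{Prop: Consistency for cone scrS, with twisting}. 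Your extension-by-$1$ on a complement to $\Lambda_{\cone\rho}$ proves the statement as written but forfeits that restriction property. As for your flagged obstacle, it is not a genuine difficulty: the required injectivity of $H^1(B,(\tilde\shQ\otimes\ul A^\times)|_B)\to H^1(B\setminus\Delta_2,(\tilde\shQ\otimes\ul A^\times)|_B)$ follows from the long exact sequence in local cohomology for \eqref{Eqn: tilde Q}, using Lemma~\ref{Lem: local cohomology of shQ} for the $\shQ$-piece and the codimension~$\ge 2$ vanishing for the constant sheaf $\ul A^\times$; no new analysis at the apex is needed since one works throughout on $\cone B\setminus h^{-1}(0)$, which retracts to $B$.
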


\begin{proof}
The first two statements follow from examining explicit \v Cech
representatives with respect to the open cover $\mathscr W$ in the long exact
cohomology sequence for \eqref{Eqn: tilde Q}. For the last statement, we can
assume ${\mathbf s}$ and $\bar{\mathbf s}$ are related as in Remark \ref{Rem:
precise relation open/closed}. Then given the lift $\tilde{\bar{\mathbf s}}$ of
$\bar{\mathbf s}$, we construct a lift of ${\mathbf s}$ to open gluing data
$\tilde{\mathbf s}$ for $\cone B$ simply by defining, for
$m\in\tilde\Lambda_{\cone\rho}$, $\tilde s_{\sigma\ul\rho}(m)=
\tilde{\bar{s}}_{\sigma\rho}(m)$, and for $m\in\Lambda_{\sigma}\subseteq
\tilde\Lambda_{\cone\sigma}$, $\tilde
s_{\sigma\ul\rho}(m)=s_{\sigma\ul\rho}(m)$.
\end{proof}

Proposition~\ref{Prop: CGD obstruction on C{B}} prompts us
to make the following definition.

\begin{definition}
\label{Def: Projective gluing data}
We call consistent open gluing data ${\mathbf s}$ \emph{projective} if
the induced closed gluing data
$\bar{\mathbf{s}}\in H^1(B,\shQ\otimes\ul{A}^\times)$
satisfies $\ob_{\PP}(\bar{\mathbf s})=0$.
\end{definition}

\begin{remark}
\label{Rem: ob_PP in GS}
If the singularities of $B$ are of the type considered in
\cite{logmirror1} and the closed gluing data $\mathbf s$ are the
restriction of open gluing data $\mathbf s'$ on all of $B$, then
$\ob_{\PP}(\mathbf{s})\in H^2(B,\kk^\times)$ agrees with the image of
$\mathbf s'$ under the homomorphism $o$ in \cite{logmirror1},
Theorem~2.34.
\end{remark}

We are then in
position to modify the constructions from Sections~\ref{Sect: Wall
structures} and~\ref{Sect: Global functions} consistently as
follows.
\smallskip

\ul{Construction of $X_0$.} 
Assume now given consistent open gluing data ${\mathbf s}$ with induced
closed gluing data $\bar{\mathbf s}$. Suppose further that $\ob_{\PP}(\bar
{\mathbf s})$ vanishes, and choose a lift $\tilde{\bar{\mathbf s}}$ of
$\bar{\mathbf s}$.
In the notation of \S\ref{Subsect:
X_0} define the ring $S[B](\tilde{\bar{\mathbf s}})$ with the same elements
as $S[B]$ but with the multiplication of monomials $z^m\cdot z^{m'}$
modified as follows. Let $\omega,\omega'$ be the minimal cells with $m\in
\cone\omega$, $m'\in\cone\omega'$. Assume that there is a cell
$\tau$ containing $\omega\cup\omega'$. Taking $\tau$ minimal with
this property we define
\begin{equation}
\label{Eq: twisted multiplication}
z^m\cdot z^{m'}:= \tilde{\bar s}_{\tau \omega}(m) 
\tilde{\bar s}_{\tau\omega'}(m')z^{m+m'}.
\end{equation}
If no such $\tau$ exists the product is zero as before.
As for associativity let $m,m',m''$ be contained in the cones for
$\omega,\omega',\omega''$ and assume $\sigma$ is the minimal cell
containing $\omega\cup\omega'\cup\omega''$. Denote by
$\tau,\tau',\tau''$ the minimal cells containing
$\omega'\cup\omega'', \omega''\cup\omega$ and $\omega\cup\omega'$,
respectively. Then
\begin{eqnarray*}
(z^m\cdot z^{m'})\cdot z^{m''} &=&
\tilde{\bar s}_{\tau''\omega}(m) \tilde{\bar s}_{\tau''\omega'}(m') 
\tilde{\bar s}_{\sigma\tau''}(m+m')
\tilde{\bar s}_{\sigma\omega''}(m'') z^{m+m'+m''}\\
&=& \tilde{\bar s}_{\sigma\omega}(m) \tilde{\bar s}_{\sigma\omega'}(m')
\tilde{\bar s}_{\sigma\omega''}(m'') z^{m+m'+m''}.
\end{eqnarray*} 
For the second equality we used multiplicativity of
$\tilde{\bar s}_{\sigma\tau''}$ and the cocycle conditions for
$\omega\subseteq\tau''\subseteq\sigma$ and
$\omega'\subseteq\tau''\subseteq\sigma$.

We now take $X_0=\Proj S[B](\tilde{\bar{\mathbf s}})$. To characterize the
irreducible components of the modified $X_0$ (Proposition~\ref{Prop: X_0}) note
that the monomials for a fixed $\sigma\in \P$ generate a quotient ring of
$S[B](\tilde{\bar{\mathbf s}})$ that is not obviously isomorphic to the standard
toric ring $S[\cone\sigma\cap (\Lambda_\sigma\oplus \ZZ)]$. An isomorphism can
however be easily defined by mapping $z^m$ to $\tilde{\bar s}_{\sigma\tau}(m)
z^m$ for $\tau$ the minimal cell with $m\in\cone\tau$. In fact, under
this map, the left-hand side of \eqref{Eq: twisted multiplication} maps to
$\tilde{\bar s}_{\sigma\omega}(m)\cdot \tilde{\bar s}_{\sigma\omega'}(m') z^{m+m'}$,
which agrees with the image
\[
\tilde{\bar s}_{\tau \omega}(m) \tilde{\bar s}_{\tau\omega'}(m')
\cdot\tilde{\bar s}_{\sigma\tau}(m+m')z^{m+m'}
= \tilde{\bar s}_{\sigma\tau}(m)\tilde{\bar s}_{\tau \omega}(m)
\tilde{\bar s}_{\sigma\tau}(m')\tilde{\bar s}_{\tau\omega'}(m') z^{m+m'}
\]
of the right-hand side.

\ul{Construction of $\foX^{\circ}$.} 
Assuming the wall structure $\scrS$ is consistent for the gluing data $\mathbf
s$ in codimension zero and one, the construction of $\foX^\circ$ in
Proposition~\ref{Prop: foX^o exists} is unchanged with the new definition of the
localization homomorphism $\chi_{\fob,\fou}= \chi_{\fob,\fou}(\mathbf s)$ in
\eqref{Eqn: Modified localization hom}. Note that the change of
$\chi_{\fob,\fou}$ implicitly also changes the isomorphism $\theta_\foj$ for
crossing a codimension one joint \eqref{Eqn: theta_foj}. Proposition~\ref{Prop:
Cohomologous twists} implies that changing open gluing data by a cocycle leads
to isomorphic directed systems of rings, hence to isomorphic schemes
$\foX^\circ$.

We then have the analogue of Proposition~\ref{Prop: foX^o modulo I_0}:

\begin{proposition}
Suppose $\ob_{\PP}(\bar{\mathbf s})=0$ and $\tilde{\bar{\mathbf s}}$ is
a lift of $\bar{\mathbf s}$, yielding $X_0$. Then
the reduction of $\foX^{\circ}$ modulo $I_0$ is canonically isomorphic
to the complement of the union of codimension two strata in $X_0$.
In particular, $\foX^{\circ}$ is separated as a scheme over $A[Q]/I$.
\end{proposition}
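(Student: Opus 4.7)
The plan is to prove the statement modulo $I_0$ first, matching the twisted local charts of $\foX^\circ$ with the natural charts on the complement of codimension two strata in $X_0 = \Proj S[B](\tilde{\bar{\mathbf s}})$, and then to deduce separatedness of $\foX^\circ$ over $\Spec(A[Q]/I)$ from nilpotence of $I_0$ in $A[Q]/I$ (which holds since $I_0=\sqrt I$ and $A[Q]/I$ is Noetherian, so $I_0^k\subset I$ for some $k$).

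First I would reduce modulo $I_0$. By \eqref{Eq: R_sigma can iso} and \eqref{Eqn: R_sigmarho}, $R_\fou\otimes_{A[Q]/I} A[Q]/I_0 \simeq (A[Q]/I_0)[\Lambda_\sigma]$ for $\sigma$ the maximal cell containing $\fou$, and similarly for boundary chambers. By the codimension one condition of Definition~\ref{Def: Wall structure},1, $f_\fob\equiv f_{\ul\rho}\mod I_0$, so the slab rings satisfy $R_\fob\otimes A[Q]/I_0=R_{\ul\rho}$ as in \eqref{Eqn: R_ul rho}. Since $f_\fop\equiv 1\mod I_0$ for codimension zero walls, the wall-crossing automorphisms $\theta_\fop$ become the identity modulo $I_0$. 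Thus the reduction of $\foX^\circ$ modulo $I_0$ is obtained by gluing the affine schemes $\Spec (A[Q]/I_0)[\Lambda_\sigma]$ (for $\sigma\in \P_\max$) and $\Spec R_{\ul\rho}|_{I_0}$ (for interior $\rho\in\P^{[n-1]}$ and any choice of $\ul\rho\subset \rho$) via the localization morphisms \eqref{Eq: Localization morphism rho -> sigma} and \eqref{Eq: R_rho -> R_sigma non-interior} composed with the open twist $s_{\sigma\ul\rho}$ prescribed by \eqref{Eqn: Modified localization hom}.

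Second I would show that the complement $X_0^\circ$ of the codimension two toric strata in $X_0=\Proj S[B](\tilde{\bar{\mathbf s}})$ admits an identical description. The vanishing of $\ob_\PP(\bar{\mathbf s})$ is exactly what ensures that the twisted multiplication \eqref{Eq: twisted multiplication} is well-defined globally, so that the same argument as in Proposition~\ref{Prop: X_0} provides, for each $\sigma\in\P_\max$, an affine open chart $U_\sigma\subset X_0$ isomorphic to $\Spec(A[Q]/I_0)[\Lambda_\sigma]$ via the change of coordinates $z^m\mapsto \tilde{\bar s}_{\sigma\tau}(m)^{-1} z^m$ (for $\tau$ the minimal cell containing $m$) that trivializes the local twist of the multiplication. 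Near each interior codimension one cell $\rho$ one analogously obtains a chart of the form $\Spec R_{\ul\rho}|_{I_0}$. The transitions between these charts are governed by the closed gluing data $\bar{\mathbf s}$ in the direction of $\Lambda_\rho$, and by Remark~\ref{Rem: precise relation open/closed} we may select representatives so that $\bar s_{\sigma\rho}(m)=s_{\sigma\ul\rho}(m)$ for all $m\in \Lambda_\rho$ and $\ul\rho\subset\rho$. This matches the open gluings of the previous paragraph exactly and yields the canonical isomorphism.

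The main obstacle will be this matching of the closed-gluing description of $X_0$ with the open-gluing description coming from $\scrS$, in particular tracking carefully how the change-of-basis trivializing the twist on $X_0$ turns the closed gluings into the open gluings that appear in $\chi_{\fob,\fou}(\mathbf s)$. Once this identification modulo $I_0$ is established, separatedness of $\foX^\circ$ over $\Spec(A[Q]/I)$ follows formally: $X_0$ is projective, hence separated, over $\Spec(A[Q]/I_0)$, so its open subscheme $X_0^\circ$ is separated; since the diagonal of a morphism is a closed immersion if and only if this is so after base change by any surjection with nilpotent kernel, and $I_0$ is nilpotent in $A[Q]/I$, the scheme $\foX^\circ$ is separated over $\Spec(A[Q]/I)$.
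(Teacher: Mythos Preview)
Your proposal is correct and follows essentially the same approach as the paper: reduce modulo $I_0$, identify the local charts of $\foX^\circ$ with affine opens of $X_0^\circ$, and match the transition maps using the relation between open and closed gluing data. The paper carries this out slightly differently in presentation: rather than describing $X_0^\circ$ abstractly as a gluing of the same charts and then matching, it writes down explicit ring homomorphisms $\psi_\fob^*:(S[B](\tilde{\bar{\mathbf s}}))_{(z^v)}\to R_\fob$ and $\psi_\fou^*:(S[B](\tilde{\bar{\mathbf s}}))_{(z^v)}\to R_\fou$ (for $v$ a rational interior point of $\cone\rho$), with formulas of the type $z^m/(z^v)^{d'}\mapsto s_{\sigma\ul\rho}(m'+a\xi)^{-1}z^{m'}Z_+^a$ depending on the sign of $a$ in $m-d'v=m'+a\xi$, and then checks directly that these are ring maps making the compatibility triangle commute. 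This is exactly the ``main obstacle'' you identify, made concrete; your conceptual description and the paper's explicit formulas are two ways of packaging the same computation.
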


\begin{proof}
Taking $I=I_0$, it is enough to construct suitable
maps $\psi_{\fob}:\Spec R_{\fob}
\rightarrow X_0$ and $\psi_{\fou}:\Spec R_{\fou}\rightarrow X_0$, 
for all slabs $\fob$ and chambers $\fou$, such that whenever
$\fob\subseteq\fou$, we have a commutative diagram
\begin{equation}
\label{Eqn: compatibility of open/closed}
\xymatrix@C=30pt
{
\Spec R_{\fou} \ar[rr]^{\psi_{\fou}} \ar[d]_{\chi_{\fob,\fou}}&& X_0\\
\Spec R_{\fob} \ar[urr]_{\psi_{\fob}}&&
}
\end{equation}
with $\psi_{\fou}$, $\psi_{\fob}$ being open immersions. To describe
these maps, suppose $\fob\subseteq \ul\rho\subseteq\rho$, with
$\rho\subseteq\sigma,\sigma'\in\P_{\max}$. Choose a point $v\in
B({1\over d}\ZZ)\cap \Int(\cone\rho)$ for some $d>0$. By using the
affine chart defined in a neighbourhood of $\Int \ul\rho$ to embed
$\sigma\cup\sigma'$ into an affine space, for any $d'>0$, any point
$m\in B({1\over dd'}\ZZ)\cap (\cone\sigma\cup\cone\sigma')$  yields a
tangent vector $m-d'v\in \Lambda_x$, for $x\in\Int \ul\rho$. Now
the image of $\psi_{\fob}$ will be the open affine subset 
$U_{\rho}:=\Spec (S[B](\tilde{\bar {\mathbf{s}}}))_{(z^v)}$ of
$X_0$, where the  localization is in degree $0$.  The map
$\psi_{\fou}$ can be defined  as follows. The localized ring
$(S[B](\tilde{\bar{\mathbf{s}}}))_{(z^v)}$  is generated as an
$A[Q]/I_0$-module by elements $z^m/(z^v)^{d'}$ for $m$ ranging over
elements of $B({1\over dd'}\ZZ)\cap \Int(\cone\sigma
\cup\cone\sigma')$ for any $d'>0$. For any such $m$,  one can write 
$m-d'v=m'+a\xi$ where $m'\in\Lambda_\rho$, $a\in\ZZ$ and
$\xi=\xi(\rho)\in\Lambda_x$ is the chosen tangent vector pointing
into $\sigma$. Then
\[
\psi_{\fob}^*(z^m/(z^{v})^{d'})=
\begin{cases}
z^{m'} & a=0\\
s_{\sigma\ul{\rho}}(m'+a\xi)^{-1}z^{m'} Z_+^a & a>0\\
s_{\sigma'\ul{\rho}}(m'+a\xi)^{-1}z^{m'} Z_-^{-a} & a<0.
\end{cases}
\]

Similarly, if $\fou\subseteq\sigma$, we define $\psi_{\fou}$ by
\[
\psi_{\fou}^*(z^m/(z^{v})^{d'})=
\begin{cases}
s_{\sigma\ul{\rho}}(m')z^{m'} & a=0\\
z^{m'+a\xi} & a>0\\
0 & a<0,
\end{cases}
\]
with a similar definition reversing the roles of $a>0$ and $a<0$ if
$\fou\subseteq\sigma'$.  One checks easily that these two maps are
ring homomorphisms and that \eqref{Eqn: compatibility of open/closed} commutes.
\end{proof}
\smallskip

\ul{Modification of broken lines.} 
If sums over broken lines are to extend the definition of (certain)
monomials $z^m$ in the construction of $X_0$ to $\foX^\circ$ they
need to be modified by closed gluing data analogously. Recall from
Remark~\ref{Rem: broken lines},1 that each broken line $\beta$
defines an asymptotic monomial $m$. Denote by $\P_m\subseteq \P$ the
polyhedral subcomplex consisting of all cells $\tau$ and their faces
having $m$ as an asymptotic monomial. Each maximal cell in $\P_m$
could be the starting cell of a broken line contributing to
$\vartheta_m$. Passing between neighbouring maximal cells
$\sigma,\sigma'\in\P_m$ through $\ul\rho\subseteq \sigma\cap\sigma'$,
the initial coefficient has to change by
$s_{\sigma\ul\rho}^{-1}(\ol m)\cdot s_{\sigma'\ul\rho}(\ol m)\in
A^\times$. The evaluations $s_{\sigma\ul\rho}(\ol m)$ for
$\sigma,\ul\rho\subseteq |\P_m|$ define a one-cocycle on $\P_m$ with
values in $A^\times$, whose class in $H^1(\P_m, A^\times)$ is an
obstruction for a consistent choice of starting data of broken lines
with asymptotic monomial $m$. Once this obstruction vanishes the
choice of a maximal cell $\sigma\in\P_m$ gives finitely many
distinguished normalizations of $\vartheta_m$ by only requiring the
starting coefficient $a_1$ for broken lines asymptotically contained
in $\sigma$ to be $1$. The starting coefficient on a different maximal
cell $\sigma'\in \P_m$ is then uniquely determined from the open
gluing data by consistency. Thus in this case, the definition of
normalized in Definition~\ref{Def: broken lines} is replaced by
$a_1$ being a product of $s_{\sigma''\ul\rho}(\ol m)$ and their inverses for
$\sigma'',\ul\rho\subseteq|\P_m|$.

Fortunately, $\P_m$ is usually contractible and hence the
obstruction vanishes. This is for example the case under the natural
assumption that $B$ is asymptotically convex in the sense that for
each asymptotic monomial $m$ there is a unique minimal cell $\tau$
carrying it. Thus $\tau$ is contained in any other cell on which $m$
is an asymptotic monomial and hence $\P_m$ retracts to $\tau$. For
example, convexity is trivially true in the two-dimensional conical
case of \cite{GHK1}. Contractibility of $\P_m$ also holds for
asymptotic monomials of degree $d>0$ on $\cone{B}$ because in this
case $\P_m$ retracts to an open set of the form $W_{\tau}$ of $B$,
and such a set is contractible.

With this modification all arguments in Section~\ref{Sect: Global
functions} go through. 

\begin{remark}
\label{Prop: alternative view}
An alternative view on twisting the construction via gluing data runs as
follows. Recall that $\shP$ defines an extension of $\Lambda$ by the
constant sheaf $\ul Q^\gp$. Now $\shExt^1(\Lambda, \ul A^\times)=0$ since
$\Lambda$ is locally free, and hence $H^1(B_0,\check\Lambda\otimes_\ZZ
\ul A^\times) =\Ext^1_{B_0}(\Lambda, \ul A^\times)$ by the local to global
spectral sequence. Therefore an equivalence class of open gluing data
$\mathbf s$ yields an equivalence class of extensions
\[
0\lra \ul A^\times\lra \shP'\lra \Lambda\lra 0
\]
of abelian sheaves on $B_0$. Then $\shP_{\mathbf s}:=\shP\times_\Lambda
\shP'$
is an extension of $\Lambda$ by $\ul Q^\gp\oplus\ul A^\times$:
\begin{equation}
\label{Eqn: tilde shP}
0\lra \ul Q^\gp\oplus \ul A^\times\lra \shP_{\mathbf s}\lra \Lambda\lra 0.
\end{equation}
Since $\Lambda$ is locally free this sequence splits locally and we have
(non-canonical) isomorphisms $\shP_{\mathbf s,x}\simeq Q^\gp\oplus
A^\times\oplus \Lambda_x$ for $x\in B_0$. Define $\shP_{\mathbf s}^+\subseteq
\shP_{\mathbf s}$ as the preimage of $\shP^+\to \shP$ under the projection
$\shP_{\mathbf s}\to\shP$. From $\shP_{\mathbf s}^+$ we can first define a
sheaf of rings $\shR$ with fibre over an interior point $x$ of a
maximal cell isomorphic to $(A[Q]/I)[\Lambda_x]$ as follows. Denote by
$A[\shP_{\mathbf s}^+]$ the sheaf of monoid rings with exponents in the stalks
$\shP^+_{{\mathbf s},x}$ and coefficients in $A$. Clearly, $A[\shP_{\mathbf
s}^+]$ is a sheaf of $A[Q]$-algebras, and hence $I\subseteq A[Q]$ defines a
sheaf of ideals $\shI\subseteq A[\shP_{\mathbf s}^+]$. Moreover, there is an
embedding of $\ul A^\times$ into $A[\shP_{\mathbf s}^+]$ by mapping $a\in
A^\times$ to $a^{-1}\cdot z^{(0,a)}$ with $(0,a)$ viewed as a section of
$\shP_{\mathbf s}^+$ via \eqref{Eqn: tilde shP}. The induced action leaves
$\shI$ invariant and hence descends to the quotient. We may therefore define
\[
\shR:= \big( A[\shP_{\mathbf s}^+]/\shI\big)/\ul A^\times.
\]
Note that by the local description of $\shP_{\mathbf s}^+$ this
sheaf of rings has the predicted stalks.

This sheaf of rings is related to the construction otherwise used in
the paper as follows. For a maximal cell $\sigma$, we have a canonical isomorphism
$R_\sigma=\Gamma(\Int\sigma, \shR)$. For codimension one, the analogue of
$(A[Q]/I)[\Lambda_\rho]$ in \eqref{Def: R_fob}, which hosts $f_\fob$, is the
$A[Q]$-subalgebra $R_{\rho}$ of $\shR_x$ generated by $\Lambda_\rho$, for some
$x\in \Int\ul\rho$. For $Z_{\pm}\in \shR_x$ take some lifts of complementary
vectors $\pm\xi\in\Lambda_x$ to $(\shP_{\mathbf{s}})_x$ with $Z_+ Z_-=
z^{\kappa_{\ul\rho}}$ in $\shR_x$. Now $R_\fob$ can be defined in analogy with
\eqref{Def: R_fob} by
\[
R_\fob= R_\rho[Z_+,Z_-]/
\big(Z_+Z_--f_\fob z^{\kappa_{\ul\rho}}\big).
\]
Note that while $Z_\pm$ depends on choices, $R_\fob$ is defined
invariantly as a subring of a localization of $\shR_y$ for $y$ close
to $x\in \Int\ul\rho$ in a maximal cell. From  this point of view the
localization morphism $\chi_{\fob,\fou}$ is again defined
canonically, this time by parallel transport inside the locally
constant sheaf $\shR$. The twist in comparison with trivial gluing
data comes from global non-triviality of the extension \eqref{Eqn:
tilde shP}.
\end{remark}

The next topic concerns consistency of the lifting of a wall
structure from $B$ to $\cone{B}$, generalizing
Proposition~\ref{Prop: Consistency for cone scrS}.

\begin{proposition}
\label{Prop: Consistency for cone scrS, with
twisting}
Let $\scrS$ be a wall structure on $(B,\P)$ that is consistent for
gluing data $\mathbf s$ on $B$, with induced closed gluing data
$\bar{\mathbf s}$. If $\tilde{\mathbf s}$ and $\tilde{\bar{\mathbf s}}$ are
as given by Proposition \ref{Prop: CGD obstruction on C{B}}, then
the lifted wall structure
$\cone{\scrS}$ on $(\cone{B},\cone{\P})$ is consistent as well.
\end{proposition}

\begin{proof}
We reexamine the proof of Proposition~\ref{Prop: Consistency for
cone scrS}. For consistency in codimension zero the gluing data play
no role and the proof works as before. In codimension one we
distinguished two kinds of monomials, those lifted from $B$ (of
degree zero) and one of the form $z^{(m,a)}$ with $a>0$ and with
$(m,a)$ tangent to the joint. Consistency for monomials of degree
zero follows as before by observing that $\tilde{\mathbf s}$
restricts to $\mathbf s$ on $\Lambda$.

For $z^{(m,a)}$ equation~\eqref{Eqn: z^(m,a) consistency} still holds,
while \eqref{Eqn: (theta,theta')(z^m)} now reads
\[
(\theta\circ\chi_{\fob_1,\sigma},
\theta'\circ\chi_{\fob_1,\sigma'})(z^m)=
(\chi_{\fob_2,\sigma}, \chi_{\fob_2,\sigma'})(h),
\]
still with $h=f\cdot z^m$ and $f\in 1+I_0\cdot R_{\fob_2}$. The rest
of the argument remains unchanged.

For the argument in codimension two consistency of $z^{(0,1)}$
follows from Proposition~\ref{Prop: ob and consistency in codim 2}
characterizing the vanishing of the obstruction class
$\ob_{\Delta}(\tilde{\mathbf s})$ in terms of consistency for
monomials tangent to codimension two cells. Note that the associated
local regular function $\vartheta^\foj_{(0,1)}(p)$ for $p$ in a chamber
$\fou$ now restricts to $a_\fou  z^{(0,1)}$ in $R_\fou$ for some
$a_\fou\in A^\times$. Non-trivial closed gluing data around $\foj$
are reflected by $a_\fou\neq 1$.
\end{proof}
\medskip

The main result Theorem~\ref{Thm: Main} now generalizes. Suppose given ${\mathbf
s}$ projective consistent open gluing data on a polyhedral
pseudomanifold $(B,\P)$, $\tilde{\bar{\mathbf s}}$ a choice of lift of the
induced closed gluing data $\bar{\mathbf s}$ with $\tilde{\mathbf s}$ the
corresponding lift of open gluing data to $\cone B$, as provided by
Proposition~\ref{Prop: CGD obstruction on C{B}}. If $\scrS$ is a consistent
wall structure for ${\mathbf s}$, we obtain schemes $\foX^{\circ}$,
$\foY^{\circ}$ from the structures $\scrS$ and $\cone\scrS$ respectively. This
gives rise to rings $R_{\infty}$ and $S$ as in Theorem~\ref{Thm: Main}, and
$\foW:=\Spec R_{\infty}$, $\foY:=\Spec S$, and $\foX:=\Proj S$.

\begin{theorem}
The conclusions of Theorem~\ref{Thm: Main} continue to hold in this generalized setup
incorporating gluing data.
\end{theorem}

\begin{remark}
\label{Rem: Independence of construction for cohomologous data}
Changing both the lifted open and closed gluing data by compatible cocycles and
the wall structures accordingly leads to an isomorphism of polarized
families taking theta functions to theta functions. Without a classification of
consistent wall structures this statement is not particularly useful and we
refrain from making a formal statement. Precise results in the setup of
\cite{logmirror1}, \cite{affinecomplex} are given in the appendix.
\end{remark}

\begin{remark}
The line bundle $\foL\rightarrow\foX$ depends on the choice of lift
$\tilde{\bar{\mathbf s}}$ of $\bar{\mathbf s}$. In fact, it is is not difficult
to see that two choices of lift $\tilde{\bar{\mathbf s}}$, $\tilde{\bar{\mathbf
s}}'$ define isomorphic line bundles over $\foX$ if and only if the two lifts
are equivalent in the sense of Definition~\ref{Def: equiv lifts}.
\end{remark}

\begin{remark}
It is worthwhile emphasizing that the projectivity of the
construction only depends on projectivity of the central fibre $X_0$
over $W_0$, where in the notation of Theorem~\ref{Thm: Main} the
affine scheme $W_0\subseteq \foW$ is the fibre over
$\Spec\big(A[Q]/I_0\big)\subseteq \Spec (A[Q]/I)$. Projectivity then
automatically continues to hold for $\foX\to \foW$.
\end{remark}

In concluding this section let us emphasize that with trivial gluing
data, $X_0$ is the pull-back of a scheme over $\Spec\ZZ$ to $\Spec
A$. Without gluing data it is therefore impossible to produce
non-trivial, locally trivial deformations. Only if all locally
trivial deformations are trivial can one hope to retrieve all
deformations already with trivial gluing data. This is the case for
example in the projective smoothing of $X_0$ with all
irreducible components $\PP^2$, see \cite{GHKS}.

%===========================================================
%===========================================================

\section{Abelian varieties and other examples}
\label{Sect: Abelian varieties}

We will discuss several extended examples. The longest is a discussion
of abelian varieties. The main point is to compare our construction with
classical theta functions: this motivates the use of the term 
``theta function.'' In particular, we will show that in this case our theta 
functions equal the classical theta functions up to some explicit rescaling.
We then look at some examples with very complex wall structures, but
for which we can nevertheless say something non-trivial.

\begin{example}
\label{Expl: Riemann theta functions}
Continuing with Example~\ref{Expl: torus}, taking $B=M_{\RR}/\Gamma$, $P$, $Q$
and $\varphi_0$ as given there, let $I_0=Q\setminus \{0\}$. In this case the
empty wall structure is consistent, so we obtain for any monomial ideal $I$ of
$Q$ with radical $I_0$ a projective family $\foX\rightarrow \Spec \kk[Q]/I$ with
$\foX=\Proj S$ and with theta functions $\vartheta_m\in \Gamma(\foX,
\O_{\foX}(d))$ for $m\in B({1\over d}\ZZ)$. Taking the inverse limit of the
rings $S$ over all ideals with radical $I_0$ gives a graded ring $\widehat{S}$
over the completion $\widehat{\kk[Q]}$ of $\kk[Q]$ with respect to the ideal
$I_0$, and hence a projective family $\shX=\Spec\widehat S\rightarrow \Spec
\widehat{\kk[Q]}$. This is of course a degenerating family of abelian varieties,
a variant of constructions of Mumford \cite{mumford} and Alexeev \cite{alexeev}. 

Before examining this in more detail, let us first obtain a better  
understanding of the function $\varphi_0$. 
The periodicity relation \eqref{periodicity} generalizes
to a periodicity relation for $\varphi_0$ given by
\begin{equation}
\label{periodicity2}
\varphi_0(x+\gamma)=\varphi_0(x)+\alpha_{\gamma}(x)
\end{equation}
for $\alpha_{\gamma}:M\rightarrow Q^{\gp}$ an affine linear
function. We can write $\alpha_{\gamma}$ as a sum of a linear
function and a constant,
$\alpha_\gamma=d\alpha_{\gamma}+c_{\gamma}$. Applying
\eqref{periodicity2} for $\gamma=\gamma_1, \gamma_2$ and
$\gamma_1+\gamma_2$ gives
\[
\alpha_{\gamma_1+\gamma_2}=d\alpha_{\gamma_1}+d\alpha_{\gamma_2}+
(d\alpha_{\gamma_1}(\gamma_2)+c_{\gamma_1}+c_{\gamma_2}),
\]
so in particular $\overline{Z}:\Gamma\times\Gamma\rightarrow
Q^{\gp}$ given by
$\overline{Z}(\gamma_1,\gamma_2)=d\alpha_{\gamma_1}(\gamma_2)$ is a
symmetric form. This gives rise to a quadratic function
\[
\bar\varphi_0:M_{\RR}\rightarrow Q^{\gp}_{\RR}, \quad
\bar\varphi_0(x)={1\over 2} \overline{Z}(x,x),
\]
satisfying the periodicity
condition
\[
\bar\varphi_0(x+\gamma)=\bar\varphi_0(x)+d\alpha_{\gamma}(x)+
{1\over 2}\overline{Z}(\gamma,\gamma).
\]

Choose a basis $e_1,\ldots,e_n$ of $M$ such that $\Gamma$ is
generated by $\{f_i=d_ie_i\}$, $d_1|\cdots|d_n$ positive integers.
Denote by $f_1^*,\ldots,f_n^*$ the dual basis. As $\varphi_0$ can be
changed by an affine linear function without affecting the
construction, we can replace $\varphi_0$ with 
\begin{equation}
\label{newvarphi0 eq}
\varphi_0+\sum_{i=1}^n ({1\over
2}\overline{Z}(f_i,f_i)-c_{f_i})f_i^*-\varphi_0(0).
\end{equation}
This has the effect\footnote{This claim comes down to showing that
$\frac{1}{2}\overline{Z}(\gamma,\gamma) = c_\gamma+
\sum_{i=1}^n(\frac{1}{2}\overline{Z}(f_i,f_i)-c_{f_i}) f_i^*(\gamma)$.
This can be proved by induction, showing that the equation holds for
$\gamma\pm f_i$ if it holds for $\gamma$. The computation is
straightforward, but is omitted for its length.}
of replacing $c_{\gamma}$ with ${1\over
2}\overline{Z}(\gamma,\gamma)$ as the constant part of
$\alpha_{\gamma}$. As a consequence, $\bar\varphi_0 -\varphi_0$ is a
single-valued function $\psi$ on $B$. Note that $\varphi_0$ may no
longer have integral slopes (that is, slopes in $N\otimes Q^{\gp}$)
but after rescaling $\varphi_0$ (which has the effect of
base-changing the construction), we may assume it continues to have
integral slope. This allows us to assume a standard form for the
$c_{\gamma}$.

In any event, regardless of the choice of $\varphi_0$, there is a
standard description of the family $\widehat{\foX}\rightarrow \Spf
\widehat{\kk[Q]}$ as the quotient of a (non-finite type) fan, as
follows. Consider in $M_{\RR}\times Q^{\gp}_{\RR}$ the polytope
\[
\Xi_{\varphi_0}:=\{ (m, \varphi_0(m)+q) \,|\, q\in Q_\RR\},
\]
where $Q_\RR=\Hom(P,\RR_{\ge 0})$ and $Q=Q_\RR\cap Q^{\gp}$.
There is a lift of the $\Gamma$-action on $M_\RR$ to $M_\RR\times
Q_\RR^\gp$ leaving $\Xi_{\varphi_0}$ invariant by letting
$\gamma\in\Gamma$ act by
\[
(m,q)\longmapsto (m+\gamma, q+\alpha_\gamma(m)).
\]
Let $\Sigma$ be the normal fan to $\Xi_{\varphi_0}$ in 
$N_{\RR} \times P^{\gp}_{\RR}$ (with $N=\Hom(M,\ZZ)$).  The 
one-dimensional rays of $\Sigma$ are dual to maximal faces of
$\Xi_{\varphi_0}$. If $\sigma\in\bar\P_{\max}$ and $\tau$ is a
codimension one face of $Q_\RR$, then 
\[
\{(m, \varphi_0(m)+q)\,|\, m\in \sigma, q\in \tau\}
\]
is a maximal face of $\Xi_{\varphi_0}$ and all maximal faces are of
this form. The primitive normal vector to this face is
$(-d(\varphi_0|_{\sigma})^t (p_{\tau}), p_{\tau})$, where 
$d(\varphi_0|_{\sigma})$ is viewed as an element of
$\Hom(M,Q^{\gp})$ and its transpose as an element of
$\Hom(P^{\gp},N)$, and  $p_{\tau}\in P$ is the primitive generator
of the edge of $P_\RR=\Hom(Q,\RR_{\ge0})$ corresponding to the face
$\tau$ of $Q_\RR$.

In particular, the projection $N_{\RR}\times
P^{\gp}_{\RR}\rightarrow P^{\gp}_{\RR}$  defines a map of fans from
$\Sigma$ to the fan of faces of $P_\RR$. If $X_{\Sigma}$ denotes the
toric variety (not of finite type) defined by $\Sigma$, we obtain a
flat morphism  $f:X_{\Sigma}\rightarrow \Spec \kk[Q]$. The action of
$\Gamma$ on $\Xi_{\varphi_0}$ induces an action on $\Sigma$ by
taking the transpose of its linear part. Thus $\gamma\in\Gamma$ acts
by $(n,p)\mapsto (n+(d\alpha_{\gamma})^t(p),p)$. Here we view
$d\alpha_\gamma$ as a homomorphism $M\to Q^\gp$, and its transpose
$(d\alpha_\gamma)^t$ accordingly as a homomorphism $P^\gp\to N$.
In turn, $\Gamma$ acts on $X_{\Sigma}\times_{\Spec\kk[Q]} \Spec
\kk[Q]/I$ for any monomial ideal $I$ with radical $I_0$, and the
quotient
\begin{equation}
\label{eq: Xsigmaquotient}
(X_{\Sigma}\times_{\Spec\kk[Q]} \Spec \kk[Q]/I)/\Gamma\rightarrow
\Spec \kk[Q]/I
\end{equation}
can be seen to coincide with $\foX\rightarrow\Spec \kk[Q]/I$.

Using this description, theta functions on $\foX$ are traditionally
seen by extending the action of $\Gamma$ on $M \times Q^{\gp}$ to an
action of $\Gamma$ on $M \times Q^{\gp}\times \ZZ$ which preserves
the cone $C(\Xi_{\varphi_0})$ and the last factor $\ZZ$.  This lifts
the $\Gamma$-action on $X_{\Sigma}$ to the total space of the line
bundle $\tilde\shL$ induced by the polytope $\Xi_{\varphi_0}$.  This
lifting is given as follows. The action on $M\times Q^{\gp}$ is given 
by $\gamma$
acting via $(m,q)\mapsto (m+\gamma, q+d\alpha_{\gamma}(m)+c_\gamma)$ with
$c_{\gamma}\in Q^{\gp}$ the constant part of $\alpha_{\gamma}$ as
before. The extension is then given by
\[
T_{\gamma}:(m,q,r)
\mapsto (m+r\gamma, q+d\alpha_{\gamma}(m) + r c_{\gamma},r).
\]
Now for $r\in \ZZ_{>0}$, the integral points of $r\Xi_{\varphi_0}$
correspond to sections of the line bundle $\tilde\shL^{\otimes r}$
on $X_{\Sigma}$. Using the action of $\Gamma$ on the total space of
$\tilde\shL$, compatible with the action of $\Gamma$ on
$X_{\Sigma}$, the line bundle $\tilde\shL$ descends to a line bundle
$\shL$ on $\foX$. Similarly, $\Gamma$-invariant sections of
$\tilde\shL$ descend to sections of $\shL$.  We then obtain theta
functions on $\foX\rightarrow \Spec\kk[Q]/I$ by constructing
$\Gamma$-invariant sections on $X_{\Sigma}\times_{\Spec\kk[Q]}
\Spec\kk[Q]/I$:  for $m\in B({1\over r} \ZZ)$,  we have
\begin{equation}
\label{thetamequation}
\vartheta_m=\sum_{\gamma\in\Gamma}
z^{T_{\gamma}(rm, r\varphi_0(m), r)}
= \sum_{\gamma\in\Gamma} z^{(r(m+\gamma), r\varphi_0(m+\gamma),r)}.
\end{equation}

Before comparing this formula with the one given by jagged paths,
let us compare the above formula with the classical notion of theta
function; indeed, it is this comparison which justifies the use of
the term ``theta function.'' To do so, we work
complex analytically, with $\kk=\CC$. There is some analytic open
neighbourhood $S\subseteq\Spec\kk[Q]$ of the  zero-dimensional
stratum such that the action of $\Gamma$ on $f^{-1}(S)$ is free,
giving $\bar f:\shX:=f^{-1}(S)/\Gamma\rightarrow S$ an analytic
degeneration of abelian varieties. For $p\in S \cap
\Spec\kk[Q^{\gp}]\subseteq P^{\gp}\otimes \Gm$, $f^{-1}(p)$ is
canonically the algebraic torus $N\otimes\Gm$ and $\bar f^{-1}(p)$
is the abelian variety $(N\otimes\Gm)/\Gamma$ where
$\Gamma\hookrightarrow N\otimes\Gm$ via $\gamma\mapsto
(d\alpha_{\gamma})^t(p)$. 

In terms of a period matrix, note the choice of basis $\{f_i\}$
define coordinates $u_1,\ldots,u_n$ on $N\otimes \CC$ via pairing
with $f_i$. Said differently, $u_i$ is the pull-back of $z^{f_i}\in
\CC[M]$ via the exponential map
\[
N\otimes \CC\lra N\otimes\GG_m,\quad
\sum_i e^*_i\otimes \lambda_i\longmapsto
\sum_i e^*_i\otimes e^{2\pi\sqrt{-1} \lambda_i}.
\]
Note that the kernel of this map is generated by $e_1^*=
d_1f_1^*,\ldots, e_n^*=d_nf_n^*$. Then using the coordinates
$\{u_i\}$, one sees that $\bar f^{-1}(p)$ has period matrix $(D,
Z(p))$, where $D=\mathrm{Diag}(d_1,\ldots,d_n)$ and
\[
Z(p)_{ij}=\langle (d\alpha_{f_j})(f_i), 
{\log p\over 2\pi\sqrt{-1}}\rangle.
\]
Here $\log$ is a local choice of inverse to $\exp: P^{\gp} \otimes
\CC \rightarrow P^{\gp}\otimes \Gm$. This is the matrix in the
basis $\{f_i^*\}$ for the $\CC$-valued bilinear form
$Z(p)(\gamma_1,\gamma_2)=\langle \bar Z(\gamma_1,\gamma_2), {\log
p\over 2\pi\sqrt{-1}}\rangle$. Note that $Z(p)$ defines the period
point in the Siegel upper half space $\mathfrak{H}_n =\{Z\in
M(n,\CC)\, |\, Z=Z^t,\, \operatorname{Im} Z>0\}$.

In what follows, we will assume that $c_{\gamma}=
{1\over 2}\bar Z(\gamma,\gamma)$ and $\varphi_0(0)=0$, as is achieved by
\eqref{newvarphi0 eq}.
Now the line bundle $\tilde\shL$ on $X_{\Sigma}$ is trivialized when
restricted to $f^{-1}(p)$, and we can choose, say, 
$z^{(0,\varphi_0(0),1)}=z^{(0,0,1)}$ as a trivializing section. Then as a
regular function on $f^{-1}(p)$, for $m\in B(\ZZ)$, the theta
function $\vartheta_m$ takes the form $\sum_{\gamma\in\Gamma} 
z^{m+\gamma}z^{\varphi_0(m+\gamma)}(p)$. Writing this as a function 
on $N\otimes\CC$ and using $\zeta=\sum_i u_if_i^*$, gives an 
expression as a function of the $u_i$ 
\[
\vartheta_m=\sum_{\gamma\in\Gamma}  z^{\varphi_0(m+\gamma)}(p)
\exp(2\pi \sqrt{-1} \langle \zeta, m+\gamma\rangle).
\]
Finally, writing
$\varphi_0(m+\gamma)=\psi(m)+\bar\varphi_0(m+\gamma)$ (as $\psi$ is
single-valued), this becomes
\[
\vartheta_m=
z^{\psi(m)}(p)\sum_{\gamma\in\Gamma}
\exp(\pi\sqrt{-1}Z(p)(m+\gamma,m+\gamma)
+2\pi\sqrt{-1}\langle \zeta, m+\gamma\rangle).
\]
Except for the scale factor $z^{\psi(m)}(p)$, after 
writing the exponent in terms of the basis $f_i$, this gives the
standard form for the classical theta function
$\vartheta\left[\begin{matrix}c^1\\ 0\end{matrix}\right](\zeta,Z)$ where
$c^1=(m_1,\ldots,m_n)$, $m=\sum m_if_i$, see
e.g.~\cite{birkenhakelange}, p.223).

We now compare the formula \eqref{thetamequation} for $\vartheta_m$ 
with the description given by jagged paths. Fix any maximal cell
$\sigma\in \P$ and $p\in\sigma$ a chosen basepoint. For $m\in
B({1\over r}\ZZ)$, the set of all jagged paths from $m$ to $p$ is
easily described via a factorization through the universal cover
$\pi:M_{\RR}\rightarrow M_{\RR}/\Gamma$ of $B$. Fixing one lift
$\tilde m$ of $m$ to $M_{\RR}$, any lift $\tilde p$ of $p$ to
$M_{\RR}$ yields a jagged path $\tilde\gamma$ whose image is just a
straight line joining $\tilde m$ to $\tilde p$. The composition with
$\pi$ gives a jagged path in $B$.

The resulting theta function, described as a sum of monomials
indexed by  jagged paths, is easily compared with
\eqref{thetamequation}. Indeed, the sheaf $\Lambda$ on $B$ is just
the constant sheaf with stalk $M$, the sheaf $\shP$ has stalk
$M\times Q^{\gp}$ with monodromy the linear part of the action of
$\Gamma$ on $M\times Q^{\gp}$ by affine transformations described
above, and the sheaf  $\widetilde\shP$ has stalk $M\times
Q^{\gp}\times\ZZ$ with monodromy given by the action of $\Gamma$ on
this latter group as described above. Thus, after choosing a local
representative for $\varphi_0$ near $m$, say the representative
given by $\varphi_0$ in a neighbourhood of $\tilde m$, we see
$(r\tilde m, r\varphi_0(\tilde m), r)$ represents $r\cdot
\varphi_{0*}(\ev_m)$, and $T_{\gamma}(r\tilde m, r\varphi_0(\tilde
m),r)$ represents parallel transport of $r\cdot\varphi_{0*}(\ev_m)$
around a loop  corresponding to $\gamma\in\Gamma$. In particular,
after parallel transport to $p$, we see that $\vartheta_m$ coincides
with $\vartheta_m$ as defined using jagged paths, see
\S\ref{Subsect: Jagged paths}.

\medskip

The theta functions constructed above are not all theta functions on
an abelian variety. Indeed, given an ample line bundle $\shL$ on an
abelian variety $A$, and $t_x:A\rightarrow A$ denoting translation
by an element $x\in A$, $t_x^*\shL$ is isomorphic to $\shL$ only for
finitely many values of $x$. To see this write $K(\shL)$ for the
kernel of the map $A\rightarrow A^{\vee}$ given by $x\mapsto
(t_x^*\shL)\otimes \shL^{-1}$. By ampleness of $\shL$ this kernel is
a finite group. 

To identify such line bundles in our construction, we need to use gluing
data as in \S \ref{Sect: Parameters}. Indeed, we continue to use trivial
gluing data to construct our family $\shX\rightarrow \Spec
\widehat{\kk[Q]}$, but there is a choice of lifting trivial gluing data to
$\cone{B}$ as described in Proposition~\ref{Prop: CGD obstruction on C{B}}.
This tells us that the set of liftings
$\tilde{{\mathbf s}}$ of the trivial gluing data $\mathbf s$ up to equivalence
is canonically in bijection with $H^1(B,\kk^\times)$.

The technically easiest way to think about such a choice of lifts is
to use the description of $\widehat{\foX}\rightarrow \Spf \widehat{\kk[Q]}$
as a quotient, and to any finite order, we can use
\eqref{eq: Xsigmaquotient}. In particular, the cohomology group $H^1(B,
\kk^\times)$ can be represented by the group cohomology 
$H^1(\Gamma,\kk^\times)$, with trivial action of $\Gamma$ on $\kk^\times$.
Then $H^1(\Gamma,\kk^\times)\cong \Hom(\Gamma,\kk^\times)$, so
we view gluing data $\tilde{\mathbf s}$ lifting the trivial open gluing
data as a map $\tilde{\mathbf s}:\Gamma\rightarrow\kk^\times$. We can
then use this to twist the action of $\Gamma$ on the line bundle $\tilde
\shL$ on $X_{\Sigma}$, by $\gamma$ acting on a monomial section $z^{(m,q,r)}$ of
$\tilde\shL^{\otimes r}$ by taking it to 
$\tilde{\mathbf s}(\gamma)z^{T_{\gamma}(m,q,r)}$. Thus again we can look
at $\Gamma$-invariant sections under this new action, getting for
$m\in B({1\over r}\ZZ)$,
\[
\vartheta_m=\sum_{\gamma\in\Gamma} \tilde{\mathbf s}(\gamma)z^{(r(m+\gamma),
r\varphi_0(m+\gamma),r)}.
\]
Again, before comparing this with what we get from broken lines, 
let us compare this expression with classical theta functions. Restricting
to a point $p \in S\cap\Spec\kk[Q^{\gp}]$ as before, this becomes
\begin{equation}
\label{generalthetaeq}
\vartheta_m=
z^{\psi(m)}(p)\sum_{\gamma\in\Gamma}
\tilde{\mathbf s}(\gamma)\exp(\pi\sqrt{-1}Z(p)(m+\gamma,m+\gamma)
+2\pi\sqrt{-1}\langle \zeta, m+\gamma\rangle).
\end{equation}

This can be interpreted as a classical theta function as follows.
Because the period matrix of $f^{-1}(p)$ is $(D, Z(p))$, viewing
$Z(p)$ as giving a map $Z(p):M_{\RR}\rightarrow N_{\CC}$ via
$Z(p)(m)=Z(p)(m,\cdot)$, any element of $N_{\CC}$ can be written
uniquely as $Z(p)c^1+c^2$ for some vectors $c^1\in M_{\RR}$,
$c^2\in N_{\RR}$. In particular, there are such vectors $c^1, c^2$
such that $\tilde{\mathbf s}(\gamma)=\exp(2\pi\sqrt{-1}\langle Z(p)c^1+c^2,
\gamma\rangle)$ for all $\gamma\in\Gamma$. Thus we can write
\[
\vartheta_m=z^{\psi(m)}(p) \sum_{\gamma\in\Gamma}
\textstyle\frac{\exp\big(\pi\sqrt{-1} Z(p)(m+\gamma+c^1,m+\gamma+c^1)
+2\pi\sqrt{-1}\langle \zeta+c^2,m+\gamma+c^1\rangle\big)}{
\exp\big( \pi\sqrt{-1}Z(p)(c^1,c^1)+ 2\pi\sqrt{-1}(Z(p)(m,c^1)
+\langle \zeta+c^2,c^1\rangle+\langle c^2,m\rangle)
\big)}.
\]
We notice the denominator is nowhere zero and independent of
$\gamma$. Recall $\vartheta_m$ defines a section of a line bundle on
$f^{-1}(p)$
by trivializing the pull-back of the line bundle to the universal cover
$N\otimes\CC$. A different choice of trivialization is determined
by an entire invertible function. In particular, after changing this
trivialization (thereby changing the factor of automorphy determining
the line bundle, see \cite{birkenhakelange}, \S 2.1), the above 
function describes the same section of a line bundle as
\begin{align*}
& \quad\quad\quad
z^{\psi(m)}(p)
\vartheta\left[ \begin{matrix} m+c^1 \\ c^2\end{matrix}\right]\\
{} = &
z^{\psi(m)}(p)\sum_{\gamma\in\Gamma}
\exp\big(\pi\sqrt{-1}Z(p)(m+\gamma+c^1,m+\gamma+c^1)+2\pi\sqrt{-1}\langle
\zeta+c^2,m+\gamma+c^1\rangle\big).
\end{align*}

To see that the expression \eqref{generalthetaeq} agrees with that
given by jagged paths, it is easiest to use the description of
parallel transport of monomials given by Remark \ref{Prop:
alternative view}. Indeed, the element $\tilde{\mathbf
s}:\Gamma\rightarrow\kk^\times$  defines an extension $\shP'$ of
$\Lambda$ by $\ul{\kk}^\times$. This extension is trivial when
$\shP'$ is pulled back to $M_{\RR}$, with $\gamma\in \Gamma$ acting
on $\kk^\times\times M$ by  $(s,m)\mapsto (s \tilde{\mathbf
s}(\gamma),m+\gamma)$. Then parallel transport of monomials along
jagged paths as already described in the case of trivial gluing data
will provide the formula for $\vartheta_m$ in 
\eqref{generalthetaeq}.

We note that this description of these more general theta functions is
not really canonical either from the point of view of classical theta
functions (as we need to change the trivialization on $N\otimes\CC$,
implying a change in factor of automorphy) or from the point of view of
homological mirror symmetry. From the latter point of view, $B({1\over d}\ZZ)$
is not the natural parameterizing set for theta functions, but rather
this set translated by the vector $c^1$. Indeed, the Lagrangian mirror
to $t_x^*\shL$ should be a translate of the Lagrangian mirror to $\shL$.
The expectation from \cite{PZ}
is that the image under the SYZ fibration of the intersection
points between this translated Lagrangian and the zero section of the SYZ
fibration is this translated set. It is possible there is a more natural
way to represent these theta functions corresponding to translated line
bundles than done here.
\end{example}

\begin{example}
\label{P2example}
(Cf. \cite{CPS}, Example~2.4)
Let $B$ be the triangle in $\RR^2$ with vertices $v_1=(-1,-1)$,
$v_2=(-1,2)$ and $v_3=(2,-1)$. Let $\P$ be the star decomposition
of  $B$, that is, each two-dimensional cell of $\P$ is the convex
hull of $0$ and an edge of $B$.

We will take the base monoid to be $Q=\NN$, writing $\kk[Q]=\kk[t]$,
and the PL function $\varphi$ to be single-valued with
\[
\varphi(0)=0, \quad \varphi(-1,-1)=\varphi(-1,2)=\varphi(2,-1)=1.
\]
Note the sheaf $\shP$ is the
constant sheaf with coefficients $\ZZ^2\oplus Q^{\gp}$, and we write the
monomials $x:=z^{(1,0,0)}$, $y:=z^{(0,1,0)}$.

We construct a structure $\scrS$ for this data as follows. First consider
the structure
\[
\scrS_{\inc}:=\{(\rho_1,1+tx^{-1}y^{-1}), 
(\rho_2, 1+tx^{-1}y^{2}), (\rho_3,1+tx^{2}y^{-1})\},
\]
where $\rho_i$ is the edge of $\P$ connecting $0$ to $v_i$. Note
that we do not bother to subdivide the $\rho_i$ (hence abandoning
the notation $\ul{\rho}_i$) as the affine structure does not
have any singularities. By applying the Kontsevich-Soibelman lemma
(see e.g., \cite{TGMS}, Theorem 6.38 for the simplest statement that
incorporates the case needed here) we obtain a structure
$\scrS\supseteq \scrS_{\inc}$ which is consistent, by \cite{CPS},
Lemma~4.7 and Lemma~4.9. Consistency at the boundary follows by the
convexity criterion Proposition~\ref{Prop: Boundary consistency}.
All walls added to obtain $\scrS$ are of the form
\[
\big ((-\RR_{\ge 0} m_0)\cap B, 1+\sum_{\ol m\in \RR_{>0} m_0}
c_mz^m\big).
\]
Technically, this $\scrS$ is not quite a structure according to our
definition because there might be distinct walls with the same
support. However a standard structure can be obtained by replacing
all walls with the same support with a single wall whose attached
function is a product. Using this structure, we can build a family
$\foX_k=\Proj S_k$ over the ring $\kk[Q]/I_k$ with $I_k=(t^{k+1})$
for each $k$. Taking the inverse limit of the $S_k$ gives a graded
ring $\widehat{S}$, getting a projective family $\shX\rightarrow T$
with
\[
T=\Spec \lim_{\longleftarrow} \kk[Q]/I_k=\Spec \kk\lfor t\rfor.
\]
We sketch the properties of this family.

For $k=0$, $\foX_0$ is a union of three toric varieties, each a weighted
projective space.
To see what the generic fibre of $\shX\rightarrow T$ is,
let us analyze local models near the vertices
of $B$. Without loss of generality, consider the vertex $v_1=(-1,-1)$. Our
construction involves gluing the spectra of various rings. Explicitly,
the ring $R_{\rho_1}$ given by \eqref{Def: R_fob} is
\[
R_{\rho_1}=(\kk[Q]/I_k)[\Lambda_{\rho_1}][Z_+,Z_-]/(Z_+Z_--
(1+tx^{-1}y^{-1})f_{\rho_1}t),
\]
where 
\[
f_{\rho_1}=\prod_{(\fod,f_{\fod})\in \scrS\setminus\scrS_{\inc}
\atop \fod=\rho_1} f_{\fod}.
\]
We also have two rings $R_{\sigma_{\pm},\rho_{\pm}}$ where
$\sigma_{\pm}$ are the two two-cells containing $\rho_1$ and
$\rho_{\pm}$ are the corresponding edges of $B$ containing $v_1$. Taking
$\rho_+$ to have vertices $v_1$ and $v_2$, we have by 
\eqref{Eqn: R_sigmarho} 
\[
R_{\sigma_+,\rho_+}=(\kk[Q]/I_k)[x,y^{\pm 1}],
\quad
R_{\sigma_-,\rho_-}=(\kk[Q]/I_k)[x^{\pm 1},y].
\]
We glue $\Spec R_{\rho_1}$ and $\Spec R_{\sigma_+,\rho_+}$ by localizing
at $Z_-$ and $x$ respectively, and then identifying the generator $(1,1)$ of
$\Lambda_{\rho_1}$ with $xy$ and $Z_+$ with $y$. We glue $\Spec R_{\rho_1}$
and $\Spec R_{\sigma_-,\rho_-}$ by localizing at $Z_+$ and $y$ respectively,
and then identifying the generator $(1,1)$ of $\Lambda_{\rho_1}$ with
$xy$ and $Z_-$ with $y^{-1}$.

It is easy to check that the ring of regular functions on this glued scheme
can then be written as
\[
R^k_{v_1}:=(\kk[Q]/I_k)[X,Y,W]/(XY+tW(1+tW^{-1})f_{\rho_1}(t,W)).
\]
The inclusion of this ring in $R_{\rho_1}$ is given by
\begin{align*}
X \mapsto {} & z^{(1,1)} Z_- \quad (z^{(1,1)}\in\kk[\Lambda_{\rho}])\\
Y \mapsto {} & Z_+\\
W \mapsto {} & z^{(1,1)}.
\end{align*}
Note that the ideal is generated by $XY+t(W+t)f_{\rho_1}(t,W)$ and
$f_{\rho_1}$ is congruent to $1$ modulo $t$.

The scheme $\Spec R^k_{v_1}$ is the affine completion of this glued
scheme.  There are similar descriptions of rings $R^k_{v_2}$,
$R^k_{v_3}$, and the three schemes $\Spec R^k_{v_i}$ cover
$\foX_k\setminus \{0\}$, where $0$ is the point in $\foX_0$ where
the three irreducible components meet.

The boundary $\foD_k$ of $\Spec R_{v_1}^k\subseteq\foX_k$ is given by
$W=0$, see Remark \ref{Rem: Boundary divisor}. Thus we see that our
construction gives a family of pairs $(\shX,\shD)\rightarrow S$, and
locally the equation for $\shD$ to order $k$ is
$XY+t^2(1+t(\cdots))=0$, clearly a smoothing of $XY=0$. Thus
$\shD\rightarrow T$ is a smoothing of a triangle of $\PP^1$'s. 

We next claim that with $\eta=\Spec\kk((t))$, the generic fibre
$\shX_{\eta}$ of $f:\shX\rightarrow T$ is smooth. We sketch the
argument. For sufficiently large $k$, it is easy to check that the
singular locus of the map $f_k:\Spec R^k_{v_i}\rightarrow \Spec
\kk[Q]/I_k$ is not scheme-theoretically surjective in the sense of
\cite{GHK1}, Definition-Lemma 4.1 and following.  Furthermore, the
argument of \S 4 of \cite{GHK1} shows a similar statement for a
neighbourhood of $0$ in $\foX_k$. Thus the singular locus of
$\foX_k\rightarrow\Spec \kk[Q]/I_k$ is not  scheme-theoretically
surjective for large $k$.  Now consider the singular locus of $f$. 
The formation of singular locus commutes with base change (see
\cite{GHK1}, Definition-Lemma 4.1 again). As $\shX\times_T \Spec
\kk[Q]/I_k =\foX_k$ for any $k$, we must not have $\Sing(f)$
surjecting onto $T$. Since $f$ is proper, this means $\Sing(f)$ is
disjoint from $\shX_{\eta}$, and the latter scheme is smooth over
$\eta$.

To identify $\shX_{\eta}$, we proceed as follows. Note that the
relatively ample line bundle $\shL$ on $\shX$ restricts to a very
ample line bundle on $\foX_0$ by inspection, embedding $\foX_0$ as a
surface of degree $9$ in $\PP^9$. This implies $\shL|_{\shX_{\eta}}$
is also very ample. Note also that $\shL|_{\foX_0}\cong
\omega_{\foX_0}^{-1}$, again by inspection. As
$H^1(\foX_0,\O_{\foX_0})$ can be calculated to be $0$, we also have
$\shL|_{\shX_{\eta}}\cong \omega_{\shX_{\eta}}^{-1}$. Thus, passing
to $\bar\eta=\Spec \overline{\kk((t))}$, we see $\shX_{\bar\eta}$ is
a del Pezzo surface of degree $9$. Here $\overline{\kk((t))}$
denotes the algebraic closure of $\kk((t))$.  From the
classification of del Pezzo surfaces, we have
$\shX_{\bar\eta}\cong\PP^2_{\bar\eta}$. So $\shX_{\eta}$ is a
Brauer-Severi scheme over $\eta$.

A result of Witt (see e.g., \cite{GiSz}, Corollary 6.3.7) implies
that if  $\kk$ is an algebraically closed field of characteristic
zero, then the Brauer group of  $\kk((t))$ is trivial. Thus
$\shX_{\eta}\cong\PP^2_{\eta}$.

There remains the question of describing the theta functions we have
constructed on $\PP^2_{\eta}$. We do not see at this point how to
describe these functions completely. The structure $\scrS$ is
expected to be very complicated, containing non-trivial rays of
every rational slope. Hence it is likely to be very difficult to
control jagged paths. Nevertheless, there is a certain amount of
symmetry which gives us some information.

There is an action of the group $H=\ZZ_3^2$, generated by $\alpha$
and $\beta$, on the data of our construction. The generator $\alpha$
acts on $B\subseteq\RR^2$ as the linear transformation
$\begin{pmatrix}  0&1 \\ -1&-1\end{pmatrix}$. This action preserves
$\varphi$, and we then get an action on monomials lifting the action
on $\RR^2$. In particular, this gives an action on walls taking
$(\fop, 1+\sum c_m z^m)$ to $(\alpha(\fop), 1+\sum c_m
z^{\alpha(m)})$. One sees that $\scrS_{\inc}$ is preserved by this
action, and hence so is $\scrS$. Further,  for $\tau\in\P$ of
codimension $0$ or $1$, the ring $R_{\tau}$ is canonically
identified using $\alpha$ with $R_{\alpha(\tau)}$. Thus $\alpha$
acts as an automorphism of $\foX_k/\Spec(\kk[Q]/I_k)$.

The generator $\beta$ leaves $B$ fixed, but acts on monomials via
\[
\beta(x)=\zeta x, \quad \beta(y)= \zeta^2 y, \quad \beta(t)=t,
\]
where $\zeta$ is a primitive third root of unity. Again $\scrS_{\inc}$ is
left invariant under this action, so the same is true of $\scrS$. Then $\beta$
also acts as automorphisms of the rings $R_{\tau}$, hence again $\beta$
induces an automorphism of $\foX_k/\Spec(\kk[Q]/I_k)$.

In conclusion, the group $H$ acts on $\foX_k/\Spec(\kk[Q]/I_k)$ for all $k$ 
(with the trivial action on $\Spec\kk[Q]/I_k$) and hence acts on $\shX/T$.
This action preserves $\shD$, and is clearly non-trivial on $\shD$ since
it permutes the components of $\shD_0$. 

Note furthermore that $\shD_{\eta}$ has a point over $\eta$: certainly
$\shD_0$ has many $\kk$-valued points which are non-singular points of
$\shD_0$, so by Hensel's lemma $\shD\rightarrow\Spec\kk\lfor t\rfor$ has
a section. Thus $\shD_{\eta}$ has a $\kk((t))$-valued point, and hence has
the structure of an abelian variety.

In particular, if $\phi\in H$ induces an automorphism
$\phi:\shD_{\eta}\rightarrow\shD_{\eta}$, then
$\phi^*\O_{\PP^2}(1)|_{\shD_{\eta}} \cong
\O_{\PP^2}(1)|_{\shD_{\eta}}$. Since the $j$-invariant of
$\shD_{\eta}$ is non-constant, the only choice for such an
automorphism is translation by an element in the kernel of the
polarization $\O_{\PP^2}(1)|_{\shD_{\eta}}$. It is then standard
(see e.g., \cite{Mu1}) that the group action of $H$ lifts to an
action on  $\O_{\PP^2}(1)|_{\shD_{\eta}}$ after passing to a central
extension \[ 1\rightarrow \Gm \rightarrow G \rightarrow H\rightarrow
0. \] Here $G$ is the Heisenberg group. The representation of $H$ on
$H^0(\shD_{\eta}, \O_{\PP^2}(1)|_{\shD_{\eta}}) =\kk((t))^{\oplus
3}$ is then isomorphic to the Schr\"odinger representation, that is,
there is a basis $x_0,x_1,x_2$ (with indices taken modulo $3$) of 
$H^0(\shD_{\eta}, \O_{\PP^2}(1)|_{\shD_{\eta}})$ with (lifts of)
$\alpha$ and $\beta$ acting by \[ \alpha(x_i)=x_{i+1}, \quad
\beta(x_i)=\zeta^ix_i \] for $\zeta\in\kk$ a primitive third root of
unity.

Now consider the theta function $\vartheta_0$ corresponding to $0\in
B(\ZZ)$. Because the monomial corresponding to $0$ is left invariant
by $G$ and the scattering diagram itself is invariant under the
action of $G$, it follows that  $\vartheta_0$ is invariant. Since
$\vartheta_0|_{\shX_{\eta}}$ is a section of
$\omega_{\shX_{\eta}}^{-1}\cong \O_{\PP^2}(3)$, $\vartheta_0$ must
be an invariant cubic. In the coordinates $x_0,x_1,x_2$ in which the
Schr\"odinger representation is described above, the general such
invariant cubic is $\lambda_1(x_0^3+x_1^3+x_2^3)
+\lambda_2x_0x_1x_2$. Here $\lambda_1,\lambda_2 \in \kk((t))$. It is
also easy to see that $\vartheta_0$ vanishes on $\shD$. Thus the
equation of $\shD_{\eta}$ is given by the above cubic, for some
choice of $\lambda_1,\lambda_2$, and $\vartheta_0$ takes the same
form. Moreover, $\shD$ is the result of applying our construction to
$\partial B$ with its decomposition into nine unit intervals. In
dimension one our construction is purely toric and, for $B=S^1$,
produces a Tate curve, with known $j$-invariant. Hence the quotient
$\lambda_1/\lambda_2$ can be computed from this $j$-invariant of
$\shD$.

If $p\in B(\ZZ)\setminus \{0\}$, then one can classify jagged paths
for $p$ which are contained entirely in $\partial B$. Indeed,
because of the form of $\scrS$, a jagged path which starts at
$p\in\partial B$ can only bend at a ray of
$\scrS\setminus\scrS_{\inc}$ if it bends outwards. Thus jagged paths
contained in $\partial B$ can only bend at the rays of
$\scrS_{\inc}$, and a simple calculation shows that such jagged
paths must bend as much as possible whenever a ray of $\scrS_{\inc}$
is crossed. Using this, one can compare $\vartheta_p|_{\shD}$ with
theta functions on $\shD$. One finds the description as given in
the earlier part of this section. We omit the details. 
\end{example}

\begin{example}
Consider the family $\shX\rightarrow S$ 
of quartic K3 surfaces in $\PP^3\times S$ given by
the equation
\begin{equation}
\label{quarticfamily}
s(x_0^4+x_1^4+x_2^4+x_3^4)+x_0x_1x_2x_3=0,
\end{equation}
where $S$ is the spectrum of a discrete valuation ring over a field
$\kk$ with uniformizing parameter $s$. This is a toric degeneration
(see Example 4.2 in \cite{logmirror1}). If we use the polarization
given by $\O_{\PP^3}(1)|_{\shX}$,  we obtain an intersection complex
$(B,\P)$ which is a union of four standard simplices, forming a
tetrahedron. There is in fact an affine structure with singularities on
$B$ which extends across the vertices. As in \cite{logmirror1},
this is specified by defining a \emph{fan structure} at each vertex,
i.e., an identification of a neighbourhood of each vertex with the neighbourhood
of $0$ of a fan. (See \cite{gokova}, Example 2.10 for the dual
intersetion complex version of this example.)
The fan structure at each vertex is given by the fan
for $\PP^2$, as the degeneration is normal crossings at the
zero-dimensional strata of $\shX_0$. Using $(B,\P)$, we can work
backwards and construct a smoothing of $\shX_0$ using the algorithm
of  \cite{affinecomplex}  to construct a consistent structure. The
initial data used to construct this structure is induced by the log
structure on $\shX_0$ coming from the inclusion $\shX_0\subseteq
\shX$. Note that $B$ has six singularities, one each at the
barycenter of each edge. There are then initial walls emanating from
each singular point, with attached function of the form $1+z^{4m}$,
where $m$ is primitive with $\ol m$ tangent to the edge. We omit the
details.

From this initial data, \cite{affinecomplex} gives
a consistent structure, giving a polarized
deformation $\foX\rightarrow \Spec \kk\lfor t \rfor$ as usual, with
$\shL$ the line bundle on $\foX$. For a simpler exposition of this
result in two dimensions, see \cite{TGMS}, Chapter 6.
Then $B(\ZZ)$ consists of the vertices
of $\P$, giving four theta functions $\vartheta_0,\ldots,\vartheta_3$ which
are sections of $\shL$. By construction, these can be chosen so their
restriction to the central fibre gives $x_0,\ldots,x_3$. Since $\shL$ is
very ample when restricted to the central fibre, it is also very ample when
restricted to the generic fibre $\foX_{\eta}$. It is then clear that
$\shL|_{\foX_{\eta}}$ embeds $\foX_{\eta}$ as a quartic surface in 
$\PP^3_{\eta}$, so one can ask which quartic equation is satisfied by
the $\vartheta_i$'s. 

To see this, one can observe as in Example \ref{P2example} that
$\foX$ has a large symmetry group. First observe that $B$ has an
action of a  permutation of the vertices, and this action preserves
the initial structure, and hence preserves the structure defining
$\foX$. This action acts on the theta functions $\vartheta_i$ by
permutation also.

We can also find an action of multiplication by fourth roots of unity.
Indeed, one can easily check that the monodromy of $\Lambda$ 
around each singular
point of $B$ takes the form $\begin{pmatrix}1&4\\ 0&1\end{pmatrix}$
in a suitable basis. Thus the local system $\Lambda\otimes_{\ZZ} \ZZ/4\ZZ$
has no monodromy, and hence is trivial. Fix an isomorphism 
$\Lambda\otimes_{\ZZ} \ZZ/4\ZZ$ with the constant sheaf with stalk 
$(\ZZ/4\ZZ)^2$; this can be done by fixing an isomorphism $\Lambda_x
\cong \ZZ^2$ at some point $x\in B_0$. In particular, given any character
$\chi:(\ZZ/4\ZZ)^2\rightarrow \kk^\times$, we obtain a map $\chi:\shP
\rightarrow \kk^\times$ via the factorization $\shP\rightarrow\Lambda
\rightarrow \Lambda\otimes_{\ZZ}(\ZZ/4\ZZ)\cong (\ZZ/4\ZZ)^2\rightarrow 
\kk^\times$. Thus such a $\chi$ gives a well-defined action on monomials.
Because of the form of the initial walls of the structure, the structure
is left invariant under this action, as are all relevant rings and gluing
maps. Hence $\chi$ acts on $\foX$. 

If we take $H$ to be the group $S_4\times \Hom((\ZZ/4\ZZ)^2,\kk^\times)$,
then there is a central extension 
\[
1\rightarrow \Gm\rightarrow G\rightarrow H\rightarrow 1
\]
of $H$ which acts on the line bundle $\shL$ and hence on its space of
sections. It is easy to see that there is a lift of 
an element $\alpha\in S_4$ to $G$ acting by the
corresponding permutation on the $\vartheta_i$. 

Given a character $\chi$, it is clear that the theta functions are also 
eigensections of the action of a lift of $\chi$ to $G$. Continuing to write
such a lift as $\chi$, since we can modify a lift by an element of 
$\Gm$, we can always assume $\chi(\vartheta_0)=\vartheta_0$. 
Once this is fixed, the action of $\chi$ on the other $\vartheta_i$
is determined. To see this explicitly, let $z^{m_i}$ be a monomial appearing
in $\vartheta_i$ as expanded at the point $v_0$, the point of $B(\ZZ)$ 
corresponding to $\vartheta_0$.
In particular, each $m_i$ lives in the
stalk of $\tilde\shP_1$ at $v_0$, and the difference $m_i-m_0$
lives in the stalk of $\shP$ at $v_0$. After taking the image of the difference
in $\Lambda\otimes_{\ZZ}(\ZZ/4\ZZ)$, we get a well-defined element 
of $\Lambda_{v_0}\otimes_{\ZZ}(\ZZ/4\ZZ)$ only depending on $i$ and not
on the particular terms taken. Using the fan structure defining the
affine structure in a neighbourhood of $v_0$, we can choose coordinates so
that $v_0,\ldots,v_3$ have coordinates $(0,0)$, $(1,0)$, $(0,1)$ and $(-1,-1)$
respectively. From this one sees using these coordinates that the lifted
action of $\chi$ is now
\[
\chi(\vartheta_0)=\vartheta_0, \quad
\chi(\vartheta_1)=\chi(1,0)\cdot\vartheta_1, \quad
\chi(\vartheta_2)=\chi(0,1)\cdot\vartheta_2, \quad
\chi(\vartheta_3)=\chi(-1,-1)\cdot\vartheta_3.
\]

As the sections $\vartheta_i$ embed $\foX$ into $\PP^3_{\kk\lfor t\rfor}$,
and at $t=0$ they satisfy the quartic equation 
$\vartheta_0\vartheta_1\vartheta_2\vartheta_3=0$, we obtain
a family of quartics invariant under the action of $G$ described above, and
hence is necessarily of the form
\begin{equation}
\label{Eqn: Modified Dwork family}
\lambda(t)(\vartheta_0^4+\vartheta_1^4+\vartheta_2^4+\vartheta_3^4)
+\vartheta_0\vartheta_1\vartheta_2\vartheta_3=0,
\end{equation}
with $\lambda(t)$ a formal power series in $t$ vanishing at $t=0$.
In particular, the family is the base-change of an algebraic family,
even if $\lambda(t)$ is not algebraic, and the theta functions are
just the coordinates. That theta functions should have such a simple
expression in general is not clear at all.

For $\kk=\CC$ one can also show that $\lambda(t)$ is analytic in $t$ as follows.
In the analytic version of the Dwork family \eqref{quarticfamily} there are
families of $2$-cycles $\alpha=\alpha(s)$ and $\beta=\beta(s)$ with $g(s)=
\exp(\int_\beta \Omega/\int_\alpha \Omega)$ an analytic coordinate on the
parametrizing disc. Here $\Omega$ is a choice of holomorphic $2$-form. These
period integrals are of the form treated in \cite{RS}, hence can be computed
also on $\foX$ to give a monomial in $t$. Comparing with \eqref{Eqn: Modified
Dwork family} shows that $g(\lambda(t))= c\cdot t$ for some $c\in\CC^\times$.
Then $\lambda$ is obtained by inverting $g$.
\end{example}

%===========================================================
%===========================================================
\begin{appendix}
\section{The GS case}

The purpose of this section is to discuss previous work of the first
and last authors within the framework established in this paper. The
main references are \cite{logmirror1} and \cite{affinecomplex}.

%===========================================================
\subsection{One-parameter families}
\label{Subsect: GS one-parameter}

The setup in \cite{logmirror1} and \cite{affinecomplex} is more
restrictive than here in that the affine structure extends over a
neighbourhood of each vertex. In fact, the discriminant locus
consists only of those codimension two cells of the barycentric
subdivision neither containing vertices nor intersecting the
interiors of maximal cells. For clarity we denote this smaller
discriminant locus by $\breve\Delta$, and by $\iota:\breve \Delta\to
B$ the inclusion. The strongest results in \cite{logmirror1} and
\cite{affinecomplex} are proved under the assumption that the affine
singularities of $B$ are \emph{positive} and \emph{simple}.
Positivity is a necessary condition for the affine structure to come
from a degeneration. Simplicity is a strong local primitivity
condition expressed by requiring that certain integral polytopes
spanned by the local monodromy vectors are elementary
simplices\footnote{An elementary simplex is a lattice simplex whose
only integral points are its vertices.}. In the positive, simple
case over an algebraically closed field $\kk$, one of the main
results of \cite{logmirror1} shows that the set of isomorphism
classes of possible $X_0$ as log spaces over the standard log point
is canonically in bijection with $H^1(B, \iota_*
\check\Lambda\otimes\ul{\kk}^\times)$ (\cite{logmirror1},
Theorem~5.4)\footnote{In \cite{logmirror1} we mostly work with the
\emph{dual} intersection complex or fan picture, hence the
dualization in the sheaf compared to the original statement, see
\cite{logmirror1}, Proposition~1.50. A summary of the setup in the
intersection picture of the present paper is contained in
\cite{affinecomplex},~\S1.}. This cohomology group
provides so-called \emph{lifted gluing data} $\mathbf
s=(s_{\omega\tau})$ (\cite{logmirror1}, Definition~5.1), which
induces both open gluing data and a log structure on
$X_0=X_0(\mathbf s)$ over the standard log point $(\Spec\kk,
\kk^\times\oplus\NN)$. Unlike in Subsection~\ref{Subsect: Twisting},
the open gluing data obtained in this way consists of
homomorphisms\footnote{The notation in \cite{affinecomplex} is $s_e$
for $e:\omega\to\tau$} $s_{\omega\tau}: \Gamma(W_{\omega\tau}, 
\iota_*\Lambda)\to \kk^\times$ for \emph{all} inclusions
$\omega\to\tau$, regardless of the dimensions. Here $W_{\omega\tau}$
is the open star of the edge $\omega\tau$ in the barycentric
subdivision as introduced in Subsection~\ref{Subsect: Twisting}.
Lifted gluing data also induces closed gluing data in the sense
considered here. The log structure is equivalent to providing slab
functions $f_{\rho,v}$ for any pair $\rho\in\P^{[n-1]}$ and
$v\in\rho$ a vertex labelling a connected component of
$\rho\setminus\breve\Delta$. There is an additional multiplicative
compatibility condition for each $\tau\in \P^{[n-2]}$ that involves
all $f_{\rho,v}$ with $\rho\supset\tau$, see also the discussion in
\cite{affinecomplex}, \S1.2.

Assuming $B$ bounded and with positive and simple singularities, the algorithm
in \cite{affinecomplex} then readily produces a mutually compatible series of
consistent wall structures $\scrS^{\mathrm{GS}}= \scrS_k$ on $(B,\P)$ for
$Q=\NN$, $A=\kk$ and $I=(t^{k+1})\subseteq \kk[t]=A[Q]$, for any $k\in \NN$, see
\cite{affinecomplex}, Theorem~3.1. Here $X_0$ has an implicit dependence on the
gluing data $\mathbf s$. The notion of wall structure is almost identical, there
being two differences. The first is that $\Delta$ in \cite{affinecomplex} was
chosen transverse to any rational polyhedral subset. In particular, our present
requirement $\Int\fob\cap\Delta=\emptyset$ for all slabs $\fob$ can only be
fulfilled for $\breve\Delta=\emptyset$. However, this requirement of
transversality with $\Delta$ was purely technical and can be removed as follws.
Modifying the argument in \cite{affinecomplex}, Remark~5.3, consider $B\times
[0,1]$ as an affine manifold with a discriminant locus restricting on
$B\times\{0\}$ to the barycentrically centered one from the present setup and
fulfilling the conditions required in \cite{affinecomplex} on $B\times (0,1]$.
Because there is never any scattering on the boundary, the algorithm still works
in this setup. Once the discriminant locus is barycentric, we can then decompose
each slab $\fob$ into the closures of connected components of
$\fob\setminus\Delta$. We then have the polyhedra of a wall structure $\scrS$ in
the sense of Definition~\ref{Def: Wall structure},2. For clarity we write
$\ul\fob$ for the slabs in the sense of this paper obtained by decomposition.
For a wall $\fop\in\scrS$ of codimension zero take the attached function
$f_\fop$ identical to the one in $\scrS^{\mathrm GS}$.

The different treatment of gluing data in the present work compared to
\cite{affinecomplex} requires a modification of the slab functions as follows.
Given the choice of open gluing data ${\mathbf s}=(s_{\omega\tau})$, one obtains
open gluing data in the sense of \S\ref{Subsect: Twisting} by
taking\footnote{The inverse arises from the different sign convention taken in
\cite{logmirror1}, \cite{affinecomplex}.}
$s_{\sigma\ul{\rho}}=s^{-1}_{\rho\sigma}$. Then given a slab $\ul\fob$ in
$\scrS$, the function $f_{\ul\fob}$ is obtained by choosing any point $x\in\Int
\ul\fob$ and considering the function $f_{\fob,x}$ attached to the point $x\in
\fob$, where $\fob$ is the slab of $\scrS^{\mathrm{GS}}$ containing $\ul\fob$.
Let $v$ be the vertex of $\rho$ contained in the same connected component of
$\rho\setminus\breve\Delta$ as $x$. We then take
\begin{equation}
\label{Eqn: GS slab functions}
f_{\ul\fob}=s_{v\rho}^{-1}(f_{\fob,x}).
\end{equation}
This formula for $f_{\ul\fob}$ arises from the change of chambers homomorphisms $\theta$
of log rings in the case $\sigma_\fou\neq\sigma_{\fou'}$, see
\cite{affinecomplex}, lower half of p.1349.\footnote{The factor
$D(s_{v\rho},\rho,v)$ from \cite{affinecomplex}, Definition~1.20 does not
appear here since we work with lifted gluing data.} Then \cite{affinecomplex},
(1.11) implies \eqref{f_rho' versus f_rho with gluing}.

For later use we observe that the order zero reduction $f_{\ul\rho}$ of
$f_{\ul\fob}$ for any slab $\ul\fob\subseteq\ul\rho$ can be written down
explicitly and turns out to have constant coefficients, not depending on gluing
data. To state this result recall that in the present case with positive and
simple singularities, the set
\[
\Delta(\rho,v)= \big\{ m_{\ul\rho\ul\rho'}\,\big|\, \ul\rho'\subseteq\rho\big\}
\]
of monodromy vectors of closed paths in $W_\rho\setminus\Delta$ starting and
ending at $v$, are the vertices of an elementary simplex. Here $\ul\rho\subseteq
\rho$ is the cell of $\tilde\P$ containing $v$.

\begin{lemma}
\label{Lem: Order zero slab functions}
Let $(B,\P)$ be bounded and with positive and simple singularities and $\ul\fob$
a decomposed slab. Then the order zero reduction of $f_{\ul\fob}$ is
\[
f_{\ul\rho}= \sum_{m\in\Delta(\rho,v)} z^m.
\]
\end{lemma}
\begin{proof}
Let $v\in\ul\fob$ be the unique vertex of $\P$ contained in $\ul\fob$.
Theorem~5.2,2 from \cite{logmirror1} says that in the positive, simple case, the
order zero slab functions $f_{v\to\rho}$ are determined uniquely by the gluing
data. An explicit formula is given in the proof of Theorem~5.2,(2) on p.304 of
\cite{logmirror1}:
\[
f_{v\to\rho}= \sum_{m\in\Delta(\rho,v)} s_{v\rho}(m)z^m.
\]
Here $z^m$ denotes the unique monomial of order zero with tangent vector
$m$. The claimed formula is now immediate by reducing \eqref{Eqn: GS slab
functions} modulo $t$.
\end{proof}
\medskip

We continue with fitting the construction of \cite{affinecomplex} into the
present framework.

\begin{lemma}
\label{Lem: Consistency in GS}
The wall structure $\scrS$ coming from \cite{affinecomplex},
Theorem~3.1 is consistent in the sense of Definition~\ref{Def:
Consistency in codim two}.
\end{lemma}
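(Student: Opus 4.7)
The plan is to verify the three codimension-wise consistency conditions of Definition~\ref{Def: Consistency in codim two} one at a time for $\scrS$, deriving each from the corresponding statement proved in \cite{affinecomplex} for $\scrS^{\mathrm{GS}}$, together with the compatibility between the log-ring formalism of that paper and the ring-gluing formalism of the present one.

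First, for consistency in codimension zero (Definition~\ref{Def: Consistency in codim zero}), note that codimension zero walls of $\scrS$ and $\scrS^{\mathrm{GS}}$ have identical underlying polyhedra and identical attached functions $f_\fop$. Moreover, no codimension zero joint meets any slab in either structure, so all wall-crossing automorphisms around such a joint are of the simple form \eqref{Eqn: theta_fop}. Consistency in the sense of \cite{affinecomplex}, Proposition~2.23 or Theorem~3.1, then directly implies the identity $\theta_{\fop_r}\circ\cdots\circ\theta_{\fop_1}=\id$ required by Definition~\ref{Def: Consistency in codim zero}.

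Second, for consistency in codimension one, the key point is that the recipe $f_\fob=D(s_{v\rho},\rho,v)\,s_{v\rho}(f_{\fob',x})$ defining the slab functions has been cooked up precisely so that equation \eqref{f_rho' versus f_rho with gluing} holds; as noted in the text, this is a direct consequence of \cite{affinecomplex}, (1.11). Given \eqref{f_rho' versus f_rho with gluing}, the rings $R_\fob$ and the localization morphisms $\chi_{\fob,\fou}$ twisted by gluing data as in \S\ref{Subsect: Twisting} fit together compatibly with the corresponding log ring picture of \cite{affinecomplex}. Consistency in codimension one for $\scrS^{\mathrm{GS}}$ (again provided by \cite{affinecomplex}, Theorem~3.1) then translates, via the dictionary of Remark~\ref{Prop: alternative view} applied to the localized rings, into the equality \[(\theta\times\theta')\big((\chi_{\fob_1,\sigma},\chi_{\fob_1,\sigma'})(R_{\fob_1})\big)=(\chi_{\fob_2,\sigma},\chi_{\fob_2,\sigma'})(R_{\fob_2})\] required by Definition~\ref{Def: Consistency in codim one}.

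The main obstacle is consistency in codimension two, which presents a genuine translation problem: in \cite{affinecomplex} consistency around a higher codimension joint is formulated as a statement about compositions of wall-crossing automorphisms in suitable formal completions of log rings, whereas Definition~\ref{Def: Consistency in codim two} demands that the local theta functions $\vartheta^\foj_m(p)$ built from normalized broken lines on $(B_\foj,\P_\foj)$ be independent of the generic point $p$ within a chamber and compatible with crossing the walls bordering the chamber. The plan is to deduce this from \cite{CPS}, where exactly such a statement is proved in the framework of \cite{affinecomplex}: for any consistent structure in the sense of \cite{affinecomplex}, the sum over broken lines ending at $p$ defines a well-defined regular function, independent of $p$ up to wall-crossings. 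One has to check that replacing the transversal $\Delta$ of \cite{affinecomplex} by the barycentric $\Delta$ here and subdividing slabs accordingly does not alter the enumeration of broken lines contributing to $\vartheta^\foj_m(p)$; indeed the subdivision just refines the decomposition of each slab function into its connected-component representatives $f_{\fob',x}$, while the twist by gluing data described above is exactly the one introduced in \S\ref{Subsect: Twisting} to make the broken line counts compatible.

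Putting these three steps together yields consistency of $\scrS$ in the sense of Definition~\ref{Def: Consistency in codim two}. The hardest step is the codimension two verification, where bookkeeping between the two discriminant-locus conventions, the inversion $s_{\sigma\ul\rho}=s^{-1}_{\rho\sigma}$ of sign convention for gluing data, and the broken line versus log ring formulations of consistency must all be accounted for simultaneously; but once the dictionary is fixed, the result is essentially a direct translation of \cite{CPS}, Lemma~4.7 into the present language.
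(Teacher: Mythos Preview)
Your overall strategy matches the paper's: treat the three codimensions separately, note that codimension zero is identical in both frameworks, and for codimensions one and two translate the consistency statements already proved in \cite{affinecomplex} and \cite{CPS} into the present formalism. The paper's proof is in fact much terser than yours.

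There are two places where your references and mechanism diverge from the paper's, and you should be aware of them. For codimension one, the paper does not go through \eqref{f_rho' versus f_rho with gluing} or Remark~\ref{Prop: alternative view}; instead it observes directly that $R_\fob$ is the fibre product $R_{\sigma_+}\times_{R_\rho} R_{\sigma_-}$ (this is discussed in the proof of \cite{affinecomplex}, Lemma~2.34), and once one has that description, the consistency condition of \cite{affinecomplex}, Definition~2.28 rewrites immediately as the condition of Definition~\ref{Def: Consistency in codim one}. Your route via the slab-function compatibility is not wrong, but Remark~\ref{Prop: alternative view} is about the twisted sheaf $\shP_{\mathbf s}$ and does not by itself give you the translation; the fibre-product identification is the cleaner bridge. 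For codimension two, the precise reference is \cite{CPS}, Proposition~3.2, which establishes exactly the local statement $\vartheta^\foj_m(p)$ is well-defined and compatible with chamber changes; \cite{CPS}, Lemma~4.7 is the global compatibility statement (varying $p$ across chambers) and sits one level above what is needed here.
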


\begin{proof}
The notion of consistency agrees in codimension zero, but differs in
codimensions one and two. In codimension one, we first observe that
our ring $R_{\ul\fob}$ arises as a fibre product of the rings
$R_{\sigma_+}$ and $R_{\sigma_-}$ over $R_\rho$. This is discussed
in the proof of \cite{affinecomplex}, Lemma~2.34. Expressed in terms
of this fibre product, the notion of consistency along a codimension
one joint of \cite{affinecomplex}, Definition~2.28, yields the
notion in Definition~\ref{Def: Consistency in codim one}.

Consistency in codimension two for $\scrS$ in the sense of
Definition~\ref{Def: Consistency in codim two} is the content of
\cite{CPS}, Proposition~3.2.
\end{proof}
\medskip

\begin{proposition}
\label{Prop: Equivalence with GS}
Let $\foX^{\rm GS}\to \Spec \big(\kk[t]/(t^{k+1})\big)$ be the flat
deformation constructed from $\scrS$ in \cite{affinecomplex}, \S2.6.
Then the complement of the codimension two strata of $X_0\subseteq
\foX^{\rm GS}$ is canonically isomorphic to $\foX^\circ$ constructed in \S2.4.

Moreover, if the lifted gluing data $\mathbf s$ is projective
(Definition~\ref{Def: Projective gluing data}) then $\foX^{\rm GS}$
agrees with the projective scheme denoted by $\foX$ in
Theorem~\ref{Thm: Main}.
\end{proposition}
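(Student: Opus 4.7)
The plan is to identify $\foX^\circ$ with the complement of the codimension two strata in $\foX^{\rm GS}$ by matching the local building blocks of the two constructions, and then to deduce the projective statement from the uniqueness clause in Theorem~\ref{Thm: Main}(c). Both schemes are assembled from local toric models glued along wall-crossing data, and the key observation is that in codimensions zero and one the two sets of local data agree after the change of conventions recorded immediately before the proposition.

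First I would verify these local identifications. For a codimension zero chamber $\fou\subset\sigma$, both constructions use the ring $R_\fou = (\kk[t]/(t^{k+1}))[\Lambda_\sigma]$, and the wall-crossing automorphisms $\theta_\fop$ coincide because the codimension zero wall functions of $\scrS$ are inherited verbatim from $\scrS^{\rm GS}$. For a slab $\fob\subset\ul\rho\subset\rho$, the ring $R_\fob$ of \eqref{Def: R_fob}, built from the modified slab function $f_\fob = D(s_{v\rho},\rho,v)\, s_{v\rho}(f_{\fob,x})$, is canonically isomorphic to the fibre product $R_{\sigma_+}\times_{R_\rho} R_{\sigma_-}$ used in the proof of Lemma~\ref{Lem: Consistency in GS}; this fibre product is the codimension one local model of \cite{affinecomplex}. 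The sign convention $s_{\sigma\ul\rho} = s^{-1}_{\rho\sigma}$ together with the factor $D(s_{v\rho},\rho,v)$ translates the transition law \cite{affinecomplex},~(1.11) into the present twisted compatibility \eqref{f_rho' versus f_rho with gluing}, and the joint-crossing isomorphisms $\theta_\foj$ correspond under the same translation. With these identifications in place, the category of rings and morphisms underlying the colimit of Proposition~\ref{Prop: foX^o exists} is a full subcategory of the analogous category in \cite{affinecomplex},~\S2.6, obtained by discarding the charts attached to strata of codimension $\ge 2$. Taking colimits produces a canonical open embedding $\foX^\circ \hookrightarrow \foX^{\rm GS}$ whose image is the complement of the codimension two strata, which proves the first statement.

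For the projective statement, note that since $B$ is bounded the only asymptotic monomial of degree zero is the zero vector, so $R_\infty = \kk[t]/(t^{k+1})$ and $\foW = \Spec(\kk[t]/(t^{k+1}))$. Under the projectivity hypothesis on $\mathbf{s}$, Theorem~\ref{Thm: Main}(c) produces a flat projective extension $\foX$ of $X_0$ over $\foW$ containing $\foX^\circ$ as an open subscheme, characterized uniquely by this property together with properness over $\foW$. Since $\foX^{\rm GS}$ is flat and projective over $\foW$, has central fibre $X_0$, and contains $\foX^\circ$ as an open subscheme by the first part, this uniqueness yields $\foX^{\rm GS} = \foX$. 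The main obstacle will be the careful bookkeeping in the codimension one identification: one must reconcile the two conventions for gluing data (open versus lifted), check that the modified slab functions satisfy the twisted compatibility \eqref{f_rho' versus f_rho with gluing}, and verify the codimension one consistency of Definition~\ref{Def: Consistency in codim one}; the first is guaranteed by design via \cite{affinecomplex},~(1.11), while the second is essentially the content of Lemma~\ref{Lem: Consistency in GS}, so the remainder reduces to organized verification rather than new ideas.
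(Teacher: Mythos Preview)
Your proposal is correct and, for the first statement, follows the paper's approach closely: the paper simply notes that the constructions agree away from codimension two, referring back to the fibre-product description of $R_\fob$ from the proof of Lemma~\ref{Lem: Consistency in GS}, and your more detailed account unpacks exactly this.

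For the projective statement you take a slightly different route. The paper argues directly via the $S_2$ condition: both $\O_{\foX^{\rm GS}}$ and $\O_{\foX}$ are sheaves on the topological space $|X_0|$ satisfying $S_2$, they agree on the codimension-two complement $X_0^\circ$, and hence $\O_{\foX} = i_*\O_{\foX^\circ} = i_*\O_{(\foX^{\rm GS})^\circ} = \O_{\foX^{\rm GS}}$. You instead invoke the uniqueness clause of Theorem~\ref{Thm: Main}(c). These are equivalent, since that uniqueness clause was itself proved by precisely this $S_2$/pushforward argument; your packaging is arguably cleaner. One small caveat: you assert $\foX^{\rm GS}$ is projective over $\foW$, but the construction in \cite{affinecomplex} is by gluing rather than as a $\Proj$, so projectivity is not immediate. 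What you actually need, and what you have, is properness: as an infinitesimal thickening of the proper scheme $X_0$, the scheme $\foX^{\rm GS}$ shares its underlying topological space and is therefore proper over $\Spec\big(\kk[t]/(t^{k+1})\big)$.
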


\begin{proof}
The constructions agree away from the codimension
two locus, observing the partial gluing in codimension one discussed in
the proof of Lemma~\ref{Lem: Consistency in GS}.

In the projective case, both $\O_{\foX^{\rm GS}}$ and $\O_\foX$ are
sheaves on $X_0$ fulfilling Serre's condition $S_2$ and which are
canonically isomorphic on $X_0^\circ$, an open dense subset with
complement of codimension two. Denote by $i: X_0^\circ\to X_0$ the
inclusion. Then also $\foX=\foX^{\rm GS}$ canonically since
\[
\O_{\foX}= i_* \O_{\foX^\circ}=
i_* \O_{(\foX^{\rm GS})^\circ} =\O_{\foX^{\rm GS}},
\]
by the $S_2$ condition.
\end{proof}

\begin{remark}
\label{Rem: Avoiding Grothendieck existence theorem}
In the projective setup Proposition~\ref{Prop: Equivalence with GS} provides a
homogeneous coordinate ring $S_k$ as a flat $\kk[t]/(t^{k+1})$-algebra. Varying
$k$, the $S_k$ form an inverse system of $\kk\lfor t\rfor$-algebras. Thus taking
the limit $S:=\liminv S_k$ shows that in \cite{affinecomplex}, Theorem~1.30, we
do not only get a flat formal scheme over $\Spf(\kk\lfor t\rfor)$, but a flat
scheme $\foX:=\Proj(S)\to \Spec(\kk\lfor t\rfor)$, without imposing further
cohomological assumptions and invoking Grothendieck's existence theorem as in
\cite{affinecomplex}, Corollary~1.31.

A similar remark holds in the case of higher dimensional bases
discussed in \S\ref{Subsect: GS universal}.
\end{remark}

%===========================================================
\subsection{The universal formulation}
\label{Subsect: GS universal}

In \cite{logmirror2}, \S5.2, it is discussed how to build a family
$(X_0,\M_{X_0})\to (\Spec A,\M_A)$ of toric log Calabi-Yau spaces
(\cite{logmirror1}, Definition~4.3) parametrized by variations of lifted gluing
data. The base is an algebraic torus with $A=\kk\big[ H^1(B,\iota_*
\check\Lambda)^*\big] = \kk\big[ H^1(B,\iota_* \check\Lambda)_f^*\big]$ with
$\kk$ an algebraically closed field. Here the subscript $f$ denotes the free
part of a finitely generated abelian group. The set of closed points of this
torus is $H^1(B,\iota_*\check\Lambda) \otimes\kk^\times = H^1(B,\iota_*
\check\Lambda)_f \otimes\kk^\times$. The last equality is due to the fact that
$\kk^\times$ is a divisible group thanks to $\kk$ being algebraically closed.
The family depends on the choice of a right-inverse $\sigma_0: H^1(B, \iota_*
\check\Lambda)_f \to H^1(B, \iota_*\check\Lambda)$ of the quotient by the
torsion subgroup\footnote{The torsion subgroup of $H^1(B,\iota_*\check\Lambda)$
is related to isotrivial families. In fact, $H^1(B, \iota_*\check\Lambda \otimes
\kk[s^{\pm1}]^\times)$ is isomorphic to $H^1(B,\iota_*\check\Lambda
\otimes\kk^\times) \oplus H^1(B,\iota_*\check\Lambda)$. Hence a class
$\sigma_T\in H^1(B,\iota_*\check\Lambda)$ with $b\cdot\sigma_T=0$ for some $b>0$
defines gluing data $(1,\sigma_T)$ over $\GG_m=\Spec\kk[s^{\pm1}]$ that becomes
trivial after the base change $s\to s^b$. Similarly, our universal families for
different choices of $\sigma_0$ become isomorphic after a finite \'etale cover.
} and on an element $s_0\in H^1(B, \iota_* \check\Lambda \otimes \kk^\times)$.
The choice of $s_0$ selects one of the pairwise disjoint $H^1(B,\iota_*
\check\Lambda) \otimes\kk^\times$-torsors that cover $H^1(B,\iota_*\check\Lambda
\otimes\kk^\times)$ and which are parametrized by
$H^2(B,\iota_*\check\Lambda)_{\mathrm{tors}}$, see \cite{logmirror2}, first displayed
formula in \S5.2. As a log scheme the base is the product of $\Spec A$ with
trivial log structure and the standard log point. In particular,
$\ol\M_A=\ul\NN$ is constant and there is a global chart $\NN\to
\Gamma(\Spec A, \M_A)$. As a family of toric log Calabi-Yau spaces this family
is defined by the pair $(s_0,\sigma_0)$ viewed as an element in $H^1 (B,
\iota_*\check\Lambda\otimes\ul{A}^\times)$ as follows. Since $A$ is a Laurent
polynomial ring,
\[
A^\times= \kk^\times\oplus H^1(B,\iota_*\check\Lambda)_f^*.
\]
Thus
\[
H^1(B,\iota_*\check\Lambda\otimes\ul{A}^\times) 
= H^1(B,\iota_*\check\Lambda\otimes\ul{\kk}^\times)\oplus
\big( H^1(B,\iota_*\check\Lambda)\otimes
H^1(B,\iota_*\check\Lambda)_f^*\big),
\]
and the second summand equals $\Hom(H^1(B,\iota_*\check\Lambda)_f,
H^1(B,\iota_*\check\Lambda))$. Thus $(s_0,\sigma_0)$ can be viewed as
an element of $H^1 (B, \iota_*\check\Lambda\otimes\ul{A}^\times)$. Note
that by compatibility with base change, the fibre of this family
$(X_0,\M_{X_0})\to (\Spec A,\M_A)$ over the closed point in $\Spec
A$ defined by an element $\xi\in H^1(B, \iota_*\check\Lambda)_f
\otimes \kk^\times$ is classified by $\sigma_0(\xi)\cdot s_0 \in
H^1(B, \iota_*\check\Lambda\otimes \ul{\kk}^\times)$.

Let us now refine the discussion in \S\ref{Subsect: GS
one-parameter} to this situation. We will do this by first enlarging
the log structure on $\Spec A$ to the universal one and then arguing
that \cite{affinecomplex} also produces a consistent wall structure
for this case.

In \cite{logmirror1} and \cite{affinecomplex} the base monoid $Q$ is always
$\NN$, but we have the freedom of choosing a strictly convex MPA-function
$\varphi$ with values in $\NN$, which we fixed so far. We now want to replace
$\NN$ by the universal monoid analogous to $Q_0$ in Proposition~\ref{Prop:
Universal MPA function}. To work this out recall that the current choice of
$\varphi$ is more restrictive than in Definition~\ref{Def: convex MPA-function}
in that we require $\kappa_{\ul\rho}(\varphi)=\kappa_{\ul\rho'}(\varphi)$ for
any $\ul\rho,\ul\rho'$ contained in the same $\rho\in\P^{[n-1]}$ and we impose
additive conditions along codimension two cells to assure the existence of local piecewise linear representatives with the given set of kinks, see Example~\ref{Expl:
MPA-functions},1. For a toric monoid $Q'$ denote the subgroup of ${Q'}^\gp$-valued
MPA-functions in this restricted sense by ${\breve\MPA}(B,{Q'}^\gp)\subseteq
\MPA(B,{Q'}^\gp)$ and let ${\breve\MPA}(B, Q')$ be the corresponding monoid of
convex functions. Refining Proposition~\ref{Prop: Universal MPA function}, the
universal point of view runs as follows. Since the additive condition is
provided by a homomorphism into a torsion-free group, $\MPA(B,\ZZ)/{\breve\MPA}
(B,\ZZ)$ is torsion-free. As a consequence, the restriction map
\[
r: \Hom\big({\MPA}(B,\NN), \ZZ\big) \lra 
\Hom\big({\breve\MPA}(B,\NN), \ZZ\big)
\] 
is a surjection. Denote by $e_{\ul\rho_i}$ the generators of the monoid
$Q_0=\Hom\big(\MPA(B,\NN),\NN\big)$ as defined in Proposition~\ref{Prop:
Universal MPA function}. Then the kernel of $r$ is the saturation of the
subgroup generated by elements of the form (1) $e_{\ul\rho}-e_{\ul \rho'}$ for
$\ul\rho,\ul\rho'$ both contained in the same codimension $1$ cell $\rho$, and
(2) elements of the form $\sum_i \langle m,n_i\rangle e_{\ul\rho_i}$. Here the
$\ul\rho_i$, $n_i$ are as in \eqref{Eqn: Balancing condition}, and there is one
element of the latter kind for each codimension two $\tau\in\P$,
$\tau\not\subseteq\partial B$ and $m\in\Lambda_v$, $v\in\tau$ a vertex. Define
$Q$ to be the saturation of the submonoid of $\Hom\big({\breve\MPA}
(B,\NN),\ZZ\big)$ generated by $r(Q_0)$, so in particular we have a map
$r:Q_0\rightarrow Q$. Define also
\begin{equation}
\label{Eqn: Universal restricted MPA fct}
\breve\varphi:= r\circ \varphi_0 \in \breve\MPA(B, Q)
\end{equation}
for $\varphi_0\in \MPA(B, Q_0)$ the universal MPA-function from
Proposition~\ref{Prop: Universal MPA function}. In fact, by the definition of
$Q$, the MPA-function $\breve\varphi$ fulfills the properties defining
$\breve\MPA(B,Q)$ as a subspace of $\MPA(B,Q)$. Note also that by construction,
$\breve\varphi$ is convex. In \cite{logmirror1}, \cite{affinecomplex} we assume
the existence of a strictly convex MPA-function with values in $\NN$, and hence
$\breve\varphi$ is even strictly convex.

Analogously to $\varphi_0$ the MPA-function $\breve\varphi$ has the universal
property for restricted MPA-functions with values in any fine saturated monoid
$Q'$. Indeed, given any $Q'$-valued restricted MPA-function $\psi$ on $B$, we
obtain a function $h:Q_0\rightarrow Q'$ so that $\psi=h\circ\varphi_0$, by
Proposition~\ref{Prop: Universal MPA function}. On the other hand, because
$\psi$ is restricted, $h$ must vanish on the elements of $Q_0^{\gp}$ generating
the kernel of $r:Q_0^{\gp}\rightarrow Q^{\gp}$ described above, and hence $h$
descends to a well-defined map $\breve h:r(Q_0)\rightarrow Q'$. Since $Q'$ is
fine and saturated, this map extends to $\breve h:Q\rightarrow Q'$. Then
$\psi=h\circ \varphi_0=\breve h\circ r \circ\varphi_0=\breve h \circ
\breve\varphi$ by construction. It is also not hard to see that if
$\psi\in\breve\MPA(B,\ZZ)$ takes values in integers, then the corresponding
classifying map $\breve h:Q\to \ZZ$ is given by the homomorphism
$\breve\MPA(B,\ZZ)^*\to \ZZ$ that evaluates at $\psi$. Said differently,
$\breve\varphi$ is the tautological restricted MPA-function with kinks
$\kappa_\rho\in Q\subseteq \breve\MPA(B,\ZZ)^*$ such that for any $\psi\in
\breve\MPA(B,\ZZ)$ we have
\begin{equation}
\label{Eqn: Kinks of universal restricted MPA-fct}
\kappa_\rho(\psi)= \langle \kappa_\rho(\breve\varphi),\psi\rangle\in\ZZ.
\end{equation}

\begin{construction}
\label{Constr: GS ghost sheaf}
(\emph{Construction of $\ol\M_{X_0}^{\breve \varphi}$}.) Analogous
to \cite{logmirror1}, Example~3.17, for the case of $\NN$, there is
a fine sheaf of monoids $\ol\M_{X_0}^{\breve\varphi}$ in the Zariski
topology on $X_0$, with constant stalks along toric strata, along
with a homomorphism $ Q\to\Gamma (X_0,\ol\M_{X_0}^{
\breve\varphi})$. For the construction observe that the affine
structure on the $Q_\RR^\gp$-torsor $\BB_{\breve\varphi}\to B$ from
Construction~\ref{Construction: B_varphi} now extends over the
preimage of $\Delta\setminus\breve\Delta$. In particular, the sheaf
of monomials $\shP^+\subseteq \shP= {\breve\varphi}^*
\Lambda_{\BB_{\breve\varphi}}$ from \S\ref{Subsect: rings} is
defined over $B\setminus \breve\Delta$. Then the restriction of
$\ol\M_{X_0}^{\breve\varphi}$ to the algebraic torus in the toric
stratum $X_\tau\subseteq X_0$ is constant with stalks
$\shP^+_x/\shP_x^\times$ for any $x\in \Int\tau\setminus\Delta$. By
the definition of ${\breve\MPA}( B, Q^\gp) \subseteq \MPA(B, Q^\gp)$,
local parallel transport yields canonical isomorphisms between the
monoids $\shP^+_x/\shP_x^\times$ for different choices of $x$, even
between different connected components of $\tau\setminus\breve
\Delta$. If $\omega\subseteq\tau$ and $\eta_\omega$, $\eta_\tau$ are
the generic points of the corresponding toric strata, the
generization map $\ol\M_{X_0,\eta_\omega} \to \ol\M_{X_0,\eta_\tau}$
is defined by generization $\shP^+_y/\shP_y^\times\to
\shP^+_x/\shP_x^\times$ for $x\in \Int\tau \setminus\breve\Delta$,
$y\in \Int\omega\setminus \breve\Delta$. Since these generization maps
are compatible with the map $Q\to \shP^+_x/\shP_x^\times$, the sheaf
$\ol\M_{X_0}^{\breve\varphi}$ comes with a homomorphism $Q\to
\Gamma(X_0,\ol\M_{X_0}^{\breve\varphi})$.
\end{construction}

\begin{construction}
\label{Constr: GS log structure}
(\emph{Construction of the log structure $\M_{X_0}^{\breve\varphi}$}
on $X_0$.)
Recall that we now have two MPA-functions on $B$ in the restricted
sense, the universal one $\breve\varphi$ with values in $Q$ and the
chosen one $\varphi$ with values in $\NN$. By the universal property
there is a unique homomorphism $\breve h:Q\to\NN$ with $\varphi=
\breve h\circ\breve\varphi$ inducing a homomorphism
$\ol\M^{\breve\varphi}_{X_0}\to \ol\M_{X_0}$ of ghost sheaves on our
family over the algebraic torus $\Spec A$. Since $\varphi$ is
strictly convex, $\breve h$ is a local homomorphism of monoids, that is,
$\breve h^{-1}(0)=\{0\}$. Then
\begin{equation}
\label{M^brevevarphi}
\M^{\breve\varphi}_{X_0}:= \ol\M_{X_0}^{\breve\varphi}
\times_{\ol\M_{X_0}} \M_{X_0}
\end{equation}
is a sheaf of monoids, and the composition
\[
\M^{\breve\varphi}_{X_0}\lra \M_{X_0}\lra \O_X 
\]
of the projection with the structure homomorphism for $\M_{X_0}$ defines a log
structure on $X_0$ with ghost sheaf $\ol\M_{X_0}^{\breve\varphi}$. In fact, the
preimage of $\O_{X_0}^\times$ in $\M^{\breve\varphi}_{X_0}$ is readily seen to
be $\{0\}\times \O_{X_0}^\times\simeq \O_{X_0}^\times$.

Moreover, denote by $\M_A^Q$ the log structure on $\Spec A$
associated to the chart $Q\to A$ mapping $Q\setminus\{0\}$ to $0$.
Then the map from $Q$ into sections of the first factor in
\eqref{M^brevevarphi} induces a morphism of log schemes
\begin{equation}
\label{Eqn: Log universal family}
(X_0,\M_{X_0}^{\breve\varphi})\lra (\Spec A,\M_A^Q).
\end{equation}
This morphism has a universal property for families of log schemes
with closed fibres isomorphic to fibres of $X_0\to \Spec A$ and
arbitrary log structures on the base. Since this is not important
for the present discussion we omit the details.
\end{construction}

Now we are in position to run the smoothing algorithm of \cite{affinecomplex}
with the following modification. As ground field ($\kk$ in \cite{affinecomplex})
take the quotient field $A_{(0)}$ of $A$. Denote by $I_0\subseteq A_{(0)}[Q]$
the ideal generated by $Q\setminus\{0\} =\breve h^{-1} \big(\NN\setminus
\{0\}\big)$. Then in the algorithm replace $\NN$ by $Q$, but define the notion
of order of exponents (\cite{affinecomplex}, Definition~2.3) as before by first
composing with $\breve h: Q\to \NN$. Geometrically this corresponds to base
changing from $A_{(0)}[Q]$ to $A_{(0)}[t]$ by means of $\breve h$. The change
from $\NN$ to $Q$ enters in the propagation of exponents on $B$ in the smoothing
algorithm in that elements of $Q$ are being added when changing cells. On a
formal level this just means interpreting $t^l$ as a monomial in $A_{(0)}[Q]$
rather than in $A_{(0)}[t]$. Moreover, by \cite{affinecomplex}, Theorem~5.2,
since the slab functions are defined over $A$, the smoothing algorithm
nevertheless produces a wall structure defined over $A[Q]$. The result is a
compatible system $\big(\scrS_k\big)_{k\in \NN}$ of consistent wall structures,
producing a compatible system of flat morphisms $\foX_k\to \Spec\big(A[Q]/I_0^k
\big)$, or a flat formal scheme
\begin{equation}
\label{Eqn: Universal formal family}
\foX\lra \Spf\big( A\lfor Q\rfor\big),
\end{equation}
extending $X_0\to\Spec A$. Here $A\lfor Q\rfor$ denotes the
completion of $A[Q]$ with respect to $\breve h:Q\to\NN$. In particular, the
base of this family is the completion of the affine toric variety
$\Spec \big( A[Q] \big)$ along its minimal toric stratum $\Spec A$.

To obtain a projective family, hence to make contact with
Theorem~\ref{Thm: Main}, restrict to any closed subspace of
$\Spec A$ with vanishing obstruction class $\ob_{\PP}$ from
Proposition~\ref{Prop: CGD obstruction on C{B}} and Remark~\ref{Rem:
ob_PP in GS}. To do this universally define an obstruction map
$\ob_\PP$ on lifted gluing data by composing the general obstruction
map from \eqref{Eqn: ob_PP}, denoted $\ob_\PP$ there, with the map
turning lifted gluing data to closed gluing data:
\[
\ob_\PP: H^1(B,\iota_*\check\Lambda\otimes\ul{A}^\times)\lra
H^1(B,\shQ\otimes\ul{A}^\times)\lra
H^2(B,A^\times).
\]
With the previous noted equalities $A^\times= \kk^\times\oplus
H^1(B,\iota_*\check\Lambda)_f^*$ and $H^1(B, \iota_*\check\Lambda)^*
=H^1(B, \iota_*\check\Lambda)_f^*$, and since the construction of
$\ob_\PP$ as a connecting homomorphism is functorial, this map
decomposes as a direct sum of a map $\ob_{\PP}^{\kk^\times}:
H^1(B,\iota_*\check\Lambda\otimes \ul{\kk}^\times)\to H^2(B,
\kk^\times)$ and of
\begin{equation}
\label{Eqn: obPPZZ}
\ob_\PP^\ZZ\otimes \id: H^1(B,\iota_*\check\Lambda)\otimes
H^1(B, \iota_*\check\Lambda)^*_f\lra
H^2(B,\ZZ)\otimes H^1(B, \iota_*\check\Lambda)^*_f.
\end{equation}
Now take $s_0\in H^1(B, \iota_*\check\Lambda\otimes\ul{\kk}^\times)$ with
$\ob_{\PP}^{\kk^\times} (s_0)=0$ and let $K=\ker(\ob_\PP^\ZZ)\subseteq H^1(B,
\iota_*\check\Lambda)$. If the free part $K_f$ of $K$ is saturated, for example
if $H^2(B,\ZZ)$ is torsion-free, the splitting $\sigma_0$ of $H^1(B,
\iota_*\check\Lambda)\to H^1(B,\iota_*\check\Lambda)_f$ implicit in the above
construction (denoted $s_{\id}$ in \cite{logmirror2}) can be chosen in such a
way that it maps $K_f\subseteq H^1(B,\iota_*\check\Lambda)_f$ into $K$, with
$K_f$ the free part of $K$. By the following lemma, $\Spec A_\PP$ with $A_\PP:=
\kk[K_f^*]$ is the maximal closed subspace of $\Spec A$ with vanishing
$\ob_\PP$.

\begin{lemma}
Assume that $K_f\subset H^1(B,\iota_*\check\Lambda)_f$ is saturated and
$\sigma_0(K_f)\subseteq K$. Let $R$ be a $\kk$-algebra and $\lambda:
H^1(B,\iota_*\check\Lambda)_f^*\to R^\times$ a homomorphism such that the
induced map
\[
H^1(B,\iota_*\check\Lambda)\otimes H^1(B,\iota_*\check\Lambda)_f^*
\stackrel{\ob_\PP^\ZZ\otimes\lambda}{\lra}  H^2(B,\ZZ)\otimes R^\times
\]
maps $\sigma_0$ to $0\otimes 1$. Then $\lambda$ factors over the quotient map
$H^1(B,\iota_*\check\Lambda)^*\to K_f^*$.
\end{lemma}

\begin{proof}
Since $K_f\subseteq H^1(B,\iota_*\check\Lambda)_f$ is saturated, there exists a
basis
\[
n_1,\ldots, n_s,n_{s+1},\ldots,n_r\in H^1(B,\iota_*\check\Lambda)_f
\]
with
$n_{s+1},\ldots,n_r$ generating $K_f$. Denoting the dual basis by
$m_1,\ldots,m_r\in H^1(B,\iota_*\check\Lambda)^*$, it holds
\[
\sigma_0=\sum_{i=1}^s \tilde n_i\otimes m_i,
\]
with $\tilde n_i=\sigma_0(n_i)$. Thus putting $a_i=\ob_\PP^\ZZ(\tilde n_i)\in
H^2(B,\ZZ)$, $i=1,\ldots,r$, and identifying $H^1(B,\iota_*\check\Lambda)
\otimes H^1(B,\iota_*\check\Lambda)_f^*$ with $\Hom(H^1(B,\iota_*\check\Lambda)_f,
H^1(B,\iota_*\check\Lambda))$, we have
\[
\sigma_0^*\ob_\PP^\ZZ= \sum_{i=1}^r a_i\otimes m_i\in  H^2(B,\ZZ)\otimes H^1(B,\iota_*\check\Lambda)^*.
\]
Thus $(\ob_\PP^\ZZ\otimes\lambda)(\sigma_0)= \sum_{i=1}^s a_i\otimes
\lambda(m_i)$. Now since the composition of $\ob_\PP^\ZZ$ with $H^2(B,\ZZ)\to
H^2(B,\ZZ)_f$ maps $H^1(B,\iota_*\check\Lambda)_f/K_f$ injectively into
$H^2(B,\ZZ)_f$, it follows that the coefficients of $a_1,\ldots,a_s$ have to be
trivial for $(\ob_\PP^\ZZ\otimes\lambda)(\sigma_0)$ to be trivial:
$\lambda(m_1)=\ldots=\lambda(m_s)=1\in R^\times$. It follows that
$\lambda:H^1(B,\iota_*\check\Lambda)^*\to R^\times$ factors over $K_f^*$ as
claimed.
\end{proof}

If $K\subset H^1(B,\iota_*\check\Lambda)$ is not saturated, one can still
proceed with the construction of a projective family from a splitting of $K\to
K_f$, but the resulting family may not be isomorphic to the pull-back of a
family over $\Spf\big( A\lfor Q\rfor\big)$, not even when restricted to the central fibre.

Assuming now that we have such a splitting of $K\to K_f$, the composition
\[
K_f\lra K\lra H^1(B,\iota_*\check\Lambda)
\]
defines an element $\sigma_\PP$ of $H^1(B,\iota_*\check\Lambda)\otimes_\ZZ
K_f^*$, hence gluing data $(s_0,\sigma_\PP)$ over $A_\PP=\kk[K_f^*]$ by the
isomorphism
\[
H^1(B,\iota_*\check\Lambda\otimes\ul \kk^\times)\oplus
\big(H^1(B,\iota_*\check\Lambda)\otimes_\ZZ K_f^* \big)
= H^1(B,\iota_*\check\Lambda\otimes \ul A_\PP^\times).
\]
The obstruction to projectivity of $(s_0,\sigma_\PP)$, as an element of
\[
H^2(B,\kk^\times)\oplus \big(H^2(B,\ZZ)\otimes K_f^*\big)
= H^2(B,\kk^\times)\oplus \Hom\big(K_f,H^2(B,\ZZ)\big)
\]
has first component $\ob_\PP^{\kk^\times}(s_0)=1$ and second component the composition
\[
\ob_\PP^\ZZ|_{K_f}:K_f\lra K\lra H^1(B,\iota_*\check\Lambda)\lra H^2(B,\ZZ),
\]
which vanishes by the definition of $K$.

According to Proposition~\ref{Prop: CGD obstruction on C{B}} and
Proposition~\ref{Prop: Consistency for cone scrS, with twisting} with
ground ring $A_\PP$, we can now lift the wall structure to a consistent
wall structure on $(\cone B,\cone\P)$. In particular,
Theorem~\ref{Thm: Main} holds with $A_\PP$ instead of $A$ and
with $I=I_0^k$ for any $k$. Taking the limit $k\lra \infty$ yields a
family over $\Spec \big(A_\PP\lfor Q\rfor\big)$. 

\begin{theorem}
\label{Thm: Main theorem, simple case}
Let $(B,\P)$ be a bounded polyhedral manifold with positive and simple
singularities admitting a strictly convex MPA function
$\varphi\in\breve\MPA(B,\NN)$. Denote by $Q\subseteq \breve\MPA(B,\NN)^*$ the
universal base monoid (see \S\ref{Subsect: GS universal}) and
$A_\PP=\kk[K_f^*]$ with $K=\ker(\ob_\PP^\ZZ)\subseteq
H^1(B,\iota_*\check\Lambda)$ as above.

Then there exists a canonical polarized family
\[
\foX_\PP\lra\Spec( A_\PP\lfor Q\rfor)
\]
constructed from a consistent wall structure on $(\cone{B},\cone{\P})$ and
inducing the universal family of polarized log Calabi-Yau spaces over $\kk$ with
intersection complex $(B,\P)$ (the polarized analogue of \eqref{Eqn: Log
universal family}). Moreover, the statements of Theorem~\ref{Thm: Main} apply
verbatim.
\end{theorem}

\begin{proof}
It only remains to check the statement on the family over $\Spec\kk$, that is,
after restriction to $\Spec A_\PP\subseteq \Spec \big(A_\PP\lfor Q\rfor\big)$.
First, working over $\kk\lfor Q\rfor$ rather than $\kk\lfor t\rfor$ only has the
effect of incorporating a universal choice of ghost sheaf in the way discussed
in Constructions~\ref{Constr: GS ghost sheaf} and \ref{Constr: GS log
structure}. Thus it suffices to check the statement after pulling back to
$\Spec\big( A_\PP\lfor t\rfor\big)$ by a homomorphism $h:Q\to\NN$.

It is clear from comparison of the construction given here with the construction
in \cite{affinecomplex} that $\foX_\PP$ reduced modulo $I_0$ agrees with the
base change $X_0^\PP\to \Spec A_\PP$ of $X_0\to \Spec A$ from \eqref{Eqn: Log
universal family} outside codimension two, hence everywhere. Moreover, the log
structure $\shM'_{X_0}$ on $X_0$ coming with the construction in
\cite{affinecomplex} agrees with the log structure $\shM_{X_0}$ constructed in
\cite{logmirror2}, \S5.2. Finally, the algorithm in \cite{affinecomplex} also
provides a compatible system of charts for the reduction of $\foX_\PP$ modulo
$I^k$ and hence, by taking the limit $k\to\infty$, a system of charts for $\foX_\PP$
compatible with charts for $(X_0,\shM'_{X_0})$. Hence $\shM'_{X_0}$ also agrees
with the log structure on $X_0^\PP$ defined by the closed embedding
$X_0^\PP\subseteq \foX_\PP$. Thus we obtain the statement on the induced family over $\Spec A_\PP$.
\end{proof}

As a final remark we give an interpretation of the obstruction map
$\ob_\PP^\ZZ$ in terms of the radiance obstruction.

\begin{proposition}
The obstruction map $\ob_\PP^\ZZ: H^1(B,\iota_*\check\Lambda) \to
H^2(B,\ZZ)$ equals the cup product with the radiance obstruction
$c_B\in H^1(B,\iota_*\Lambda)$ followed by the trace homomorphism.
\end{proposition}

\begin{proof}
Since by \cite{logmirror1}, Proposi\-tion~1.29, the radiance
obstruction $c_B$ vanishes locally, it can be defined as the
extension class $\tilde c_B\in \Ext^1(\iota_*\check\Lambda,\ul\ZZ)$
of
\[
0\lra \ul\ZZ \lra \shAff(B,\ZZ) \lra \iota_*\check\Lambda \lra 0,
\]
the push-forward to $B$ of \eqref{Eqn: Aff}. The obstruction map $\ob_\PP^\ZZ$
from \eqref{Eqn: obPPZZ} is a connecting homomorphism of the associated long
exact cohomology sequence. The result now follows from the standard fact that
connecting homomorphisms are given by cup product with the extension
class. Identifying $H^1(B,\iota_*\check\Lambda)$ with
$\Ext^1_B(\ul\ZZ,\iota_*\check\Lambda)$ this cup product is the Yoneda
composition product,
\[
\Ext^1_B(\iota_*\check\Lambda,\ul\ZZ)
\otimes\Ext^1_B(\ul\ZZ,\iota_*\check\Lambda)
\lra \Ext^2_B(\ul\ZZ,\ul\ZZ)=H^2(B,\ZZ).
\]
In terms of sheaf cohomology this means taking the cup product with
$c_B$ to arrive at a class in $H^2(B,\iota_*\check\Lambda
\otimes\iota_*\Lambda)$, followed by $H^2$ of the trace homomorphism
$\iota_*\check\Lambda \otimes\iota_*\Lambda\to \ul\ZZ$.
\end{proof}

%===========================================================
\subsection{Equivariance}
\label{Subsect: GS torus action}

There are two tori acting compatibly on the universal families constructed in
\S\ref{Subsect: GS universal}. To define these actions, denote by $\breve\PL(B)$
the subgroup of $\PL(B)=\PL(B,\ZZ)$ consisting of \emph{restricted
PL-functions}, that is, having kinks satisfying the conditions in codimensions one
and two stated in Example \ref{Expl: MPA-functions},1. The space $\breve\PL(B)$
agrees with the space of piecewise linear function in \cite{logmirror1},
Definition~1.43. Writing $\breve\MPA(B)= \breve\MPA(B,\ZZ)$, the map
$\kappa:\PL(B,\ZZ)\to \MPA(B,\ZZ)$ associating to a PL-function $\varphi$ the
associated MPA-function with kinks $\kappa_\rho(\varphi)$, then descends to a
map $\breve\PL(B)\rightarrow \breve\MPA(B)$ that we also denote $\kappa$.

Now the first action is by the relative torus from \S\ref{Subsect: torus action}
with character lattice $\breve\PL(B)^*$, treated abstractly in Theorem~\ref{Thm:
Torus action}. This action is trivial on the coefficient ring $A$. The second
torus has character lattice the dual of $\breve\MPA(B)/\kappa
\big(\breve\PL(B)\big)$, up to finite index. This latter action is typically
non-trivial on $A=\kk[H^1(B,\iota_*\check\Lambda)^*]$, which is graded by the
dual of the connecting homomorphism $c_1: \breve \MPA(B)\to
H^1(B,\iota_*\check\Lambda)$. 
\bigskip

Let us first treat the relative torus action. Our universal family is a flat
formal family $\foX\to \Spf(A\lfor Q\rfor)$ or, restricting to projective
gluing data, a flat projective family $\foX_\PP\to \Spec (A_\PP\lfor Q\rfor)$.
Here $Q\subseteq {\breve\MPA}(B,\ZZ)^*$, $A= \kk[H^1(B,
\iota_*\check\Lambda)_f^*]$ and $A_\PP=\kk[K_f^*]$ with $K=\ker(\ob_\PP^\ZZ)$,
and $K_f$ its free part, as discussed in \S\ref{Subsect: GS universal}. We are
now in the situation with non-trivial gluing data commented on in
Remark~\ref{Rem: torus action with gluing data}. Recall from
Proposition~\ref{Prop: Aut(X_0)} that the identity component of $\Aut_A(X_0)$ is
the torus over $A$ with character lattice $\PL(B)^*$. We are going to extend the
restriction of this action to the subtorus with character lattice
$\Gamma:=\breve\PL(B)^*$ to $\foX$.\footnote{Elements of $\Aut_A(X_0)$ not
in this subtorus move the log-singular locus on the central fibre; hence their
action on $X_0$ can not extend to an action on $\foX$ with trivial action
on $\Spec A$.}

In the notation of \S\ref{Subsect: torus action} we take the $\Gamma$-grading on
$A$ to be trivial, the map $\delta_B:\PL(B)^* \rightarrow
\breve\PL(B)^*$ dual to the inclusion, and $\delta_Q$ the dual of the map
$\breve\PL(B)\rightarrow \breve\MPA(B)$:
\[
\delta_Q:Q\hookrightarrow \breve\MPA(B)^*
\lra \breve\PL(B)^*=\Gamma.
\]
Commutativity in the compatibility diagram~\eqref{Eqn: delta_b and delta_Q} is
trivially true. The degree zero part of $A[Q]$ is $A_0^Q=A[Q']$ with
$Q'=\delta_Q^{-1}(0)$. In particular, $A\subseteq A_0^Q$, and hence the gluing
data have degree zero as required in Remark~\ref{Rem: torus action with gluing
data}.

\begin{proposition}
\label{Prop: Relative torus action in GS}
The action of the torus $\Spec(A[\Gamma])\subseteq\Aut_A(X_0)$ on $X_0$ extends
canonically to actions on $\foX\to \Spf(A\lfor Q\rfor)$ and on
$\foX_\PP\to \Spec(A_\PP\lfor Q\rfor)$.
\end{proposition}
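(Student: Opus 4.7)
The plan is to apply Theorem~\ref{Thm: Torus action} in the form adapted to non-trivial gluing data (Remark~\ref{Rem: torus action with gluing data}), with the setup of \S\ref{Subsect: GS torus action}: the toric monoid is $Q = \breve\MPA(B,\NN)^\vee$, the character lattice is $\Gamma = \breve\PL(B,\ZZ)^*$, the ring $A$ carries the trivial $\Gamma$-grading, and $\delta_Q$ and $\delta_B$ are as defined there, so that the compatibility square \eqref{Eqn: delta_b and delta_Q} commutes. Once we show that the gluing data have values in degree zero and that every wall function in the structure $\scrS$ produced by the algorithm of \cite{affinecomplex} is $\Gamma$-homogeneous of degree zero, Theorem~\ref{Thm: Torus action} (with the modification of Remark~\ref{Rem: torus action with gluing data}) yields an equivariant action of $\TT = \Spec(A_0^Q[\Gamma])$ on $\foX_\PP^\circ$. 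Because each fibre of $\foX_\PP \to \Spec(A_\PP[Q]/I_0^k)$ satisfies Serre's $S_2$ condition (Proposition~\ref{prop: S2 condition}), Lemma~\ref{Lem: Extension lemma} extends the action from $\foX_\PP^\circ$ to $\foX_\PP$; passing to the inverse limit in $k$ then gives the action on $\breve\foX$ as well.

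The degree-zero condition on the open gluing data is immediate: by construction the values $s_{\sigma\ul\rho}(m) \in A^\times$ lie in the ring $A$, which sits in $\Gamma$-degree zero by definition. This verifies the hypothesis on the gluing data in Remark~\ref{Rem: torus action with gluing data}. The task thus reduces to verifying homogeneity of the wall functions $f_\fop$ appearing in $\scrS$.

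For an initial slab $\fob \subset \rho$ a monomial term of $f_\fob$ has the form $a\, z^{q}\, z^{m}$ with $a \in A^\times$, $q \in Q$, $m \in \Lambda_\rho$. Its $\Gamma$-degree is $\delta_Q(q) + \delta_B(\nabla_m)$. The initial slab functions in the GS setup are dictated by the log structure $\M_{X_0}^{\breve\varphi}$ constructed in Construction~\ref{Constr: GS log structure}; the defining relation $Z_+Z_- = f_\fob z^{\kappa_{\ul\rho}}$ together with the universal property of $\breve\varphi$ forces exactly the identity between $\delta_Q$ of the kink and $\delta_B$ applied to the directional derivatives that was used in the proof of Theorem~\ref{Thm: Torus action} to show homogeneity of the slab relation. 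Each monomial appearing in $f_\fob$ thus has $\Gamma$-degree zero. For codimension zero walls, these are produced inductively by the Kontsevich--Soibelman scattering lemma at codimension two joints. Each scattering step produces a new wall-crossing automorphism as a bracket in the Lie algebra of derivations of the form $z^m \partial_n$ acting on $R_\sigma$. Since each initial wall-crossing automorphism preserves the $\Gamma$-grading, and the bracket of two grading-preserving derivations is again grading-preserving, every generated wall function is also homogeneous of $\Gamma$-degree zero. This inductive preservation of homogeneity through the scattering algorithm is the main technical step; it will be checked by noting that an automorphism of the form $z^m \mapsto f^{\langle n, \ol m\rangle} z^m$ with $f$ homogeneous of degree zero preserves the $\Gamma$-grading, and that the output of the Kontsevich--Soibelman factorization is a composition of automorphisms of this same form with homogeneous $f$.

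For the lift to $\cone B$, Proposition~\ref{Prop: Consistency for cone scrS, with twisting} provides the consistent wall structure $\cone\scrS$, and Remark~\ref{Rem: grading on cone(N)} explains that $\TT$ acts on the total space $\foL$ of the polarizing line bundle once $\cone\scrS$ is homogeneous. Homogeneity of $\cone\scrS$ is inherited from that of $\scrS$: the lifting replaces $f_\fop$ by the root $f_\fop^{1/a}$ of Definition~\ref{Def: cone of wall structure}, which by the uniqueness statement of Lemma~\ref{Lem: roots exist} and the torsion-freeness of $\Gamma$ remains homogeneous of degree zero. By Theorem~\ref{Thm: Main}(c)--(d) the $\TT$-action on $\foL$ descends to an equivariant action on $\foX_\PP = \Proj(S)$ extending the given torus action on $X_0$, and the same argument at each finite order $k$ furnishes the action on $\breve\foX$.
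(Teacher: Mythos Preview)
Your overall strategy matches the paper's: reduce to Theorem~\ref{Thm: Torus action} (in the form of Remark~\ref{Rem: torus action with gluing data}) by verifying that the gluing data take values in degree zero and that the wall structure is homogeneous. The difference lies in how homogeneity of the inductively produced wall functions is established.

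The paper does \emph{not} track homogeneity through the scattering procedure. Instead it uses the uniqueness of the consistent wall structure produced by the algorithm of \cite{affinecomplex}: given a consistent $\scrS$, one replaces every $f_\fop$ by its degree-zero part and observes that the resulting structure is still consistent (since consistency is tested by composing wall-crossing automorphisms around joints, and the degree-zero component of the identity is the identity). As this new structure has the same initial data as $\scrS$, uniqueness forces them to coincide, so every $f_\fop$ was homogeneous of degree zero to begin with. This argument is indifferent to the internal mechanics of the algorithm and works uniformly for slab corrections and codimension-zero walls alike. Your route via the Kontsevich--Soibelman lemma is also valid in spirit, but your description of the algorithm as ``KS scattering at codimension two joints'' is an oversimplification of \cite{affinecomplex}; the uniqueness argument sidesteps this entirely.

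One genuine gap: your justification that the \emph{initial} slab functions are homogeneous of degree zero is circular. You invoke the relation $Z_+Z_- = f_\fob z^{\kappa_{\ul\rho}}$ and the identity from the proof of Theorem~\ref{Thm: Torus action}, but that identity shows the relation is homogeneous \emph{provided} $f_\fob$ is homogeneous of degree zero; it does not prove the latter. The paper instead appeals to the fact that in the GS setup the initial slab coefficients lie in $A$, which carries the trivial $\Gamma$-grading. A second minor point: for $\breve\foX$ you pass through $\foX^\circ$ and invoke $S_2$ plus Lemma~\ref{Lem: Extension lemma}, whereas the paper notes that in the GS framework the full scheme $\breve\foX$ (not just its codimension-two-open part) is already constructed directly, so the torus action is obtained on $\breve\foX$ without an extension step.
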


\begin{proof}
To apply Theorem~\ref{Thm: Torus action}, modified for non-trivial gluing data
according to Remark~\ref{Rem: torus action with gluing data}, it remains to
check that the wall structure $\scrS$ is homogeneous in the sense of
Definition~\ref{Def: Homog wall structure}. We argue inductively and first
observe that the initial wall structure $\scrS_0$ is homogeneous. Indeed,
Lemma~\ref{Lem: Consistency in GS}, which continues to hold with universal
gluing data with the same proof, shows that an initial slab function $f_{v\to
\rho}$ is a sum of monomials $z^{m_{\ul\rho\ul\rho'}}$ with
$m_{\ul\rho\ul\rho'}$ the local monodromy vectors from \eqref{Eqn: monodromy
vectors} and $\ul\rho\subseteq\rho$ the cell of $\tilde\P$ with $v\in\ul\rho$.
But $\varphi\in \PL(B)$ lies in $\breve \PL(B)$ only if it is invariant under
monodromy along $\rho$, which is equivalent to $\nabla_{m_{\ul\rho\ul\rho'}}
(\varphi)=0$ for all $\ul\rho'\subseteq \rho$. Here
$\nabla_{m_{\ul\rho\ul\rho'}}$ is the directional derivative on $\PL$-functions
defined in \eqref{Eqn: nabla(PL)}. This shows $\deg_\Gamma
(z^{m_{\ul\rho\ul\rho'}})=0$. Hence any order zero slab function $f_{v\to\rho}$
is homogeneous of degree~$0$.

Next observe that the wall crossing isomorphisms $\theta_\fop$ are of the form
\[
z^m\longmapsto \tilde f_\fop^{\langle n_\fop,m\rangle} z^m,
\]
with $\tilde f_\fop$ differing from $f_\fop$ by changing the constants via an
application of gluing data, see the two displayed formulas on p.1349 of
\cite{affinecomplex}, Construction~2.24.

Now assume inductively that the wall structure $\scrS_k$ at order $k$ is
homogeneous. Then up to subdivision of walls and slabs, the wall structure
$\scrS_{k+1}$ at order $k+1$ is obtained from $\scrS_k$ by adding walls carrying
monomials of order $k+1$ and changing the functions $f_{\ul\fob}$ carried by slabs
$\ul\fob$ by adding monomials of order $k+1$. Consistency is checked by
composing automorphisms associated to walls containing a local affine
submanifold $\foj$ of codimension two (a \emph{joint}) in a cyclic order. New
walls are always created by such a consistency check of $\scrS_k$ in a version
of the tropical vertex group \cite{GPS} at order $k+1$, as in Lemma~3.7 in
\cite{affinecomplex}\footnote{If $\foj$ is contained in a cell of $\P$ of
codimension one or two, the coefficient ring has to be enlarged from
$A[\Lambda_\foj]$, the Laurent polynomial ring with exponents the integral
tangent vectors to $\foj$, to a localization at the slab functions $f_{\ul\rho}$
with $\ul\rho\supseteq\foj$. The careful treatment of this situation in
\cite{affinecomplex}, Chapter~4, assures that no denominators appear in the
coefficients of newly inserted walls.}. This shows that if $\theta$ is the
composition of automorphisms for walls in $\scrS_k$ containing $\foj$, then to
order $k+1$ there is a unique expansion
\begin{equation}
\label{Eqn: TVG computation}
\theta=\exp \big(\sum_i c_i z^{m_i}\partial_{n_i} \big),
\end{equation}
with $m_i\in\Lambda_x$, $n_i\in\check\Lambda_x$ for some $x\in\foj$ and $c_i$
contained in (a localization of) $A[\Lambda_\foj]$ and the order of each summand along $\foj$ equal to $k+1$. Each summand on the
right-hand side produces one new wall emanating from $\foj$. Now
all the rings involved are graded, and by the induction hypothesis $\theta$ is
homogeneous of degree zero. Moreover, in the computation of the exponential on
the right-hand side any cross terms have order strictly larger than $k+1$ and
therefore vanish. Hence omitting any summands $c_i z^{m_i}\partial_{n_i}$ with
$c_i z^{m_i}$ of non-zero degree from the right-hand side of \eqref{Eqn: TVG
computation} leads to another expression of $\theta$ as the exponential of a
vector field of order $k+1$. Thus by uniqueness of the expansion in \eqref{Eqn:
TVG computation}, any of the monomials $c_i z^{m_i}$ of newly inserted walls are
homogeneous of degree zero.

For joints contained in cells of codimensions one and two, we modify the slab
function $f_{\ul\fob}$ with $\ul\fob$ contained in a codimension one cell
$\rho\supseteq\foj$ by the addition of those terms $c_i z^{m_i}$ with
$m_i\in\Lambda_\rho$. Thus the slab functions stay homogeneous of degree zero in
the algorithm as well.

Apart from some subdivision of slabs or walls, which obviously does not spoil
homogeneity, the insertion of new walls and the change of slab functions, there
are two more changes to obtain $\scrS_{k+1}$. The first of these (Step~II in the
algorithm of \cite{affinecomplex}) is the propagation of changes made to one
slab function $f_{\ul\fob}$ to other slabs $\ul\fob'$ contained in the same
codimension one cell $\rho$, see \cite{affinecomplex}, \S3.5. This step only
adds homogeneous terms $c_i z^{m_i}$ of degree zero from $f_{\ul\fob}$ to
$f_{\ul\fob'}$. Hence this step also preserves homogeneity.

The last modification (Step~III in the algorithm of \cite{affinecomplex})
concerns the normalization condition by adding terms of the form $c_i
z^{m_{\ul\rho\ul\rho'}}$ with $c_i\in A[Q]$ to the slab functions
(\cite{affinecomplex}, \S3.6). The coefficient $c_i$ is again obtained from a
unique expansion of a homogeneous expression by a process that is linear at the
given order. Hence by the same argument as before each $c_i$ is homogeneous of
degree zero, as is $z^{m_{\ul\rho\ul\rho'}}$. Thus this step also does not spoil
homogeneity of the wall structure $\scrS_{k+1}$.

Note that homogeneity of $\scrS_{k+1}$ for all $k$ establishes the torus action
readily on $\foX$, not only on the complement of the codimension two locus
$\foX^\circ$.

In the projective setup the homogeneous coordinate ring becomes naturally
$\Gamma$-graded, hence the statement in this case.
\end{proof}

\begin{remark}
The action of the relative torus on $A$ being trivial implies that we have an
induced action on the fibre over any closed point in $\Spec A$, viewed as a log
space over the log point $(\Spec\kk, Q\oplus\kk^\times)$. This action keeps the
isomorphism class of this log space over $(\Spec\kk, Q\oplus\kk^\times)$, in
agreement with the interpretation of $\Spec A$ as a moduli space of log spaces
over a log point.

This is in contrast to the second action below, which acts effectively on $\Spec
A$.
\end{remark}
\medskip

The second torus, referred to as the \emph{regluing torus}, has character
lattice the dual of a finite index sublattice $H$ of $c_1\big (\breve\MPA(B)\big)
\subseteq H^1(B,\iota_*\check\Lambda)$. Here $c_1: \breve\MPA(B)\to
H^1(B,\iota_*\check\Lambda)$ is the connecting homomorphism coming from the
analogue for restricted PL-functions of the short exact sequence \eqref{Eqn:
MPA} (see \cite{logmirror1}, Definition~1.45). The regluing torus acts by
changing how $\foX$ is glued from open subsets. To define this action, we start
with the exact sequence
\[
\breve\PL(B)\stackrel{\kappa}{\lra} \breve\MPA(B)\stackrel{c_1}{\lra} H^1(B,\iota_*\check\Lambda).
\]
Neither $\kappa$ nor $c_1$ need to have saturated images. With the superscripts
``$\sat$'' and ``$\tor$'' denoting the saturation of a subgroup and the torsion
part of an abelian group, define
\[
\tilde H=c_1(\breve\MPA(B)),\quad
K:=\kappa\big(\breve\PL(B)\big)^\sat= c_1^{-1}\big(H^1(B,\iota_*\check\Lambda)^\tor\big),
\]
and choose complements $\tilde L$ to $K\subseteq\breve\MPA(B)$ and $F$ to
$\tilde H^\sat\subseteq H^1(B,\iota_*\check\Lambda)$. By taking $F$
the image under $\sigma_0$ of a complement to $q_f(\tilde H^\sat)=
\big(q_f(\tilde H)^\sat\big)$ in $H^1(B,\iota_*\Lambda)_f$, we may also assume
$F\subseteq \im(\sigma_0)$. We have direct sum decompositions
\[
\breve\MPA(B)= K\oplus \tilde L,\quad
H^1(B,\iota_*\check\Lambda)= \tilde H^\sat\oplus F,
\]
and $c_1$ composed with the quotient by the torsion subgroup of
$H^1(B,\iota_*\check\Lambda)$ maps $\tilde L$ isomorphically to $\tilde
H_f=\tilde H/\tilde H^\tor$. The dual of the inclusion $\tilde H\subseteq \tilde
H^\sat$ defines a finite index sublattice $(\tilde H^\sat)^*\subseteq \tilde
H^*$. The torus with character lattice $\tilde H^*$ acts canonically on the
finite \'etale extension $\kk[\tilde H^*\oplus F^*]$ of
$A=\kk[H^1(B,\iota_*\check\Lambda)^*]= \kk[(\tilde H^\sat)^*\oplus F^*]$. If
$H^1(B,\iota_*\check\Lambda)$ is torsion-free this action lifts to the pull-back
of the universal family.

In general, the action of the regluing torus depends on a good representative of
the universal gluing data $(s_0,\sigma_0)$ from \S\ref{Subsect: GS universal}
compatible with $c_1: \breve\MPA(B)\to H^1(B,\iota_*\check\Lambda)$. To find
this representative we may need another finite \'etale extension of base rings and go
over to an isogeneous torus. Denote by $q_f: H^1(B,\iota_*\check\Lambda)\to
H^1(B,\iota_*\check\Lambda)_f$ the quotient by the torsion subgroup.

\begin{lemma}
There is a finite index sublattice $L\subseteq \tilde L$ with the property
\[
\sigma_0\circ q_f|_{c_1(L)} =\id_{c_1(L)}.
\]
\end{lemma}
\begin{proof}
This follows since $\sigma_0$ being a right-inverse to $q_f$, the image of
$\sigma_0\circ q_f-\id$ lies in the torsion subgroup of
$H^1(B,\iota_*\check\Lambda)$.
\end{proof}

Choose a finite index sublattice $L\subseteq\tilde L$ as given by the lemma and
define $H=c_1(L)\subseteq\tilde H$. Then $c_1|_L$ factors over $\sigma_0|_{H_f}$
and $H\simeq H_f$ is torsion-free and isomorphic to $L$ via $c_1$. Note that
since $H^\sat=\tilde H^\sat$ we still have the direct sum decomposition
\[
H^1(B,\iota_*\check\Lambda)= H^\sat\oplus F.
\]
Since $H$ and $F$ are free we identify them with their respective images in
$H^1(B,\iota_*\check\Lambda)_f$. We take $L^*\simeq H^*$ as the character
lattice of the regluing torus. Now $H\oplus F$ defines a sublattice of finite
index in $H^1(B,\iota_* \check\Lambda)$. Dualizing we obtain a finite \'etale
extension of rings
\[
A=\kk[H^1(B,\iota_*\check\Lambda)^*]= \kk[(H^\sat)^*\oplus F^*]\lra
\tilde A:=\kk[H^*\oplus F^*].
\]
The action of the regluing torus is only defined after pulling back the universal
family $\foX\to \Spf \big(A\lfor Q\rfor\big)$ from \S\ref{Subsect: GS
universal} by the finite and \'etale morphism $\Spf\big(\tilde A\lfor
Q\rfor\big)\to \Spf \big(A\lfor Q\rfor\big)$.

For the projective case the alternative definition
\[
\breve\shMPA(B,\ZZ)= \breve\shPA(B,\ZZ)/\shAff(B,\ZZ)
\]
shows that $c_1: \breve\MPA(B)\to H^1(B,\iota_*\check\Lambda)$ factors over
$H^1(B,\shAff(B,\ZZ))$. Thus since $\ob_\PP^\ZZ$ is the connecting homomorphism
of the long exact sequence for \eqref{Eqn: Aff}, pushed-forward to $B$ by
$\iota_*$, we get $\ob_\PP^\ZZ\circ c_1=0$. Hence $\ob_\PP^\ZZ$ vanishes on
$H$ and we can choose $F\subseteq\ker \ob_\PP^\ZZ$ as a complement to
$H^\sat\subseteq \ker\ob_\PP^\ZZ$ to obtain an analogous ring extension
$A_\PP\subseteq \tilde A_\PP$. The resulting families are
\begin{equation}
\label{Eqn: Universal families pulled back}
\tilde\foX\lra \Spf\big(\tilde A\lfor Q\rfor\big),\qquad
\tilde\foX_\PP\lra \Spec\big(\tilde A_\PP\lfor Q\rfor\big).
\end{equation}
The action of the regluing torus on the base space of these families is defined
by $L^*$-gradings on $\tilde A$ and on $Q\subseteq \breve\MPA(B)^*$.
We define the grading on $Q$ by the transpose of the inclusion $L\subseteq
\breve\MPA(B,\ZZ)$, that is, by the composition
\begin{equation}
\label{Eqn: regluing torus grading I}
\delta_Q:Q\lra \breve\MPA(B)^*= K^*\oplus \tilde L^*\lra \tilde L^* \lra L^*.
\end{equation}
Here the first arrow is the inclusion, the second arrow the projection, the
third the transpose of $L\to\tilde L$. The grading on $\tilde A$ or $\tilde
A_\PP$ is defined by projection, followed by the isomorphism $H^*\simeq L^*$:
\begin{equation}
\label{Eqn: regluing torus grading II}
H^*\oplus F^*\lra H^* \stackrel{\simeq}{\lra} L^*.
\end{equation}

Next, recall the universal restricted MPA-function $\breve\varphi
\in\breve\MPA(B,Q)$ from Equation~\eqref{Eqn: Universal restricted MPA fct}.
Composing with $\delta_Q$, that is, restricting each kink
$\kappa_\rho(\breve\varphi)\in Q\subseteq\breve\MPA(B,\ZZ)^*$ to $L\subset
\breve\MPA(B,\ZZ)$, yields an element
\[
\breve\varphi|_L\in \breve\MPA(B,L^*) = \Hom \big(L,\breve\MPA(B,\ZZ)\big).
\]
Recall from \eqref{Eqn: Kinks of universal restricted MPA-fct} that as
a map $\breve\MPA(B,\ZZ)\to \ZZ$, the kink $\kappa_\rho(\breve\varphi)$ maps
$\psi$ to $\kappa_\rho(\psi)$. In other words, $\breve\varphi$ is the
tautological restricted MPA-function with values in $\breve\MPA(B,\ZZ)^*$ that
evaluated on $\psi\in\breve\MPA(B,\ZZ)$ retrieves $\psi$. In particular, the
restriction of $\breve\varphi$ to $L\subset\breve\MPA(B,\ZZ)$, viewed as a map
\[
\breve\varphi|_L:L\lra \breve\MPA(B,\ZZ),
\]
is just the inclusion map of $L$ as a subset of $\breve\MPA(B,\ZZ)$.
\footnote{Another way to put this is to observe that as an element of
$\breve\MPA(B,\MPA(B,\ZZ)^*)= \Hom(\breve\MPA(B,\ZZ),\breve\MPA(B,\ZZ))$, the
universal MPA-function $\breve\varphi$ is the identity. Restricting to $L$ then
gives the inclusion of $L$ into $\breve\MPA(B,\ZZ)$.}. Now let
$\big(\breve\varphi_\tau\big)_{\tau\in\P}$ be a piecewise linear
representative of $\breve\varphi$ and define
\[
\psi_\tau=\breve\varphi_\tau|_L =\delta_Q\circ\breve\varphi_\tau.
\]
Thus for any $\tau\in\P$ we now have a choice of $PL$-function $\psi_\tau:
|\Sigma_\tau|\to \Hom(L,\RR)$ on the fan $\Sigma_\tau$ defined by the tangent
wedges to $\tau$, with kinks
\[
\kappa_\rho(\psi_\tau)= \kappa_\rho(\breve\varphi)|_L\in L^*,
\]
for any $\rho\in\P^{[n-1]}$ containing $\tau$. For convenience of the later
discussion we take $\breve\varphi_\tau|_{\Lambda_\tau}=0$.\footnote{This choice
has the effect that the action is trivial on order zero monomials with exponents
tangent to the cell considered.} These choices give the desired good
representative of the gluing data $\sigma_0$ on $H=c_1(L)$. To make
a precise statement, recall that $B$ has an open cover $\W$ by open stars
$W_\tau$ of the barycentric subdivision of $\P$ containing $\Int\tau$
(\cite{logmirror1}, Definition~1.25). This open cover is acyclic for
$\iota_*\check\Lambda$ (\cite{logmirror1}, Lemma~5.5). Elements of
$H^1(B,\iota_*\check\Lambda)$ therefore can be represented by \v Cech 1-cocycles
$\big(s_{\omega\tau}\big)_{\omega,\tau}$ with labelling by pairs
$\omega,\tau\in\P$ with $\omega\subsetneq\tau$ and $s_{\omega\tau}\in
\Gamma(W_{\omega}\cap W_{\tau}, \iota_*\check\Lambda)$.

\begin{lemma}
\label{Lem: sigma_0 on L}
The restriction of $c_1$ to $L\subseteq \breve\MPA(B,\ZZ)$,
\[
c_1|_L: L\lra H^1(B,\iota_*\check\Lambda) 
\]
can be represented by the \v Cech 1-cocycle
$\big(\sigma_{\omega\tau}\big)_{\omega,\tau} \in
C^1(\W,\iota_*\check\Lambda)\otimes L^*$ with
\[
\sigma_{\omega\tau} = \psi_\tau|_{W_\omega\cap W_\tau} - \psi_\omega|_{W_\omega\cap W_\tau}.
\]
\end{lemma}

\begin{proof}
This is immediate from the fact that $c_1$ arises as a connecting homomorphism
in the long exact cohomology sequence for the analogue of \eqref{Eqn: MPA} (see
\cite{logmirror1}, Definition~1.45).
\end{proof}

We are now ready to construct the action of the regluing torus.

\begin{proposition}
\label{Prop: Regluing torus acting}
The actions of the torus $\Spec \big(\kk[L^*]\big)$ on $\Spf \big(\tilde A\lfor
Q\rfor\big)$ and on $\Spec \big(\tilde A_\PP\lfor Q\rfor\big)$ defined by the
gradings \eqref{Eqn: regluing torus grading I},\eqref{Eqn: regluing torus
grading II} lift to actions on the finite pull-backs $\tilde\foX\to
\Spf\big(\tilde A\lfor Q\rfor\big)$ and $\tilde\foX_\PP\to \Spec\big(\tilde
A_\PP\lfor Q\rfor\big)$ in \eqref{Eqn: Universal families pulled back} of the
universal families.
\end{proposition}

\begin{proof}
We only discuss the case of $\tilde\foX\to \Spf\big(\tilde A\lfor Q\rfor\big)$.
The statement for the projective family then follows as in the proof of
Proposition~\ref{Prop: Relative torus action in GS} by grading the homogeneous
coordinate ring.
\smallskip

\noindent
\ul{\emph{Step~1: Choice of gluing data.}}\ \ 
By functoriality, the gluing data describing the central fibres of $\tilde\foX\to
\Spf\big(\tilde A\lfor Q\rfor\big)$ are given by the image of
\[
(s_0,\sigma_0)\in H^1(B,\iota_*\check\Lambda\otimes \ul A^\times)=
H^1(B,\iota_*\check\Lambda\otimes\kk^\times)
\oplus\Hom\big(H^1(B,\iota_*\check\Lambda)_f,H^1(B,\iota_*\check\Lambda)\big)
\]
under
\[
A^\times= \kk^\times\oplus H^1(B,\iota_*\check\Lambda)_f^*
= \kk^\times\oplus (H^\sat)^*\oplus F^* \lra \tilde A^\times=
\kk^\times\oplus H^*\oplus F^*.
\]
Thus this base change simply leaves $s_0$ unchanged and restricts $\sigma_0$ to
$q_f(H\oplus F)\subseteq H^1(B,\iota_*\check\Lambda)_f = q_f(H^\sat\oplus F)$.
Denote this restriction of $\sigma_0$ by $\tilde\sigma_0: q_f(H\oplus F)\to
H^1(B,\iota_*\check\Lambda)$. If we abuse notation and identify
$H\oplus F$ with its image in $H^1(B,\iota_*\check\Lambda)_f$, then
$\tilde\sigma_0$ is just the inclusion $H\oplus F\to
H^1(B,\iota_*\check\Lambda)$. As a \v Cech 1-cocycle, $\tilde\sigma_0$ is
represented by $(\tilde\sigma_{\omega\tau})_{\omega,\tau}$ with
$\tilde\sigma_{\omega\tau}\in \Gamma(W_\omega\cap W_\tau,\iota_*\check\Lambda)
\otimes (H\oplus F)^*$.

Now Lemma~\ref{Lem: sigma_0 on L} gives particular \v Cech $1$-cocycles
for elements of $H=c_1(L)$. Choosing \v Cech representatives for the elements of
$F$ arbitrarily, we arrive at a \v Cech $1$-cocyle
$(\tilde\sigma_{\omega\tau})_{\omega,\tau}$ of $\tilde\sigma_0$ with
the property
\begin{equation}
\label{Eqn: representing tilde sigma}
\tilde\sigma_{\omega\tau}\circ q_f\circ c_1
= \sigma_{\omega\tau}=\psi_\tau|_{W_\omega\cap W_\tau} -
\psi_\omega|_{W_\omega\cap W_\tau} \in \Gamma(W_\omega\cap
W_\tau,\iota_*\check\Lambda)\otimes L^*.
\end{equation}
We write $\big(\tilde s_{\omega\tau}\big)_{\omega,\tau}$ for the corresponding
representative of the base-changed universal gluing data $(s_0,\tilde \sigma_0)$.
\smallskip

\noindent
\ul{\emph{Step~2: Definition of the grading.}}\ \ 
We are now in position to define the grading on our rings. As in the proof of
Proposition~\ref{Prop: Relative torus action in GS} we prove inductively that the
wall structure is homogeneous of degree $0$. The grading on the coefficent ring
$\tilde A\lfor Q\rfor$ has already been defined in \eqref{Eqn: regluing torus
grading I} and \eqref{Eqn: regluing torus grading II}. In \cite{affinecomplex}
the $k$-th order approximation to $\foX$ is glued from rings denoted
$R^k_{\omega\to\tau,\sigma}$ indexed by cells $\omega\subseteq\tau$ in $\P$ and
a reference maximal cell $\sigma$ containing $\tau$ (\cite{affinecomplex},
Construction~2.7). These $\tilde A\lfor Q\rfor$-algebras are given by the
localization at order zero slab functions $f_{v\to\rho}$ with
$\rho\supseteq\omega$, of a quotient by a monomial ideal of a monomial ring. The
monomials $z^m$ have exponents $m\in P_{\omega,\sigma}$ with
\begin{equation}
\label{Eqn: monoid P_omega,sigma}
P_{\omega,\sigma}=\big\{ m=(\ol m,h)\in \Lambda_\sigma\oplus Q^\gp\,\big|\,
h\in \breve\varphi_\omega(\ol m)+Q\big\}.
\end{equation}
Here $\breve\varphi_\omega$ is the representative of $\breve\varphi$ on $W_\omega$ chosen
above, now viewed as a piecewise linear map $\Lambda_\sigma\to Q^\gp$ by
means of a chart for the affine structure at any point of $\Int\omega$. This
description is independent of the choice of point by the local monodromy
invariance of our notion of piecewise linear function. The definition of
$P_{\omega,\sigma}$ in \cite{affinecomplex}, Construction~2.7 is more intrinsic,
but the equivalence with the one given here is not hard to show.

For $m=(\ol m,h)$ we now define
\[
\deg_{L^*} z^m= \delta_Q (h),
\]
with $\delta_Q$ the grading on $Q$ from \eqref{Eqn: regluing torus grading I}.
In particular, the degree of a monomial of order zero, that is $m=(\ol m,h)$
with $h=\breve\varphi_\omega(\ol m)$, equals $\psi_\omega(\ol
m)=\delta_Q\big(\breve\varphi_\omega(\ol m)\big)$.

It is instructive to write down explicitly the action on the rings in
codimension zero and one used elsewhere in this paper. On the rings for
chambers $R_\fou=R_\sigma$ the grading is trivial on the monomials by our choice
$\breve\varphi_\sigma=0$ and hence just comes from the grading on the
coefficients in $\tilde A[ Q]\subseteq \tilde A\lfor Q\rfor$. For a slab
$\ul\fob$ let $\rho$ be the codimension one cell containing $\ul\fob$ and
$\sigma,\sigma'$ the two adjacent maximal cells, respectively. Then our ring
$R_{\ul\fob}$ arises from the rings $R^k_{\omega\to\tau,\sigma}$ as a fibre
product (see the proof of \cite{affinecomplex}, Lemma~2.34):
\begin{equation}
\label{Eqn: R_fob as fibre product}
R_{\ul\fob}= R^k_{\rho\to\sigma,\sigma}\times_{R^k_{\rho\to\rho,\sigma}} R^k_{\rho\to\sigma',\sigma'}.
\end{equation}
The homomorphisms in this fibre product are defined by the relevant changes of
strata and change of chambers (\cite{affinecomplex}, Construction~2.24). Since
$\breve\varphi_\rho|_{\Lambda_\rho}=0$ the induced grading of the ring
$R_{\ul\fob}$ is trivial on the monomials $z^m$ with $m\in\Lambda_\rho$. For the
two remaining generators we have
\[
\deg_{L^*} Z_+= \psi_\rho(\xi)\in L^*,\quad
\deg_{L^*} Z_-= \psi_\rho(-\xi)\in L^*.
\]
Recall that our sign conventions say that $Z_+$ maps to a monomial on the maximal
cell $\sigma=\sigma(\rho)$, see \eqref{Eq: Localization morphism rho -> sigma}.
Provided the slab function $f_{\ul\fob}$ is homogeneous of degree zero, this
definition of the grading of monomials turns $R_{\ul\fob}$ into a graded $\tilde A\lfor
Q\rfor$-algebra. In fact, the only relation $Z_+ Z_- - f_{\ul\fob} t^{\kappa_\rho}$
is homogeneous of degree $\delta_Q(\kappa_\rho) =
\psi_\rho(\xi)+\psi_\rho(-\xi)\in L^*$.
\smallskip

\noindent
\ul{\emph{Step~3: Homogeneity of order zero slab functions.}}\ \ 
Lemma~\ref{Lem: Order zero slab functions} continues to hold for universal
gluing data with the same proof. Thus the order zero reduction of one of our
slab functions $f_{\ul\fob}$ equals $f_{\ul\rho}=\sum_{m\in\Delta(\rho,v)}
z^m$. The degree $\deg_{L^*}(z^m)$ of any monomial occuring in $f_{\ul\rho}$
equals $\psi_\rho(\ol m)$ with $m=m_{\ul\rho\ul\rho'}$. Now as a restricted
PL-function, $\psi_\rho$ is invariant under monodromy in $W_\rho\setminus\Delta$
and hence $\psi_\rho(\ol m)=0$ for any $m\in\Delta(\rho,v)$. Note this argument
holds regardless of our special choice of PL-functions $\psi_\rho$. This shows
that the order zero reduction of a slab function $f_{\ul\fob}$ is homogeneous of
degree zero, as an element of the ring $R^0_{\rho\to\rho,\sigma}$ for
$\rho\supset\ul\fob$. Homogeneity of the slab functions in the other rings
$R^0_{\omega\to\tau,\sigma}$ then follows by homogeneity of the gluing morphisms
of type~(I) in the following Step~4.
\smallskip

\noindent
\ul{\emph{Step~4: Homogeneity of gluing.}}\ \ 
We have to check homogeneity of the gluing morphisms in \cite{affinecomplex},
Construction~2.24. We assume inductively that the functions associated to slabs
and walls are homogeneous of degree zero. There are three cases.

(I) (Change of strata). For cells $\omega\subseteq\omega'
\subseteq\tau'\subseteq \tau\subseteq \sigma$ with $\sigma$ maximal, we have a
homomorphism\footnote{The formulas in \cite{affinecomplex} define homomorphisms
of log rings; here we only need and state the induced homorphisms of ordinary rings.
Moreover, since we work with lifted gluing data, the restriction of gluing data
to a maximal cell necessary in \cite{affinecomplex} is not needed here.}
\begin{equation}
\label{Eqn: Change of strata}
R^k_{\omega\to\tau,\sigma}\lra R^k_{\omega'\to\tau',\sigma},\quad
z^m\longmapsto \tilde s_{\omega\omega'}(m)^{-1}\cdot z^m.
\end{equation}
Assuming without loss of generality that $z^m$ has order zero, as an element of
$R^k_{\omega\to\tau,\sigma}$ we have $\deg_{L^*} z^m= \psi_\omega(\ol m)$. Hence
homogeneity follows from computing the degree of the right-hand side of \eqref{Eqn: Change of strata} in $R^k_{\omega'\to\tau',\sigma}$:
\[
\deg_{L^*} \big(\tilde s_{\omega\omega'}(m)^{-1}\cdot z^m\big)
= \deg_{L^*}\big (-\tilde\sigma_{\omega\omega'}(\ol m)\big) +\psi_{\omega'}(\ol m)
\ =\ -\sigma_{\omega\omega'}(\ol m)+\psi_{\omega'}(\ol m) = \psi_\omega(\ol m).
\]
For the second equality note from \eqref{Eqn: representing tilde sigma}
that $\tilde\sigma_{\omega\omega'}\circ q_f\circ c_1=\sigma_{\omega\omega'}$.
The last equality then follows from the definition of $\sigma_{\omega\omega'}$ in
Lemma~\ref{Lem: sigma_0 on L}.
\smallskip

(II.1) (Change of chambers $\fou\to\fou'$ with $\fou,\fou'\subseteq\sigma$).
This is a sequence of wall crossing homomorphisms, each defined by $\tilde
s_{\omega\sigma}(f_\fop)$ with $f_\fop$ homogeneous of degree zero. Thus
$f_\fop$ can be written as a sum of expressions $c_m z^m$ with $c_m\in\tilde
A[Q]$ and $z^m$ a monomial of order zero on $\sigma$. Now it holds $\deg_{L^*}
z^m=0$ by our choice of $\breve\phi_\sigma$, and then necessarily also
$\deg_{L^*} c_m=0$. As an element of $R^k_{\omega\to\tau,\sigma}$, the term $c_m
\tilde s_{\omega\sigma}(m) z^m$ then has degree
\[
\deg_{L^*}\big(c_m \tilde s_{\omega\sigma}(m) z^m\big) = \sigma_{\omega\sigma}(\ol m) +\psi_\omega(\ol m)=\psi_\sigma(\ol m)=0.
\]
\smallskip

(II.2) (Change of chambers $\fou\to\fou'$ with $\fou\subseteq\sigma$,
$\fou'\subseteq\sigma'$ and $\sigma\neq\sigma'$). This is the most interesting
case $\theta:R^k_{\omega\to\tau,\sigma}\to R^k_{\omega\to\tau,\sigma'}$ of two
chambers $\fou\subseteq\sigma$, $\fou'\subseteq\sigma'$ separated by a slab
$\ul\fob\subseteq\rho$. Now the homomorphism $R^k_{\omega\to\rho,\sigma}\to
R^k_{\omega\to\tau,\sigma}$ from (I) is a composition of a homogeneous
epimorphism with a localization at a product of order zero slab functions, see
\cite{affinecomplex}, Remark~2.9. Moreover, the homomorphism
$R^k_{\omega\to\rho,\sigma}\to R^k_{\rho\to\rho,\sigma}$, also from (I), is
induced by a localization homomorphism $P_{\omega,\sigma}\to P_{\rho,\sigma}$ of
the toric monoids from \eqref{Eqn: monoid P_omega,sigma} and is hence injective.
The analogous statements holds for $\sigma$ exchanged by $\sigma'$. Thus it is
enough to treat the case $\omega=\tau=\rho$ and prove homogeneity of the change
of chambers homomorphism $R^k_{\rho\to\rho,\sigma}\to
R^k_{\rho\to\rho,\sigma'}$.

All order zero monomials $z^m$ of the two rings with $\ol m\in\Lambda_\rho$ have
degree zero and are mapped to each other by identifying $\Lambda_\rho$ as
sublattices of $\Lambda_\sigma$ and $\Lambda_{\sigma'}$ via parallel transport
through $\rho\setminus\breve\Delta$. In particular, $\theta$ maps the localizing
elements to each other and hence is also homogeneous on the localized subrings
generated by $\Lambda_\rho$. Each of the two rings has two more generators
$z_+,z_-\in R^k_{\rho\to\rho,\sigma}$ and $z'_+,z'_-\in
R^k_{\rho\to\rho,\sigma'}$ with homogeneous relations
\[
z_+z_-= t^{\kappa_\rho},\quad z'_+z'_-= t^{\kappa_\rho}.
\]
The change of chamber homomorphism also depends on the choice of a vertex
$v\in\omega\subseteq\rho$, which selects one of the codimension one cells
$\ul\rho\subseteq\rho$ of the barycentric subdivision. Recall also the choice of
primitive normal vector $\xi=\xi(\rho)\in\Lambda_\sigma$ for
$\sigma=\sigma(\rho)$ and denote by $\xi'\in\Lambda_{\sigma'}$ the parallel
transport of $\xi$ to $\sigma'$ through $\Int\ul\rho$. For easy comparison with
the rings $R_{\ul\fob}$ with $\ul\fob=\fob\cap\ul\rho$ the decomposed slab from
\S\ref{Subsect: GS one-parameter}, we choose $z_+=
z^{(\xi,\breve\varphi_\rho(\xi))}$ and $z'_+=z^{(\xi',
\breve\varphi_\rho(\xi))}$. According to \cite{affinecomplex}, p.1349, and
\eqref{Eqn: GS slab functions} we have
\[
\begin{array}{lclcl}
\theta(z_+)&=& f_{\ul\fob}\cdot (z'_-)^{-1}\cdot t^{\kappa_\rho}  &=& f_{\ul\fob}\cdot z'_+\\
\theta(z_-)&=& f_{\ul\fob}^{-1}\cdot (z'_+)^{-1}\cdot t^{\kappa_\rho} & =& f_{\ul\fob}^{-1}\cdot z'_-.
\end{array}
\]
Now $\deg_{L^*} z_+=\deg_{L^*}{z'_+}= \psi_\rho(\xi)$ and $f_{\ul\fob}$ is homogeneous of degree zero, and similarly for $z_-$ This shows homogeneity of $\theta$.

This wall crossing homomorphism induces the grading on the ring $R_{\ul\fob}$ already
discussed in Step~2.
\smallskip

\noindent
\ul{\emph{Step~5: Homogeneity of wall structure.}}\ \ 
Assuming inductively that the wall structure $\scrS_k$ at order $k$ is
homogeneous of degree zero, following the steps in the algorithm of
\cite{affinecomplex} shows homogeneity of the wall structure $\scrS_{k+1}$ at
the next order, just as in the proof of Proposition~\ref{Prop: Relative torus
action in GS}.
\end{proof}

\begin{remark}
It is obvious from the constructions that our two group actions commute. Thus we
have an action of a product torus with character lattice $\breve\PL(B)^*\oplus
L^*$ on the finite \'etale pull-backs $\tilde\foX\to \Spf\big(\tilde A\lfor
Q\rfor\big)$ and $\tilde\foX_\PP\to \Spec\big(\tilde A_\PP\lfor Q\rfor\big)$ of
the universal families. Here $L\subseteq\breve\MPA(B)$ together with
$\kappa\big(\breve\PL(B)\big)$ spans a finite index subgroup of $\breve\MPA(B)$.
The first torus factor contains the subtorus with character lattice the
dual of the saturated sublattice $H^0(B,\iota_*\check\Lambda)\subseteq
\breve\PL(B)$ dealing with automorphisms of the family relative the base. The
quotient of the product torus $\Spec\big(\kk[\breve\PL(B)^*\oplus L^*]
\big)$ by this subtorus has character lattice $\breve \MPA(B)^*=
Q^\gp$, up to finite index. Hence up to isogeny it can be identified with the
torus for the affine toric variety $\Spec \kk[Q]$. Only the second torus,
with character lattice $L^*$, acts non-trivially on the fibre
$\Spec(\tilde A)$ of $\Spf \big(\tilde A\lfor Q\rfor\big)\to \Spf \big(\kk\lfor
Q\rfor\big)$ over $0$. This fibre parametrizes log structures relative to the
standard log point and hence only this part induces a non-trivial action on
$H^1(B,\iota_*\check\Lambda\otimes\kk^*)$.

Note that the projection $\Spf \big(\tilde A\lfor Q\rfor\big)\to \Spf \big(\kk\lfor
Q\rfor\big)$ is equivariant for the isogeny of tori
\begin{equation}
\label{Eqn: isogeny of acting tori}
\Spec\big( \kk[\kappa(\breve\PL)^*\oplus L^*]\big) \lra \Spec\big(
\kk[\breve\MPA(B,\ZZ)^*]\big),
\end{equation}
and the action of $\Spec\big( \kk[\kappa(\breve\PL)^*\oplus L^*]\big)$ lifts to
$\tilde\X$. Restricting to the fibre over $0$, we obtain a family $\tilde
X_0\to\Spec(\tilde A)$ of log schemes over the log-point
$(\Spec\kk,Q\oplus\kk^\times)$ parametrized by $\Spec(\tilde A)$. Any two closed
fibers of this family in the same orbit of the group action by $\Spec\big(
\kk[\kappa(\breve\PL)^*\oplus L^*]\big)$ are isomorphic as log-schemes over a
log point with monoid $Q$. But note that since the group action is non-trivial
on $(\Spec \kk,Q\oplus \kk^\times)$, the isomorphism is not an isomorphism
relative to this log point. Indeed, the fibres of $\tilde X_0\to \Spec(\tilde
A)$ classify isomorphism classes of certain log schemes over $(\Spec \kk,Q\oplus
\kk^\times)$, up to the finite base change from $A$ to $\tilde A$. From this
point of view, the action on the fibre over $0$ can be understood by pulling
back the given chart for the log point $(\Spec\kk,Q\oplus\kk^\times)$ by the
group action via \eqref{Eqn: isogeny of acting tori}.
\end{remark}

%===========================================================
\subsection{The non-simple case in two dimensions}
\label{Subsect: K3}
In two dimensions the local rigidity assumption in
\cite{affinecomplex} is empty. We can therefore also treat
non-simple singularities. The singularities are then at the
barycenters of edges with local affine monodromy conjugate to
$\left(\begin{smallmatrix}1&0\\r&1\end{smallmatrix}\right)$. Here
the edge is parallel to the first coordinate axis and
$r\ge 1$. We call such a singularity an \emph{$r$-fold singularity},
so $r=1$ is a simple singularity.

The following proposition generalizes \cite{logmirror1}, Theorem~5.4
to the non-simple case in two dimensions, formulated in the Legendre
dual fan picture as in loc.cit., thus not requiring any projectivity
assumption.

\begin{proposition}
\label{Prop: Log strs in 2d}
Let $(\check B,\check \P)$ be a closed two-dimensional polyhedral
affine manifold with positive singularities (at the barycenters of
the edges). Denote by $n_r$ the number of $r$-fold singularities and
let $K=\sum_r n_r(r-1)$.

Then the set of isomorphism classes of positive log Calabi-Yau
spaces over the standard log point $(\Spec\kk,\NN)$ with dual
intersection complex $(\check B,\check \P)$ modulo isomorphism
preserving $\check B$ is $H^1(\check B,\iota_*\Lambda \otimes\kk^\times)\times
\kk^K$.
\end{proposition}

\begin{proof}
Let $(X_0,\M_{X_0})$ be a log Calabi-Yau space as in the statement.
Reexamination of the proof of \cite{logmirror1}, Theorem~5.4, shows that there
exists a unique isomorphism class of lifted gluing data $\check s\in H^1(\check
B,\iota_*\Lambda \otimes\kk^\times)$ normalizing $(X_0,\M_{X_0})$
(\cite{logmirror1}, Definition~4.24). Unlike the simple case (\cite{logmirror1},
Theorem~5.2,2), now the normalization condition does not fix the log structure
completely. Let $\rho\in\check \P$ be an edge containing an $r$-fold singularity
and $f_{\rho,v}\in\CC[w]$ the slab function determining the log structure on the
toric stratum $X_\rho\subseteq X_0$ associated to $\rho$. Here $v\in\rho$ is a
choice of vertex and $w$ is the toric coordinate for $X_\rho\simeq\PP^1$ at the
corresponding zero-dimensional stratum. Then $\deg f_{\rho,v}=r$ and as in the
proof of \cite{logmirror1}, Theorem~5.2,2, the normalization condition
determines the constant and highest coefficients of $f_{\rho,v}$. Thus
$f_{\rho,v}=1+a_1 w+\ldots+ a_{r-1} w^{r-1} + c(\check s) w^r$ with $c(\check
s)\in\kk^\times$ determined by the gluing data. The other coefficients
$a_2,\ldots,a_{r-1}$ are completely free to vary, contributing a factor
$\kk^{r-1}$.

Once a choice of vertex on each edge with a singularity has been
made, this description is unique up to changing $\check s$ by a
coboundary.
\end{proof}

With the generalization to the non-simple case from Proposition~\ref{Prop: Log
strs in 2d}, the previous discussion of the case of simple singularities
generalizes with the only change of replacing $A_\PP=\kk[K_f^*]$ by $\tilde
A_\PP=A_\PP[\NN^K]= \kk[K_f^*\oplus\NN^K]$. In particular, the additional factor
$\NN^K$ does not affect projectivity, the obstruction $\ob_\PP^\ZZ$ still only
takes the first factor $H^1(B,\iota_*\check\Lambda)$ as an input.

An interesting additional feature is the preservation of singularities in the
family. To state this fact, denote by $\scrS_k$ the inductively obtained wall
structure on $(B,\P)$ that is consistent modulo $I_0^k$. Then for
$\rho\in\P^{[1]}$ let $f_\rho\in \tilde A_\PP[w^{\pm1}]$ be the order zero slab
function in $\scrS_k$ for the stratum $\rho$ as given by the codimension one
case of Definition~\ref{Def: Wall structure},1. Here $w$ is the toric coordinate
along the stratum $X_\rho\subseteq X_0$. Then $f_\rho$ equals the reduction
modulo $I_0$ of any slab function $f_{\ul\fob}$ with $\ul\fob\subseteq \rho$ and it is
related to $f_{\rho,v}$ in the proof of Proposition~\ref{Prop: Log strs in 2d}
via formula~\eqref{Eqn: GS slab functions}.

\begin{theorem}
Let $(B,\P)$ be a closed two-dimensional polyhedral affine manifold
with positive singularities and assume there exists a strictly
convex $\varphi\in\breve \MPA(B,\NN)$. Then the conclusions of
Theorem~\ref{Thm: Main theorem, simple case} hold with $A_\PP$
replaced by $\tilde A_\PP=A_\PP[\NN^K]$, yielding a family
\[
\foX_\PP\lra \Spec(A_\PP\lfor Q\rfor)\times_\kk \AA_\kk^K.
\]
Moreover, the completion of $\foX_\PP$ along the big cell of the toric
stratum for an edge $\rho\in\P^{[1]}$ is isomorphic relative $\tilde
A_\PP\lfor Q\rfor$ to
\[
\Spf\big( \tilde A_\PP\lfor Q\rfor\lfor x,y\rfor[w^{\pm 1}]/(xy -
f_\rho z^{\kappa_\rho}) \big).
\]
In particular, for a closed point $(s,a)\in \Spec(A_\PP) \times\AA_\kk^K$ with
$f_\rho(s,a)$ having $n^\rho_r$ zeros of order $r$, the generic fibre of the
restriction of $\foX_\PP$ to $\{(s,a)\}\times \Spec \kk\lfor Q\rfor$ has
$n^\rho_r$ many $A_{r-1}$ singularities with closure intersecting
$X_\rho\subseteq X_0$.
\end{theorem}

\begin{proof}
Recall that Legendre duality (\cite{logmirror1}, \S1.4) swaps the
fan and cone pictures (\cite{logmirror1}, Theorem~2.34) and 
transforms an $r$-fold singularity along an edge $\rho$ into an
$r$-fold singularity along the Legendre-dual edge
(\cite{logmirror1}, Proposition~1.50,1). Thus Proposition~\ref{Prop:
Log strs in 2d} applies to the Legendre dual of $(B,\P,\varphi)$ for
some choice of strictly convex $\varphi\in \breve\MPA(B,\NN)$.

The local model for the order $k$ deformation $\foX_\PP^\circ$ along the
codimension one stratum $X_\rho\subseteq X_0$ is $Z_+Z_-=f_{\ul\fob}
z^{\kappa_\rho}$ in $\tilde A_\PP[Q]/(I_0^{k+1})[Z_+, Z_-,
w^{\pm1}]$. Now by the inductive construction, at any finite order, a
slab function $f_{\ul\fob}$ for $\ul\fob\subseteq\rho$ is of the form $f_{\ul\fob}=
f_\rho\cdot\prod_\mu(1+a_\mu w^{l_\mu})$ with $a_\mu\in I_0$. At
order $k$ the slab $\ul\fob$ only factors with $a_\mu\in I_0^k$ are
being added. Thus taking the limit $k\to\infty$ for slabs $\ul\fob_k$
containing a general point of $\rho$, this product converges to some
$f_{\ul\fob}=f_\rho\cdot h \in\tilde A_\PP\lfor Q\rfor [w^{\pm1}]$. Now
take $x=Z_+$, $y=h^{-1}Z_-$ to arrive at the stated equation for the
formal completion along $X_\rho^\circ\subseteq \foX_\PP^\circ$.
\end{proof}

\end{appendix}

%===========================================================
%===========================================================

\end{document}